\newtheorem{theorem}{Theorem}[section]
\newtheorem{lemma}[theorem]{Lemma}
\numberwithin{equation}{section} 
\numberwithin{figure}{section}  
\newtheorem{conjecture}{Conjecture}[section]
\newtheorem{conclusion}{Conclusion}[section]
\newcommand \bei {\begin{itemize}}
\newcommand \eei {\end{itemize}}
\newcommand \be {\begin{equation}}
\newcommand \bel {\begin{equation}\label}
\newcommand \ee {\end{equation}}
\newcommand \trianglerightNEW \triangleright
\newcommand \sgn {\text{sgn}}
\newcommand \auth {\textsc}
\newcommand \del \partial
\newcommand \eps \epsilon 
\let\oldmarginpar\marginpar
\renewcommand\marginpar[1]{\-\oldmarginpar[\raggedleft\footnotesize #1]%
{\raggedright\footnotesize #1}}
\begin{document} 

\title{\em \Large 
Weakly regular fluid flows with bounded variation 
\\
on the domain of outer communication of a Schwarzschild blackhole spacetime. 
\\
 A numerical study} 

\author{Philippe G. LeFloch\footnote{
\normalsize Laboratoire Jacques-Louis Lions \& Centre National de la Recherche Scientifique, Universit\'e Pierre et Marie Curie (Paris 6), 
4 Place Jussieu, 75252 Paris, France. 
Email : {\sl contact@philippelefloch.org, xiang@ljll.math.upmc.fr}
\newline 
AMS classification: 35L60, 65M05, 76L05. Keywords: relativistic fluids, Schwarzschild geometry, generalized Riemann problem, random choice, finite volume, asymptotic preserving}
\, and Shuyang Xiang$^*$}

\date{December 2016}

\maketitle

\abstract{
We study the dynamical behavior of compressible fluids evolving on the outer domain of communication of a Schwarz\-schild background. To this end, we design several numerical methods which take the Schwarzschild geometry into account and we treat, both, the relativistic Burgers equation and the relativistic Euler system under the assumption that the flow is spherically symmetric.  All the schemes we construct are proven to be well-balanced and therefore to preserve the family of steady state solutions for both models. They enable us to study the nonlinear stability of fluid equilibria, and in particular to investigate the behavior of the fluid near the blackhole horizon. We state and numerically demonstrate several conjectures about the late-time behavior of perturbations of steady solutions.}


\section{Introduction}

In this paper and the companion papers \cite{PLF-SX-one,PLF-SX-two,PLF-SX-four}, we study numerically compressible fluid flows on a Schwarzschild blackhole background. The present investigation is part of a research project by LeFloch and co-authors on designing numerical methods for relativistic fluid problems posed on curved spacetimes; see \cite{Amorim-L,Ceylan-L,LF-malaga,LF-M,LF-M-O}. Building upon the  numerical analysis in the later papers and on the analytical work performed by the authors in  \cite{PLF-SX-one,PLF-SX-two,PLF-SX-four}, we are able here to design several numerical  schemes for the approximation of shock wave solutions to, both, the relativistic Burgers equation and the compressible Euler system under the assumption that the flow is spherically symmetric. Our schemes are asymptotic preserving and therefore allow us to investigate the late-time asymptotic of solutions. One important challenge addressed here is taking the curved geometry into account at the level of the discretization and handling the behavior of solutions near the horizon of the blackhole. 

The  {\em relativistic Burgers equation on a Schwarzschild background} reads as follows (see \cite{PLF-SX-one} for further details): 
 \bel{Burgers}
\del_t \bigg({v\over (1- {2M /  r})^2}\bigg)+ \del_r  \Bigg({v^2 -1 \over 2 (1-2M/r)} \Bigg)=0, 
\qquad r>2M, 
\ee
where we have normalized the light speed to unit and the unknown is the function $v=v(t,r) \in [-1, 1]$.  This equation can also be put in the following non-conservative form: 
\bel{Burgers'}
\del_t v + \del_ r \Bigg(\Big(1-{2M \over r} \Big) {v^2 -1 \over 2}\Bigg) ={2M \over r^2} \big(v^2 -1 \big), 
\qquad r>2M.  
\ee
Here $M>0$ denotes the mass of the blackhole and, clearly, we recover the standard Burgers equations when the mass vanishes.  

Our main contribution for the relativistic Burgers model above is as follows. First of all, we are going to construct a well-balanced finite volume method as well as a random choice method which, both, are capable to preserve the steady state solutions. We will use these schemes to investigate the following issues and validate and extend our theoretical results (briefly reviewed below in Theorems~\ref{Burgers1} to \ref{Burgers3}): 
\bei

\item The global-in-time existence theory for the generalized Riemann problem generated by an arbitrary initial discontinuity. 

\item The late-time behavior of an initially perturbed steady state solution, possibly containing a stationary shock wave. 

\eei
Furthermore, our study here have led us to the following two conjectures for general initial data.

\begin{conjecture}
\label{con0}
Given any compactly perturbed steady shock as an initially data, the solution to the relativistic Burgers model on a Schwarzschild background \eqref{Burgers} converges to a steady state shock asymptotically in time.  
\end{conjecture}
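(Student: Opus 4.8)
The plan is to treat \eqref{Burgers} as a scalar conservation law with spatially dependent flux and to reduce the convergence statement to three ingredients: a conserved invariant that pins the limiting shock, a compactness argument producing candidate limits, and a Lyapunov functional that upgrades subsequential convergence to convergence of the full orbit. First I would pass to the conservative variable $u = v/(1-2M/r)^2$ with flux $F(r,u)=(v^2-1)/(2(1-2M/r))$, so that \eqref{Burgers} becomes a genuine conservation law whose only $r$-dependence sits inside $F$, and recall from \cite{PLF-SX-one} the well-posedness of Kruzhkov entropy solutions together with the comparison principle for this flux. The steady states solve $F(r,u)\equiv C$, hence $v=\pm\sqrt{1+2C(1-2M/r)}$ with $C\le 0$; the dissipative source $(2M/r^2)(v^2-1)\le 0$ visible in \eqref{Burgers'} forces $v^2\to1$ as $r\to 2M^+$. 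A steady shock connects the upper branch $v_-=+\sqrt{\,\cdot\,}$ on the left ($r<r_s$) to the lower branch $v_+=-\sqrt{\,\cdot\,}$ on the right ($r>r_s$): the zero shock speed is compatible with Rankine--Hugoniot because $F$ is continuous there, and since the characteristic speed is $\lambda=v(1-2M/r)$, one has $\lambda>0$ on the left and $\lambda<0$ on the right, so the Lax condition holds and characteristics impinge on the discontinuity from both sides.

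The second step is to isolate the invariant selecting the asymptotic shock. For a compact perturbation of a reference steady shock of density $\overline u$, the quantity
\bel{inv0}
\mathcal I(t) = \int_{2M}^{\infty}\bigl(u(t,r)-\overline u(r)\bigr)\,dr
\ee
is finite, and, once one checks that the boundary fluxes at $r=2M$ and $r\to\infty$ reduce to the common background value $C$, it is conserved in time. Because the family of steady shocks is monotonically parametrized by $r_s$, the initial value $\mathcal I(0)$ singles out a unique candidate limit $\overline u_{r_\infty}$. I would then deduce the uniform $BV$ and $L^\infty$ bounds underlying the well-balanced analysis of this paper (available analytically from \cite{PLF-SX-one}), extract along a sequence $t_n\to\infty$ a limit $v_\infty$ in $L^1_{loc}$, and show by a standard time-translation and entropy argument that $v_\infty$ is a stationary entropy solution, hence a smooth steady state or a steady shock. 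Using \eqref{inv0} together with the topological fact that the descending jump from the upper to the lower branch cannot be created or destroyed by the dynamics, I would identify $v_\infty=\overline u_{r_\infty}$, and finally invoke the monotonicity of the Lyapunov functional $t\mapsto\int_{2M}^{\infty}|u(t,r)-\overline u_{r_\infty}(r)|\,dr$ (a Kato-type inequality adapted to the spatially varying flux) to promote subsequential convergence to convergence of the whole orbit.

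The main obstacle is the degeneracy at the horizon. The stationary shock is a natural attractor since characteristics impinge on it from both sides, and because $\lambda<0$ for $r>r_s$ no part of the perturbation escapes to infinity, so \eqref{inv0} is genuinely conserved. However, $\lambda=v(1-2M/r)\to0$ as $r\to 2M^+$, so a perturbation lodged near the horizon is transported toward the shock arbitrarily slowly, and the delicate point is to prove that it is nonetheless fully absorbed and that the shock location stabilizes at $r_\infty$ rather than drifting. Ruling out that the $\omega$-limit is a smooth steady state into which the shock has been expelled toward $r=2M$, or a configuration perpetually oscillating in $r_s$, is precisely where the generalized-characteristics estimates must quantify this degeneracy; this is the reason the statement is offered as a conjecture and substantiated numerically. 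A secondary, structural difficulty is that the $L^1$ contraction is by itself circular — it presupposes the limit — so it is the compactness-plus-invariant mechanism of the second step that must carry the identification of the limit.
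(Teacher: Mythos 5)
You are addressing Conjecture~\ref{con0}, and the first thing to say is that the paper contains no proof of it: the statement is offered precisely as a conjecture, supported by (i) the weaker rigorous result Theorem~\ref{Burgers3}, which treats exactly piecewise-steady data $v_L>v_R$ with a compact perturbation, and (ii) numerical demonstration with the two structure-preserving schemes of Sections~\ref{Sec:3} and \ref{Sec:5} --- both well-balanced, the random-choice one built on an exact generalized Riemann solver --- whose experiments (Figures~\ref{FIG-81-0} and \ref{FIG-81-1}) show a perturbed steady shock \eqref{Burgers-steady-shock} relaxing to a steady shock with the same constant $K$ but a shifted discontinuity $r_1\neq r_0$. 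Your proposal is therefore a genuinely different route: an analytic program (conserved invariant, BV compactness via Theorem~\ref{Burgers2}, Kato-type $L^1$ Lyapunov functional) rather than numerical evidence. If it could be closed it would strengthen the paper, and your invariant $\mathcal I(t)=\int_{2M}^\infty(u-\overline u)\,dr$ would even predict the asymptotic location $r_1$, which the paper observes but never quantifies. The skeleton is standard and mostly sound: the steady branches you derive match \eqref{steady-solution}, the zero-speed Rankine--Hugoniot and Lax conditions for the steady shock are correct, and $L^1$-contraction between two entropy solutions of the same equation survives the smooth $r$-dependence of the flux away from the horizon.

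As a proof, however, it does not close, and not only for the reason you concede (quantifying the degeneracy $\lambda=v(1-2M/r)\to 0$ at the horizon). The sharper problem is that your compactness-plus-invariant mechanism can lose mass at both ends, which breaks the identification of the limit. By Theorem~\ref{Burgers1}, a wave whose motion is initially directed toward the blackhole tends to the horizon for all time without arriving; a compact perturbation on the left of the shock that dips to negative values generates exactly such a left-going wave. Then $\mathcal I$ is conserved at every finite time (the flux trace at $r=2M^+$ stays at the background value, since the wave never arrives), yet the invariant mass carried by that wave exits every compact subset of $(2M,\infty)$ as $t\to\infty$, so the $L^1_{loc}$ limit satisfies $\mathcal I(v_\infty)\neq\mathcal I(0)$ and the location $r_\infty$ selected by $\mathcal I(0)$ is wrong. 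Symmetrically, your claim that nothing escapes to infinity because $\lambda<0$ for $r>r_s$ presumes the solution stays on the negative branch there, which a finite perturbation can violate, producing a right-going wave whose $L^1$ content spreads (only $t^{-1/2}$ decay in the N-wave regime of Theorem~\ref{Burgers3}). You would need a tightness estimate at the horizon and at infinity --- equivalently, smallness or sign conditions on the perturbation --- before the ``topological'' jump-counting argument and the Lyapunov functional can identify the limit; note also that $u=v/(1-2M/r)^2$ blows up at the horizon, so even the finiteness of $\mathcal I$ and of $\int|u-\overline u_{r_\infty}|\,dr$ requires the solution to coincide with the background branch near $r=2M$, which is itself part of what must be proved. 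These are precisely the obstructions that leave the statement at the level of a conjecture, as the paper's purely numerical treatment implicitly acknowledges.
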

 
\begin{conjecture}
\label{con1}
Given an initial data $v_0 = v_0 (r) \in [-1, 1]$ defined on $[2M, +\infty)$, the corresponding solution $v= v(t, r)$ to the relativistic Burgers model \eqref{Burgers} is as follows:  
\bei 

\item If $v_0 (2M)=1 $, then there exists a finite time  $t_0>0$ such that, for all $t>t_0$, the solution $v$ is a single shock with left-hand state $1$ and right-hand state $- \sqrt  {2M \over r}$.

\item If  $v_0 (2M)  <  1$ and $\lim \limits_ {r\to +\infty} v_0(r)  >  0$,  then there exists a finite time $t_0>0$ such that, for all $t>t_0$,  the solution is $v(t, r)=- \sqrt  {2M \over r}$ for all  $t>t_0$.

\item If  $v_0(2M)<  1$ and  $\lim \limits_ {r\to +\infty} v_0(r)  \leq 0$, then there exists a finite time $t_0>0$ such that, for all $t>t_0$,  the solution to the relativistic Burgers model satisfies for all   $t>t_0$
$$
v(t, r)= - \sqrt {1-({1-(v_ 0^\infty)}^2)  \big(1-{2M \over r} \big)}, 
\qquad 
\lim \limits_ {r \to +\infty} v_0(r)  =: v_0^\infty \leq 0.  
$$
\eei
\end{conjecture}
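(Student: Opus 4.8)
The plan is to treat \eqref{Burgers} as a scalar balance law on the half-line $r>2M$ and to combine the comparison principle for entropy solutions (from the well-posedness theory in \cite{PLF-SX-one,PLF-SX-two}) with Dafermos' theory of generalized characteristics, using the steady states as barriers. The first step is to classify all steady states: integrating $\del_r\big(\frac{v^2-1}{2(1-2M/r)}\big)=0$ produces the one-parameter family $v_C^\pm(r)=\pm\sqrt{1+2C(1-2M/r)}$ with $C\in[-1/2,0]$, together with the critical states $v\equiv\pm1$ and the standing shocks obtained by gluing $v_C^+$ to $v_C^-$ at any radius (zero-speed Rankine--Hugoniot). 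The three target profiles are members of this family: $-\sqrt{2M/r}=v_{-1/2}^-$, the Case~3 profile is $v_C^-$ with $C=-(1-(v_0^\infty)^2)/2$ and far-field value $v_0^\infty$, and the Case~1 object is the shock between $v\equiv1$ and $v_{-1/2}^-$. A useful structural fact is that every positive-branch steady state satisfies $v_C^+(2M)=1$.

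Next I would reduce the selection of the limit to two order parameters, the horizon trace $a(t):=v(t,2M)$ and the far-field value $b(t):=\lim_{r\to\infty}v(t,r)$. Writing \eqref{Burgers'} in characteristic form $\dot r=(1-2M/r)v$, $\dot v=\frac{M}{r^2}(v^2-1)$, one sees that $\dot r=0$ on the horizon, so $a$ obeys the autonomous ODE $\dot a=\frac{1}{4M}(a^2-1)$; hence $a\equiv1$ if $a(0)=1$ and $a(t)\to-1$ if $a(0)<1$, which is exactly the dichotomy between Case~1 and Cases~2--3. Since $v_C^+(2M)=1$, the hypothesis $v_0(2M)<1$ also rules out a positive far field and forces the limit onto the negative branch. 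For that branch I would use that in the far field $\lambda\approx v$ and the flux is asymptotically the flat Burgers flux: when $v_0^\infty\le0$ the characteristics at infinity are incoming and pin $b(t)\equiv v_0^\infty$ (Case~3), whereas when $v_0^\infty>0$ they are outgoing and sweep the positive data off to infinity, so that $b(t)$ relaxes to the separatrix value $0$ (Case~2). Selecting $v_C^-$ with the prescribed $(a,b)$ then identifies the candidate limit in each case.

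The convergence itself I would obtain by a trapping argument: since the solution operator is order-preserving and the $v_C^-$ are time-independent, I would sandwich $v(t,\cdot)$ between two members $v_{C^-}^-\le v_0\le v_{C^+}^-$ of the family and let $C^\pm\to C_*$, using the $L^1$ contraction to squeeze $v(t,\cdot)$ onto the selected steady state; for Case~1 I would instead track the single admissible outgoing shock between $v\equiv1$ and $v_{-1/2}^-$ through Rankine--Hugoniot, checking the Lax condition $\lambda(1)>0>\lambda(-\sqrt{2M/r})$ to confirm it is the only surviving discontinuity. The monotone interior dynamics $\dot v=\frac{M}{r^2}(v^2-1)<0$ for $|v|<1$ --- fluid being driven toward the infalling state $v=-1$ --- forces the characteristics to focus and the transient waves to be absorbed into shocks, which is what drives the profile onto a member of the steady family.

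The hard part will be upgrading these asymptotic statements to the \emph{finite-time} convergence asserted in the conjecture, together with the far-field selection in Case~2. Because $\lambda=(1-2M/r)v\to0$ both at the horizon and, along the separatrix, at infinity, no individual characteristic reaches the boundary in finite time, so the existence of $t_0$ cannot come from pointwise transport; it must follow from a global wave-interaction and shock-absorption estimate in a degenerate, unbounded geometry, precisely where the generalized-characteristic bookkeeping becomes delicate. Justifying that the positive far field is entirely evacuated to infinity in Case~2, uniformly in $r$, is the other genuinely difficult point, and together with the finite-time claim it is what keeps the statement at the level of a conjecture.
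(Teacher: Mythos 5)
First, a point of calibration: the paper does not prove this statement. It is stated as Conjecture~\ref{con1}, and the paper's only support for it is numerical (Section~\ref{Sec:7}, where both the well-balanced finite volume scheme and the exact-Riemann-solver Glimm scheme are run on general data, the three regimes are observed in Figures~\ref{FIG-82-0}--\ref{FIG-83-1}, and a ``Conclusion'' restates the trichotomy as numerically demonstrated). So there is no paper proof to compare against; your analytic program attempts something the authors deliberately leave open. Within that program, your preliminaries are sound and consistent with the paper's theory in Section~\ref{Sec:2}: the steady family $v=\pm\sqrt{1-K^2(1-2M/r)}$ (your $C=-K^2/2\in[-1/2,0]$ is the globally defined branch), the identification of $-\sqrt{2M/r}$ as the critical state $K=1$, the observation that every steady state has $|v|=1$ at the horizon, and the characteristic system $\dot r=(1-2M/r)v$, $\dot v=(M/r^2)(v^2-1)$, whose restriction to $r=2M$ yields the trace ODE $\dot a=(a^2-1)/(4M)$ and hence exactly the dichotomy $v_0(2M)=1$ versus $v_0(2M)<1$ that organizes the conjecture.

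The genuine gap is in your convergence step. The sandwich $v_{C^-}^-\le v_0\le v_{C^+}^-$ followed by ``letting $C^\pm\to C_*$'' cannot work: the barriers are exact stationary solutions, so the comparison principle propagates the initial gap forever --- nothing shrinks it, and for a fixed initial datum you cannot send $C^\pm\to C_*$ unless the datum was already pinched between tight barriers, which is precisely what must be proved. Worse, in Case~2 (positive far field) and Case~1 ($v_0(2M)=1$) no negative-branch steady state dominates the data from above, so the trapping does not even initialize; one would need time-dependent super-solutions or a quantitative generalized-characteristics absorption estimate, and that is exactly the missing ingredient. Note also that in Case~2 the literal finite-time statement cannot hold uniformly in $r$ on $(2M,+\infty)$: the positive far-field portion is transported outward with speed bounded by $1$, so at any finite time it persists near $r\approx t$, and the conjectured $t_0$ must be understood on compact sets (or on the truncated domain used in the numerics). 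To your credit you flagged both of these difficulties explicitly; but as written the proposal is a plausible research plan rather than a proof, which is consistent with the statement's status in the paper as an open conjecture validated only by the experiments of Section~\ref{Sec:7}.
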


We also investigate solutions to the Euler system on a Schwarzschild background, which takes the form: 

\bel{Euler}
\aligned 
& \del_t \Big(r^2 {1 + k^2  v^2 \over 1 - v^2}  \rho \Big) +\del_r \Big(r(r-2M) {1+k^2  \over 1 -  v^2} \rho v\Big) = 0,
\\
& \del_t \Big(r(r-2M) {1+k^2  \over 1 -  v^2} \rho v \Big) + \del_r \Big((r-2M)^2 {v^2+ k^2  \over 1 -  v^2} \rho\Big)
  =3M   \Big(1 - {2M \over r} \Big) {v^2+ k^2  \over 1 - v^2}\rho   -M {r-2M \over r} {1+  k^2  v^2 \over 1 -  v^2}  \rho+2{(r-2M)^2 \over r}k^2 \rho, 
\endaligned 
\ee
where the light speed is normalized to unit and $k \in (0, 1]$ denotes the sound speed. By formally letting $k \to 0$, we can recover the pressureless Euler system, from which in turn we can derive the relativistic Burgers equation above. On the other hand, by letting the blackhole mass $M \to 0$, we recover the relativistic Euler system. Furthermore, we can also write the Euler equations in the following form: 
\bel{Euler1}
\aligned 
& \del_t \Big({1 + k^2  v^2 \over 1 - v^2}  \rho \Big) +\del_r \Big((1-2M/r) {1+k^2  \over 1 -  v^2} \rho v\Big) = - {2\over r} (1-2M/r) {1+k^2  \over 1 -  v^2} \rho v,
\\
& \del_t \Big ({1+k^2  \over 1 -  v^2} \rho v \Big) + \del_r \Big((1-2M/r)  {v^2+ k^2  \over 1 -  v^2} \rho\Big)
  ={- 2 r+5M \over r^2}{v^2+ k^2  \over 1 - v^2}\rho    -{M \over r^2}  {1+  k^2  v^2 \over 1 -  v^2}  \rho+2{r-2M  \over r^2}k^2 \rho. 
\endaligned 
\ee
Our study of the relativistic Euler equations on a Schwarzschild background  \eqref{Euler} is based on  the construction of a finite volume method with second-order accuracy, which preserves the family of steady state solutions. Our numerical study suggests a global-in-time existence theory for the generalized Riemann problem, whose explicit form is not yet known theoretically. In particular, we exhibit here solutions containing up to three steady state components, connected by a $1$-wave and a $2$-wave.

\begin{conjecture}
\label{con2}
Let $(\rho_*, v_*) =(\rho_*, v_*)(r)$, $r>2M$ be a smooth steady state solution to the Euler model above and let $(\rho_0, v_0)= (\rho_0, v_0) (r)= (\rho_*, v_*)(r)+  (\delta_ {\rho}, \delta_v)(r)$ where $ (\delta_ {\rho}, \delta_v)=  (\delta_ {\rho}, \delta_v)(r)$ has compact support. Then, the corresponding solution to the relativistic Euler equation on a Schwarzschild background $(\rho, v)=(\rho, v)(t, r)$ satisfies: 
\bei

\item If $|\int \delta_ {\rho} (r)dr| +| \int  \delta_v (r)dr |= 0$, then there exists a time $t_0>0$ such that $(\rho, v) (t, r)= (\rho_*, v_*)(r)$ for all $t>t_0$.

\item If $|\int \delta_ {\rho} (r)dr | +  |\int  \delta_v (r)dr  |\neq 0$, then there exists a time $t_0 >0$ such that $(\rho, v) (t, r)= (\rho_{**}, v_{**})(r)$ for all $t>t_0$, where $(\rho_{**}, v_{**})$ is a possibly different steady state solution.  
\eei
\end{conjecture}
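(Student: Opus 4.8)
The plan is to combine a global well-posedness theory in the class of functions of bounded variation with an analysis of the conserved quantities and the characteristic structure of \eqref{Euler}, establishing first convergence to \emph{some} steady state and then identifying which one is selected. First I would solve the Cauchy problem: building on the generalized Riemann solver and the wave-interaction estimates of the companion papers \cite{PLF-SX-one,PLF-SX-two,PLF-SX-four} (in analogy with the Burgers theory recalled in Theorems~\ref{Burgers1}--\ref{Burgers3}), one obtains a global BV solution $(\rho,v)(t,\cdot)$ on $(2M,+\infty)$ whose total variation stays controlled in time, together with the finite-dimensional manifold of steady states obtained by setting $\del_t=0$ in \eqref{Euler} and integrating the resulting ODE system in $r$; this manifold includes piecewise-smooth states carrying a stationary shock, as exhibited numerically below.

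The core of the argument is the conservation structure. Writing \eqref{Euler} as $\del_t U+\del_r F=S$ with
\[
U_1=r^2\,\frac{1+k^2v^2}{1-v^2}\,\rho,\qquad U_2=r(r-2M)\,\frac{1+k^2}{1-v^2}\,\rho v,
\]
one notes that the first balance law carries no source. I would therefore track the integrated differences $\int_{2M}^{+\infty}(U_j-U_j^*)\,dr$ between the solution and the background densities $U_j^*$ built from $(\rho_*,v_*)$; these are finite at $t=0$ since $(\delta_\rho,\delta_v)$ is compactly supported. Their time derivatives reduce to boundary contributions (plus, for $j=2$, the integrated source): because the fluxes carry the geometric factors $(r-2M)$ and $(r-2M)^2$, the horizon term at $r=2M$ is governed entirely by the near-horizon behaviour of the flow, while the term at spatial infinity vanishes for asymptotically steady data. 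The heuristic quantity $|\int\delta_\rho\,dr|+|\int\delta_v\,dr|$ thus serves as a proxy for the net change imprinted on these invariants; the rigorous criterion is the vanishing of $\int(U_j-U_j^*)\,dr$, a \emph{weighted} combination of $\delta_\rho$ and $\delta_v$, and proving that this is equivalent to the stated unweighted condition is one technical point to settle.

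Next I would control the transient. Linearizing the principal part of \eqref{Euler} produces two characteristic families whose speeds are distinct in the interior and degenerate as $r\to 2M$. The waves emitted by the compactly supported perturbation split into a $1$-wave and a $2$-wave part; using generalized characteristics together with the geometric source, which acts as a damping term at the level of the steady ODE, I would argue that on every compact subset of $(2M,+\infty)$ the solution coincides with a steady profile once the transient has been transported away, and that the residual wave strengths tend to zero. Matching the surviving profile to the invariants computed above then pins down the limit: $(\rho_*,v_*)$ in the balanced case, where no invariant has changed, and a distinct $(\rho_{**},v_{**})$ otherwise.

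I expect the main obstacle to be twofold. First, proving that the relaxation is achieved in \emph{finite} time, rather than merely as $t\to+\infty$, requires a quantitative understanding of how waves are absorbed near the horizon, where the characteristic speeds vanish and the usual separation of the two families degenerates; one anticipates that the degeneracy of the flux at $r=2M$, together with the stationary shock, swallows the residual disturbance, but making this rigorous, and ruling out a slowly decaying tail, is delicate. Second, selecting the precise state $(\rho_{**},v_{**})$ demands a nondegeneracy (monotonicity) property of the map from steady-state parameters to the integrated invariants, ensuring that the shifted invariants determine a unique admissible profile; establishing this invertibility, together with the compatibility of the limiting profile with the boundary behaviour enforced at the horizon, is where the analysis will be hardest.
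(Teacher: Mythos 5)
The statement you are trying to prove is Conjecture~\ref{con2}: the paper itself offers \emph{no proof} of it. Its only support in the paper is numerical: the authors build the well-balanced finite volume scheme of Section~\ref{Sec:9}, which exactly preserves the steady states \eqref{Euler-static}, evolve compactly supported perturbations, and certify convergence to a steady state by monitoring the time-variation functional $E^n$ of Lemma~\ref{En} (Figures~\ref{FIG-92}--\ref{FIG-95}). So there is no argument in the paper for your proposal to match, and it must be judged as a free-standing analytic attempt. As such it is a program rather than a proof, and several of its steps would fail as stated.

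Concretely: (i) the well-posedness input you invoke is weaker than you need --- Theorem~\ref{Euler-ex-th} gives BV control only on $[2M+\delta,+\infty)$ and only with a bound growing like $e^{CT}$, so ``total variation stays controlled in time'' and any decay of residual wave strengths are not available from the cited results. (ii) Your conservation argument breaks at the horizon. The first equation of \eqref{Euler} is indeed source-free, but the flux $r(r-2M)\,\frac{1+k^2}{1-v^2}\,\rho v$ need not vanish as $r\to 2M$, since $\rho/(1-v^2)$ can blow up as the fluid accretes with $|v|\to 1$; mass is absorbed by the blackhole, $\int (U_1-U_1^*)\,dr$ is \emph{not} an invariant, and the passage to a different steady state $(\rho_{**},v_{**})$ is expected precisely through this horizon flux. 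Hence the invariants you propose cannot select the limit, quite apart from the weighted-versus-unweighted discrepancy with $|\int\delta_\rho|+|\int\delta_v|$ that you correctly flag but leave unresolved. (iii) Finite-time relaxation is the heart of the conjecture and your mechanism does not deliver it: both characteristic speeds $\mu_\mp=(1-2M/r)\frac{v\mp k}{1\mp k^2 v}$ vanish at $r=2M$, so disturbances reach the horizon only asymptotically in coordinate time (compare Theorem~\ref{Burgers1}, where the wave location ``tends to'' the horizon, and Theorem~\ref{Burgers3}, where even the stable Burgers cases show only $O(t^{-1/2})$ decay), making exact equality $(\rho,v)(t,\cdot)=(\rho_{**},v_{**})$ for $t>t_0$ the hard point, which you concede. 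In short, the two ``main obstacles'' you name at the end are exactly the content of the conjecture, so nothing beyond the numerically supported statement is established.
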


Using steady shocks (to be defined in Section~\ref{Sec:8}), we also have the following. 

\begin{conjecture}
\label{con3}
Let $(\rho_*, v_*) =(\rho_*, v_*)(r)$, $r>2M$ be a steady shock and let $(\rho_0, v_0)= (\rho_*, v_*)(r)+  (\delta_ {\rho}, \delta_v)(r)$ where $ (\delta_ {\rho}, \delta_v)=  (\delta_ {\rho}, \delta_v)(r)$ is a compactly supported perturbation. Then there exists a finite time $t>t_0$ such that the solution $(\rho, v)= (\rho, v)(t, r)$ 
is a (possibly different) steady shock. 
\end{conjecture}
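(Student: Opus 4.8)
The plan is to transpose the strategy behind the Burgers result of Conjecture~\ref{con0} to the $2\times 2$ system \eqref{Euler}, combining an explicit description of the manifold of steady shocks with a wave-tracking argument in the class of weakly regular (bounded variation) solutions. First I would parametrize the steady shocks. Setting $\del_t\equiv 0$ in \eqref{Euler}, the first equation shows that the flux $J:=r(r-2M)\frac{1+k^2}{1-v^2}\rho v$ is constant in $r$, while the second equation reduces to a first-order ODE for the remaining profile; together these produce a two-parameter family of smooth steady states. A steady shock is a pair of such states joined at a location $r_s>2M$ at which both steady fluxes are continuous --- the zero-speed Rankine--Hugoniot relations, the source terms being locally integrable --- and which obeys the Lax entropy inequalities for the $1$- or the $2$-family. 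I would record how $r_s$ and the two adjacent states depend on the global constants, since this is exactly the set to which the perturbed flow must return.

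I would then set up the perturbed Cauchy problem. Since $(\delta_\rho,\delta_v)$ is supported in a bounded interval $[a,b]\subset(2M,+\infty)$, the data agree with the background steady shock outside a compact region, and I would invoke the global-in-time existence theory for BV solutions of \eqref{Euler} to produce an entropy solution whose total variation is controlled uniformly in time. The perturbation emits finitely many nonlinear fronts of the two characteristic families, which I would follow by front tracking (equivalently, through the random choice scheme used numerically), monitoring a Glimm-type interaction potential adapted to the Schwarzschild geometry.

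The core of the argument is the asymptotic sorting of these fronts. The domain carries a natural outflow at infinity and a sink at the horizon: right-moving fronts travel toward $r\to+\infty$ and leave every compact set, left-moving fronts are advected toward $r=2M$ where the factor $(r-2M)$ degenerates the characteristic speeds, and the asymptotic-preserving source terms relax the smooth part of the flow onto the steady manifold. I would show that the stationary discontinuity is structurally stable under these interactions: each incoming front merely shifts its position and alters its strength without destroying it, so that once the transient fronts have cleared, the solution is a single steady state on each side of a relocated discontinuity $r_s'$, matched by the zero-speed Rankine--Hugoniot relations, i.e.\ a new steady shock.

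The decisive obstacle is the behavior at the horizon. Both the flux and source coefficients vanish or blow up as $r\to 2M$, so a first difficulty is to establish uniform-in-time BV bounds through weighted interaction estimates that stay finite up to the horizon. The deeper difficulty is the degeneration of the characteristic speeds there: in these coordinates a left-moving front needs infinite coordinate time to reach $r=2M$, which sits in tension with the claimed attainment of a steady shock after a \emph{finite} time $t_0$. Resolving this requires sharp control of how fronts behave as characteristics approach the horizon --- showing that the pattern determining the limiting shock location and its adjacent states is fixed in finite time while the residual near $r=2M$ is absorbed into the limiting steady profile, and ruling out persistent reflections or an accumulation of weak fronts that would prevent the flow from settling. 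This is precisely the phenomenon that the numerical experiments of the present paper are designed to probe in the absence of a complete proof.
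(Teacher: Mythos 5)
The first thing to say is that the paper contains no proof of this statement: it is stated as Conjecture~\ref{con3} and is supported only by numerical evidence, namely the well-balanced finite volume scheme of Section~\ref{Sec:9}, the time-variation diagnostic $E^n$ of Lemma~\ref{En}, and the experiments of Section~\ref{Sec:10} (Figure~\ref{FIG-95}), which exhibit a perturbed steady shock relaxing to a relocated steady shock. So your proposal cannot be matched against a paper proof; the only question is whether it closes the gap on its own, and it does not. You concede this yourself at the decisive point: the finite-time settling of the flow near the horizon, where the coordinate characteristic speeds degenerate like $(r-2M)$, is exactly the content of the conjecture, and your final paragraph restates that difficulty rather than resolving it. A proof sketch whose core step is ``show that the pattern is fixed in finite time and rule out accumulation of weak fronts'' has assumed the conclusion.

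Beyond that acknowledged hole, several intermediate steps would fail as stated. First, the existence theory you invoke does not deliver what you use: Theorem~\ref{Euler-ex-th} controls the total variation only on $[2M+\delta,+\infty)$, away from the horizon, and with a bound growing like $e^{CT}$, not uniformly in time; ``weighted interaction estimates finite up to the horizon'' is an open problem, not an available tool. Second, the front-tracking/random-choice machinery requires an exact (generalized) Riemann solver, and for the system \eqref{Euler} the paper is explicit that the generalized Riemann problem ``has not yet been solved in a closed form''; only an approximate solver accurate to $O(\Delta t^2)$ is known, so the wave-tracking argument cannot be run as for the scalar Burgers model. Third, your claim that each incoming front ``merely shifts the position'' of the standing discontinuity ignores the rigidity of the steady-shock conditions \eqref{steady-shock-Euler1}--\eqref{steady-shock-Euler2}: as the paper points out, translating the discontinuity of a given pair of steady states to $r_1\neq r_0$ does \emph{not} yield a steady shock, so the limiting configuration must consist of a genuinely different pair of steady states, and nothing in your argument identifies the mechanism (presumably conservation integrals of the perturbation, as in Conjecture~\ref{con2}) that selects the new location $r_{**}$ and the new adjacent states. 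As a research program your outline is sensible and close in spirit to the Glimm--Marshall--Plohr framework the authors build on, but it should be presented as a strategy with named open steps, not as a proof.
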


Our numerical approach on the Glimm scheme is motivated by the approach proposed by Glimm, Marshall, and Plohr \cite{Glimm-P} for quasi-one-dimensional gas flows. We rely on static solutions and on the generalized Riemann problem, which we studied extensively in \cite{PLF-SX-one,PLF-SX-two,PLF-SX-four} for the relativistic models under consideration here. The numerical analysis of hyperbolic problems posed on curved spacetimes was initiated in  \cite{Amorim-L,Ceylan-L,LF-malaga,LF-M,LF-M-O} using the finite volume methodology, and we also recall that hyperbolic conservation laws on curved spaces are also studied by Dziuk and co-authors \cite{Dziuk-E,Dziu-K}. 

This paper is organized as follows. In Section~\ref{Sec:2}, we briefly overview our theoretical results for the relativistic Burgers model. We include a full description of the family of steady state solutions, as well as some outline of the existence theory for the initial data problem and the nonlinear stability of piecewise steady solutions.  In Section~\ref{Sec:3}, we introduce a finite volume method for the relativistic Burgers model \eqref{Burgers}, which is well-balanced and second-order accurate. In Section~\ref{Sec:4}, we apply our scheme in order to study the generalized Riemann problem and to elucidate the late-time behavior of perturbations of steady solutions. 
 
Building on our theoretical results, in Section~\ref{Sec:5} we implement a generalized Glimm scheme for the relativistic Burgers model \eqref{Burgers}. Our numerical method is based on an explicit and accurate solver of the generalized Riemann problem and, therefore, our method preserves all steady state solutions. Numerical experiments are presented in Section~\ref{Sec:6}, in which we are able to validate and expand the theoretical results in Section~\ref{Sec:2}. Our method avoids  to introduce numerical diffusion and provide an efficient approach for computing shock wave solutions. Furthermore, in Section~\ref{Sec:7} we apply both methods to the study of the initial problem for the relativistic Burgers equation when the initial velocity is rather arbitrary and we validate our Conjectures~\ref{con0} and \ref{con1} and, along the way, clarify the behavior of the fluid flow near the blackhole horizon. 
 
Next, in Section~\ref{Sec:8}, we turn our attention to the relativistic Euler model on a Schwarzschild background. We begin by  reviewing some theoretical results, including the existence theory for steady state solutions, the construction of a solver for the generalized Riemann problem, and the existence theory for the initial value problem. We are then in a position, in   Section~\ref{Sec:9}, to construct a finite volume method for the relativistic Euler model. Our method is second-order accuracy and is proven be well-balanced. With the proposed algorithm, in Section~\ref{Sec:10}, we are able to tackle the generalized Riemann problem (which has not yet been solved in a closed form) and we study the nonlinear stability of steady state solutions when the perturbation has compact support. This leads us to numerically demonstrate the validity of Conjectures~\ref{con2} and  \ref{con3} above. 


\section{Overview of the theory for the relativistic Burgers model} 
\label{Sec:2}

An important class of solutions to the relativistic Burgers model \eqref{Burgers} is provided by the {\em steady state solutions}, that is, solutions depending on the space variable $r$ only:  
\bel{static-Burgers}
 \del_r  \Bigg({v^2 -1 \over 2 (1-2M/r)} \Bigg)=0. 
\ee
Clearly, $\Big({v^2 -1 \over 2 (1-2M/r)} \Big)$ is then a constant, and we see that steady state solutions for the Burgers equation are 
\bel{steady-solution}
v(r)= \pm \sqrt {1- K^2 (1-2M/r)}, 
\ee
where $K >0$ is a constant and, clearly, the sign of a steady state cannot change. The following remarks are in order: 
\bei

\item $v=v(r)$ is a uniformly bounded and smooth in $r$ and it admits the finite limit  $\lim\limits_ {r \to 2M} v (r)= \pm 1$ at the  blackhole horizon. 

\item When  $0<K <   1$, one has $\lim \limits_ {r \to +\infty} v (r)= \pm \sqrt {1-K^2}$. 

\item When $K =1$ or equivalently, $v_*^\pm = \pm \sqrt {2M\over r}$, the steady state solution vanishes at infinity. These two solutions are referred to as the {\em critical steady state solutions}.  

\item When $K>1$, the steady state solution vanishes at a finite radius  $r^\natural ={2M K^2 \over 1- K^2}$, which we may refer to as the \emph {vanishing point}. 
\eei 


\begin{figure}[!htb] 
\centering 
\epsfig{figure=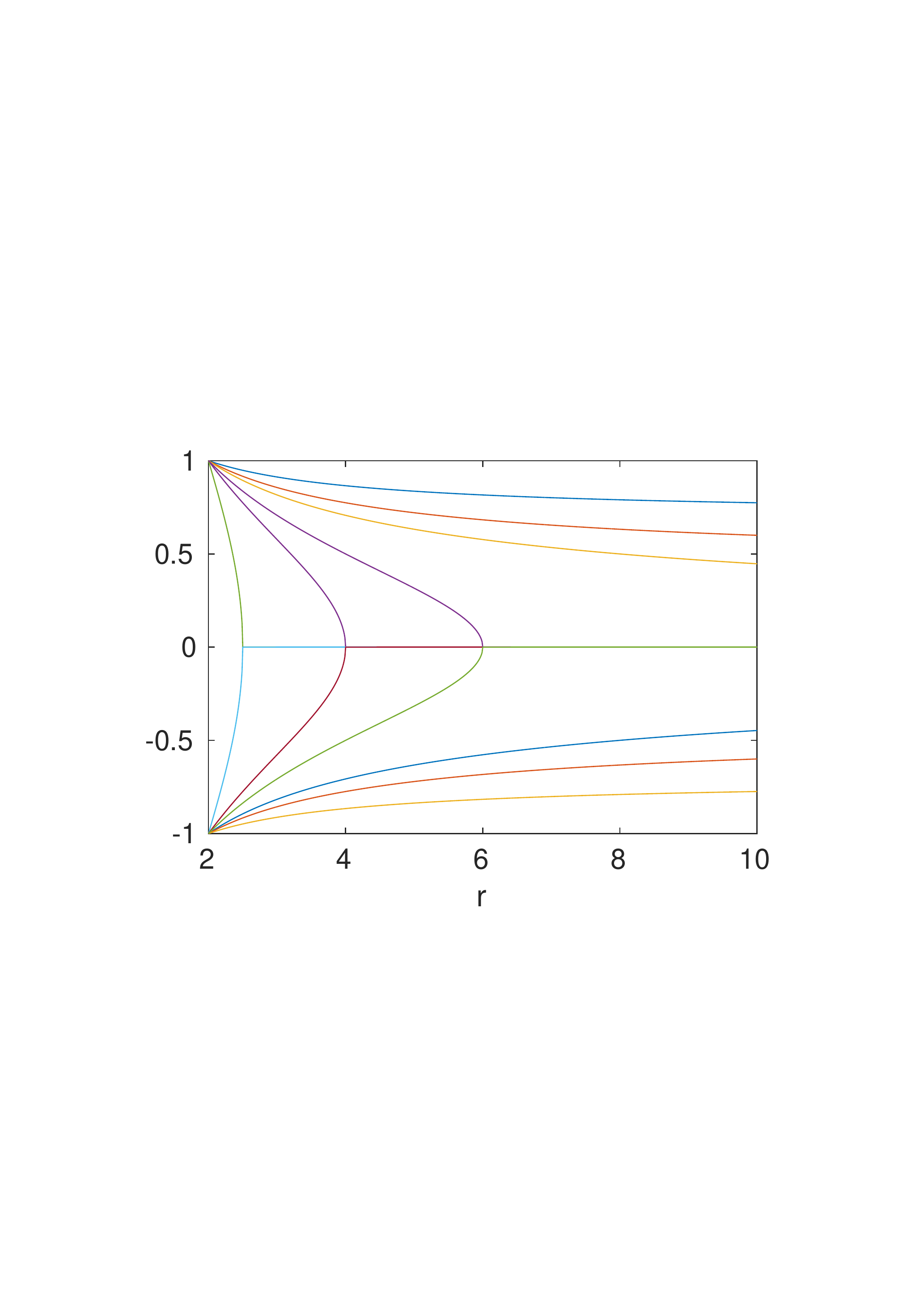,clip, trim=1.2in 3.2in 1.2in 3.2in,height = 2.5in} 
\caption{Steady state solutions for the relativistic Burgers model} 
\label{FG-20}
\end{figure}


In addition to the smooth steady state solutions, we can also define the class of {\em steady shocks for the relativistic Burgers equation}, which are given by 
\bel{Burgers-steady-shock}
v=\begin{cases}
\sqrt {1- K^2 (1-2M/r)}, &  2M<r <r_0, \\
 -\sqrt {1- K^2 (1-2M/r)},  & r>r_0, 
\end{cases}
\ee
where $K $ is a constant and $r_0$ is any given radius. The solution \eqref{Burgers-steady-shock} is time-independent and the  discontinuity point $r=r_0$ does not move when time increases. 
The relevant solutions to the relativistic Burgers equation $v= v(t, r)$ have a range bounded by the light speed, that is, $v\in [-1, 1] $ for all $t>0$ and $r>2M$. An initial problem of particular importance is given by the generalized Riemann problem, associated with initial data made of two steady states separated by a jump discontinuity located at some given radius. 

\begin{theorem}[The generalized Riemann problem for the relativistic Burgers model]
\label{Burgers1}
There exists a unique solution to the generalized Riemann problem defined for all $t>0$ realized by either by a shock wave or a rarefaction wave. Moreover, the following asymptotic behaviors hold: 
\bei 
\item The wave location tends to the blackhole horizon if it initially  converges towards  the blackhole.

\item  The wave location tends to the space infinity if it initially converges away from the blackhole.

\item  The wave location does not change if it is initially steady.  
\eei
\end{theorem}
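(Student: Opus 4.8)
The plan is to construct the solution explicitly as the two prescribed steady states joined by a single elementary wave, and then to reduce the entire analysis to one scalar ordinary differential equation for the wave location. I would begin from the conservative form~\eqref{Burgers}, which gives the Rankine--Hugoniot relation for a discontinuity $r=\xi(t)$ separating the left steady state $v_L$ from the right steady state $v_R$ (both of the form~\eqref{steady-solution} and evaluated at the running location $\xi$); computing $[F]/[U]$ yields the shock-speed law
\[
\dot\xi=\frac{v_L(\xi)+v_R(\xi)}{2}\,\Big(1-\frac{2M}{\xi}\Big).
\]
The non-conservative form~\eqref{Burgers'} gives the characteristic field $\dot r=(1-2M/r)\,v$, $\dot v=\frac{M}{r^2}(v^2-1)$, along which one checks that the profiles~\eqref{steady-solution} are exactly preserved. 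Since $\partial_v\big((1-2M/r)v\big)=1-2M/r\neq0$, the field is genuinely nonlinear, so only shocks and rarefactions occur, and the Lax condition reduces to comparing $v_L(\xi)$ and $v_R(\xi)$: a shock when $v_L>v_R$ and a (curved) rarefaction fan when $v_L<v_R$.

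The structural fact I would establish first is that both $v_L(\xi)-v_R(\xi)$ and $v_L(\xi)+v_R(\xi)$ have a sign independent of $\xi$. Writing $v_{L,R}=\pm\sqrt{1-K_{L,R}^2(1-2M/\xi)}$, the difference and the sum each either vanish identically or keep a fixed sign set only by the two $\pm$ signs and by the ordering of $K_L,K_R$; in particular $v_L+v_R\equiv0$ precisely in the steady-shock case $K_L=K_R$ with opposite signs. Two consequences follow: the wave keeps its character (shock or rarefaction) for all time, and the right-hand side $g(\xi):=\tfrac12(v_L+v_R)(1-2M/\xi)$ of the location ODE has constant sign on $(2M,+\infty)$, with no zero in the open interval except in the steady case. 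Global existence for all $t>0$ then follows because $g$ is bounded ($|g|\le1$) and locally Lipschitz, so $\xi(t)$ is monotone and cannot blow up; moreover $g(\xi)\to0$ as $\xi\to2M^+$ through the factor $1-2M/\xi$, so the horizon can be approached only in infinite time.

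With these facts in hand the three asymptotic statements become a phase-line analysis of $\dot\xi=g(\xi)$. A monotone orbit with $g>0$ (the shock initially moving outward) increases without bound to $+\infty$; with $g<0$ (initially moving toward the blackhole) it decreases to the only limit point $2M$; and with $g\equiv0$ it is stationary, which is exactly the steady shock. For the rarefaction I would argue directly on the characteristic field: the speed $(1-2M/r)v$ has the sign of $v$, so when $v_L,v_R$ share a sign the whole fan is transported monotonically toward $2M$ (both negative) or toward $+\infty$ (both positive), reproducing the same trichotomy.

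The main obstacle I anticipate lies not in the shock analysis, which collapses to the scalar ODE above, but in the rigorous construction and control of the rarefaction fan in the curved geometry, and in the uniqueness claim. Integrating the characteristic system to fill the fan, checking that the resulting solution is continuous and attains every intermediate value between $v_L$ and $v_R$, and treating the transonic case $v_L<0<v_R$ (where the fan simultaneously emits characteristics toward the horizon and toward infinity through the stationary value $v=0$) all require care. Uniqueness I would obtain from the standard entropy theory for scalar balance laws with smooth $r$-dependence, applied on compact subintervals of $(2M,+\infty)$; the delicate point there is the degeneracy of the geometric factor $1-2M/r$ at the horizon, which the asymptotic estimate above is designed to control.
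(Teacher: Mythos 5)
Your proposal is essentially correct, and for the shock case it coincides in substance with what the paper does: the Rankine--Hugoniot computation from \eqref{Burgers} does give $\dot\xi=\tfrac12\big(v_L(\xi)+v_R(\xi)\big)\big(1-2M/\xi\big)$, the sign-constancy of $v_L\pm v_R$ along the family \eqref{steady-solution} is right (it is immediate from writing $v_{L,R}=\pm\sqrt{1-K_{L,R}^2\,w}$ with $w=1-2M/r$), and your phase-line argument correctly yields the trichotomy, including the infinite-time approach to the horizon via the linear vanishing of the factor $1-2M/\xi$. Where you genuinely diverge from the paper is in the treatment of the wave itself: the paper (Section~\ref{Sec:5}, drawing on the companion work \cite{PLF-SX-two}) does not argue qualitatively but integrates the characteristic system $\dot r=(1-2M/r)v$, $\dot v=\tfrac{M}{r^2}(v^2-1)$ in closed form, producing the explicit antiderivatives $R^{v_j}$ in \eqref{R^v_j} and $\widetilde R(r,K)$ in \eqref{Rcurve}, defining the fan boundaries $r_L(t),r_R(t)$ implicitly through \eqref{rj} and the interior profile $\widetilde v$ through the implicit equation \eqref{K} for $K(t,r)$; uniqueness is then obtained by verifying the Rankine--Hugoniot condition and the entropy inequality for this explicitly constructed solution, rather than by invoking general entropy theory for scalar balance laws as you propose. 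Your qualitative route is lighter and delivers the asymptotic statements more transparently, but the paper's explicit construction is not an aesthetic choice: the closed-form Riemann solver is exactly what powers the generalized Glimm scheme \eqref{Glimm-method}, and it also settles the two points you correctly flag as delicate --- the rigorous filling of the curved rarefaction fan (each fan characteristic stays on a steady curve of fixed $K$, so the fan is parametrized by $K$ between $K_L$ and $K_R$) and the transonic case $v_L<0<v_R$, where the two fan edges move in opposite directions and the trichotomy must be read edge by edge; your sketch covers the co-signed case but should be completed along these lines, and you should also note that steady states with $K>1$ exist only up to their vanishing radius, which constrains the admissible data for a globally defined $v_R$.
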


In connection with the general existence theory for \eqref{Burgers},  we introduce the auxiliary variable
$z: =  \sgn (v)\sqrt{{v^2 -1 \over 1-2M /r}+1}$.
It is obvious that $z$ is a constant if $v$ is a steady state solution. With this notation, we have the following result from \cite{PLF-SX-one}. 

\begin{theorem}[Existence theory for the relativistic  Burgers model]
\label{Burgers2}
Consider the relativistic Burgers equation  \eqref{Burgers} posed on the outer domain of a Schwarzschild blackhole with mass $M$. Then, for any initial velocity $z_0 = z_0(r) \in (-1, 1)$  such that $z_0= z_0 (r)$ has bounded total variation, there exists a corresponding weak solution to \eqref{Burgers}  $z=z(t, r)$ whose total variation is non-increasing with respect to time: 
$$
{TV}\big(z  (s, \cdot)\big) \leq{TV} \big(z (t, \cdot)\big), \qquad 
0 \leq t\leq s. 
$$ 
 \end{theorem}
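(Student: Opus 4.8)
\emph{Proof proposal.} The plan is to recast \eqref{Burgers}, written in the conservative density $v/(1-2M/r)^2$, as a scalar conservation law whose \emph{flux depends on $z$ alone}, and then to run a wave-based approximation scheme for which the total variation of $z$ is manifestly non-increasing, passing to the limit by $\mathrm{BV}$ compactness. First I would exploit the algebraic identity built into the definition of $z$: squaring $z = \sgn(v)\sqrt{(v^2-1)/(1-2M/r)+1}$ gives $v^2-1 = (z^2-1)(1-2M/r)$, so that the spatial flux in \eqref{Burgers} collapses to a function of $z$ only,
\be
\frac{v^2-1}{2(1-2M/r)} = \frac{z^2-1}{2} =: F(z).
\ee
Thus \eqref{Burgers} becomes the balance law $\del_t U + \del_r F(z) = 0$ with conserved density $U = v/(1-2M/r)^2$, where at each fixed $r$ the map $z \mapsto U$ is strictly increasing (one computes $\del_z U = z/((1-2M/r)\,v) > 0$ for $z \neq 0$, the critical value $z=0$ corresponding to the critical steady states $v = \pm\sqrt{2M/r}$). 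By construction the steady states \eqref{steady-solution} are exactly the profiles $z \equiv \mathrm{const}$, for which $F(z)$ is spatially constant and the equation holds trivially. This singles out $z$ as the correct equilibrium variable: the curved geometry enters only through the $r$-dependent density $U(z,r)$, never through the flux, so variation measured in $z$ should not be amplified by the background. Note also that $z$ stays confined to $(-1,1)$ since $z^2 = 1+(v^2-1)/(1-2M/r) \le 1$, which is the natural $L^\infty$ quantity to propagate, whereas $U$ itself degenerates as $r \to 2M$.

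Second, I would record the wave structure of the generalized Riemann problem, already resolved by Theorem~\ref{Burgers1}: two steady states $z_L, z_R$ are joined by a single admissible wave — a Lax shock or a rarefaction — across which $z$ varies monotonically between $z_L$ and $z_R$, so that the Riemann solution has $\mathrm{TV}(z) = |z_R - z_L|$, exactly as for the flat Burgers equation written in the unknown $z$. I would then build approximate solutions by front tracking (or, equivalently, by the random-choice/Glimm scheme) on top of this solver, taking them piecewise steady in $z$, and establish two uniform estimates: the maximum principle $z \in (-1,1)$, and the key claim that $t \mapsto \mathrm{TV}(z)$ is non-increasing along the approximations. For a scalar law the latter reduces to a wave-interaction analysis — merging fronts of the same family yield a single front and rarefaction fans carry no extra variation, so no interaction increases $\mathrm{TV}(z)$.

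The step I expect to be the main obstacle is precisely this uniform bound on $\mathrm{TV}(z)$: one must show that the explicit $r$-dependence of the geometry — entering through the density $U(z,r)$, through the degeneracy of the propagation speed $\lambda = (1-2M/r)\,v \to 0$ at the horizon, and through the critical locus $z = 0$ where $U(\cdot,r)$ loses smoothness — does not generate new variation in $z$. The resolution is that $F$ is a function of $z$ alone, so between interactions each front merely travels along a curved characteristic carrying a constant value of $z$; one must nonetheless verify carefully that this time-continuous transport across the $r$-dependent background preserves the monotonicity of $z$ and that nothing is created at the (stationary) horizon. Granting the uniform $L^\infty$ and $\mathrm{BV}$ bounds on $z$, Helly's theorem yields a subsequence converging in $L^1_{\mathrm{loc}}$ to a weak solution of \eqref{Burgers}. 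Choosing the initial discretization with $\mathrm{TV}(z^h(0,\cdot)) \to \mathrm{TV}(z_0)$ and combining the discrete estimate $\mathrm{TV}(z^h(t,\cdot)) \le \mathrm{TV}(z^h(0,\cdot))$ with lower semicontinuity of the total variation under $L^1$ convergence gives $\mathrm{TV}(z(t,\cdot)) \le \mathrm{TV}(z_0)$ for every $t \ge 0$; since \eqref{Burgers} is autonomous, applying this bound to the problem restarted at time $t$ upgrades it to the claimed monotonicity $\mathrm{TV}(z(s,\cdot)) \le \mathrm{TV}(z(t,\cdot))$ for all $0 \le t \le s$.
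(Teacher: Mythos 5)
The paper itself contains no proof of Theorem~\ref{Burgers2}: the result is imported verbatim from \cite{PLF-SX-one}, and all this paper records is the definition of $z$ and the fact that $z$ is constant on steady states. Your overall strategy --- flux depending on $z$ alone, piecewise-steady approximations built on the generalized Riemann solver of Theorem~\ref{Burgers1}, TVD interaction estimates, Helly compactness --- is exactly the kind of argument that framework calls for, so the plan is the right one. But as written it has a genuine gap at its foundation, namely the claim that $z\mapsto U(z,r)$ is a strictly increasing parametrization of the state space.

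From $z^2 = 1-(1-v^2)/(1-2M/r)$ one sees that $z$ is real only where $v^2 \geq 2M/r$: the whole band $|v| < \sqrt{2M/r}$ --- which contains every steady state \eqref{steady-solution} with $K>1$ --- has no $z$-representation, and the two critical states $v = \pm\sqrt{2M/r}$ are both sent to $z=0$. Hence at fixed $r$ the map $z \mapsto v$, and with it $z\mapsto U(z,r)$, is not continuous at $z=0$: $v$ jumps from $-\sqrt{2M/r}$ to $+\sqrt{2M/r}$, so $U$ jumps by $2\sqrt{2M/r}/(1-2M/r)^2$, and your formula $\del_z U = z/((1-2M/r)v)$ holds only on each branch. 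This is not cosmetic. For sector data that change sign --- say $v_L < -\sqrt{2M/r}$ and $v_R > \sqrt{2M/r}$ at some $r_0$, perfectly admissible since $z_L, z_R \in (-1,1)$ --- the flux is convex in $v$ and the Riemann solution is a rarefaction whose fan sweeps continuously through $v=0$; in the notation of \eqref{curved-ra} this forces $K(t,r)>1$ near the zero of $\widetilde v$ (indeed $\widetilde v=0$ requires $K = (1-2M/r)^{-1/2}>1$). The exact solution therefore leaves the region where $z$ is defined, so the ``manifest'' statement $\mathrm{TV}(z)$ non-increasing cannot even be formulated along it, and the maximum principle $z\in(-1,1)$ you invoke presupposes an invariance of the sector $v^2\geq 2M/r$ that fails. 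The missing idea is an extension of the equilibrium variable across the band (e.g.\ collapsing $|v|\le\sqrt{2M/r}$ to $z=0$, or a genuinely different change of unknown), together with interaction estimates valid for the extended, no-longer-invertible variable; without it the core TVD step is unjustified precisely for the data that produce sign-changing rarefactions.

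A second, smaller gap: the final upgrade from $\mathrm{TV}(z(t,\cdot)) \leq \mathrm{TV}(z_0)$ to $\mathrm{TV}(z(s,\cdot)) \leq \mathrm{TV}(z(t,\cdot))$ by ``restarting at time $t$'' silently uses a uniqueness/semigroup property for the weak solutions that neither the theorem nor your construction supplies --- the solution built from data $z(t,\cdot)$ need not coincide with the original one on $[t,\infty)$. Moreover lower semicontinuity of $\mathrm{TV}$ under $L^1$ convergence bounds $\mathrm{TV}(z(t,\cdot))$ \emph{from below} by $\liminf_h \mathrm{TV}(z^h(t,\cdot))$, which is the wrong direction for your chain of inequalities. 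The standard repair is to prove the two-time estimate entirely at the level of the approximations (the scheme is TVD between any two discrete times along the same sequence) and pass to the limit only at time $s$, or to establish convergence of the total variation at almost every time; either should be stated explicitly rather than appealing to autonomy of \eqref{Burgers}.
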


We are going to design several numerical methods for study these solutions. In particular,   we are interested in the behavior of solutions when the initial data $v_0= v_0 (r)$ is a piecewise smooth and steady state solution, to which we will add a compactly supported perturbation, i.e.  
\bel{initial-steady-pe}
v_0 (r)= \begin{cases}
v_L(r) & 2M<r< r_L, 
\\ 
v_R(r) & r>r_R, 
\end{cases}
\ee
where $v_L= v_L(r)$, $ v_R= v_R(r)$ are two steady state solutions given by \eqref{steady-solution} and $r_L, r_R$ are two fixed points. 

\begin{theorem}[Time-asymptotic properties for the relativistic Burgers model]
\label{Burgers3}
Consider the asymptotic behavior of a relativistic Burgers solution $v=v(t,r)$ on a Schwarzschild background \eqref{Burgers} whose  initial data is composed by steady state solutions $v_L, v_R$ with a compactly supported perturbation.  

\bei
\item If $v_L > v_R$, then the solution $v= v(t, r)$ converges asymptotically to a shock curve generated  by a left-hand state $v_L$ and a right-hand state $ v_R$. 

\item  If $v_L <  v_R$, then a generalized N-wave $N= N(t, r)$ can be defined such that inside a  rarefaction fan,  one has  $|v (t, r)-N(t, r)|= O(t^{-1})$ while in a region supporting of the evolution of the initial data, one has $|v (t, r)-N(t, r)|= O(t^{-1/2})$. Otherwise, one has $v (t, r)=N(t, r)$. 

\item If $v_L =  v_R$, then $||v(t, r)-v_R(t, r) ||_{L^{1}(2M,+\infty)} = O (t^{-1/2})$. 
\eei
\end{theorem}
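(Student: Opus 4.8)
The plan is to reduce the three statements to the classical large-time theory for a genuinely nonlinear scalar conservation law, exploiting the fact that the variable $z$ is a Riemann invariant. First I would record two identities: the flux in \eqref{Burgers} equals $\tfrac{z^2-1}{2}$, which is immediate from $v^2-1=(z^2-1)(1-2M/r)$, and $z$ is constant along the characteristics $\tfrac{dr}{dt}=(1-2M/r)\,v$, which is a direct computation from the non-conservative form \eqref{Burgers'}. I would then pass to the tortoise coordinate $r_*=r+2M\ln\!\big(\tfrac{r}{2M}-1\big)$, which maps the horizon $r=2M$ to $r_*=-\infty$ and leaves spatial infinity at $r_*=+\infty$; in the variables $(t,r_*)$ the characteristics of $z$ move with speed exactly $v$, so $z$ solves $\partial_t z+v(z,r_*)\,\partial_{r_*}z=0$. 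Since $v(z,r_*)\to z$ as $r_*\to+\infty$ and $v(z,r_*)\to\sgn(z)$ as $r_*\to-\infty$, this is a genuinely nonlinear scalar law (one has $\partial_z v=\tfrac{z(1-2M/r)}{v}\neq0$ away from the critical state $z=0$) that is asymptotically the inviscid Burgers equation at spatial infinity, and whose constant states are exactly the steady states \eqref{steady-solution}. Because $v\mapsto z$ is monotone on each of $\{v>0\}$ and $\{v<0\}$, the hypotheses $v_L>v_R$, $v_L<v_R$, $v_L=v_R$ translate, respectively, into $z_L>z_R$, $z_L<z_R$, and $z_L=z_R$, and Theorem~\ref{Burgers2} supplies the underlying BV existence together with the monotonicity of $\mathrm{TV}(z(t,\cdot))$ that I will use throughout, while Theorem~\ref{Burgers1} supplies the large-scale wave pattern (shock or rarefaction) to which the perturbation must relax.

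On this flattened problem the three cases are the familiar trichotomy. For $z_L>z_R$ the data are compressive: using generalized characteristics together with the $L^1$-contraction between two entropy solutions, I would show that the finitely many waves produced by the perturbation coalesce and that the solution converges to the single shock joining $v_L$ and $v_R$ given by Theorem~\ref{Burgers1}, its location being fixed by mass balance and the excess being drained through the ends of the domain. For $z_L<z_R$ the Riemann pattern is a rarefaction, and I would construct the generalized $N$-wave $N(t,r)$ as the image under $z\mapsto v$ of the self-similar $N$-wave of the asymptotic Burgers equation; the rates then follow from the DiPerna--Liu decay estimates, namely $O(t^{-1})$ inside the rarefaction fan and $O(t^{-1/2})$ on the support of the transported perturbation, the Oleinik one-sided bound controlling the fan and the mass balance fixing the $N$-wave. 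For $z_L=z_R$ the background is a single constant state, and the same decay machinery gives $\|z(t,\cdot)-z_R\|_{L^1}=O(t^{-1/2})$, which transfers to $\|v(t,\cdot)-v_R\|_{L^1}=O(t^{-1/2})$ through the change of variables once the contributions near the horizon and near the critical state are estimated separately.

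The main obstacle I anticipate is that the Schwarzschild geometry breaks the exact self-similarity on which the $N$-wave construction and the $t^{-1/2}$ decay rest, and it does so precisely at the two ends of the domain. The reduced speed $v(z,r_*)$ is of genuine Burgers type only as $r_*\to+\infty$; near the horizon $r_*\to-\infty$ it saturates to $\sgn(z)$, the left end is characteristic, and the map $v\leftrightarrow z$ ceases to be bi-Lipschitz, while at the critical state $z=0$ (that is $v=\pm\sqrt{2M/r}$) the field loses genuine nonlinearity. Consequently the cases in which $v_L$ and $v_R$ straddle the critical state require separate treatment and are where I expect the estimates to be most delicate. I would handle the inhomogeneity by showing that, in the tortoise variable, the coefficients converge to constants at spatial infinity fast enough that the constant-coefficient decay theory applies after localization, and by using finite propagation speed together with the $\mathrm{TV}$ bound of Theorem~\ref{Burgers2} to confine the perturbation to a region where the comparison is valid; the horizon would then be treated as an absorbing characteristic end through which excess mass leaves at a controlled rate.
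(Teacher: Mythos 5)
Your structural reductions are sound, and they reproduce exactly the device the paper itself sets up: the identity $v^2-z^2=(1-z^2)\,2M/r$ shows the flux in \eqref{Burgers} is $(z^2-1)/2$, a direct computation from \eqref{Burgers'} confirms that $z$ is constant along $dr/dt=(1-2M/r)v$, and the trichotomy $v_L\gtrless v_R \Leftrightarrow z_L\gtrless z_R$ is correct since $\del_z v=z(1-2M/r)/v>0$ away from $z=0$. Be aware, however, that this paper does not prove Theorem~\ref{Burgers3}: it is recalled from the companion papers, and the toolkit the paper supplies for it is the \emph{exact curved} Riemann solution of Section~\ref{Sec:5}, namely the rarefactions \eqref{curved-ra}--\eqref{Rcurve} and the $R$-functions \eqref{Rj} entering \eqref{Rie-sol}. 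Measured against that toolkit, your sketch has two genuine gaps.

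First, defining $N$ as the image under $z\mapsto v$ of the \emph{flat} self-similar $N$-wave cannot deliver the stated $O(t^{-1})$ rate inside the fan. The characteristic speed in your tortoise-coordinate equation is $v(z,r_*)$ with $v-z=(1-z^2)(2M/r)/(v+z)=O(1/r)$; along a fan characteristic $r\approx ct$ this $O(1/t)$ speed error integrates to an $O(\ln t)$ positional drift, so comparison with the profile $(r_*-r_0)/t$ leaves an $O(t^{-1}\ln t)$ discrepancy, not $O(t^{-1})$. The ``generalized $N$-wave'' of the theorem must be built from the curved rarefaction \eqref{curved-ra}, with $K(t,r)$ given implicitly by \eqref{K}--\eqref{Rcurve}; the geometry has to enter the \emph{definition} of $N$, not only the error estimate. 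Second, your appeal to DiPerna--Liu decay ``after localization'' is not off-the-shelf here: genuine nonlinearity fails not only at the critical state $z=0$ but uniformly as $r_*\to-\infty$, since $\del_z v=z(1-2M/r)/v\to 0$ at the horizon, where the equation linearizes (speed $\to\sgn z$) and no nonlinear decay mechanism acts on the part of the perturbation falling toward the blackhole. What rescues the $L^1(2M,+\infty)$ statement of the third case there is the Jacobian $dr=(1-2M/r)\,dr_*$, which compresses the horizon region exponentially -- a mechanism your sketch does not identify. Relatedly, since the $z$-form is non-conservative, the shock dynamics and the $L^1$-contraction argument of the first case must be run on the conservation law \eqref{Burgers}, whose conserved density $v/(1-2M/r)^2$ is unbounded at the horizon, so Kruzkov-type contraction on $(2M,+\infty)$ requires justification rather than citation.
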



\section{A finite volume scheme for the relativistic Burgers model}
\label{Sec:3}

\paragraph{The first-order formulation}

In this section, we propose a finite volume method for the relativistic Burgers equation \eqref{Burgers'} which takes the Schwarzschild geometry into consideration.  In order to construct our approximations, we will rely on the solution to the Riemann problem for the standard Burgers equation : 
\bel{standard}
\del_ t v + \del_ x {v^2 \over 2} =0 
\ee
that is,  an initial data problem with $v(t, r)= v_0(r)$ where $v_0= v_0 (r)$ is given as a piecewise constant function
$
v_0 = \begin{cases}
v_L  & r< r_0, \\
v_R & r>r_0, 
\end{cases}
$ 
for some fixed $r_0$ and two constants $v_L$,  $v_R$. The solution to the standard Riemann problem is given as 
\bel{Riemann-sol}
v(t, r)= \begin{cases}
v_L & r < s_L t +r_0,  \\
{r-r_0 \over t} &s_L t +r_0  <r< s_R t +r_0,  \\
v_R  & r> s_R t +r_0, 
\end{cases}
\qquad 
s_L= \begin{cases}
v_L  & v_L< v_R, \\
{v_L +v_R \over 2} & v_L>v_R,  
\end{cases}\qquad 
s_R= \begin{cases}
v_R & v_L< v_R, \\
{v_L +v_R \over 2} & v_L>v_R. 
\end{cases}
\ee

Denote by $\Delta t $, $\Delta r$ 
the mesh lengths in time and in space respectively with the CFL condition 
${\Delta t \over \Delta r} =\Lambda$, 
where $\Lambda$ is  such that $\Lambda |v| \leq 1/2$ in order to avoid wave interaction between two Riemann problems. We set 
$t_n = n \Delta t$ and $r_j=2M+  j\Delta r$. 
Introduce also the mesh point $(t_n, t_j)$, $n\geq 0$, $j\geq 0$ and the rectangle
$R_{nj}= \{t_n \leq t < t_{n+1}, \quad r_{j-1/2} \leq r < r_{j+1/2}\}$. 
Integrate \eqref{Burgers'} from $r_{j- 1/2}$ to $r_{j+1/2}$ in space and from $t_n$ to $t_{n+1}$: 
$$ 
\aligned 
& \int_{r_{j-1/2}}^{r_{j+1/2}} \Big (v (t_{n+1}, r)-v (t_n, r) \Big)dr 
+\int^{t_{n+1}}_{t_n}\Bigg((1-2M/r_{j+1/2}) \bigg({v^2(t, r_{j+1/2}) -1 \over 2} \bigg)
\\ & - (1-2M/r_{j-1/2})\bigg({v^2(t, r_{j-1/2}) -1 \over 2} \bigg)\Bigg) dt 
-   \int_{r_{j-1/2}}^{r_{j+1/2}} \int^{t_{n+1}}_{t_n} {2M \over r^2} (v^2-1) dt dr =0. 
\endaligned 
$$
Denote by $V_j^n= \int_{r_{j-1/2}}^{r_{j+1/2}}  v (t_n, r) dr, $ the average value of the solution in the space interval $(r_{j-1/2},r_{j+1/2})$, 
and introduce the finite volume scheme for the relativistic Burgers equation on a Schwarzschild background: 
\bel{Godunov}
V_j^{n+1}  = V_j^n- 
{\Delta t\over \Delta r}\big(F_{j+1/2} -  F_{j-1/2} \big)- \Delta t {2M \over r_j^2} ({V_j^n}^2 -1), 	   
\ee
where $  F_{j+1/2}$ and $ F_{j-1/2}$ are
$F_{j+1/2}= \mathcal F (r_{_{j+1/2}}, V_j^n, V_{j-1}^n)$, and 
\bel{mathcal-F}
 \mathcal F (r, V_L, V_R)= \Big(1-{2M \over r}  \Big) {q^2(V_L, V_R)-1 \over 2} 
\ee
with  $q (\cdot,\cdot)$ the standard solution to the Riemann problem   centered at $r$ given by \eqref{Riemann-sol}.  Observe that the CFL condition guarantees that the solution to the Riemann problem  does not  to leave the rectangle $R_{n,j}$ within one time step.

We now consider the  boundary condition of our finite volume  scheme. Let $J$ be the number of the space mesh points and we introduce ghost cells at the space boundaries: 
$R_{n,0}=  \{t_n \leq t < t_{n+1}, \quad r_{-1/2} \leq r < r_{1/2}\}$
and $R_{n,J}=  \{t_n \leq t < t_{n+1}, \quad r_{J-1/2} \leq r < r_{J+1/2}\}$. 
We solve the Riemann problem at the two boundaries with initial condition 
$$
V_0 (r)=\begin{cases} 
1 & r<r_0, \\ 
V _0^n & r>r_0, 
\end{cases}
\qquad 
V_0 (r)=\begin{cases}
V _J^n  & r<r_J, \\ 
-1 & r>r_J. 
\end{cases}
$$


\paragraph{A consistency property}


\begin{lemma}
The finite volume method for the relativistic Burgers model introduced in
\eqref{Godunov} satisfies the following properties: 
\bei
\item 
The scheme is well-balanced, that is, it preserves the steady state solution to the Euler equation \eqref{Euler-static}. 
\item 
The scheme is consistent, that is, if  $v=v(t, r)$ is an exact  solution to the relativistic Burgers model given by the ordinary differential equation \eqref{static-Burgers}, then for every fixed point $r>2M$,  
\bel{consistent}
\mathcal F (r_R, V_L, V_R)- \mathcal F (r_L, V_L, V_R) =   {2M \over r^2} (v^2 -1) (r_R- r_L)+O(r_R-r_L)^2 
\ee
holds as $V_L, V_R \to v $ and $r_L, r_R \to r$. 
\eei
\end{lemma}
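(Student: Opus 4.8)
The two assertions are essentially independent, so I would establish them one at a time.

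\emph{Well-balanced property.} My plan is to insert a steady state \eqref{steady-solution} as initial data into the one-step map \eqref{Godunov} and verify that it is a fixed point. Since the sign of a steady state is constant, I would treat the positive branch $v(r)=+\sqrt{1-K^2(1-2M/r)}$ (the negative branch being symmetric) and set $V_j^n=v(r_j)$. As $1-2M/r$ increases in $r$, such a $v$ is monotone decreasing, so at each interface the pair $(V_j^n,V_{j+1}^n)$ is decreasing; by \eqref{Riemann-sol} this is a shock of speed $(V_j^n+V_{j+1}^n)/2>0$, and the Godunov value $q$ in \eqref{mathcal-F} is the upwind (left) state. Substituting these upwind values and using the defining steady relation $v(r)^2-1=-K^2(1-2M/r)$ --- equivalently, that $\frac{v^2-1}{2(1-2M/r)}$ equals the constant fixed by \eqref{static-Burgers} --- I would compute the flux increment $F_{j+1/2}-F_{j-1/2}$ and check that it is cancelled by the discretized source $\Delta t\,\frac{2M}{r_j^2}\big((V_j^n)^2-1\big)$, so that $V_j^{n+1}=V_j^n$. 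The critical and supercritical regimes $K\ge 1$, where $v$ vanishes at infinity or at the finite radius $r^\natural$, would be handled by the same bookkeeping, with extra care at the interface where the upwind selection switches as $v$ crosses zero.

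\emph{Consistency.} For \eqref{consistent} the two evaluations of $\mathcal{F}$ share the same arguments $V_L,V_R$, so the Riemann value factors out and
\[
\mathcal{F}(r_R,V_L,V_R)-\mathcal{F}(r_L,V_L,V_R)=\Big[\big(1-\tfrac{2M}{r_R}\big)-\big(1-\tfrac{2M}{r_L}\big)\Big]\,\frac{q(V_L,V_R)^2-1}{2}.
\]
I would Taylor expand the geometric bracket about $r$, giving $\frac{2M}{r^2}(r_R-r_L)+O((r_R-r_L)^2)$, and let $V_L,V_R\to v$ so that $q(V_L,V_R)\to v$. To match the source coefficient of \eqref{Burgers'} I would let $V_L=v(r_L)$ and $V_R=v(r_R)$ run along the steady state, so that the Riemann value itself carries the $O(r_R-r_L)$ variation of $v$; the steady balance \eqref{static-Burgers} then identifies the leading increment with $\frac{d}{dr}\big[(1-\tfrac{2M}{r})\frac{v^2-1}{2}\big]\,(r_R-r_L)=\frac{2M}{r^2}(v^2-1)(r_R-r_L)$, the remainder being $O((r_R-r_L)^2)$.

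\emph{Main obstacle.} The consistency expansion is routine Taylor analysis. The delicate point is the well-balanced claim, which rests entirely on the Riemann solver returning the correct one-sided value at every interface: the real work is the case analysis of the sign of the shock (or rarefaction) speed as the monotonicity and sign of $v$ vary, and verifying that the algebraic cancellation between the numerical flux and the pointwise source term is exact on steady data rather than merely $O(\Delta r)$. I would pay particular attention to interfaces straddling the vanishing point $r^\natural$ or the zero of $v$, where the upwind selection flips and a naive substitution would break the balance.
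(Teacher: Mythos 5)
Your consistency argument lands where the paper's does, but your opening display contradicts your own conclusion and should be repaired. If the Riemann value $q(V_L,V_R)$ really factored out of both terms, then with $g(r):=1-2M/r$ the difference would be $\bigl(g(r_R)-g(r_L)\bigr)\frac{q^2-1}{2}=\frac{2M}{r^2}\frac{v^2-1}{2}(r_R-r_L)+O\bigl((r_R-r_L)^2\bigr)$, which is exactly \emph{half} of the right-hand side of \eqref{consistent}; read that way, the identity is false. The paper's proof reads the flux \eqref{mathcal-F} with the Riemann solution centered at the evaluation radius, i.e.\ $q=q(r',V_L,V_R)$, and expands $\frac{q^2(r',V_L,V_R)-1}{2}=\frac{v^2-1}{2}+v\,\partial_r v\,(r'-r)+O\bigl((r'-r)^2\bigr)$ along the steady state; differentiating $v^2-1=-K^2\,g(r)$ gives $(1-2M/r)\,v\,\partial_r v=\frac{M}{r^2}(v^2-1)$, which supplies the missing half, so that the two contributions assemble into $\partial_r\bigl[(1-2M/r)\frac{v^2-1}{2}\bigr](r_R-r_L)=\frac{2M}{r^2}(v^2-1)(r_R-r_L)$. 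Your second step (letting $V_L=v(r_L)$, $V_R=v(r_R)$ run along the steady state so the Riemann value carries the $O(r_R-r_L)$ variation) is precisely this move and is correct; but it is incompatible with the "factors out" display, which you should drop.

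On the well-balanced property there is a genuine gap, and it sits exactly where you flagged your "main obstacle." If you carry out the substitution literally with the constant-state upwind Godunov value, then at interface $r_{j+1/2}$ the sampled state is the cell value $v(r_j)$, not the steady interface value $v(r_{j+1/2})$, and on the positive branch $F_{j+1/2}-F_{j-1/2}=-\frac{K^2}{2}\bigl(g(r_{j+1/2})g(r_j)-g(r_{j-1/2})g(r_{j-1})\bigr)$, whereas the discretized source is $-K^2\,\Delta r\,g'(r_j)\,g(r_j)$; these agree only up to $O(\Delta r^3)$ per cell, so the cancellation you propose to "check" is not exact and $V_j^{n+1}=V_j^n$ fails at the round-off-versus-truncation level rather than identically. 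The paper's (terse) proof avoids this by taking the interface Riemann value on steady data to coincide with the steady solution's value \emph{at the interface}, so that the flux difference telescopes into the exact integral $\int_{r_{j-1/2}}^{r_{j+1/2}}\frac{2M}{r^2}(v^2-1)\,dr$, which is then identified with the discrete source; this steady-reconstruction reading is the same device spelled out for the Euler scheme in \eqref{Euler-states}--\eqref{Euler-source}. To close your argument you must either adopt that reading of $q$ (interface values obtained by following \eqref{static-Burgers} within each cell) or redefine the discrete source as the exact cell integral of $\frac{2M}{r^2}(v^2-1)$ along the local steady state; your secondary worries about the sign change of $v$ and the vanishing point $r^\natural$ are legitimate but subordinate to this fix.
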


\begin{proof}
To establish the well-balanced property, we write
$$
\aligned 
F_{j+1/2} -  F_{j-1/2}= & (1-2M /r_{j+ 1/2}) {q (V_j^n, V_{j+1}^n)-1 \over 2} -  (1-2M /r_{j-  1/2})  {q (V_{j-1}^n, V_j^n)-1 \over 2} \\
= & \int_ {j-1/2}^{j+1/2}   {2M \over r^2} (v^2 -1) dr = {2M \over r_j^2} ({V_j^n}^2 -1), 
\endaligned 
$$
and, therefore, $V_j^n= V_j^{n+1}$ holds.  
Next, recall  that  $\mathcal F(r, V_L, V_R)=\Big(1-{2M \over r} \Big) {q(r, V_L, V_R) -1 \over 2} $ is the numerical flux of the scheme determined by the standard the Riemann solution. A Taylor expansion gives
$$  
\aligned 
& 1 -{2M \over r '}=	1-{2M \over r}  + {2 M \over  r^2}  (r- r')  +O(r-r')^2,
 \\ 
 & 
 {q^2 (r ',  V_L,V_R)-1 \over 2} ={v^2 -1 \over 2} +  v  \del_ r v (r-r')+  O (r-r')^2.
  \endaligned 
$$ 
Hence, we have
$$
\aligned 
\mathcal F (r_R, V_L, V_R)- \mathcal F (r_L, V_L, V_R)=&  {2M \over r^2}{v^2 -1 \over 2} {v^2 -1 \over 2} +\Big(1-{2M \over r}\Big) v  \del_ r v  (r_R- r_L)+  O(r_R-r_L)^2 \\
=&  \del_ r \Big(((1-2M/r) {v^2 -1 \over 2}\Big) +  O(r_R-r_L)^2
 =  {2M \over r^2} (v^2 -1) (r_R- r_L)+O(r_R-r_L)^2.  \hfill \qedhere
\endaligned 
$$ 
\end{proof}


\paragraph{A second-order formulation}

We now extend the method to second-order. The solution is now discretized as a piecewise linear function, and we define 
\bel{Different-constant}
\Delta_j^n  V = \begin{cases}
 \min (2| \Delta_ {j -1/2} V^n |, 2| \Delta_ {j +1/2} V^n |, |\Delta_j   V^n|)
&  \text{if $\sgn \Delta_ {j -1/2} V^n = \sgn \Delta_ {j +1/2} V^n  = \sgn \Delta_j   V^n$}, 
\\ 0  & \text{otherwise,}
\end{cases}
\ee
where 
$$
\Delta_j   V^n= {1\over 2} (\Delta  V_ {j+1}^n- \Delta  V_ {j -1}^n), \qquad \Delta_ {j +1/2} V^n =  (\Delta  V_ {j+1}^n- \Delta  V_ j^n), \qquad \Delta_ {j -1/2} V^n =  (\Delta  V_ j^n- \Delta  V_ {j-1}^n). 
$$
Then, our second-order  scheme is stated as 
\bel{second-order-Godunov}
\aligned 
V_j^{n+1}  =&  V_j^n- 
{\Delta t\over \Delta r}\Big(\mathcal F(r_{j+1/2},V_j^{n+1/2, R},V_{j+  1}^{n+1/2, L})
\\ & -\mathcal   F(r_{j-1/2}, V_{j- 1}^{n+1/2, R}, V_j^{n+1/2, L})  \Big)- \Delta t {2M \over r_j^2} (V_j^2 -1),
\endaligned 
\ee
where 
$\mathcal F$ is the numerical flux \eqref{mathcal-F}. Here, the two values $V_{j+  1}^{n+1/2, L}, V_j^{n+1/2, R}$ are given by 
\bel{Riemann-two-value}
\aligned 
& V_j^{n+1/2, L} := V_j^{n, L}  -  {\Delta t \over 2}\Big({(1-2M/ r_j) V_j^n \Delta_j^n  V \over \Delta r} - {2M \over r_j^2} ({V_j^n}^2 -1) \Big),
\\ 
& V_j^{n+1/2, R} :=  V_j^{n, R}  -  {\Delta t \over 2}\Big({(1-2M/ r_j) V_j^n \Delta_j^n   V \over \Delta r} - {2M \over r_j^2} ({V_j^n}^2 -1) \Big),
\endaligned 
\ee
where, with $\Delta_j^n V$ defined by \eqref{Different-constant} and
$V_j^{n, L} = V_j^n -  {\Delta_j^n   V \over  2}$ and $V_j^{n, R}=  V_j^n +  {\Delta_j^n   V \over  2}$.  


\section{Numerical experiments with the finite volume scheme}
\label{Sec:4}

\paragraph{Asymptotic-preserving property}

We now present some numerical  tests with the proposed finite volume method applied to the relativistic Burgers equation  \eqref{Burgers'}. As mentioned earlier, we work within the domain $r>2M$, and the mass parameter $M$ is taken to be $M=1$ in all our tests. We work in the space interval $(r_{\min,}, r_{\max})$ with $r_{\min}= 2M=2$ and $r_{\max}=4$ and we take $256$ points to discreize the space interval. 

We begin by showing  that the method at, both, first-order and second-order accuracy  preserves the steady state solutions.  For positive/negative steady state Burgers solutions $v= \pm \sqrt {{3\over 4}+{1\over 2 r}}  $,  we see that the initial steady states are exactly conserved  by  the scheme. We also show that the following steady state shock is preserved by the scheme: 
$$
v= \begin{cases}
\sqrt {{3\over 4}+{1\over 2 r}}  & 2.0<r<3.0,
\\ 
- \sqrt {{3\over 4}+{1\over 2 r}}  & r>3.0. 
\end{cases}
$$
We obtain that our finite volume scheme preserves three typical forms for the static solutions, as is illustrated in Figures \ref{FIG-51} and {FIG-52}.

\begin{figure}[!htb] 
\centering 
\begin{minipage}[t]{0.3\linewidth}
\centering
\epsfig{figure=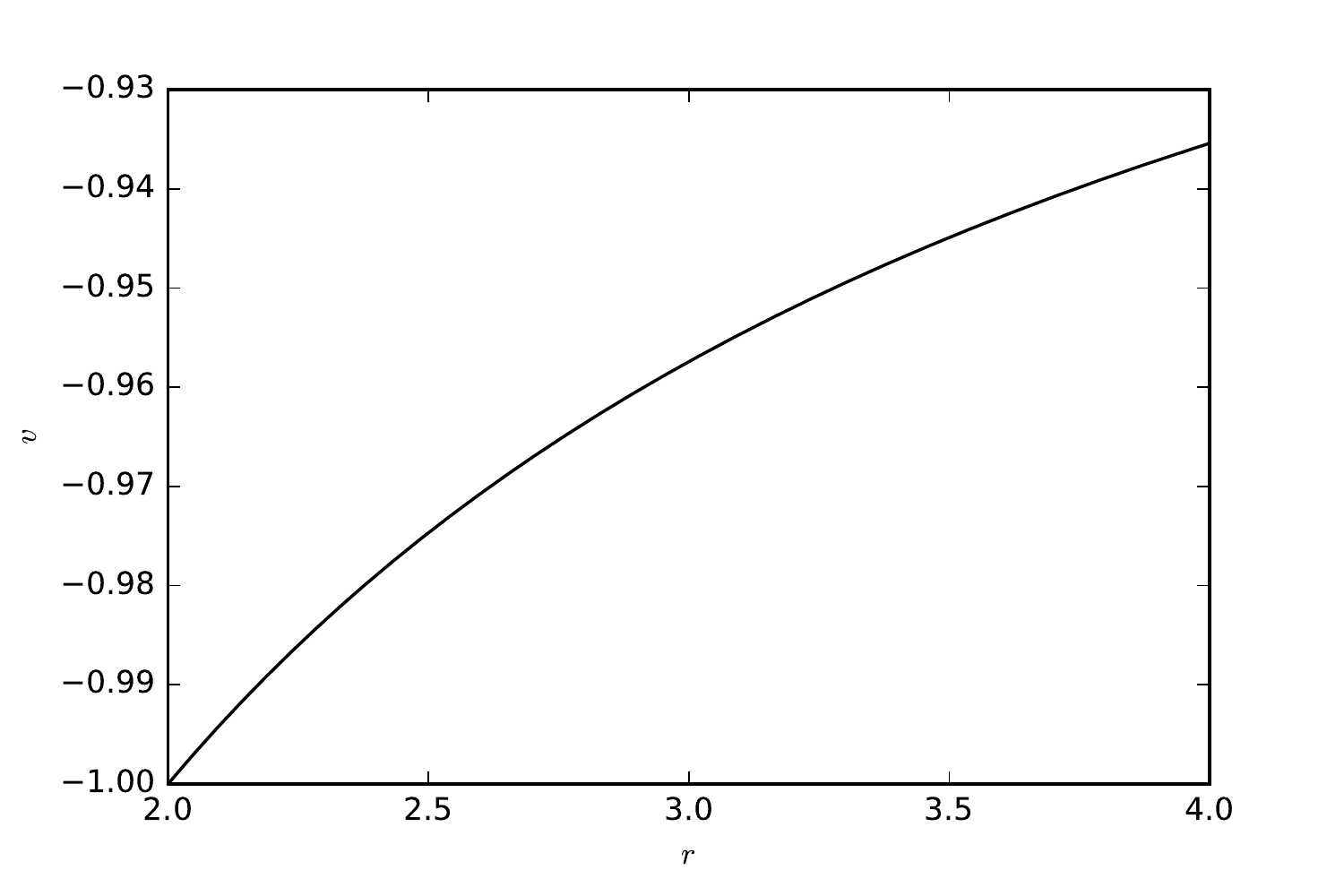,width= 2.5in} 
\end{minipage}
\hspace{0.1in}
\begin{minipage}[t]{0.3\linewidth}
\centering
\epsfig{figure=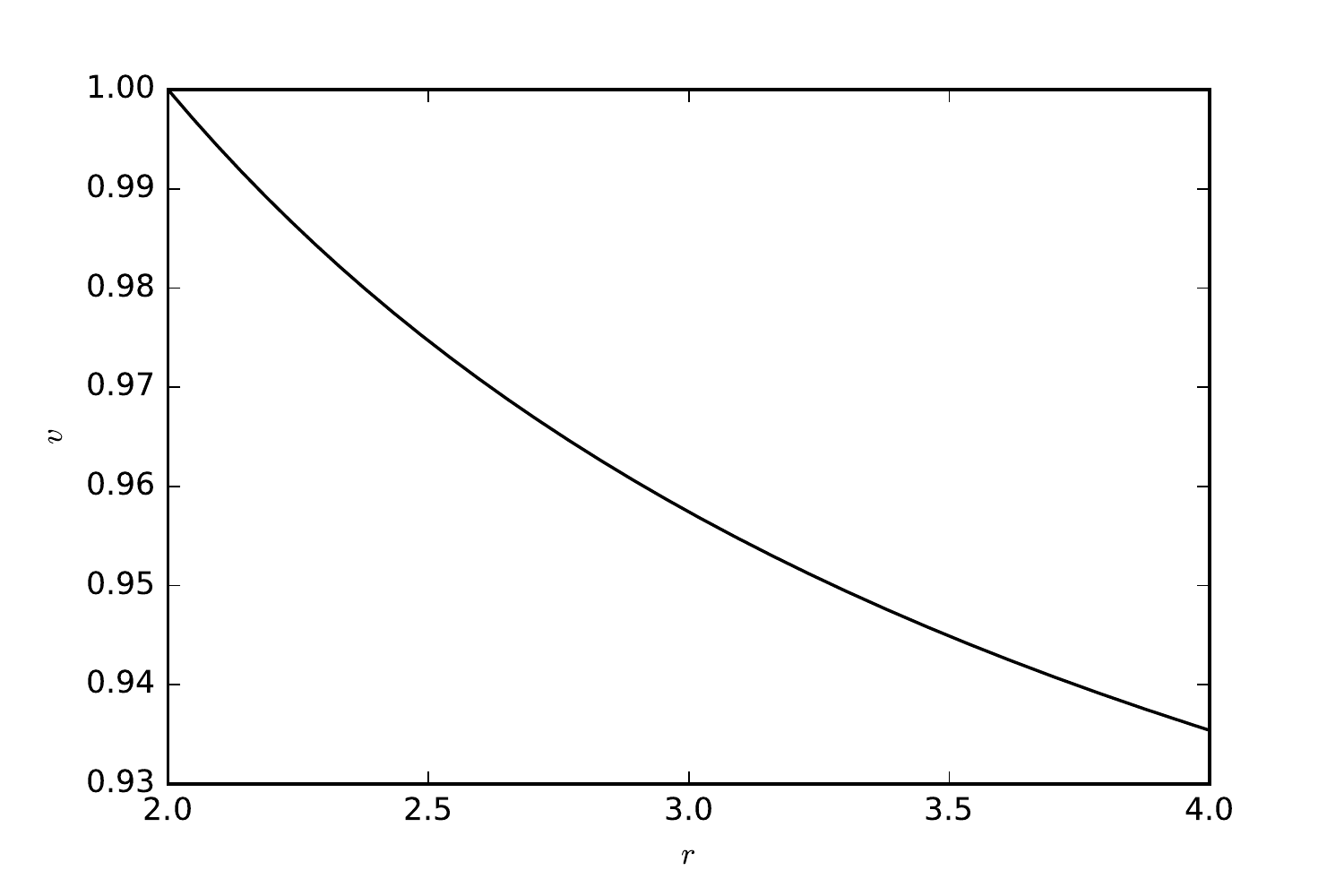,width=2.5in} 
\end{minipage}
\hspace{0.1in}
\begin{minipage}[t]{0.3\linewidth}
\centering
\epsfig{figure=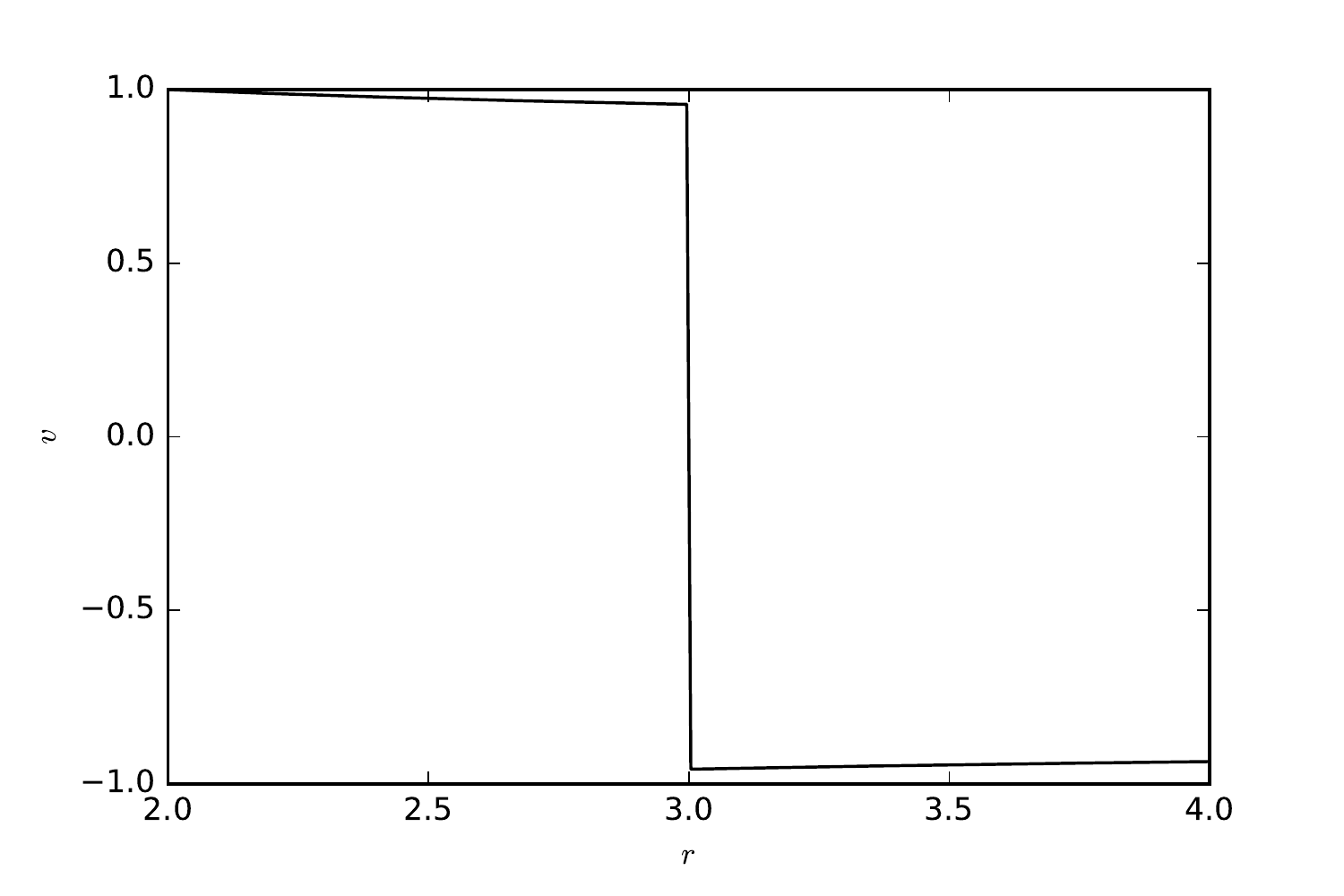,width=2.5in} 
\end{minipage}
\caption{Three static solutions}
\label{FIG-51} 
\end{figure}

\begin{figure}[!htb] 
\centering
\begin{minipage}[t]{0.3\linewidth}
\centering
\epsfig{figure=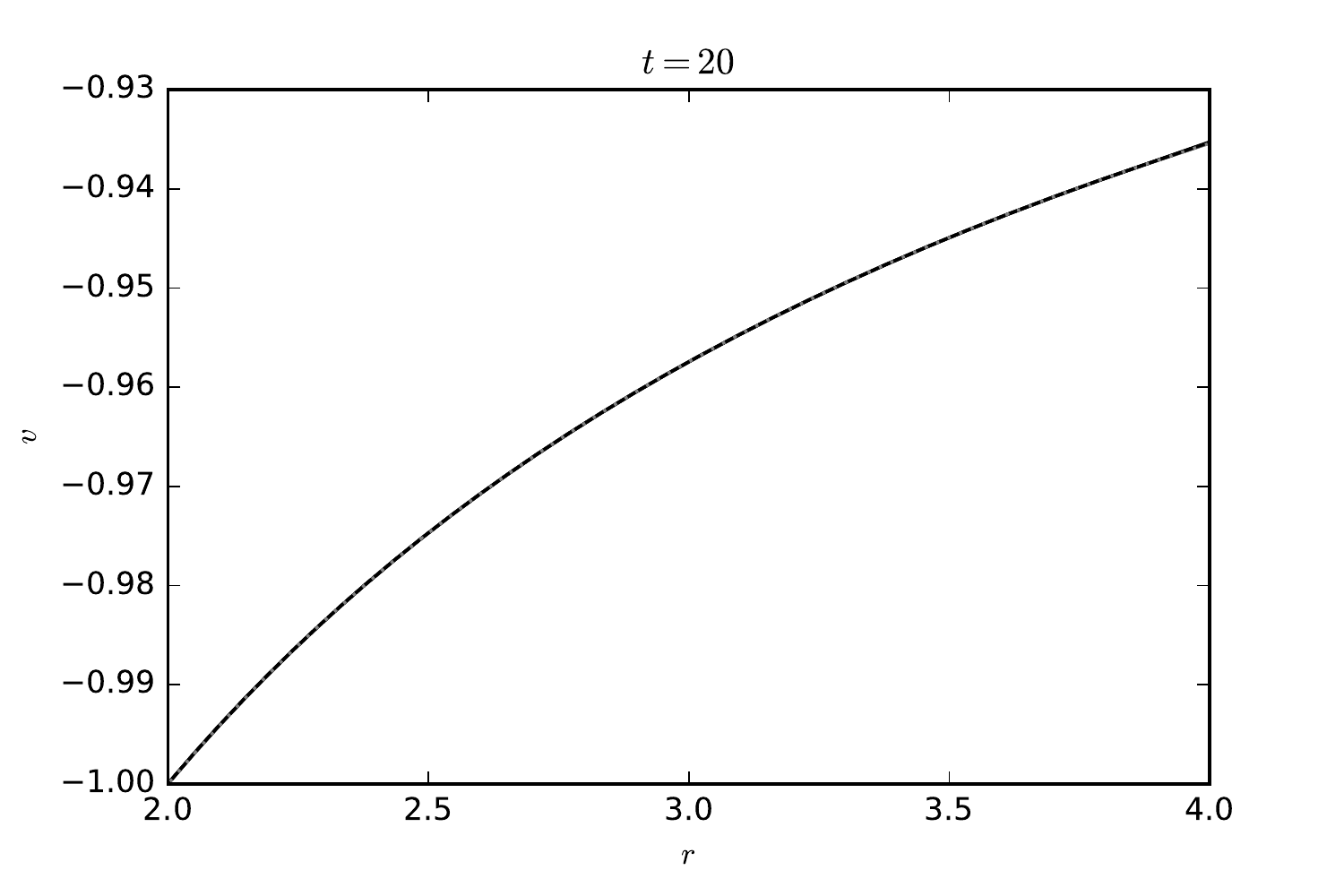,width= 2.5in} 
\end{minipage}
\hspace{0.1in}
\begin{minipage}[t]{0.3\linewidth}
\centering
\epsfig{figure=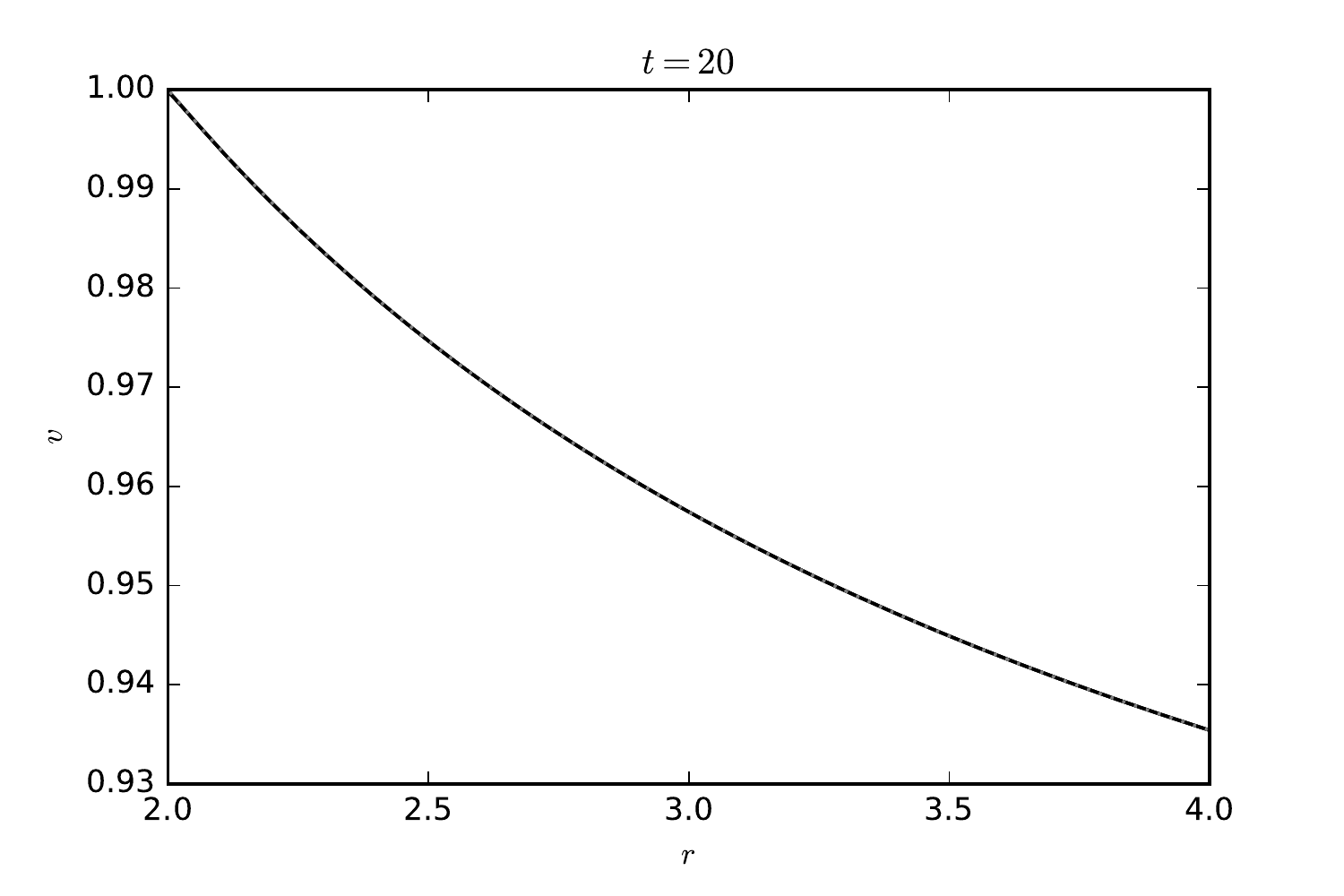,width=2.5in} 
\end{minipage}
\hspace{0.1in}
\begin{minipage}[t]{0.3\linewidth}
\centering
\epsfig{figure=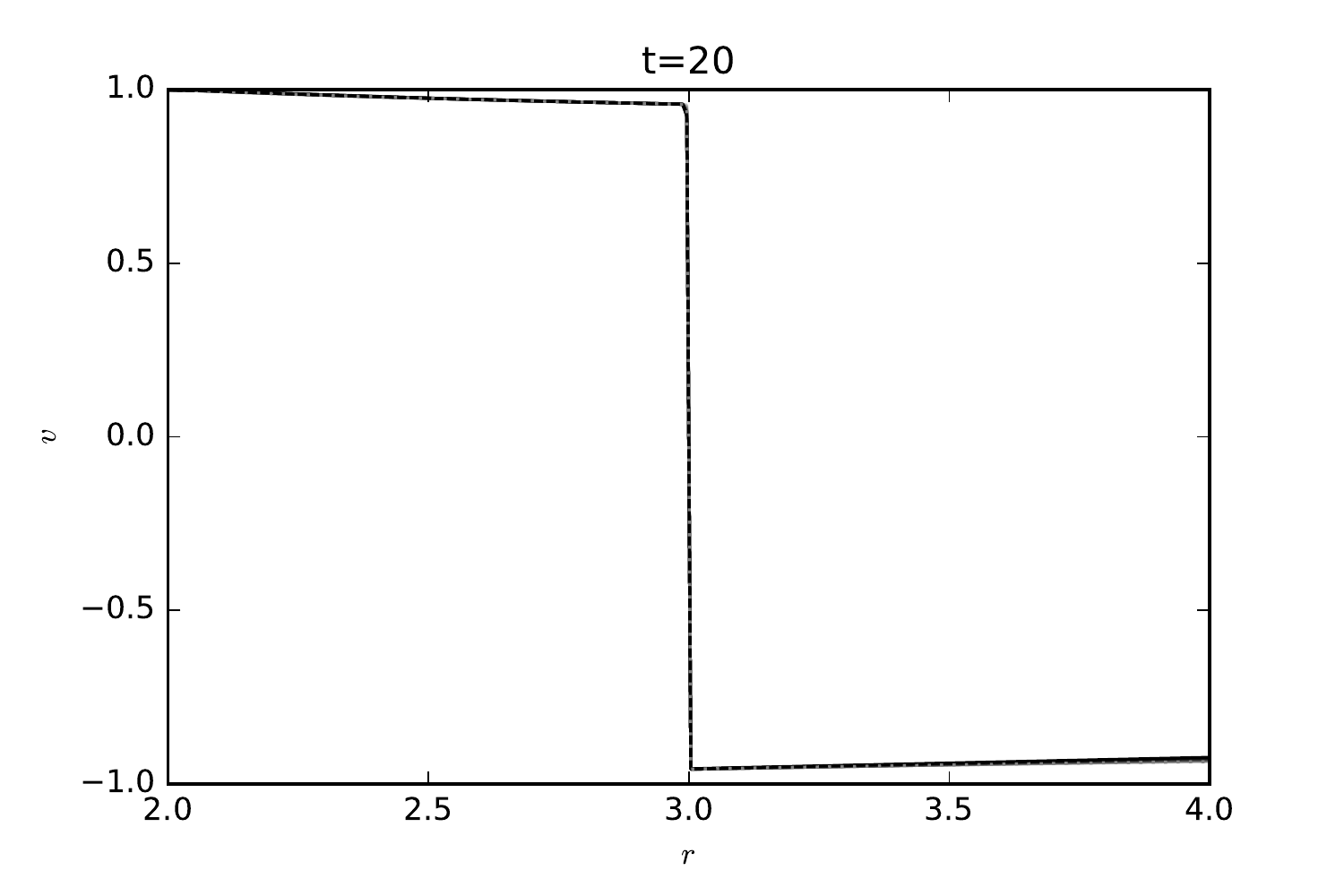,width=2.5in} 
\end{minipage}
\centering
\caption{Solution at time $t=20$ of a steady state, using the second-order finite volume scheme}
\label{FIG-52} 
\end{figure}


\paragraph{A moving shock separating two static solutions}

In view of Theorem~\ref{Burgers1},  whether the solution to the Riemann problem will move towards the blackhole horizon depends only on the behavior of the initial velocity.  We take again the space interval to be $(2.0,4.0) $ with 256 space mesh points.  We take then two kinds of initial data to be 
$$
v= \begin{cases}
\sqrt{{1\over 2}+{1\over r}} & 2.0<r <2.5, \\ 
\sqrt{2\over r} & r >2.5, 
\end{cases}
\qquad 
v= \begin{cases}
- \sqrt{2\over r}& 2.0<r <2.5, \\ 
- \sqrt{{3\over 4}+{1\over 4 r}}  & r >2.5. 
\end{cases}
$$
The behavior of the two shock solutions obtained with the first-order and second-order accurate versions are shown in Figures~\ref{FIG-53}, \ref{FIG-54}, \ref{FIG-55}, and \ref{FIG-56}.

\begin{figure}[!htb] 
\centering 
\begin{minipage}[t]{0.3\linewidth}
\centering
\epsfig{figure=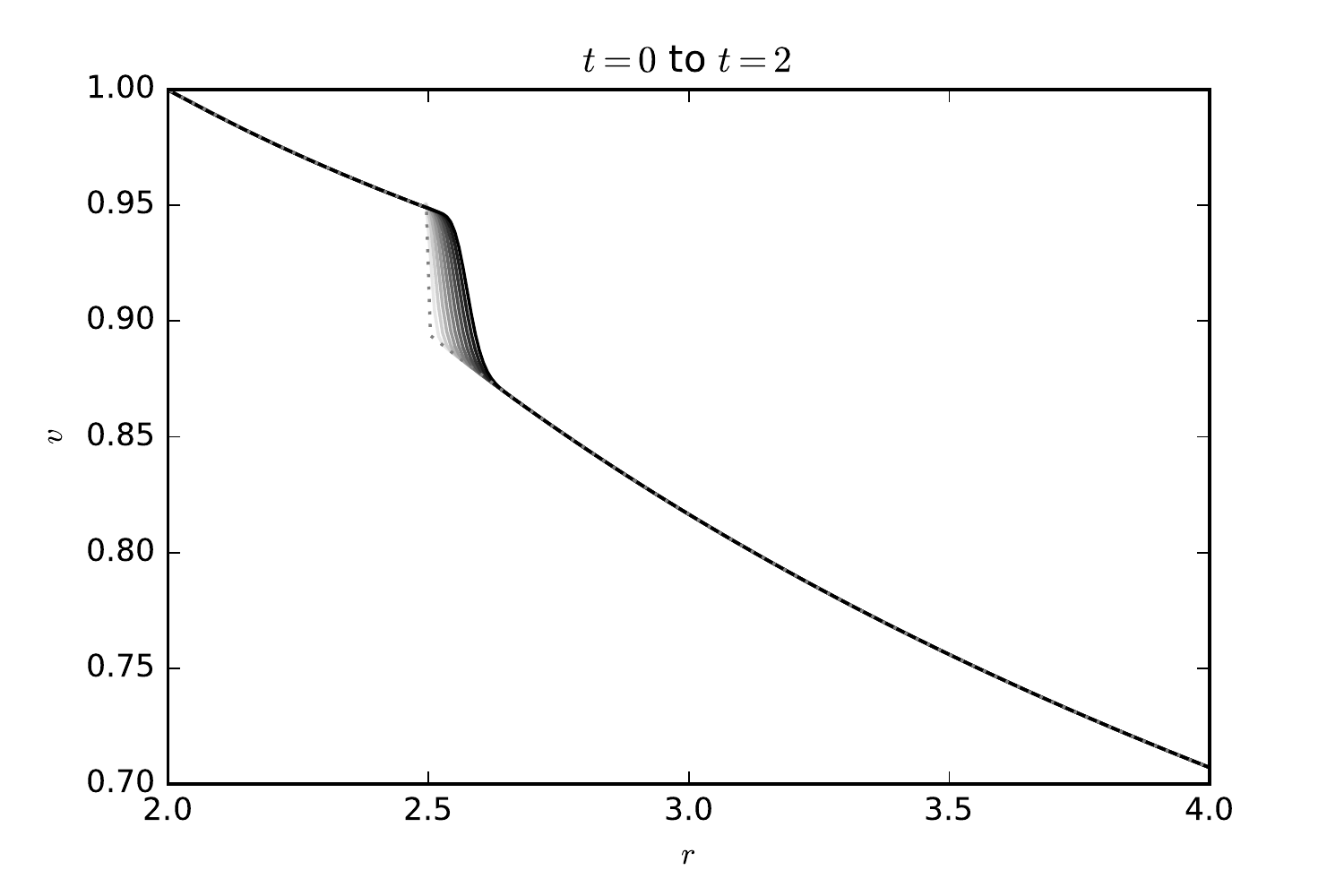,width=2.5in} 
\end{minipage}
\hspace{0.1in}
\begin{minipage}[t]{0.3\linewidth}
\centering
\epsfig{figure=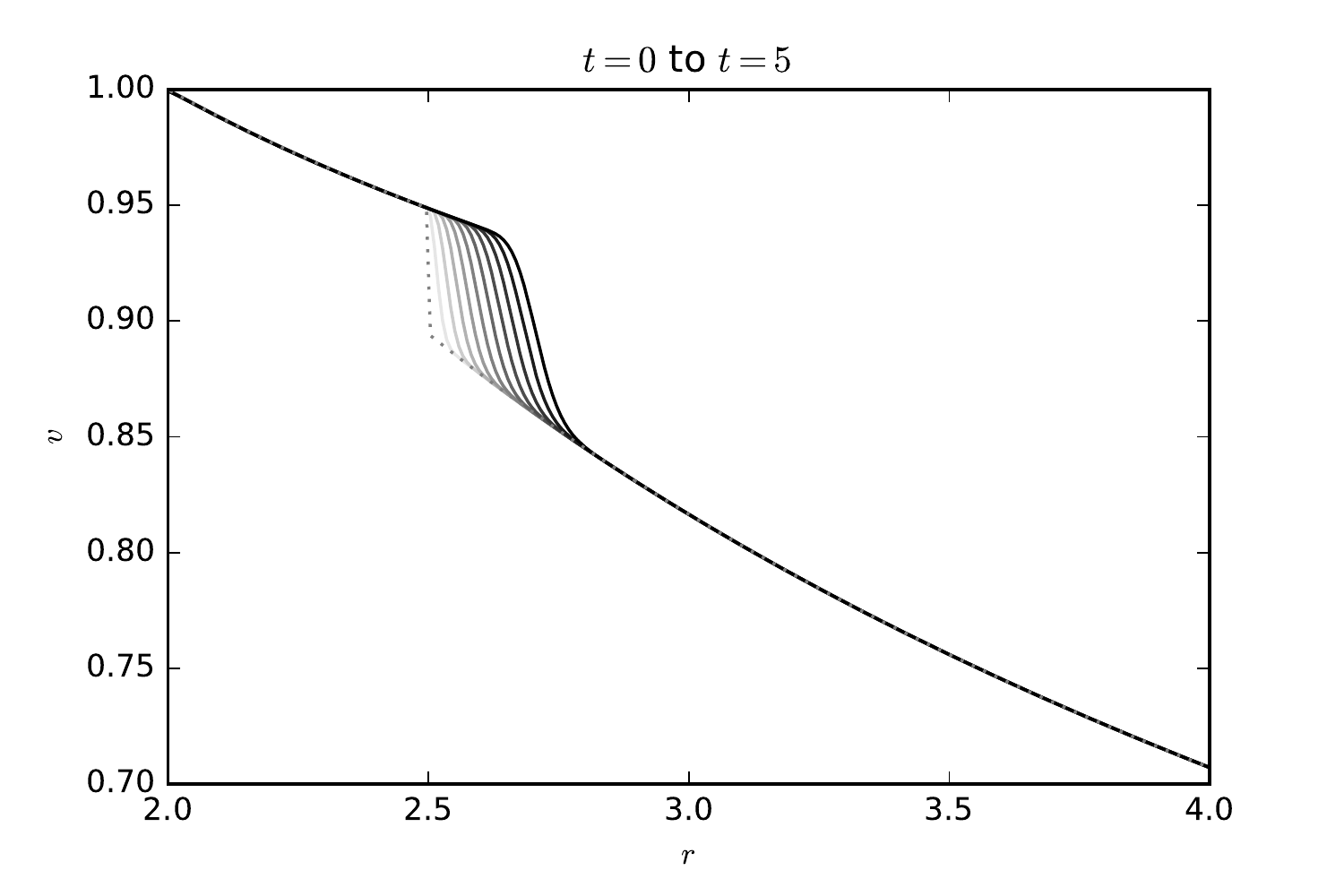,width= 2.5in} 
\end{minipage}
\hspace{0.1in}
\begin{minipage}[t]{0.3\linewidth}
\centering
\epsfig{figure=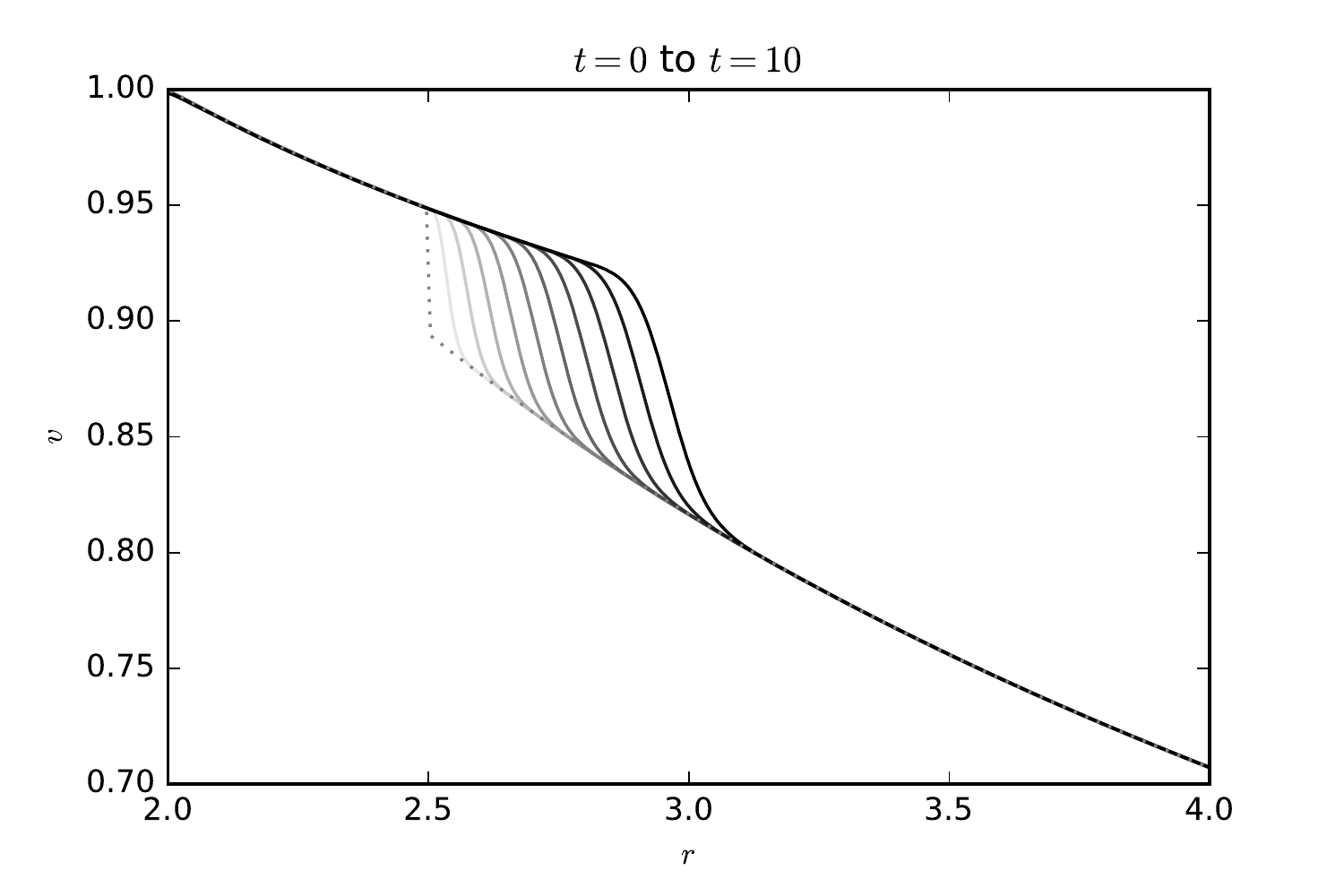,width=2.5in} 
\end{minipage}
\centering
\caption{Static solution with a right-moving shock computed with the first-order finite volume scheme}
\label{FIG-53} 
\end{figure}

\begin{figure}[!htb] 
\centering 
\begin{minipage}[t]{0.3\linewidth}
\centering
\epsfig{figure=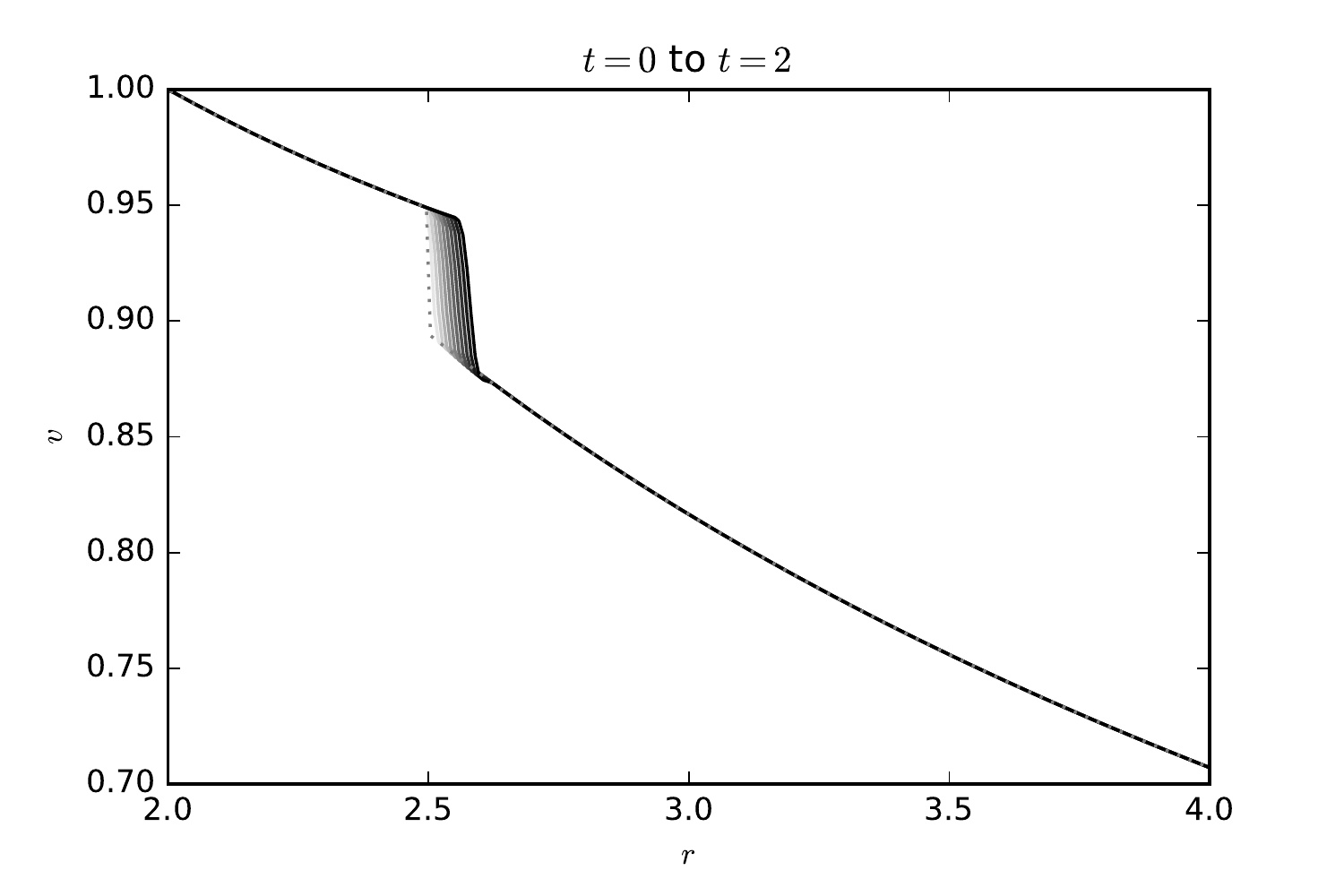,width=2.5in} 
\end{minipage}
\hspace{0.1in}
\begin{minipage}[t]{0.3\linewidth}
\centering
\epsfig{figure=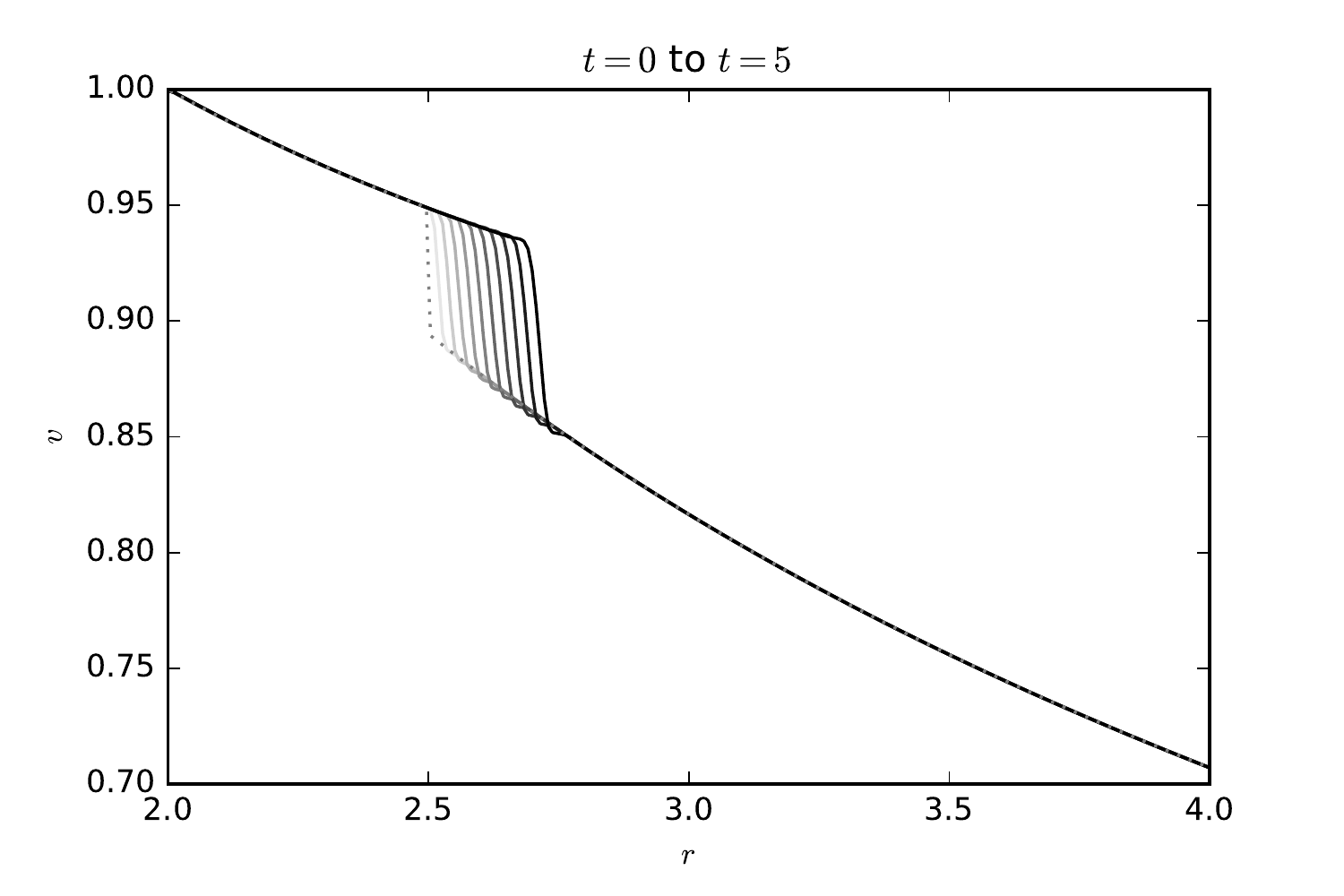,width= 2.5in} 
\end{minipage}
\hspace{0.1in}
\begin{minipage}[t]{0.3\linewidth}
\centering
\epsfig{figure=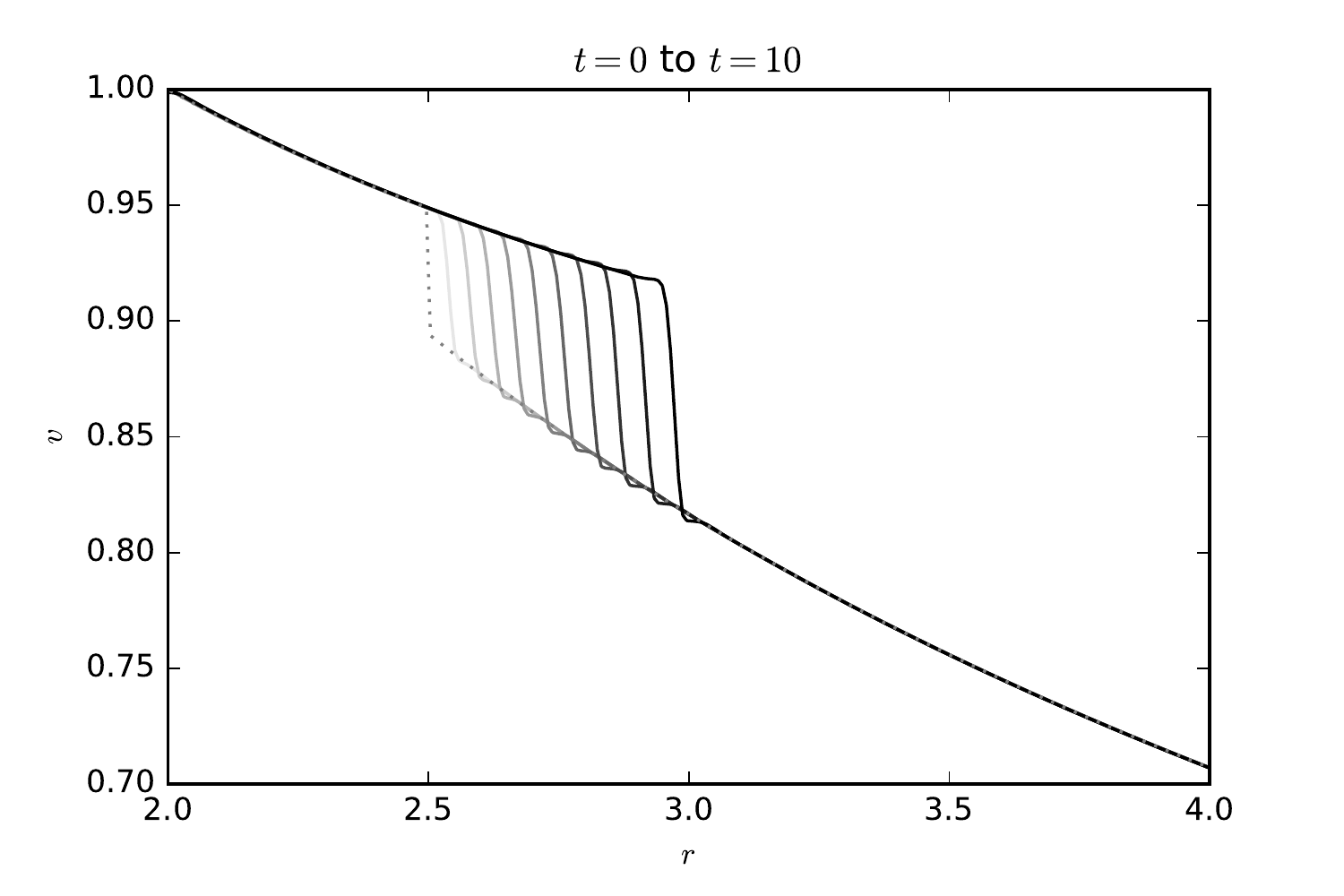,width=2.5in} 
\end{minipage}
\centering
\caption{Static solution with a right-moving shock computed with the second-order finite volume scheme}
\label{FIG-54} 
\end{figure}

\begin{figure}[!htb] 
\centering 
\begin{minipage}[t]{0.3\linewidth}
\centering
\epsfig{figure=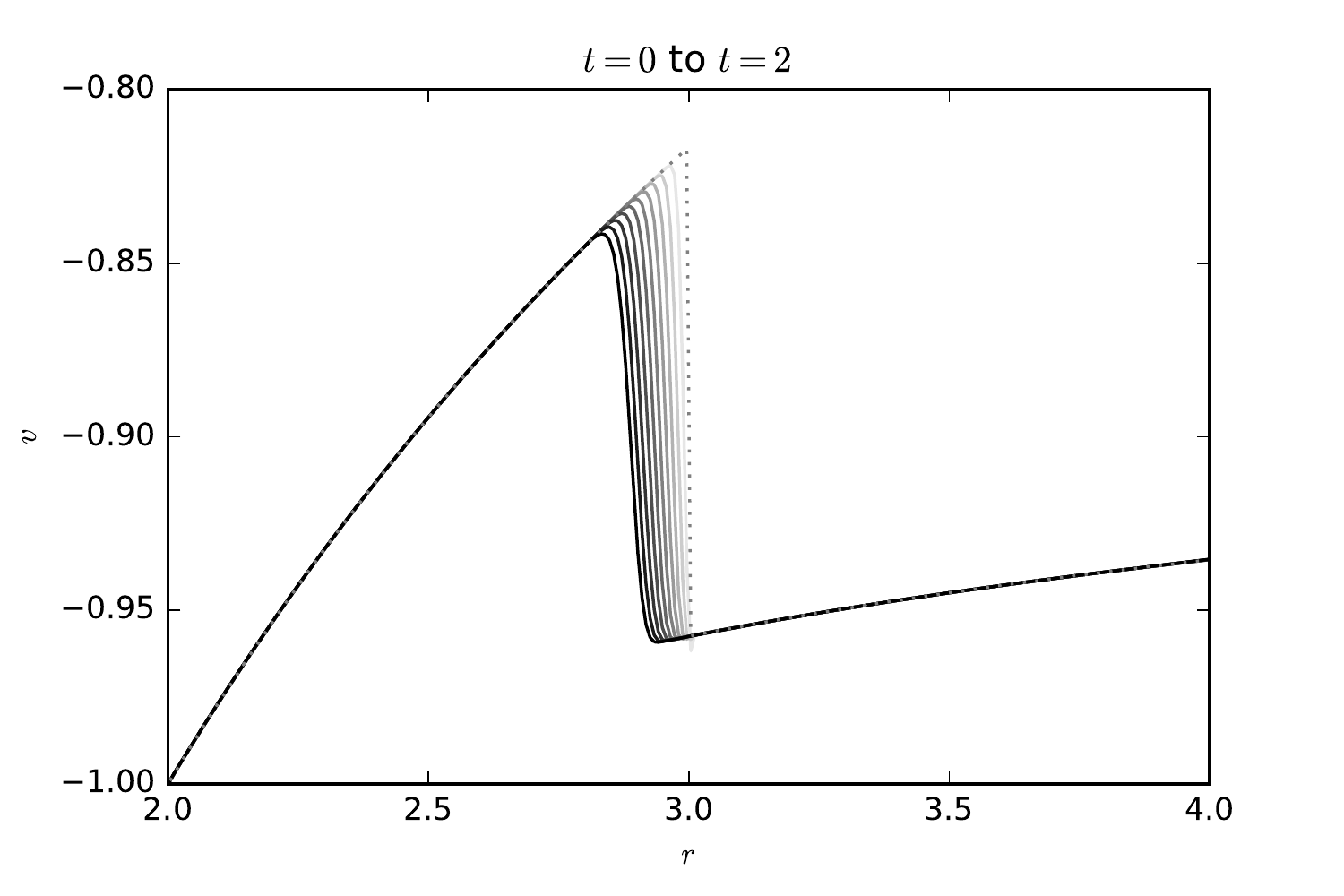,width= 2.5in} 
\end{minipage}
\hspace{0.1in}
\begin{minipage}[t]{0.3\linewidth}
\centering
\epsfig{figure=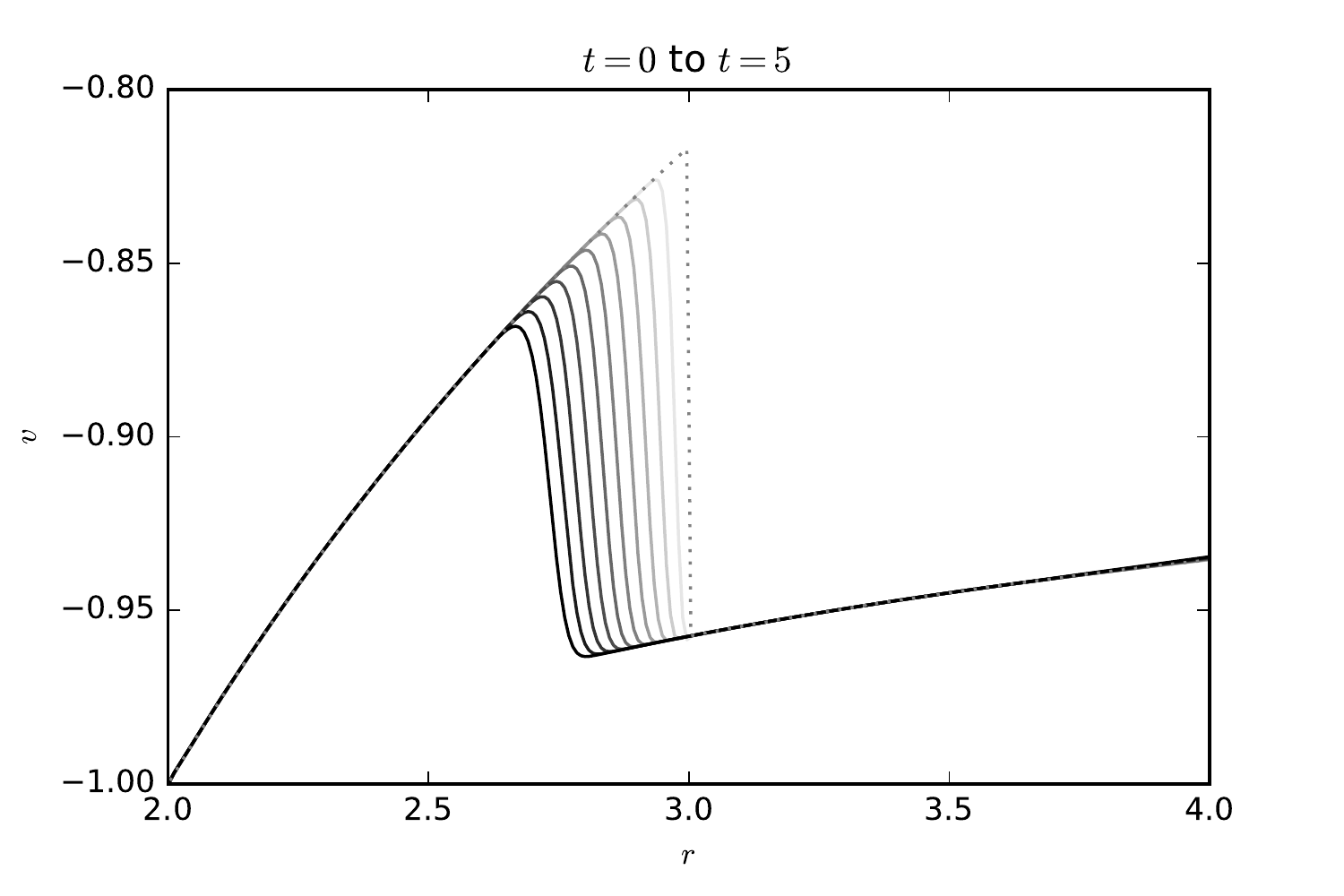,width= 2.5in} 
\end{minipage}
\hspace{0.1in}
\begin{minipage}[t]{0.3\linewidth}
\centering
\epsfig{figure=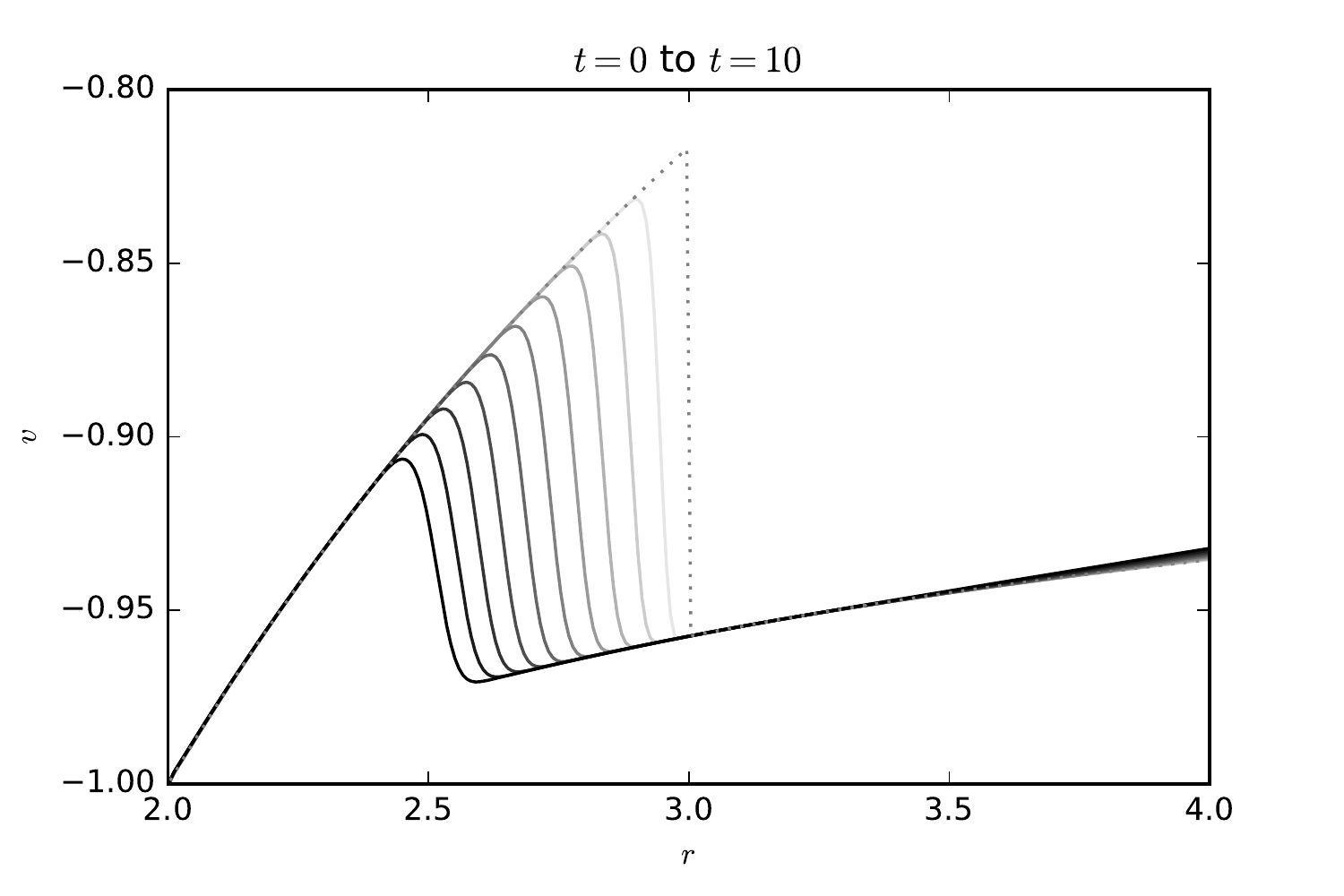,width= 2.5in} 
\end{minipage}
\caption{Static solution with a left-moving shock computed with the first-order finite volume scheme}
\label{FIG-55} 
\end{figure}

\begin{figure}[!htb] 
\centering 
\begin{minipage}[t]{0.3\linewidth}
\centering
\epsfig{figure=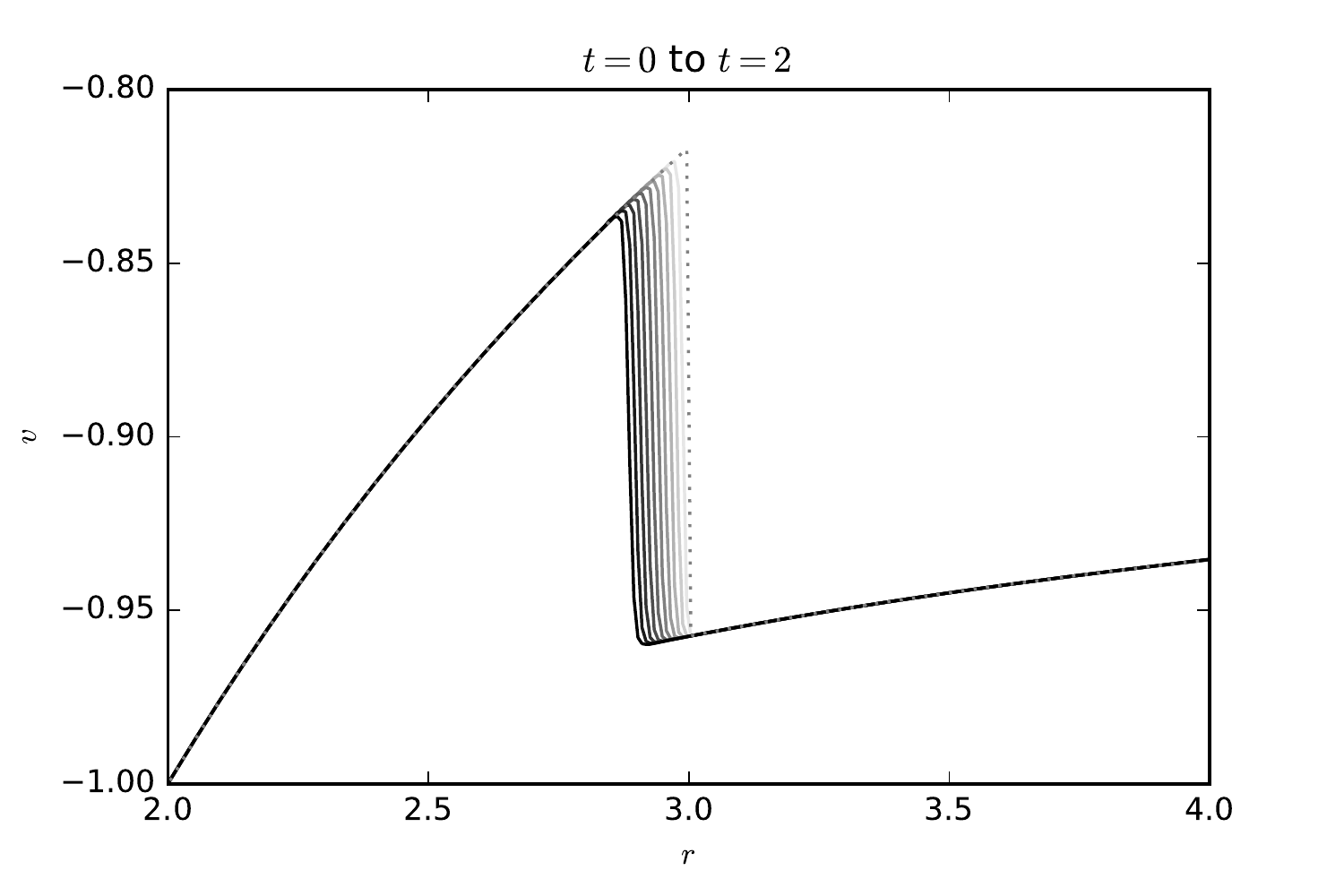,width= 2.5in} 
\end{minipage}
\hspace{0.1in}
\begin{minipage}[t]{0.3\linewidth}
\centering
\epsfig{figure=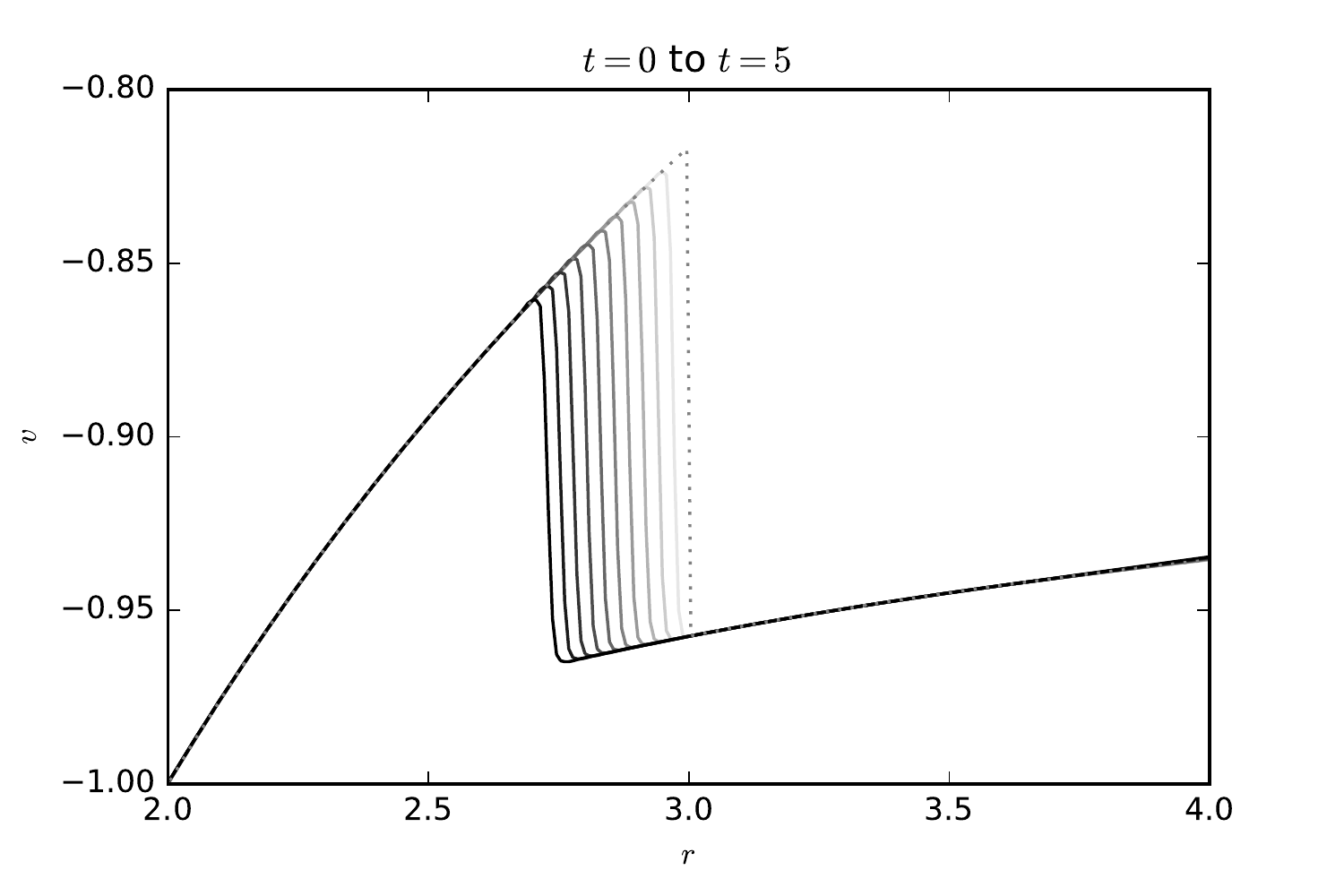,width= 2.5in} 
\end{minipage}
\hspace{0.1in}
\begin{minipage}[t]{0.3\linewidth}
\centering
\epsfig{figure=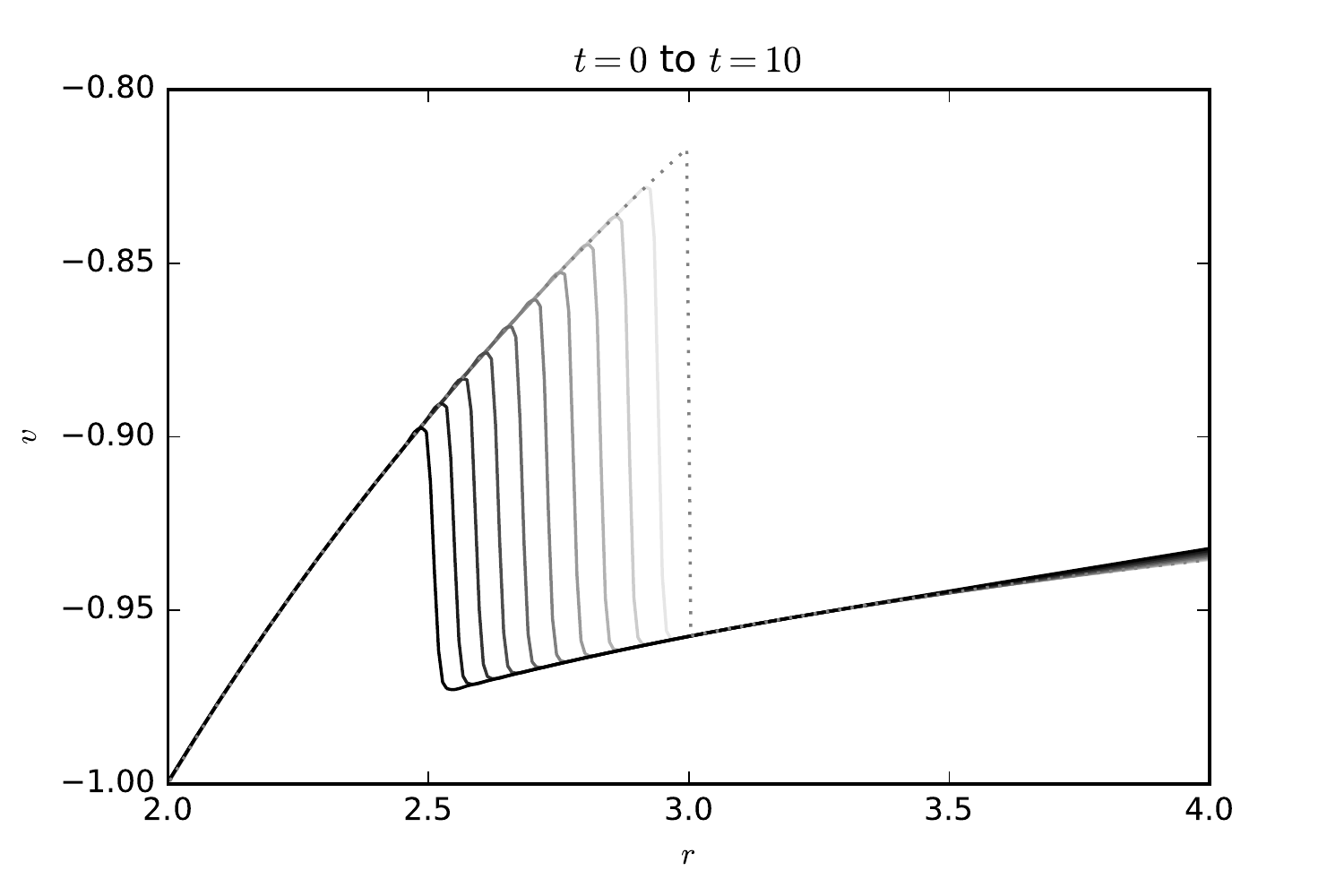,width= 2.5in} 
\end{minipage}
\caption{Static solution with a left-moving shock computed with the second-order finite volume scheme}
\label{FIG-56} 
\end{figure}


\paragraph{Late-time behavior of solutions}

We now study the late-time behavior of solutions whose initial data is given as \eqref{initial-steady-pe}, that is, a piecewise steady state solution with a compactly supported perturbation. We treat the following two kinds of piecewise steady state solutions: 
$$
 v= \sqrt{{1\over 2}+ {1\over r}}, \qquad v= \begin{cases}
\sqrt{{1\over 2}+{1\over r}} & 2.0<r <2.5, \\ 
\sqrt{2\over r} & r >2.5, 
\end{cases}
$$
with compactly supported perturbations.

\begin{figure}[!htb] 
\centering
\begin{minipage}[t]{0.3\linewidth}
\centering
\epsfig{figure=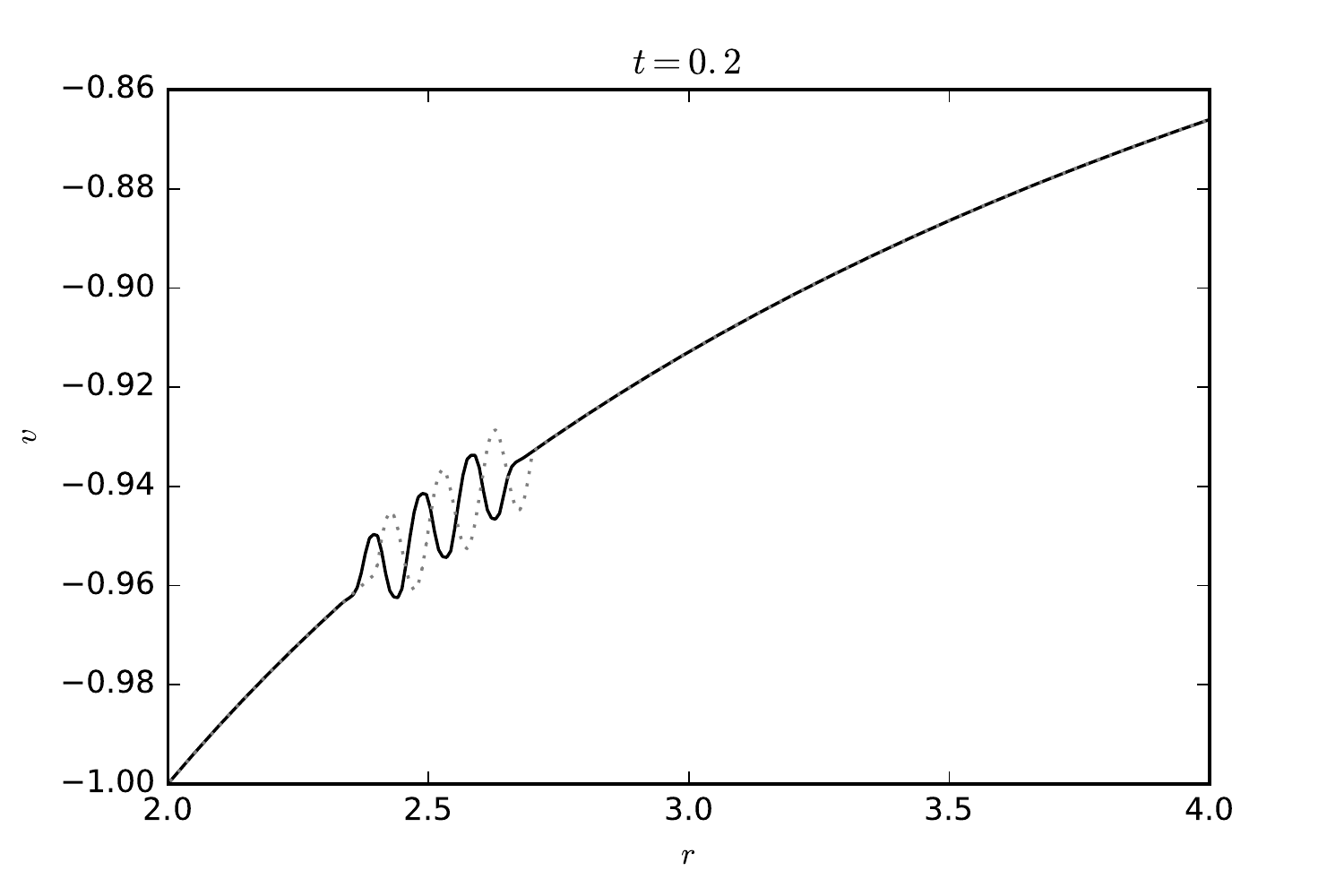,width=2.5in} 
\end{minipage}
\hspace{0.1in}
\begin{minipage}[t]{0.3\linewidth}
\centering
\epsfig{figure=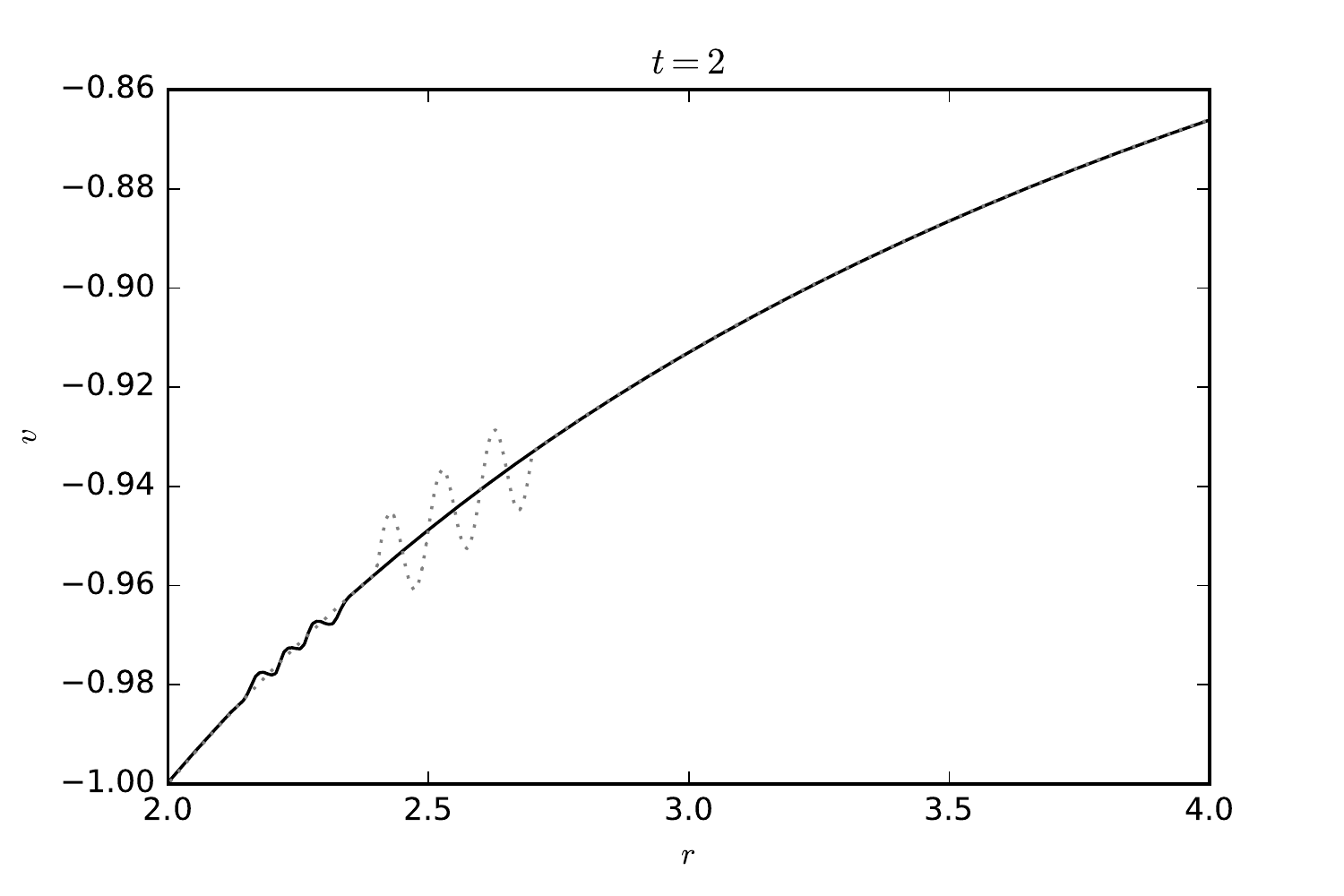,width=2.5in} 
\end{minipage}
\hspace{0.1in}
\begin{minipage}[t]{0.3\linewidth}
\centering
\epsfig{figure=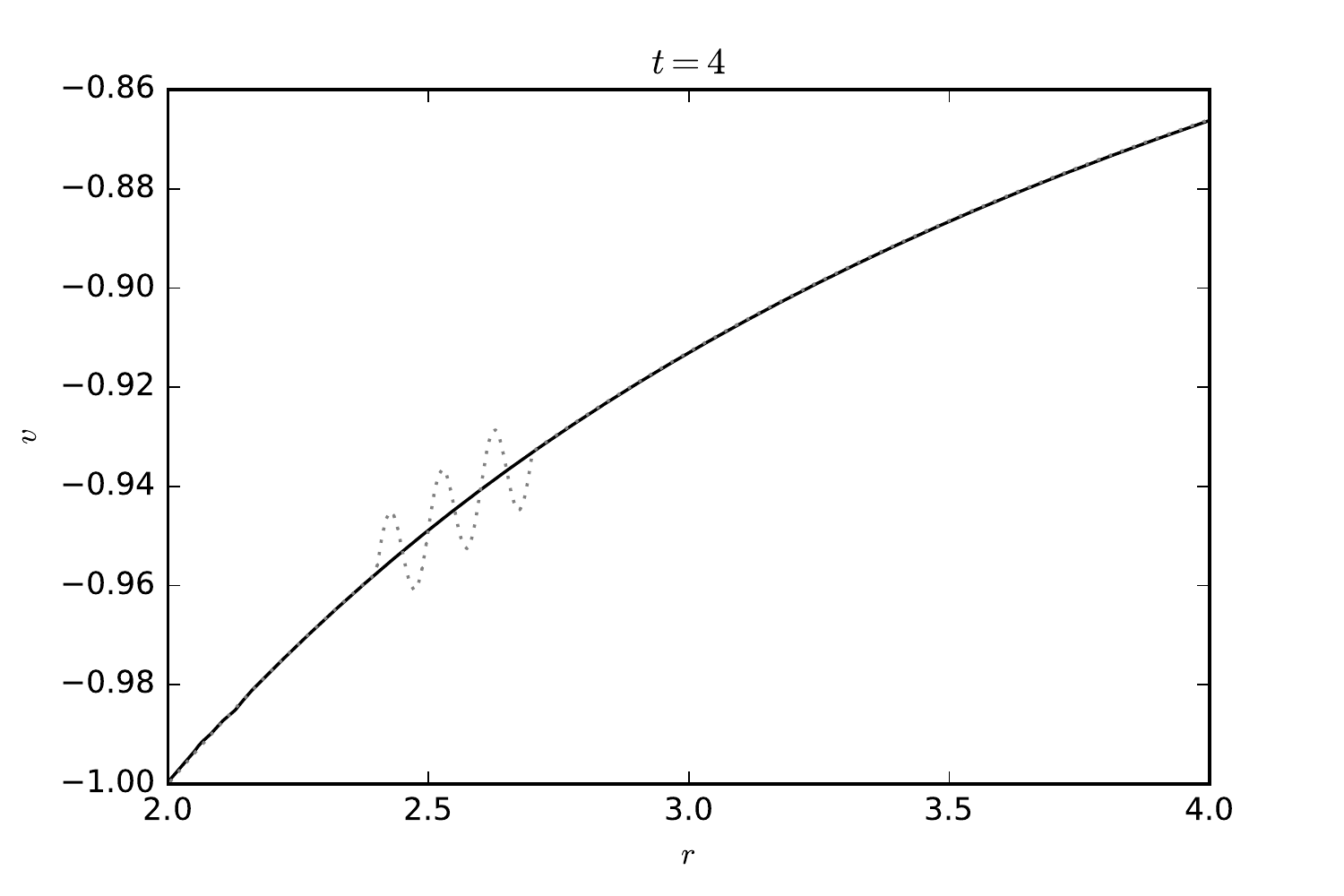,width=2.5in} 
\end{minipage}
\caption{Numerical solution from initially perturbed steady state}
\label{FIG-57} 
\end{figure}

\begin{figure}[!htb] 
\centering
\begin{minipage}[t]{0.3\linewidth}
\centering
\epsfig{figure=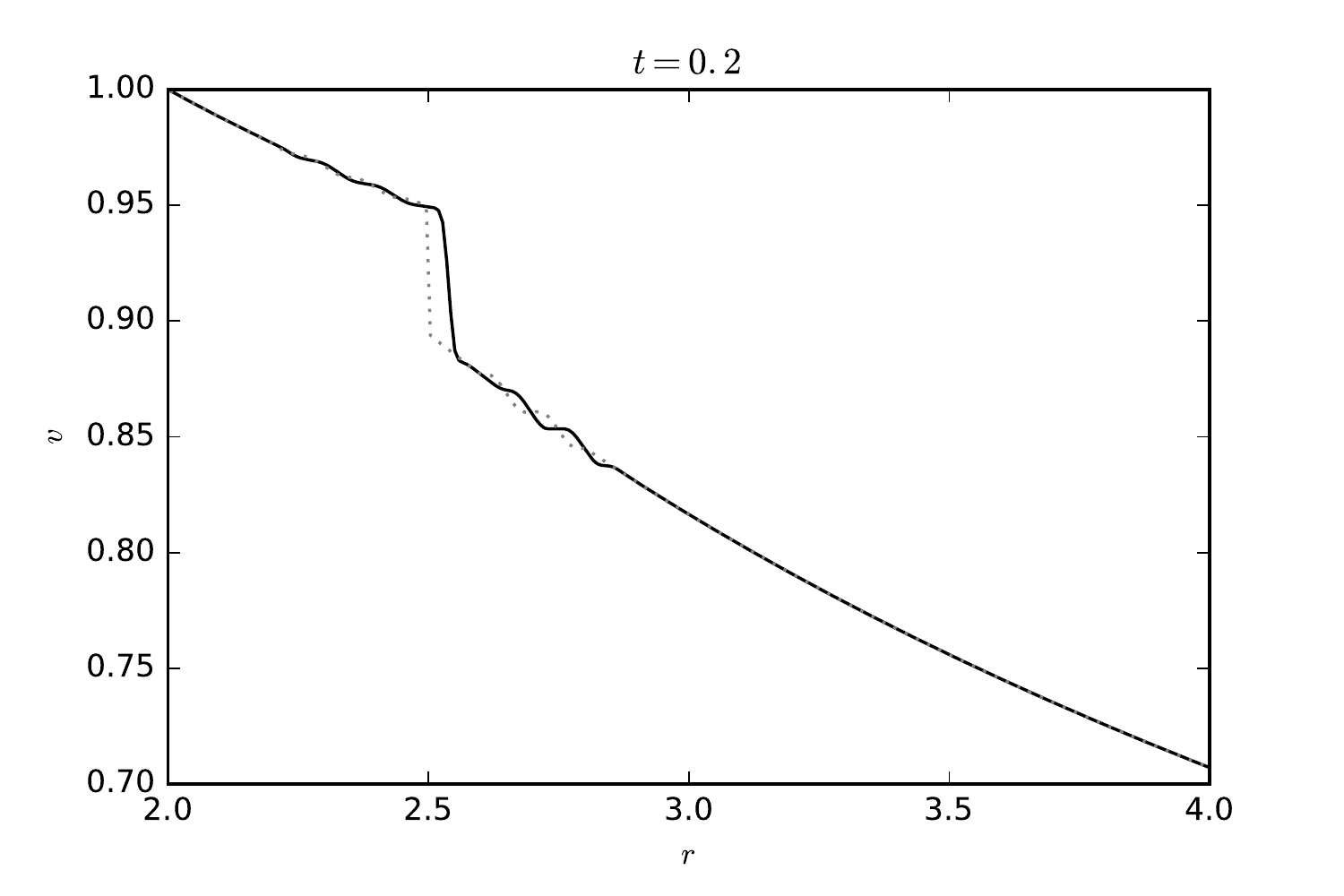,width=2.5in} 
\end{minipage}
\hspace{0.1in}
\begin{minipage}[t]{0.3\linewidth}
\centering
\epsfig{figure=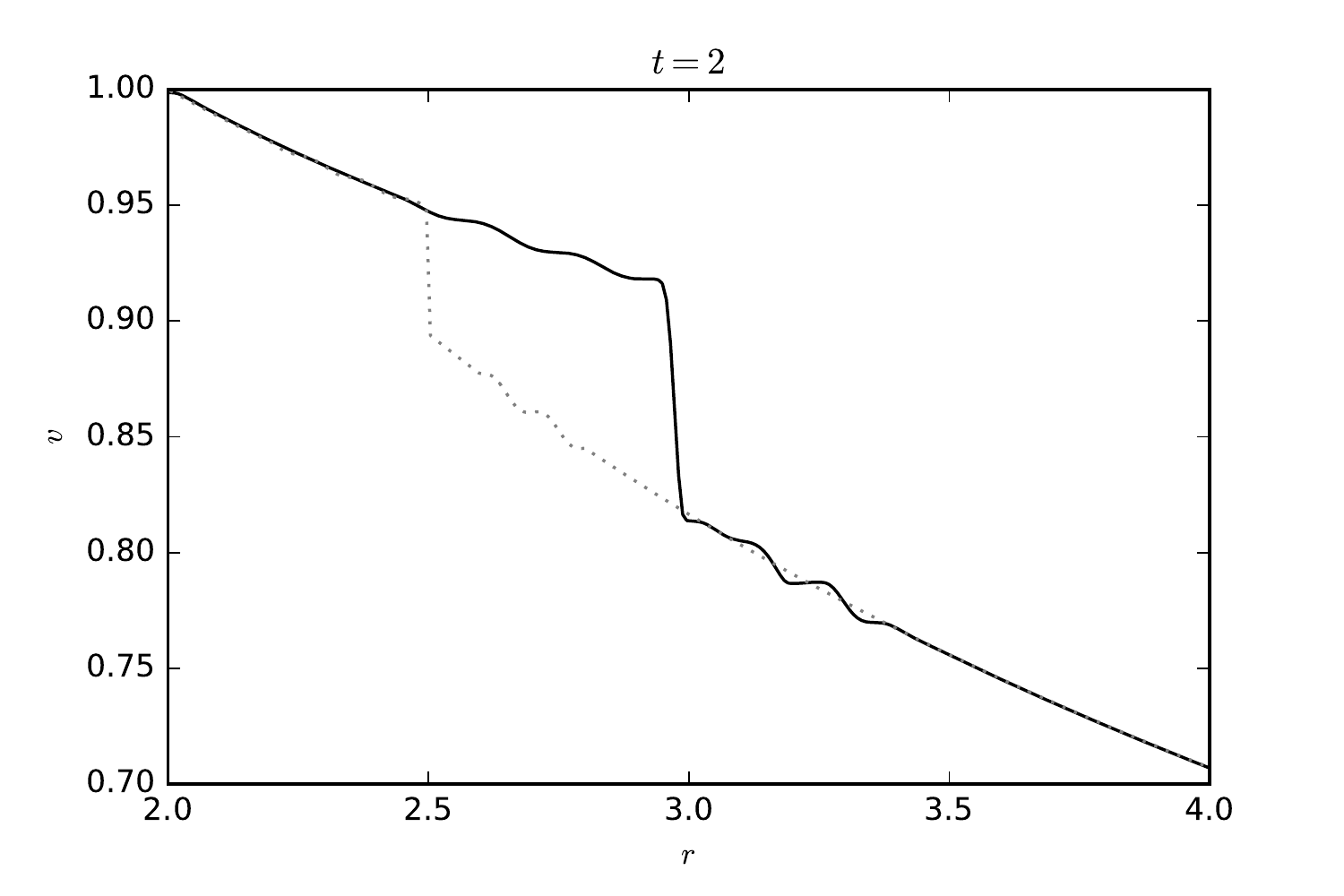,width=2.5in} 
\end{minipage}
\hspace{0.1in}
\begin{minipage}[t]{0.3\linewidth}
\centering
\epsfig{figure=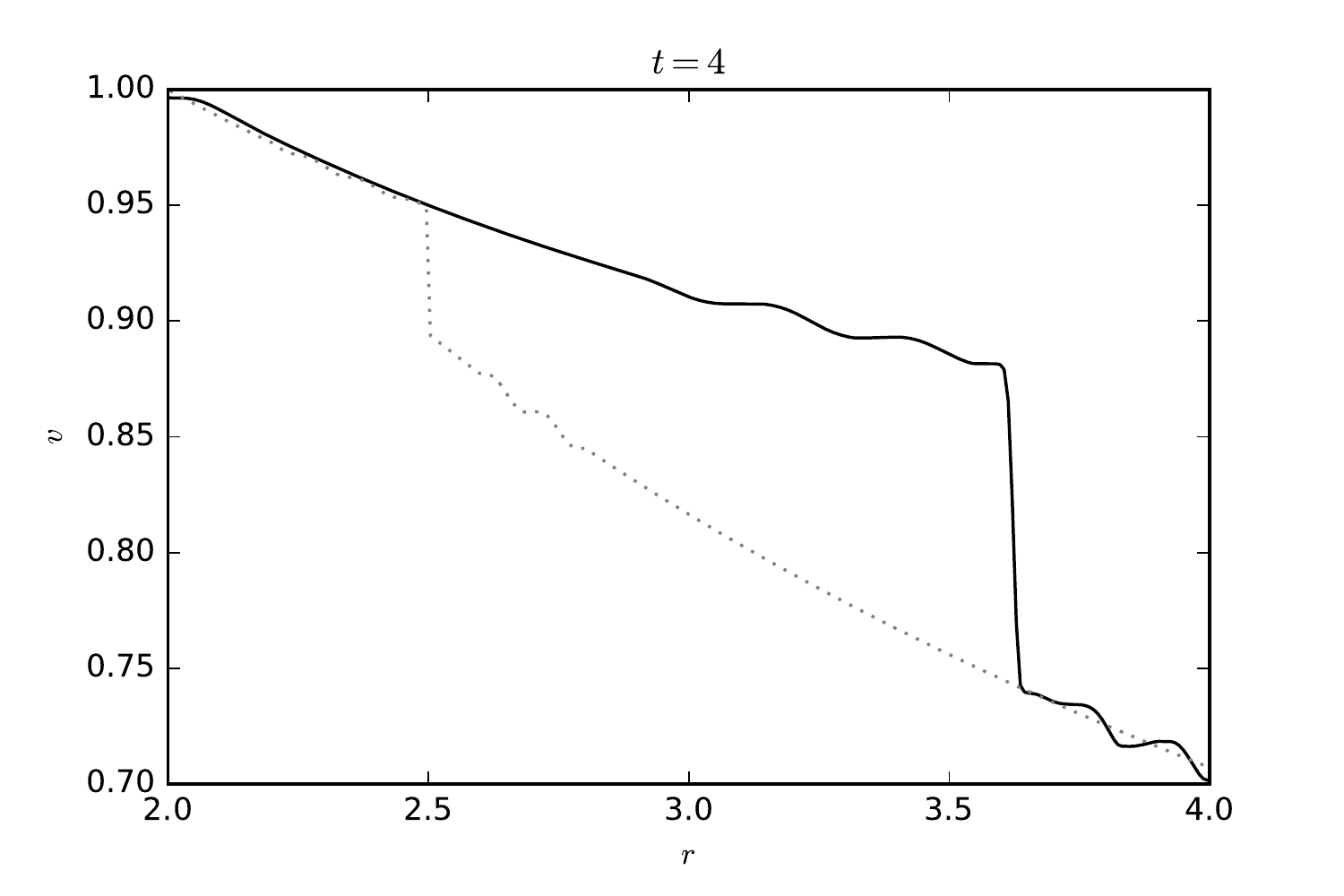,width=2.5in} 
\end{minipage}
\caption{Numerical solution from an initially perturbed shock}
\label{FIG-58} 
\end{figure}


\section{A generalized random choice scheme for the relativistic Burgers model}
\label{Sec:5}

\paragraph{Explicit solution to the generalized Riemann problem}

In order to construct a Glimm method for the relativistic Burgers model, we need first introduce the explicit form of the generalized Riemann problem of the relativistic Burgers equation \eqref{Burgers},which is an initial problem whose initial data $v_0 = v_0 (r)$ is given as 
\bel{initial-Riemann}
v_0 (r)= \begin{cases}
v_L (r) & 2M <r < r_0, \\ 
v_R (r) & r>r_0, 
\end{cases}
\ee
where $r_0$ is a fixed point in space and $v_L =v_L (r)$, $v_R= v_R (r)$ are two steady state solutions  of the Burgers' equation with explicit forms 
\bel{v_L-v_R}
v_L(r)= \sgn(v_L^0) \sqrt{1- K_L^2 \Big(1-{2M\over r} \Big)}, \qquad v_R (r) = \sgn(v_R^0) \sqrt{1- K_R^2 \Big(1-{2M\over r} \Big)},
\ee
where $K_L, K_R>0 $ are two constants and we denote  by $v_L^0 = v_L (r_0), v_R (r_0) =v_R^0$. 
The existence of the generalized Riemann problem is concluded in Theorem~\ref{Burgers1}. More precisely, the solution to the Riemann problem $v= v(t, r)$ can be realized by either a shock wave or a rarefaction wave which is given explicitly by the following form: 
\bel{Rie-sol}
v (t, r) = \begin{cases}
v_L (r) & r< r_L(t), \\
\widetilde v (t, r) & r_L(t)<r < r_R(t), \\ 
v_R (r) & r>r_R(t). 
\end{cases}
\ee
Here, $r_L(t)$ and $r_R(t)$ are bounds of rarefaction regions satisfying 
\bel{rj}
R_j\big(r_j (t)\big)- R_j (r_0) =  t, 
\ee
where $R_j = R_j (r)$ is given by 
\bel{Rj}
R_j(r): = {R^{v_j} (r) \over 2} + \chi_{[v_j^0< v_k^0  ]} (r){R^{v_j} (r) \over 2} +  \chi_{[v_j^0< v_k^0  ]} (r){R^{v_k} (r) \over 2}
\ee
with $j =L, R$,$k = R, L$, 
$$
 \chi_{[v_j^0  \gtrless v_k^0  ]} (r) = \begin{cases}
 1 & \text{if $v_j^0  \gtrless v_k^0$}, \\ 
 0 & \text{otherwise}, 
 \end{cases}
 $$
 and the function $R^v_j = R^v_j(r)$  is 
\bel{R^v_j}
\aligned 
R^{v_j} (r) : = & \sgn(v_j){1  \over (1/\eps^2 - K_j^2)^{3/2}} 
\Bigg (2 M \eps \big({1\over \eps^2}  - K_j^2\big)^{3/2} \ln (r- 2M)
\\  
& -2M  \big({1\over \eps^2}  - K_j^2\big)^{3/2} \ln \Big({2 r \over \eps}  \sqrt {{1 \over \eps^2}- K_j^2 \Big(1- {2M\over r}\Big)} +(2M-r) K_j^2\Big)
\\
& 
+ {1\over \eps} \bigg(r \sqrt {{1\over \eps^2}  -K_j^2}   \sqrt {{1 \over \eps^2}- {K_j}^2\Big(1- {2M\over r}\Big)} 
\\
& + M(2/ \eps^2 -3 K_*^2) \ln \Big(r \sqrt {{1\over \eps}- K_j^2}   \sqrt {{1 \over \eps^2}- K_j^2 \Big(1- {2M\over r}\Big)} + (M-r) K_j^2+{r \over \eps^2}  \Big)
 \bigg) 
 \Bigg). 
\endaligned 
\ee
The function $\widetilde v= \widetilde v(t,r)$ denotes the generalized rarefaction wave 
\bel{curved-ra}
\widetilde v(t, r) = \sgn(r-r_0)  \sqrt{{1\over \eps^2} -K^2 (t,r) \Big(1-{2M \over r}\Big)},  
\ee
where $K = K (t, r)$ is characterized by the condition 
\bel{K}
 \sgn(r-r_0) =   {\widetilde R(r, K)-\widetilde R(r_0, K) \over t},  
\ee
where 
\bel{Rcurve}
\aligned 
\widetilde R(r, K): & ={1  \over (1/\eps^2 - K^2)^{3/2}}  \Bigg (2 M \eps \big({1\over \eps^2}  - K^2\big)^{3/2} \ln (r- 2M)
\\  
& -2M  \big(1/ \eps^2 - K^2\big)^{3/2} \ln \Big({2 r \over \eps}  \sqrt {{1 \over \eps^2}- K^2 \Big(1- {2M\over r}\Big)} +(2M-r) K^2\Big)
\\
& 
+ {1\over \eps} \bigg(r \sqrt {{1\over \eps^2}  - K^2}   \sqrt {{1 \over \eps^2}- K^2\Big(1- {2M\over r}\Big)} 
\\
& + M \big (2/ \eps^2  -3 K^2) \ln \Big(r \sqrt {{1\over \eps}- K^2}   \sqrt {{1 \over \eps^2}- K^2 \Big(1- {2M\over r}\Big)} + (M-r) K^2+{r \over \eps^2}  \Big)
 \bigg) 
 \Bigg). 
\endaligned 
\ee 

Indeed,  referring to~\cite{PLF-SX-two}, the solution constructed by \eqref{Rie-sol} is proven to be unique, satisfying the Rankine-Hugoniot jump condition and the entropy inequality at the same time.  Besides, the solution to the generalized Riemann problem is globally defined both in time and in space. 


\paragraph{A generalized random choice method}

The random choice method is a scheme based on the result of generalized Riemann problem.  We use again the time-space grid where the mesh lengths in time and in space are $\Delta  t, \Delta r$ with $t_n = n \Delta t, r_j =2M+ j \Delta r $ where we recall $2M$  is the blackhole horizon. Denote by $V_j^n$ the numerical solution $V (n\Delta t, 2M + j \Delta r)$.  Let $(w_n)$ be a sequence equidistributed in $(-{1\over 2}, {1\over 2}) $  and write $r _ {n,j} = 2M+  (j+ w_n) \Delta r$. 
We define our Glimm-type appromations as follows: 
\bel{Glimm-method}
V_j^{n+1} =  V_{\mathcal R}^{j, n}  (t_{n+1}, r_{n,j}), 
\ee
where $ V_{\mathcal R}^{j, n}=  V_{\mathcal R}^{j, n}(t, r)$ is the solution to the Riemann problem with the initial data 
\bel{V^nj0}
V^{j,n}_0= \begin{cases}
V_L^{j, n} (r), & r < r_{j+\sgn(w_n)/2}, \\ 
V_R^{j, n} (r), \quad & r > r_{j+\sgn(w_n)/2}, 
\end{cases}
\ee
where the left-hand state  $V_L^{j, n}=V_L^{j, n} (r) $ and the right-hand state $V_R^{j, n}= V_R^{j, n} (r) $ are steady state solutions to \eqref{static-Burgers} with initial conditions: 
$$
\begin{cases}
 V_L^{j, n} (r_j)=  V_j^n, \quad  & w_n \geq 0, \\
  V_L^{j, n} (r_{j-1})=   V_{j-1}^n, &  w_n  < 0, 
 \end{cases} \qquad \qquad 
 \begin{cases}
 V_R^{j, n} (r_j)=  V_j^n,  & w_n > 0, \\
  V_R^{j, n} (r_{j+1})=   V_{j+1}^n, &  w_n \geq 0. 
 \end{cases}
$$
We choose a random number only once at each time level $t=t_n$ rather than at every each mesh point $(t_n, r_j)$.

In order to have an equidistributed sequence, the random values $(w_n)$ are defined by following Chorin~\cite{Chorin}: we give two large prime numbers $p_1< p_2$ and define a sequence of integers $(q_n)$: 
\bel{qn}
\aligned 
& q_0,  \quad \text{given} \quad q_0 < p_2; 
\qquad 
 q_n := (p_1+ q_{n-1}) \mod p_2, \quad n \geq 1. 
\endaligned
\ee
Then we define the sequence 
$w_n' = {q_n + w_n+1/ 2  \over p_2}  -{1\over 2}$, 
which is to be used in our Glimm method instead of  instead of $(w_n)$. It is direct to see that $ w_n' \in \big(-{1\over 2}, {1\over 2}\big)$.


\section{Numerical experiments with the random choice scheme for the relativistic Burgers model}
\label{Sec:6}

\paragraph{Consistency property}

We now presents numerical experiment with the proposed Glimm method for the Burgers equation on a Schwarzschild background \eqref{Burgers}.  Recall that $r>2M$ and we choose again $M=1$ for the blackhole mass.  The space interval in consideration is $(r_{\min,}, r_{\max})$ with $r_{\min}= 2M=2$ and $r_{\max}=4$.  To introduce the random sequence, we fix  two prime integers, specifically  $p_1=937, p_2=  997$ and $q_0= 800$.
Since the solution to every local generalized Riemann problem \eqref{Burgers}, \eqref{initial-Riemann} is exact, the following observation is immediate. 

\begin{lemma}
Consider a given initial velocity $v_0=v_0(r)$ as a steady state solution such that the static Burgers model  \eqref{static-Burgers} holds. Then the approximate solution to the relativistic Burgers equation \eqref{Burgers} constructed by the Glimm method \eqref{Glimm-method} is accurate. 
\end{lemma}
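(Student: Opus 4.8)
The plan is to exploit the exactness of the generalized Riemann solver: if the initial datum is a single steady state, then every local generalized Riemann problem built through \eqref{V^nj0} is degenerate, carrying no jump, so that its exact solution is the steady state itself and the random sampling merely reads off a value lying on the same steady-state curve. No spurious wave and no numerical diffusion are then produced, which is the precise sense in which the Glimm approximation is to be called accurate (well-balanced) on steady states. Thus the whole argument reduces to identifying the left and right states of each local problem and checking that they coincide.

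First I would record that, since $v_0$ solves the static equation \eqref{static-Burgers}, it is one of the explicit profiles \eqref{steady-solution}, so that all the initial cell values $V_j^0=v_0(r_j)$ sit on this single curve. Next I would invoke the uniqueness of the steady state passing through a prescribed point $(r,v)$, which follows at once from \eqref{steady-solution} since the pair $(r,v)$ determines both the constant $K$ and the sign. Because $V_L^{j,n}$ is by definition the steady solution matching the nodal value at one node and $V_R^{j,n}$ the steady solution matching the neighbouring node, and both nodes lie on $v_0$, the two fitted steady states coincide with $v_0$. Consequently each local generalized Riemann problem has identical left and right data, so by Theorem~\ref{Burgers1} its solution is realized neither by a shock nor by a rarefaction: it is simply the time-independent profile $v_0(r)$. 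The Glimm update \eqref{Glimm-method} then returns $V_j^{n+1}=v_0(r_{n,j})$, again an exact point-value of $v_0$, and I would close the argument by induction on $n$.

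The hard part, and the point the word ``immediate'' quietly glosses over, is that the random sampling evaluates the local solution at $r_{n,j}=2M+(j+w_n)\Delta r$ rather than at the node $r_j$, so the updated cell values are a re-sampled copy of $v_0$ at shifted abscissae. On a genuinely time-independent solution this relocation is harmless, because each value returned is still an \emph{exact} value of $v_0$ and no wave has been created; what must nevertheless be verified is that reinterpreting these shifted values as nodal data does not manufacture jumps that drift off the steady-state manifold and accumulate over the $O(1/\Delta t)$ time steps. This is exactly where the exact time-independence of $v_0$ and the exactness of the generalized Riemann solver established in \cite{PLF-SX-two} are essential, since any inexactness in the local solves — however small per step — would degrade the property from exact preservation to a mere $O(\Delta r)$ approximation.
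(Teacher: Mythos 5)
Your proposal is correct and follows essentially the same route as the paper, which in fact offers no written proof at all beyond remarking that the observation is ``immediate'' from the exactness of the local generalized Riemann solver --- precisely the mechanism you spell out (uniqueness of the steady state through a given point, hence trivial local Riemann problems, hence induction on $n$). The resampling subtlety you flag in your last paragraph is resolved by the very uniqueness argument you already invoked: the steady state reconstruction passing through the sampled point $\left(r_{n,j},\, v_0(r_{n,j})\right)$ is again $v_0$ itself, so each update returns data lying exactly on the original steady curve, no $O(\Delta r)$ drift can accumulate, and the induction closes exactly.
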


We will still observe the evolution of those three types of solutions shown in Figure~\ref{FIG-51}, that is,  the two steady state solutions $v= \pm \sqrt {{3\over 4}+{1\over 2 r}} $ and  the steady shock: 
$$
v= \begin{cases}
\sqrt {{3\over 4}+{1\over 2 r}},  & 2.0<r<3.0,
\\ 
- \sqrt {{3\over 4}+{1\over 2 r}},  & r>3.0. 
\end{cases}
$$
\begin{figure}[!htb] 
\centering
\begin{minipage}[t]{0.3\linewidth}
\centering
\epsfig{figure=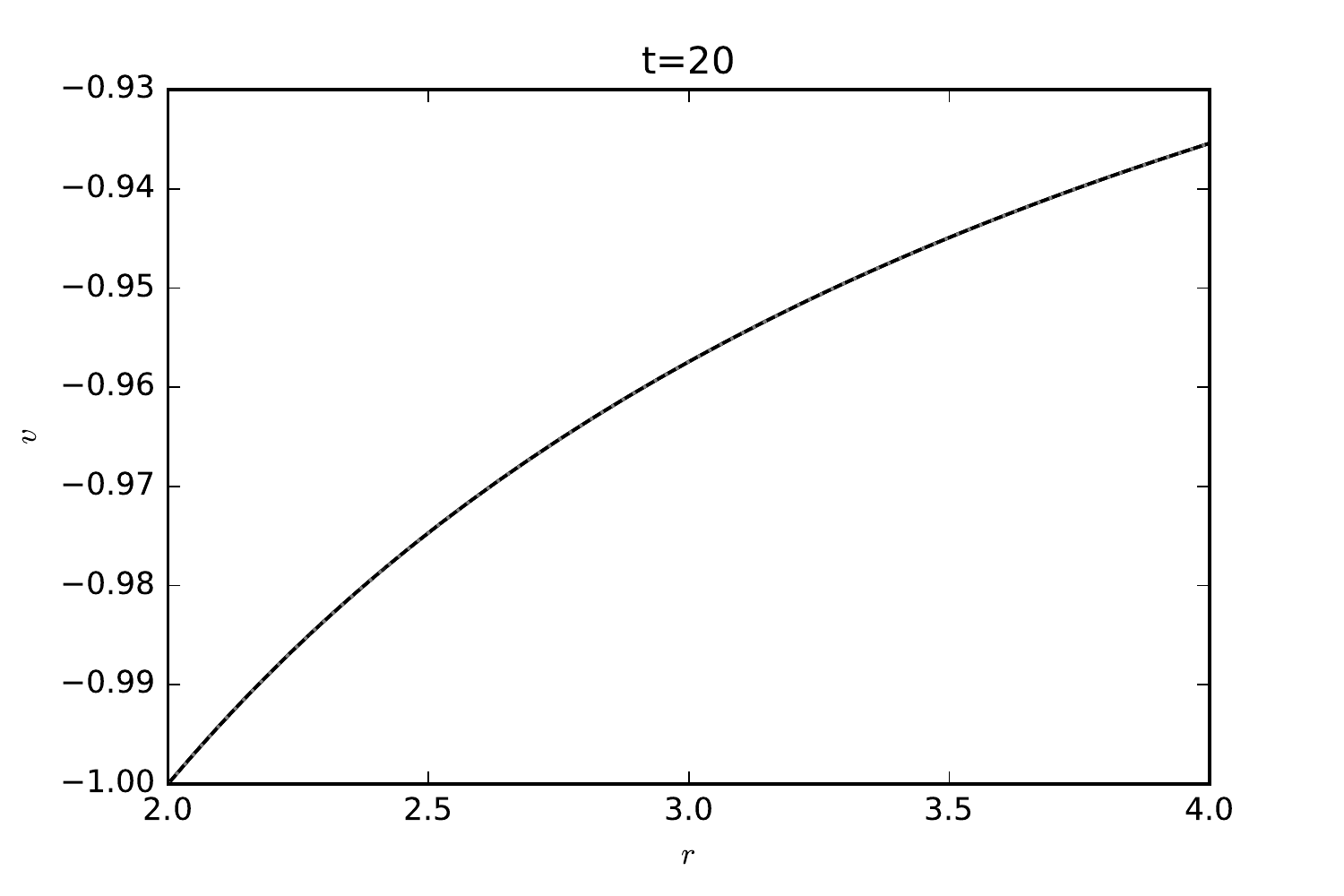,width= 2.5in} 
\end{minipage}
\hspace{0.1in}
\begin{minipage}[t]{0.3\linewidth}
\centering
\epsfig{figure=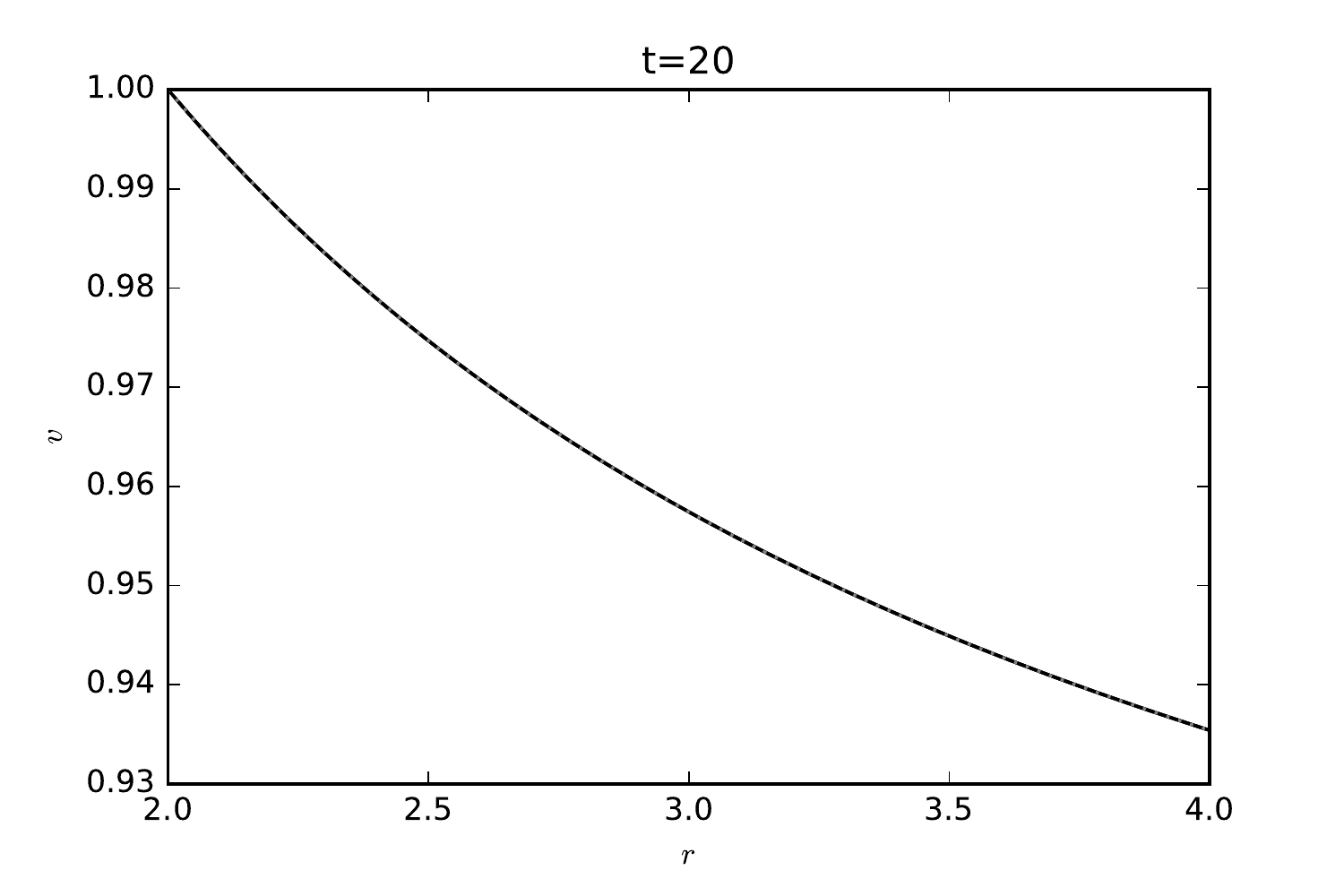,width=2.5in} 
\end{minipage}
\hspace{0.1in}
\begin{minipage}[t]{0.3\linewidth}
\centering
\epsfig{figure=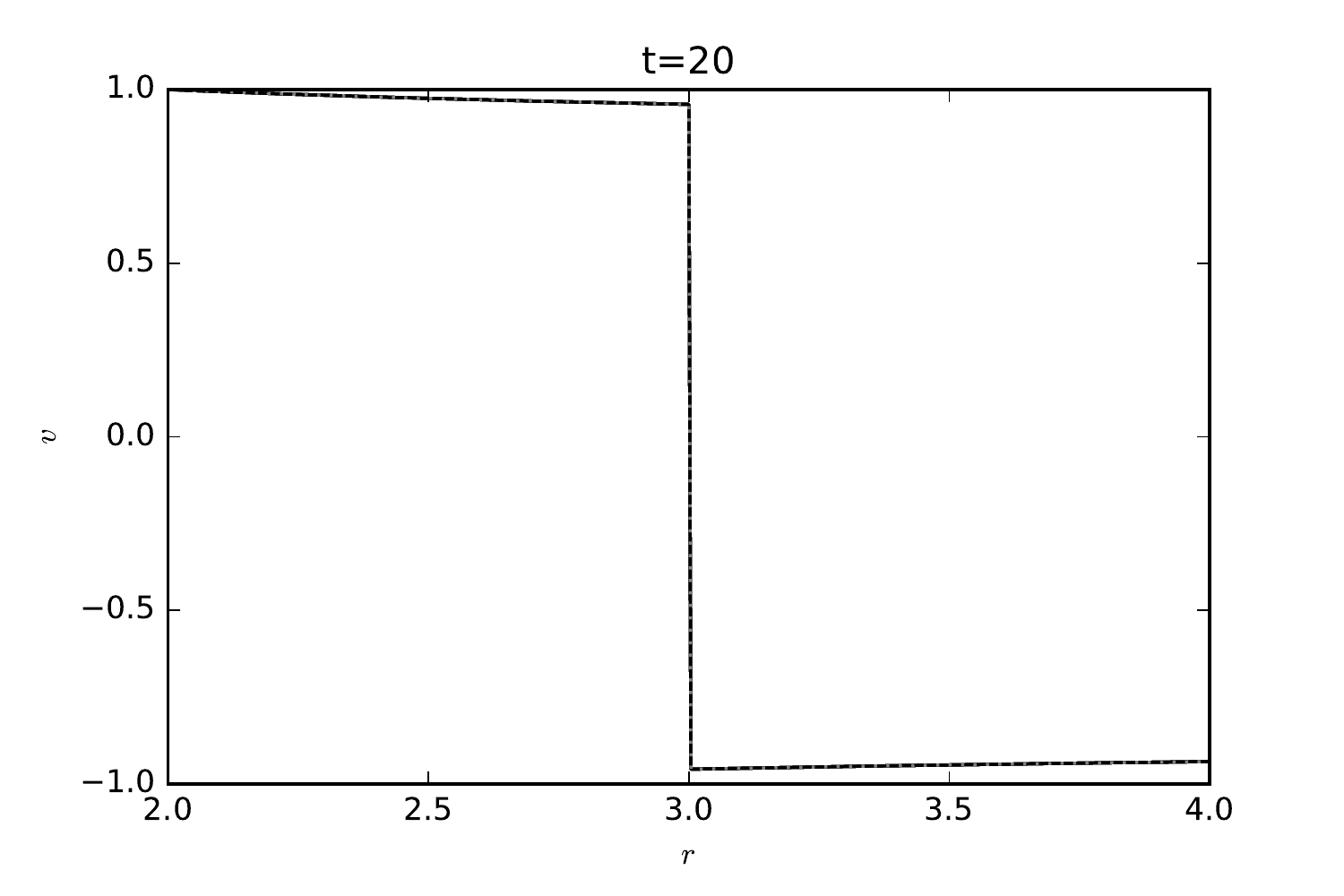,width=2.5in} 
\end{minipage}
\centering
\caption{Solution at time $t=20$ from a steady state initial data, using the Glimm  scheme}
\label{FIG-62} 
\end{figure}

\paragraph{Different types of shocks}

 We consider two different shocks whose initial speed are positive and negative. As was observed by the finite volume method, whether the position of the shock will go toward the blackhole horizon is determined uniquely by their initial behavior. We can recover the same conclusion with the Glimm method. Again, we take two kinds of initial data: 
$$
v= \begin{cases}
\sqrt{{1\over 2}+{1\over r}}, & 2.0<r <2.5, \\ 
\sqrt{2\over r}, & r >2.5, 
\end{cases}
\qquad \qquad 
v= \begin{cases}
- \sqrt{2\over r}, & 2.0<r <2.5, \\ 
\sqrt{{3\over 4}+{1\over 4 r}},  & r >2.5. 
\end{cases}
$$
Since our Riemann solver is exact, the numerical solutions contain no numerical diffusion. 
\begin{figure}[!htb] 
\centering
\begin{minipage}[t]{0.3\linewidth}
\centering
\epsfig{figure=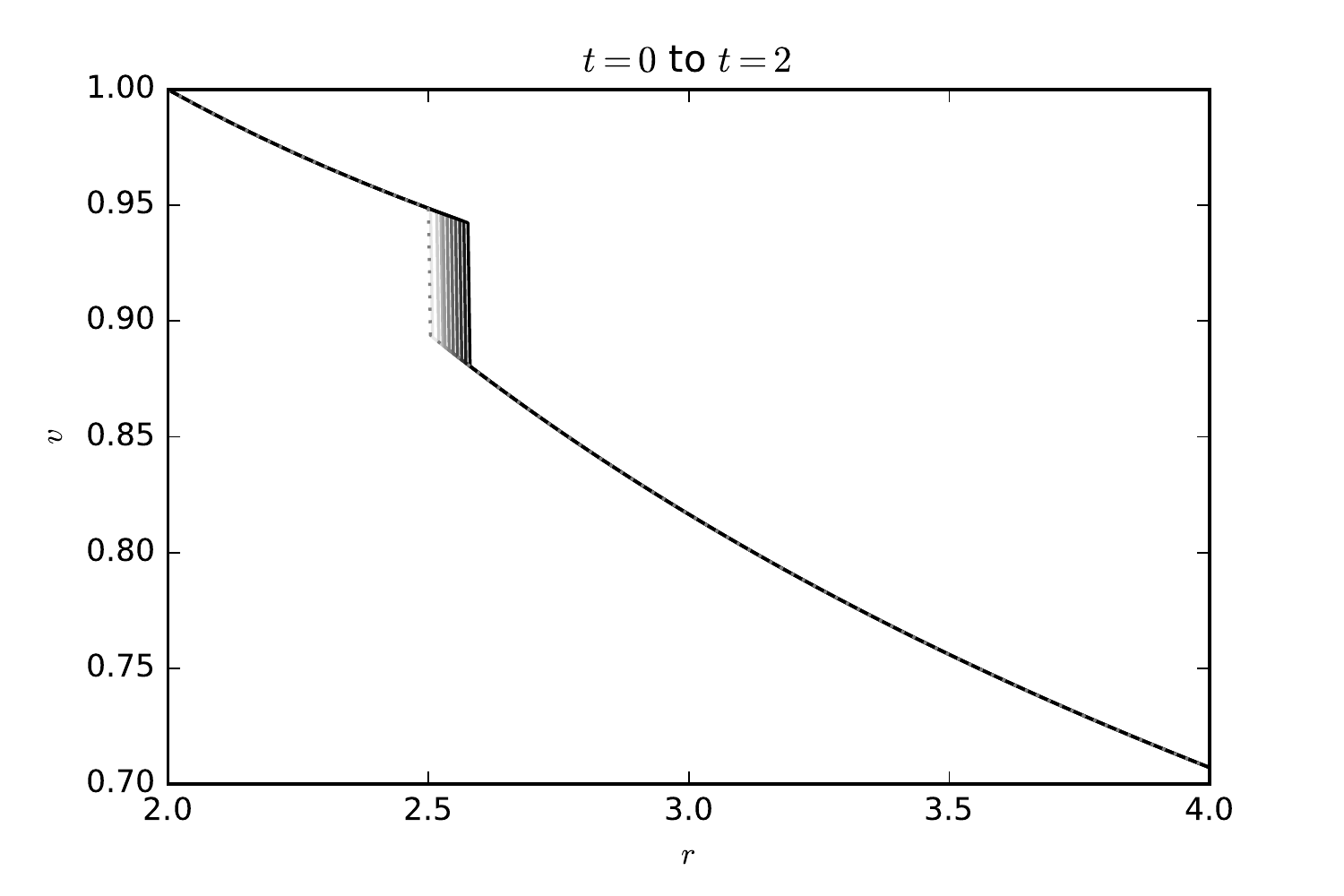,width= 2.5in} 
\end{minipage}
\hspace{0.1in}
\begin{minipage}[t]{0.3\linewidth}
\centering
\epsfig{figure=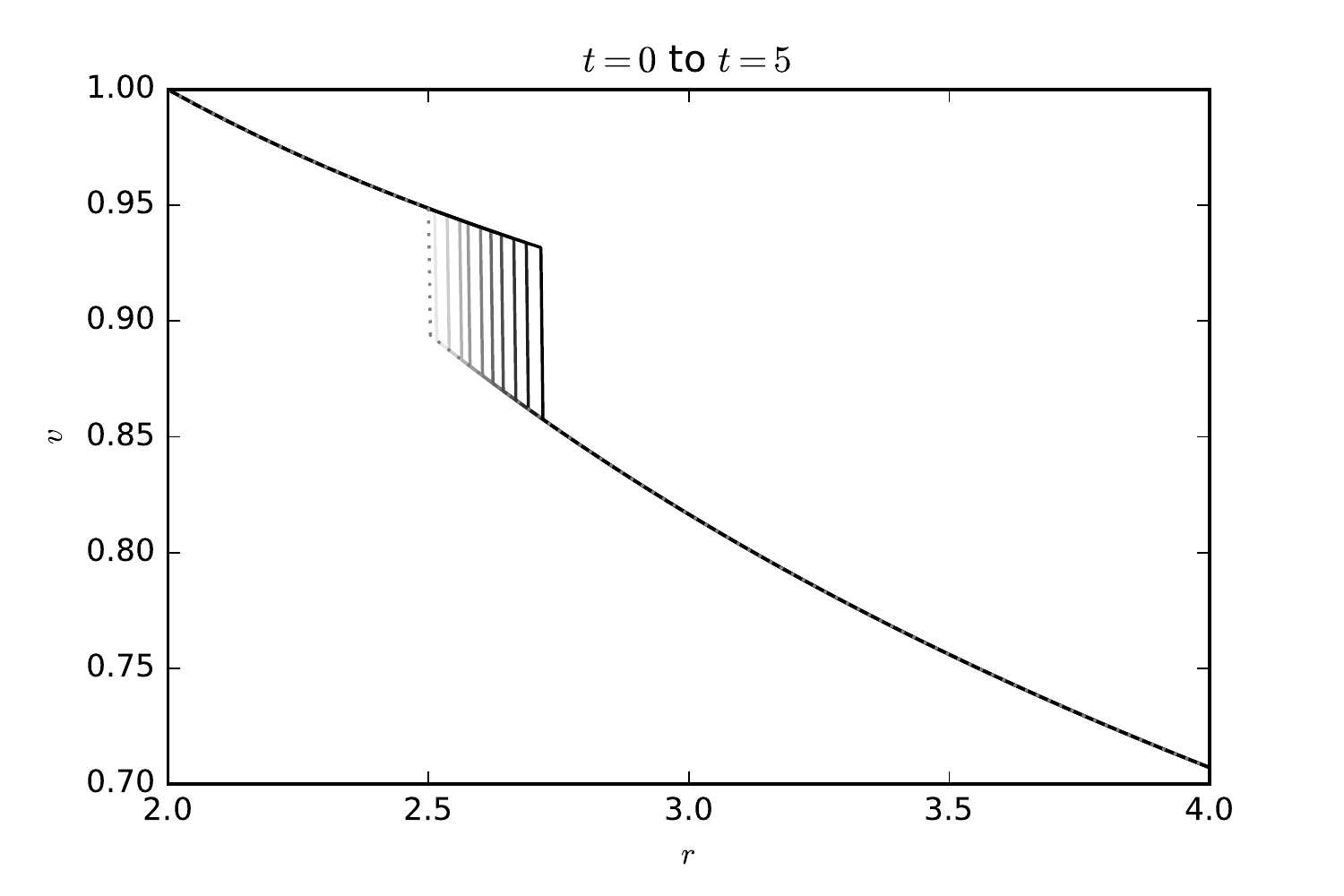,width=2.5in} 
\end{minipage}
\hspace{0.1in}
\begin{minipage}[t]{0.3\linewidth}
\centering
\epsfig{figure=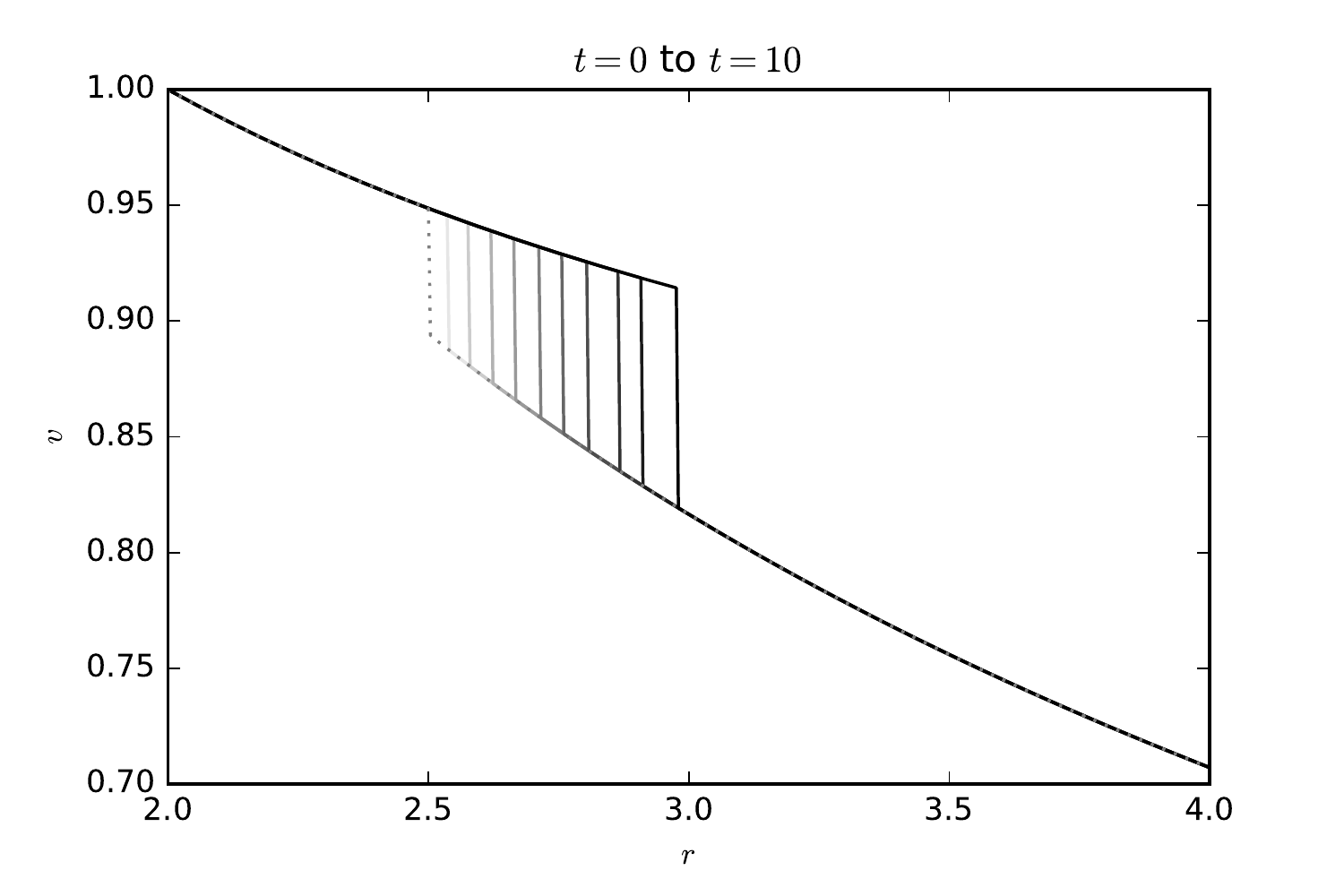,width=2.5in} 
\end{minipage}
\centering
\caption{Static solution with a right-moving shock computed by the Glimm  scheme}
\label{FIG-63} 
\end{figure}

\begin{figure}[!htb] 
\centering
\begin{minipage}[t]{0.3\linewidth}
\centering
\epsfig{figure=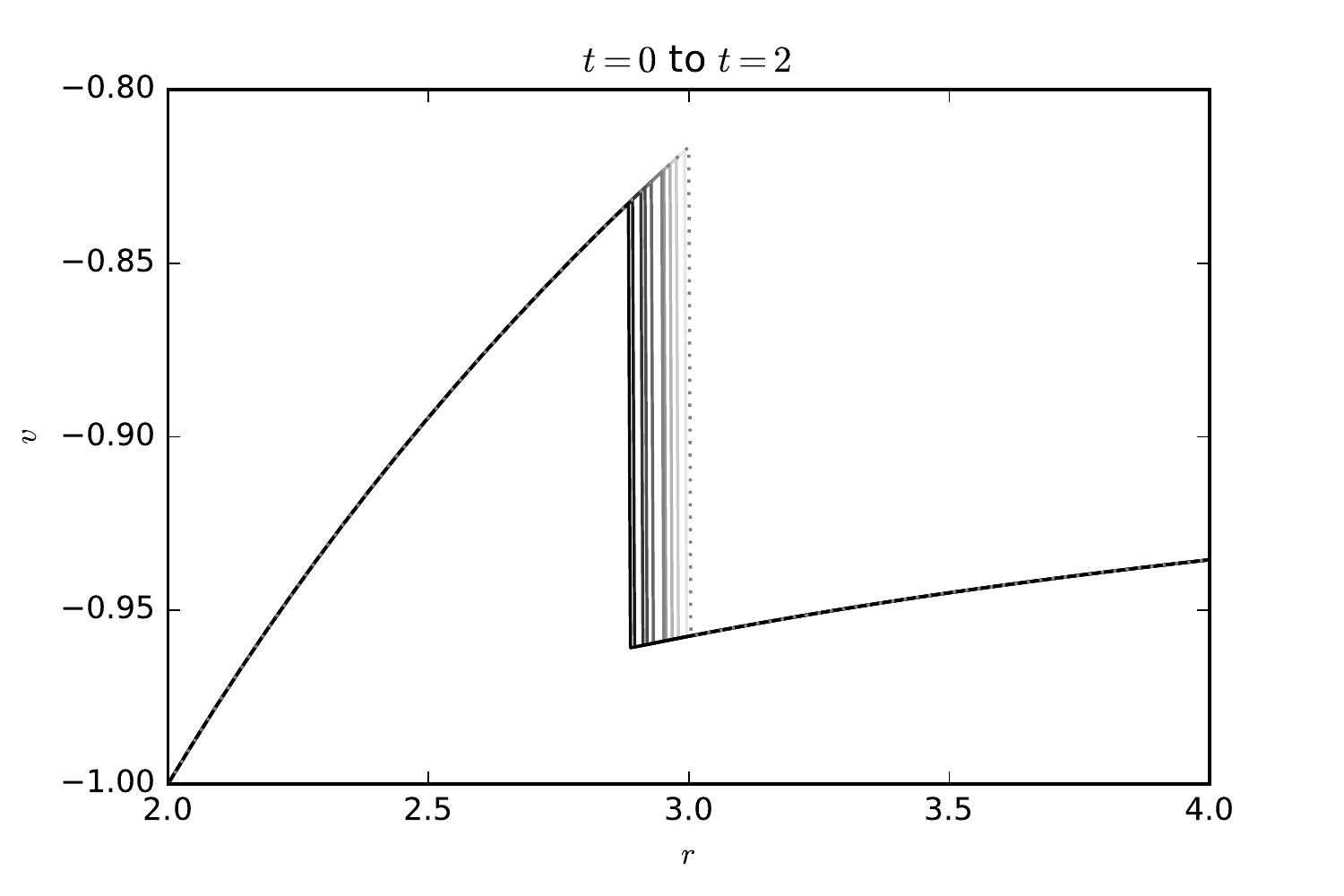,width= 2.5in} 
\end{minipage}
\hspace{0.1in}
\begin{minipage}[t]{0.3\linewidth}
\centering
\epsfig{figure=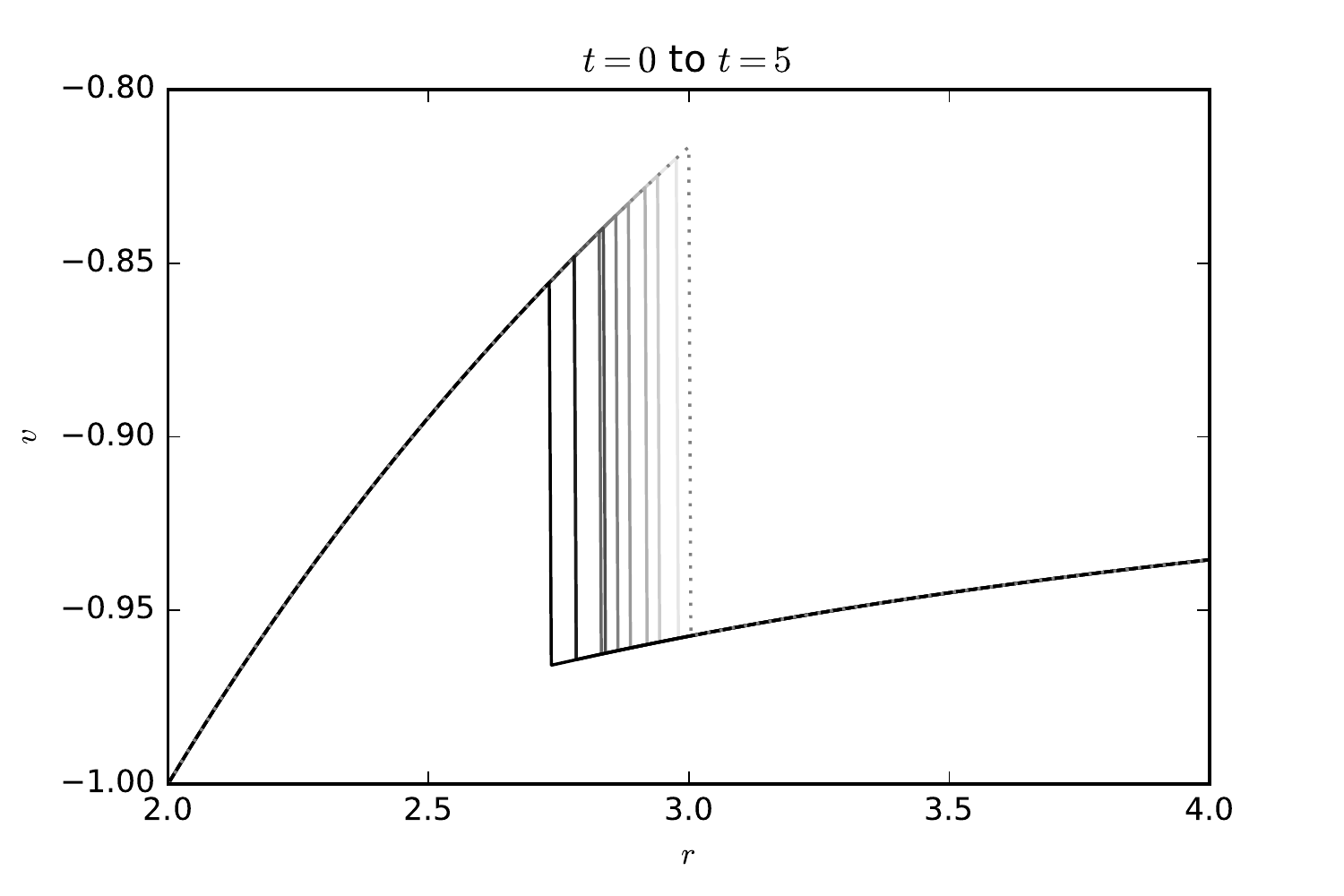,width=2.5in} 
\end{minipage}
\hspace{0.1in}
\begin{minipage}[t]{0.3\linewidth}
\centering
\epsfig{figure=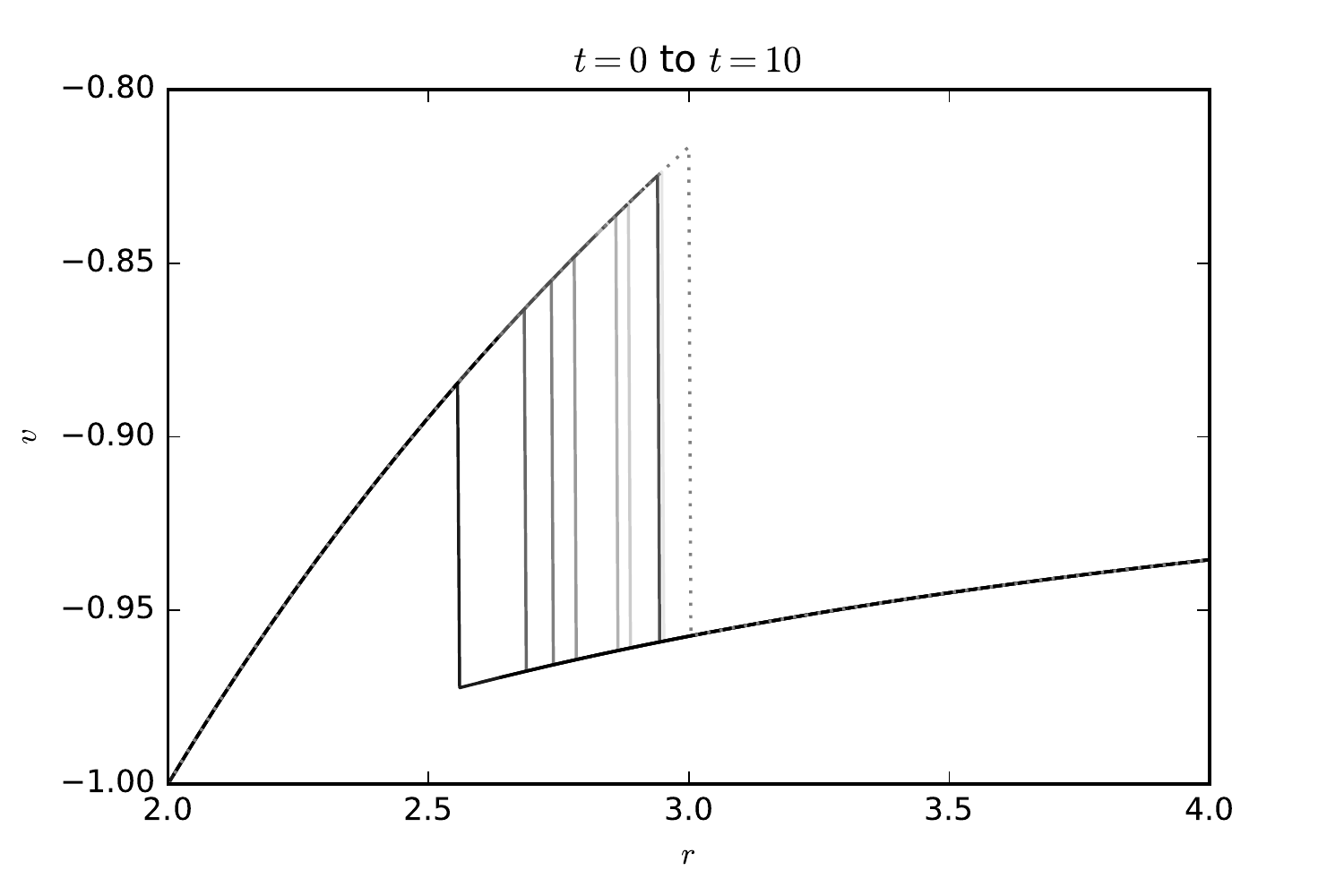,width=2.5in} 
\end{minipage}
\centering
\caption{Static solution with a left-moving shock computed by the Glimm  scheme}
\label{FIG-64} 
\end{figure}


\paragraph{Asymptotic behavior of Burgers solutions}

We are now interested in the evolution of solutions whose initial data is given as piecewise steady state solution satisfying \eqref{static-Burgers}. As was done earlier, we take into account  
 two kinds of initial data:  
$$
 v= \sqrt{{1\over 2}+ {1\over r}}, \qquad v= \begin{cases}
\sqrt{{1\over 2}+{1\over r}} & 2.0<r <2.5, \\ 
\sqrt{2\over r} & r >2.5, 
\end{cases}, 
$$
perturbed by compactly supported functions.

\begin{figure}[!htb] 
\centering
\begin{minipage}[t]{0.3\linewidth}
\centering
\epsfig{figure=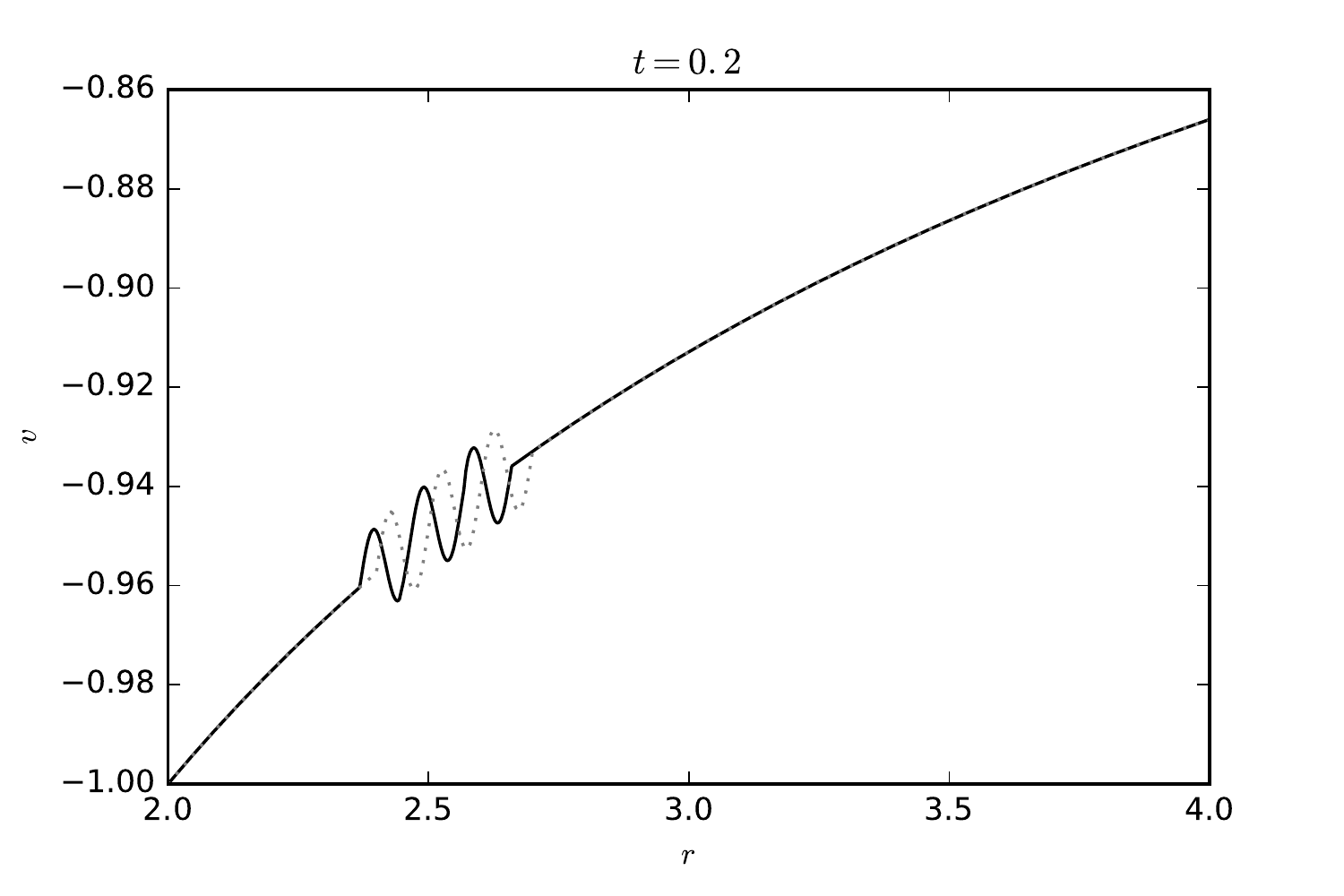,width=2.5in} 
\end{minipage}
\hspace{0.1in}
\begin{minipage}[t]{0.3\linewidth}
\centering
\epsfig{figure=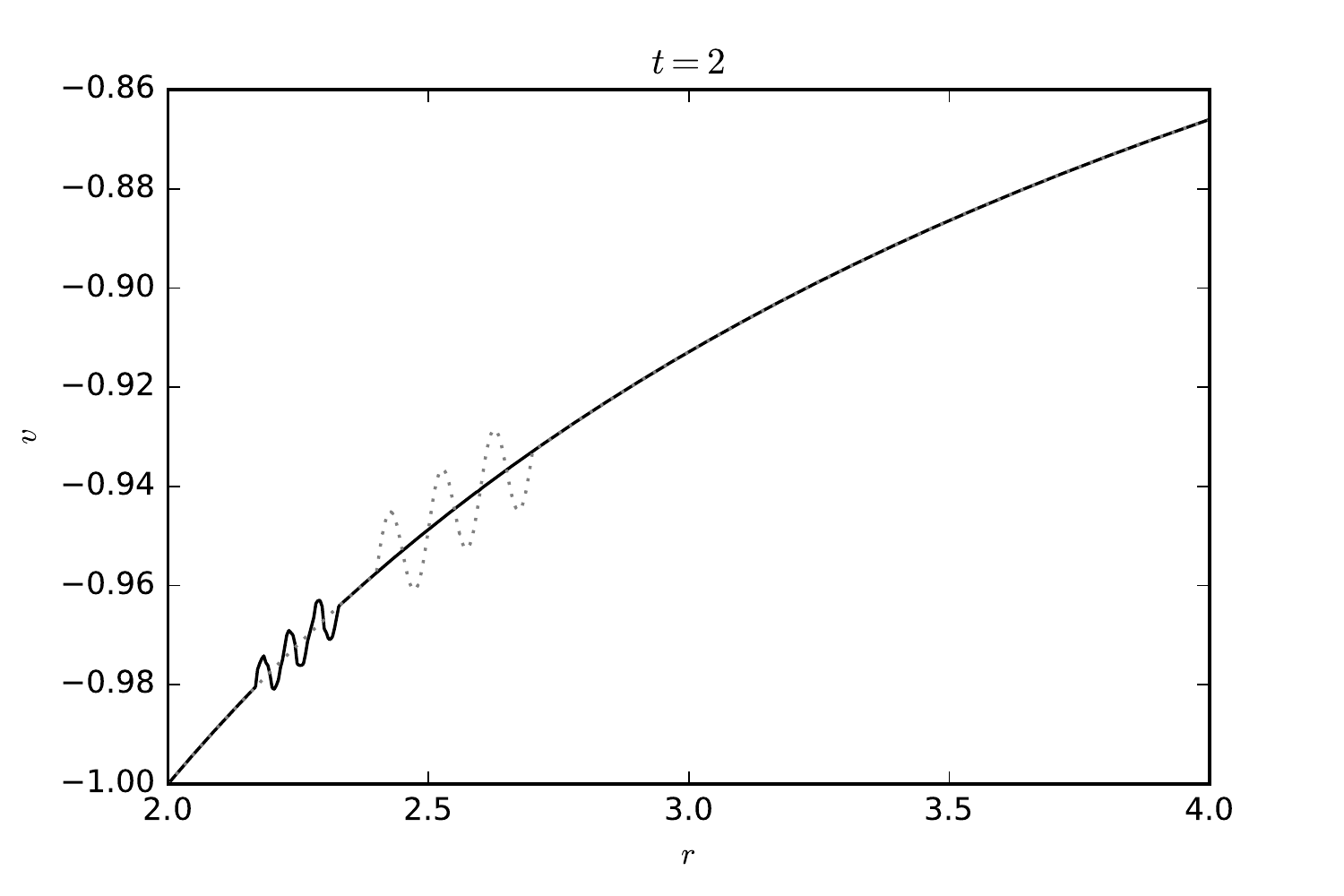,width=2.5in} 
\end{minipage}
\hspace{0.1in}
\begin{minipage}[t]{0.3\linewidth}
\centering
\epsfig{figure=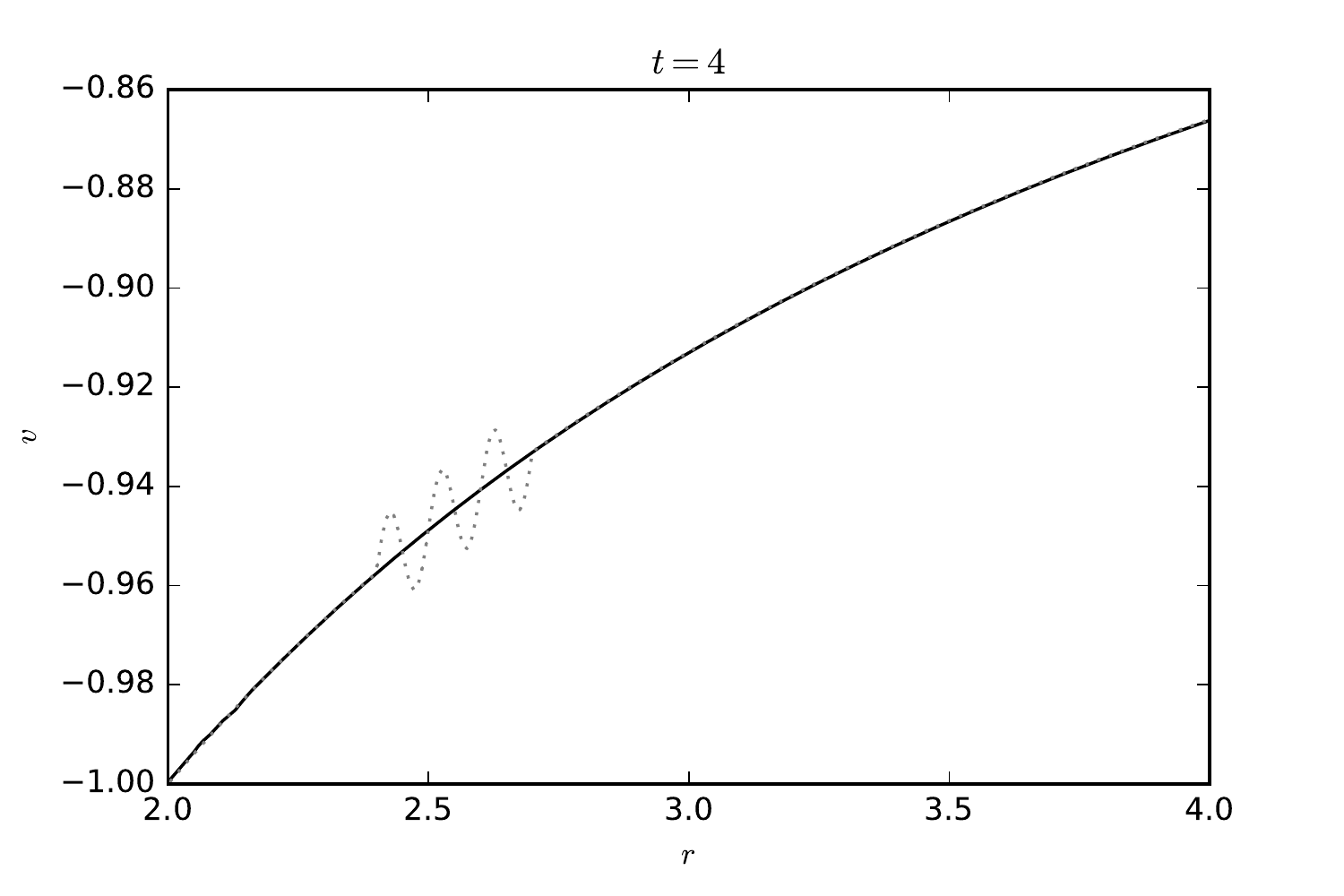,width=2.5in} 
\end{minipage}
\caption{Numerical solution from an initially perturbed steady state, using the Glimm method}
\label{FG-56}
\end{figure}

\begin{figure}[!htb] 
\centering
\hspace{0.1in}
\begin{minipage}[t]{0.3\linewidth}
\centering
\epsfig{figure=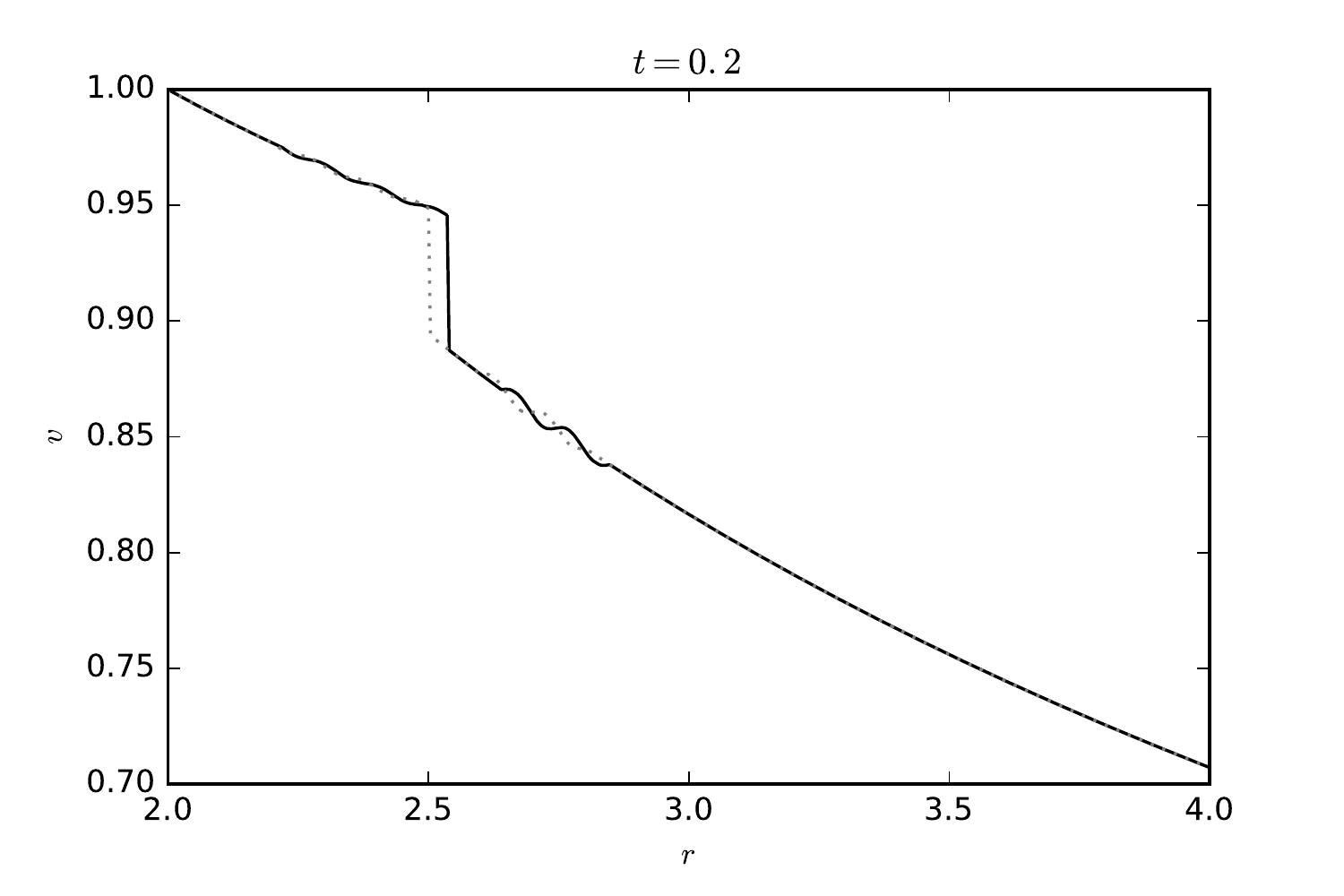,width=2.5in} 
\end{minipage}
\hspace{0.1in}
\begin{minipage}[t]{0.3\linewidth}
\centering
\epsfig{figure=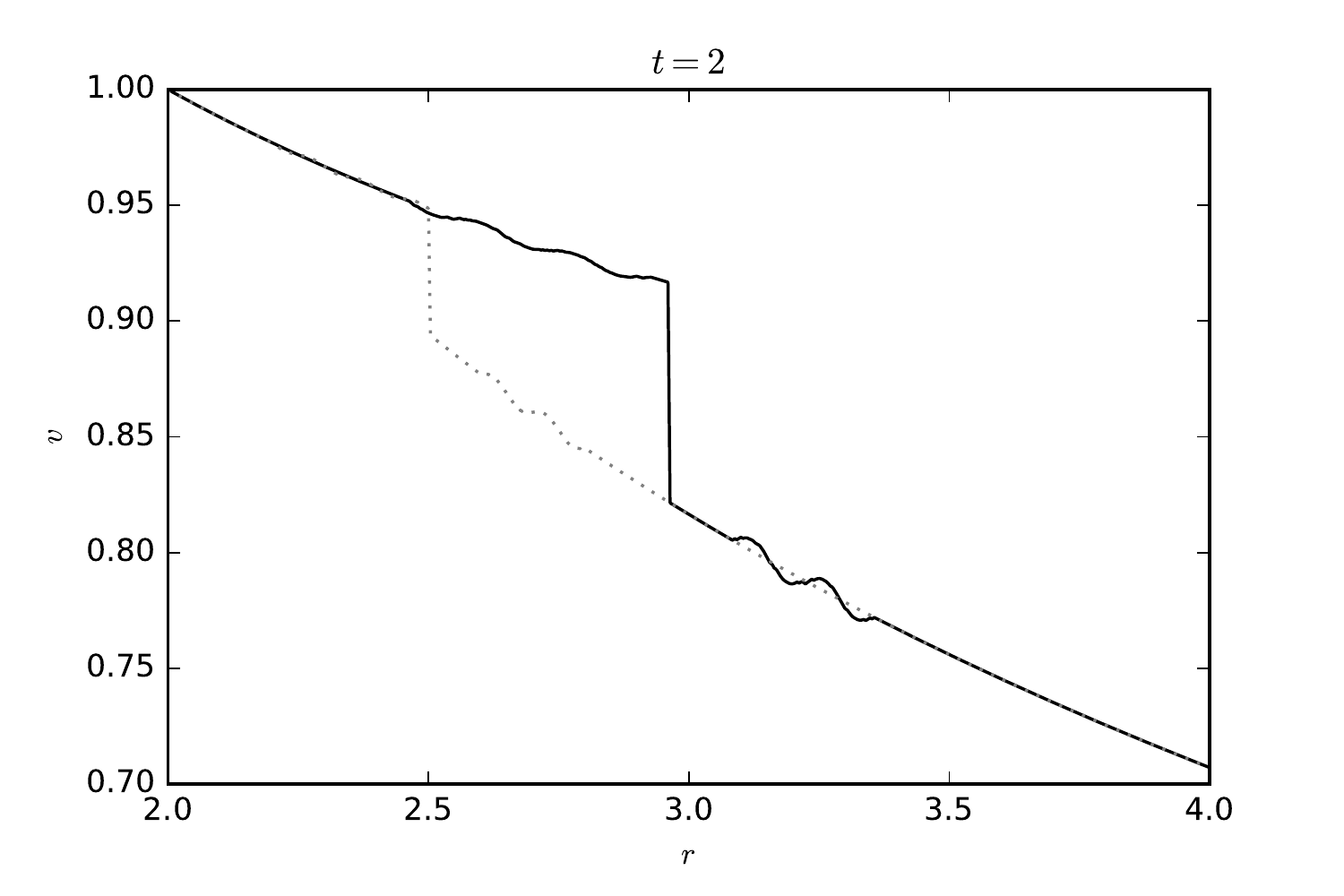,width=2.5in} 
\end{minipage}
\hspace{0.1in}
\begin{minipage}[t]{0.3\linewidth}
\centering
\epsfig{figure=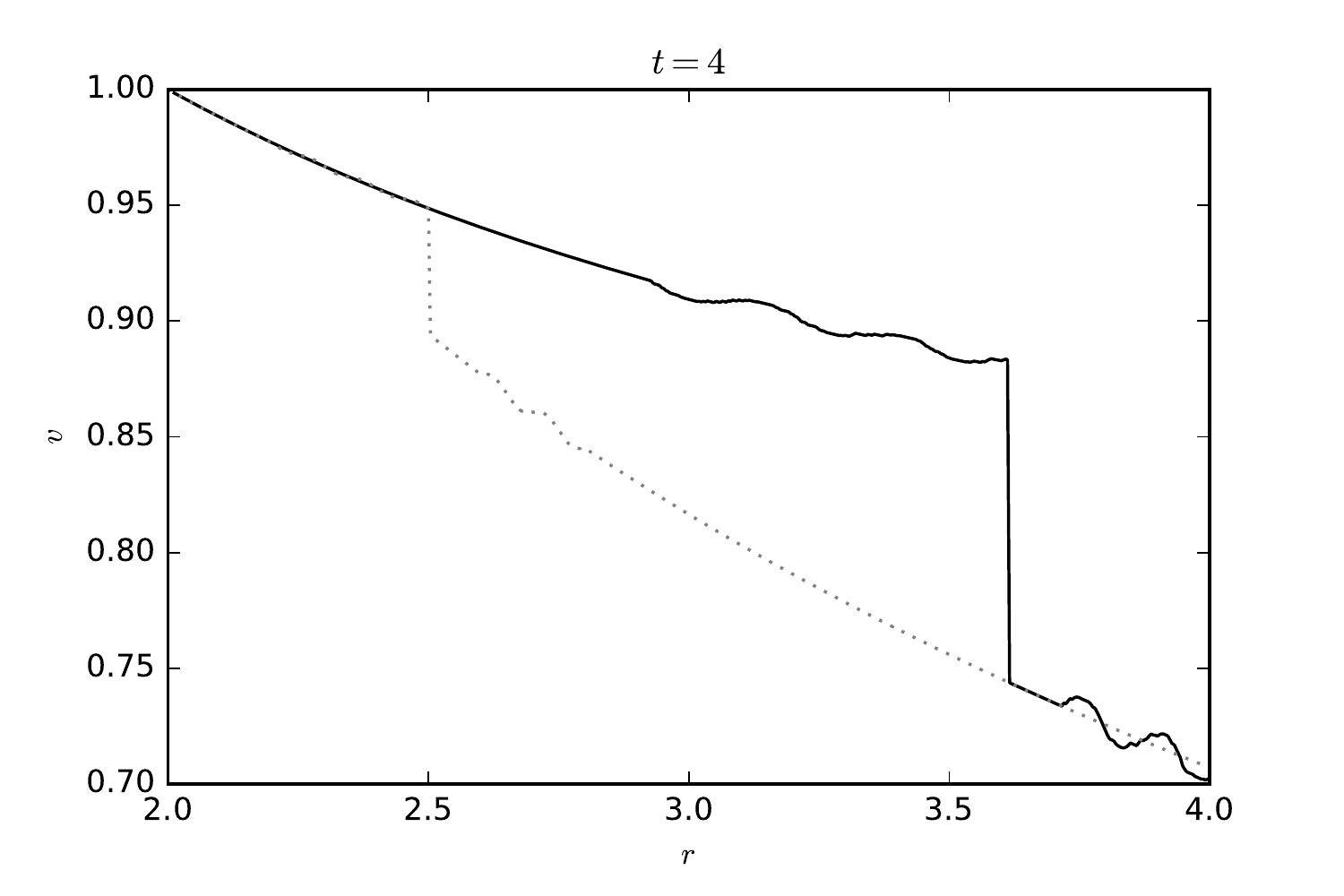,width=2.5in} 
\end{minipage}
\caption{Numerical solution from an initially perturbed shock, using the Glimm method}
\label{FIG-66} 
\end{figure}


\section{General initial data for the relativistic Burgers equation}
\label{Sec:7}

\paragraph{Steady shock with perturbation}

The behavior of a smooth steady state solution to the relativistic Burgers model \eqref{Burgers} perturbed by a function on a compactly supported function is understood both numerically and theoretically: the solution converge to the same initial steady state solution. The steady shock \eqref{Burgers-steady-shock} is  a solution to the static equation \eqref{static-Burgers} in the distribution sense. We are interested in the asymptotic behavior and our numerical results in Figures~\ref{FIG-81-0} and  \ref{FIG-81-1} lead us to the following.

\begin{conclusion}
Consider a perturbed steady shock given as \eqref{Burgers-steady-shock}:
$$
v_0=\begin{cases}
\sqrt {1- K^2 (1-2M/r)} &  2M<r <r_0, \\
 -\sqrt {1- K^2 (1-2M/r)}  & r>r_0, 
\end{cases}
$$
where $K$  is a given constant and $r_0>2M$ is fixed radius out of the Schwarzschild blackhole region. The solution to the relativistic Burgers model \eqref{Burgers} converges at some finite time to a solution of the form (with possibly $r_1\neq r_0$):
$$
v=\begin{cases}
\sqrt {1- K^2 (1-2M/r)} &  2M<r <r_1, \\
 -\sqrt {1- K^2 (1-2M/r)}  & r>r_1, 
\end{cases} 
$$
\end{conclusion}

\begin{figure}[!htb] 
\centering 
\begin{minipage}[t]{0.3\linewidth}
\centering
\epsfig{figure=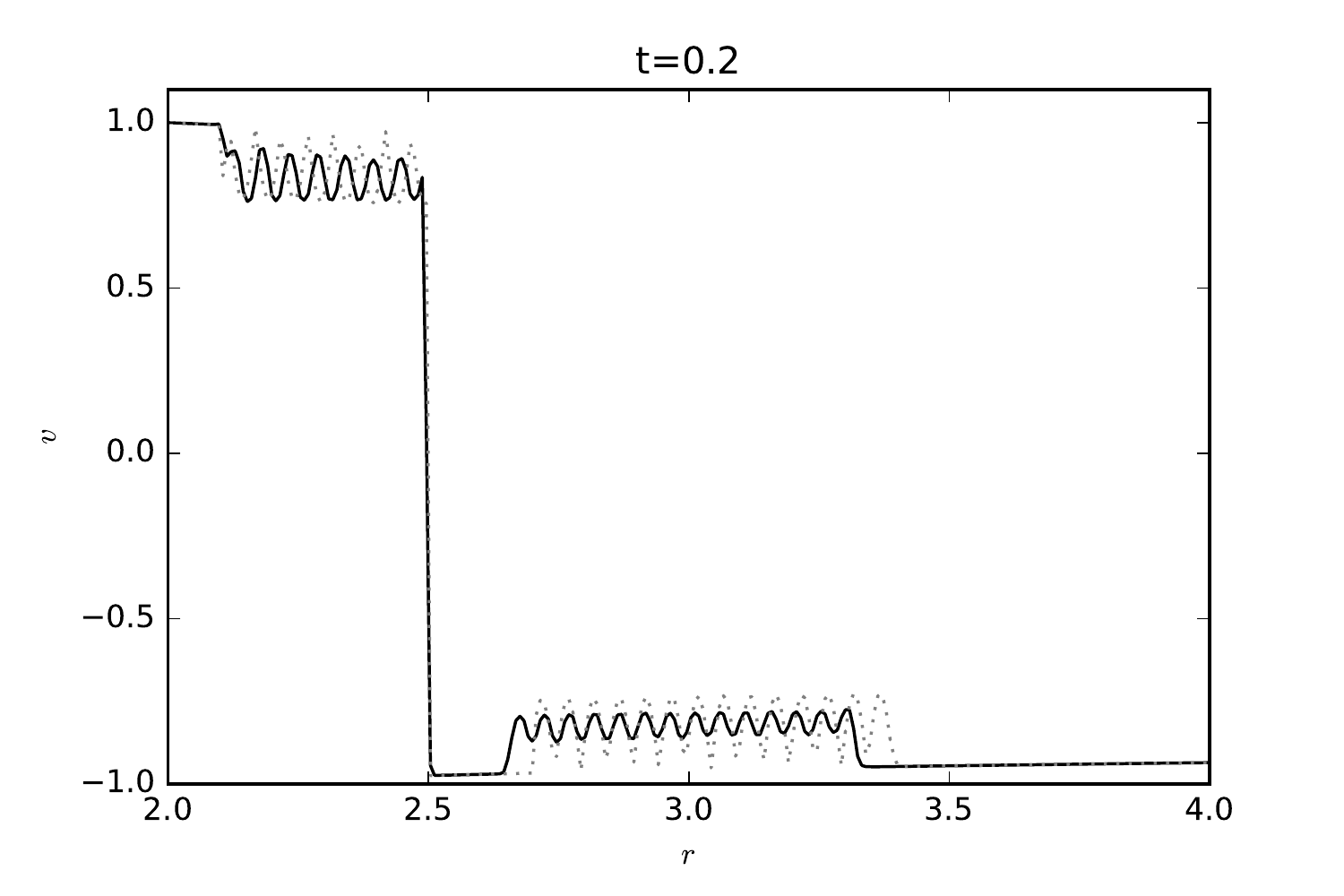,width= 2.5in} 
\end{minipage}
\hspace{0.1in}
\begin{minipage}[t]{0.3\linewidth}
\centering
\epsfig{figure=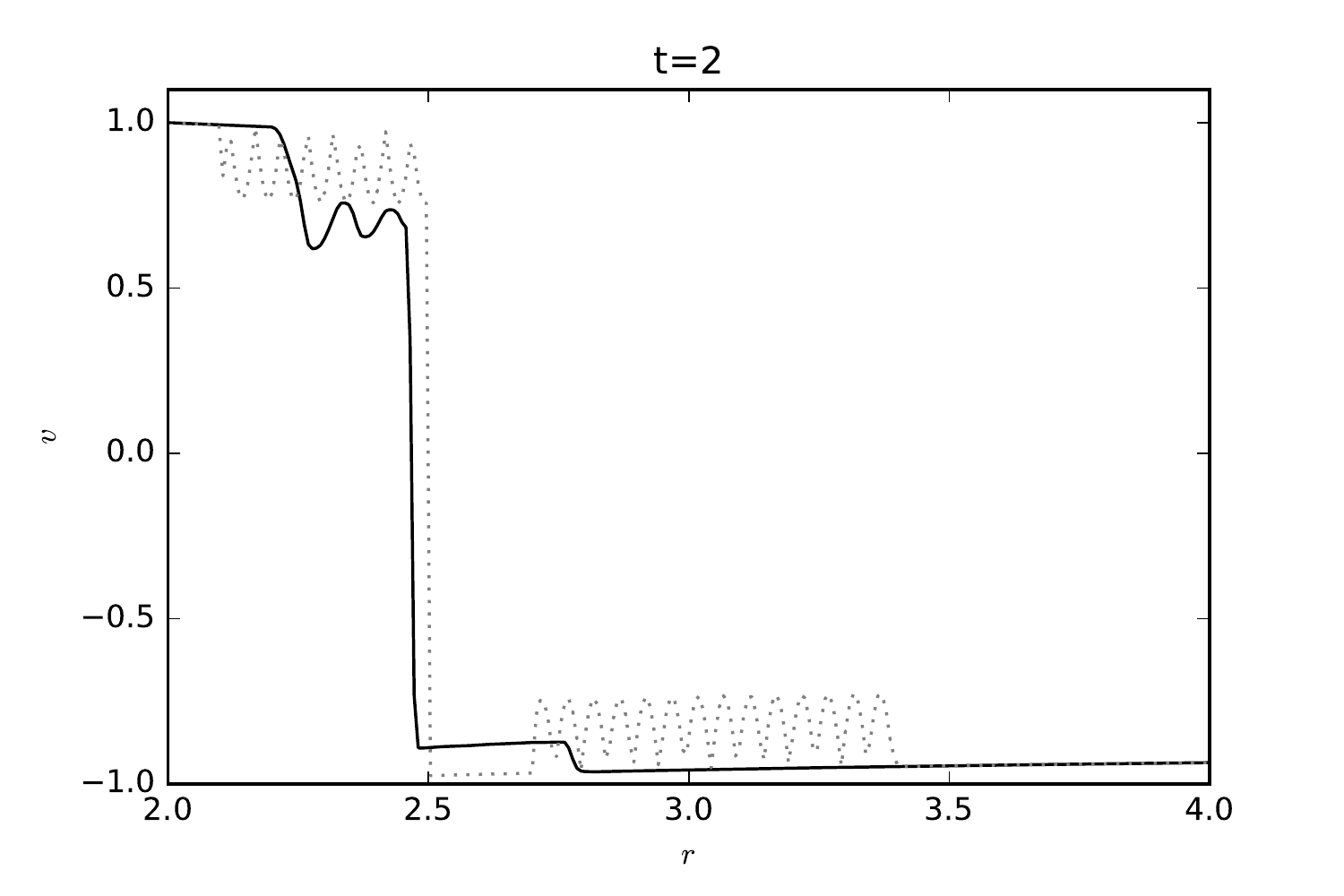,width=2.5in} 
\end{minipage}
\hspace{0.1in}
\begin{minipage}[t]{0.3\linewidth}
\centering
\epsfig{figure=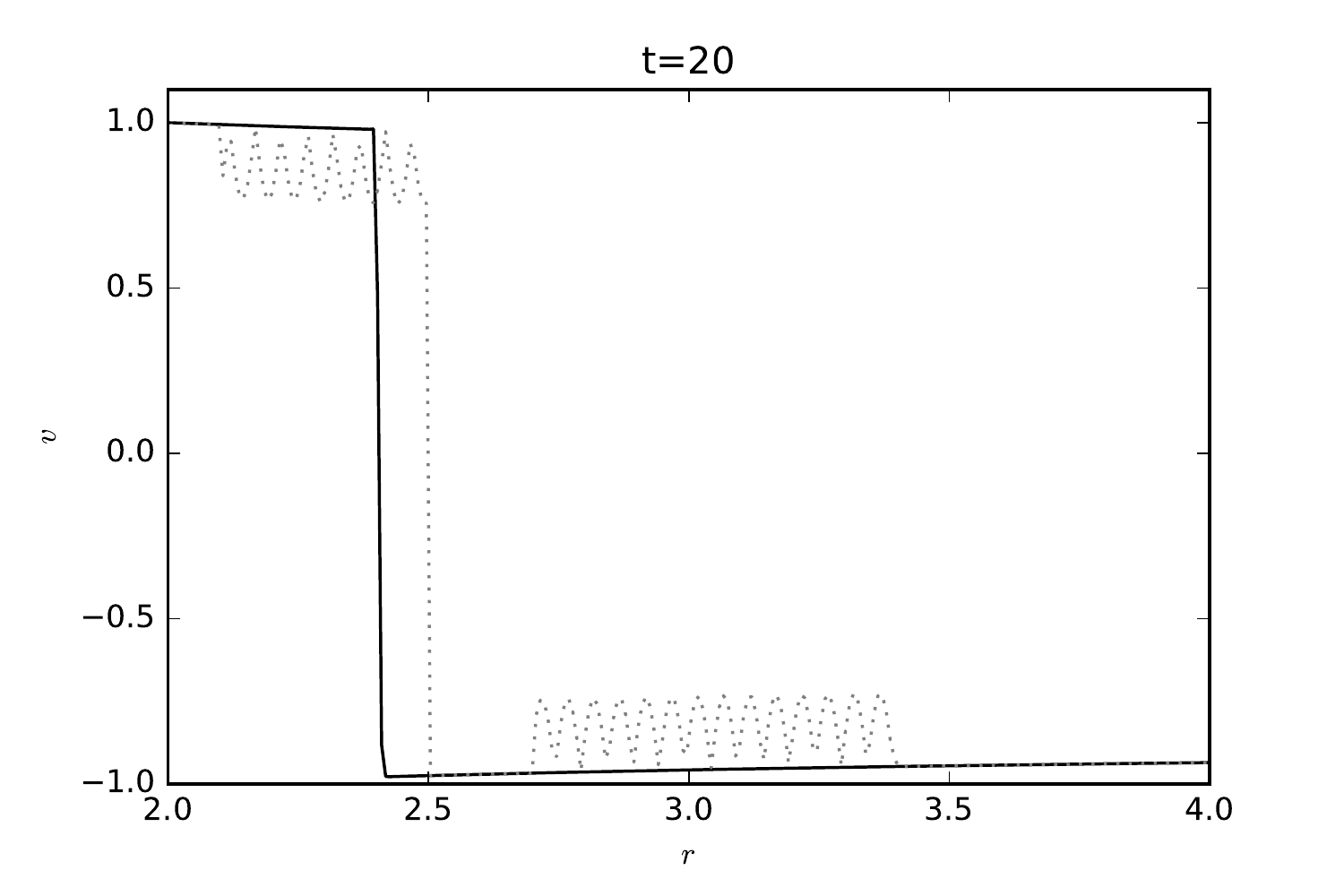,width=2.5in} 
\end{minipage}
\caption{Evolution of a perturbed steady shock, using the finite volume method}
\label{FIG-81-0} 
\end{figure}

\begin{figure}[!htb] 
\centering 
\begin{minipage}[t]{0.3\linewidth}
\centering
\epsfig{figure=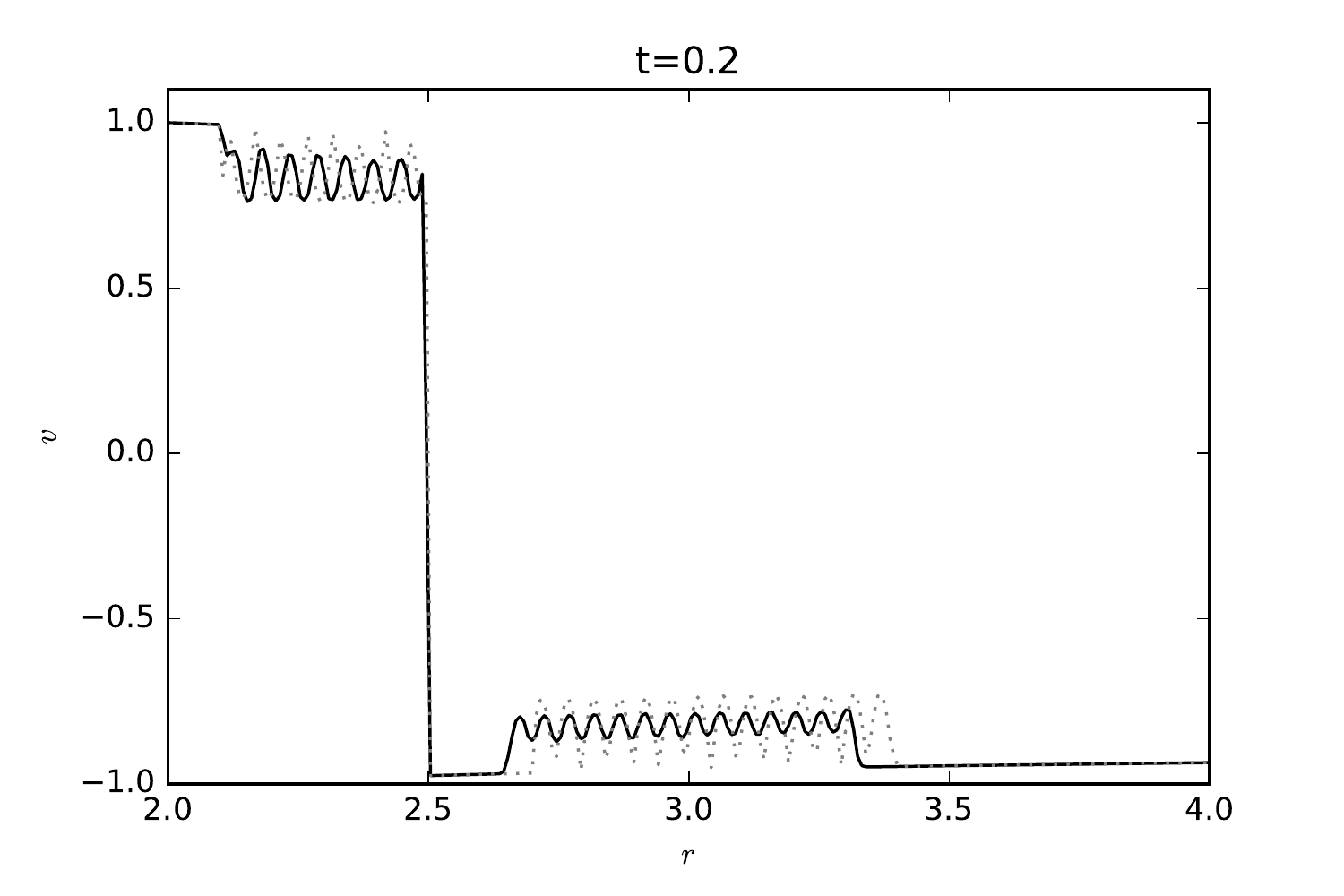,width= 2.5in} 
\end{minipage}
\hspace{0.1in}
\begin{minipage}[t]{0.3\linewidth}
\centering
\epsfig{figure=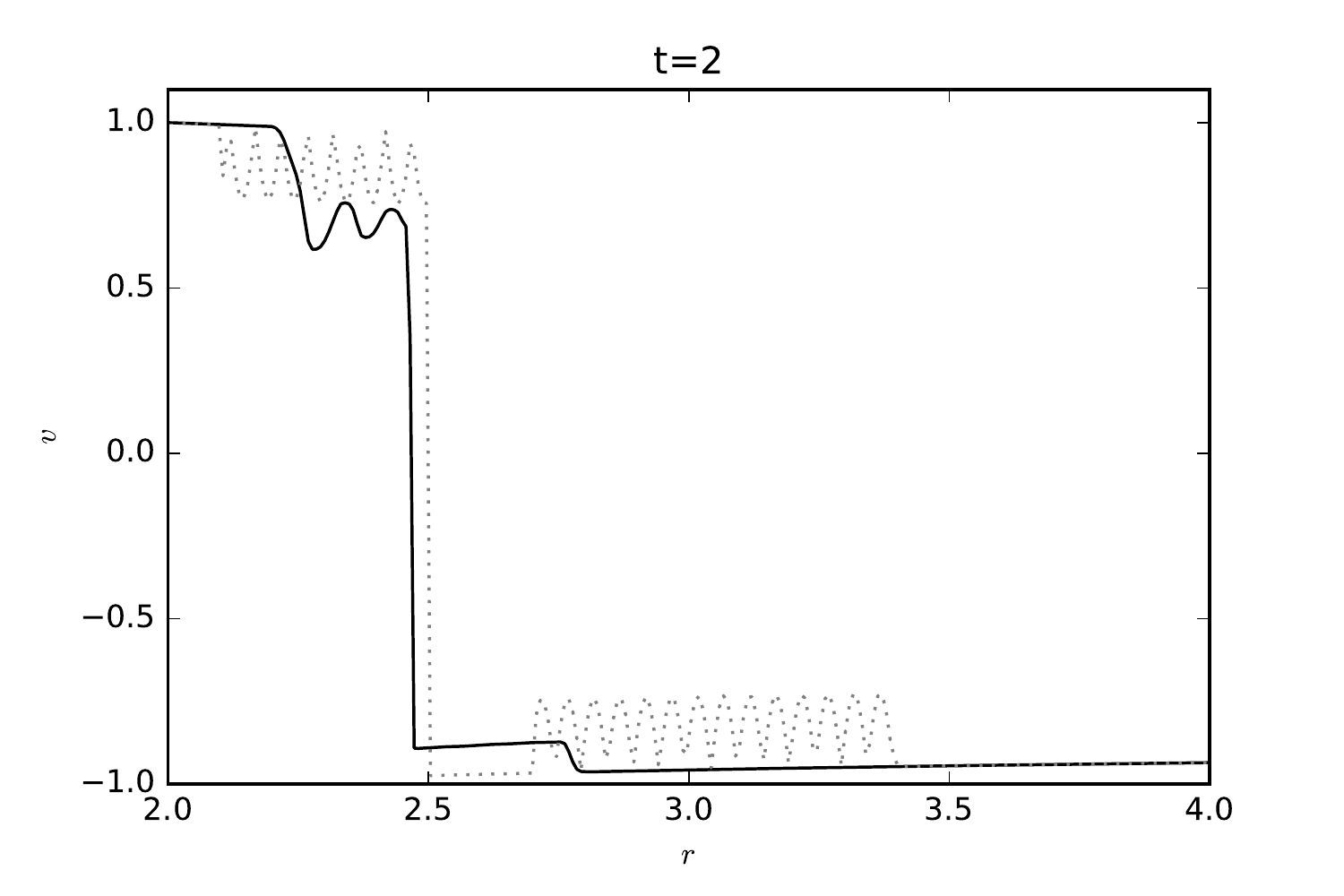,width=2.5in} 
\end{minipage}
\hspace{0.1in}
\begin{minipage}[t]{0.3\linewidth}
\centering
\epsfig{figure=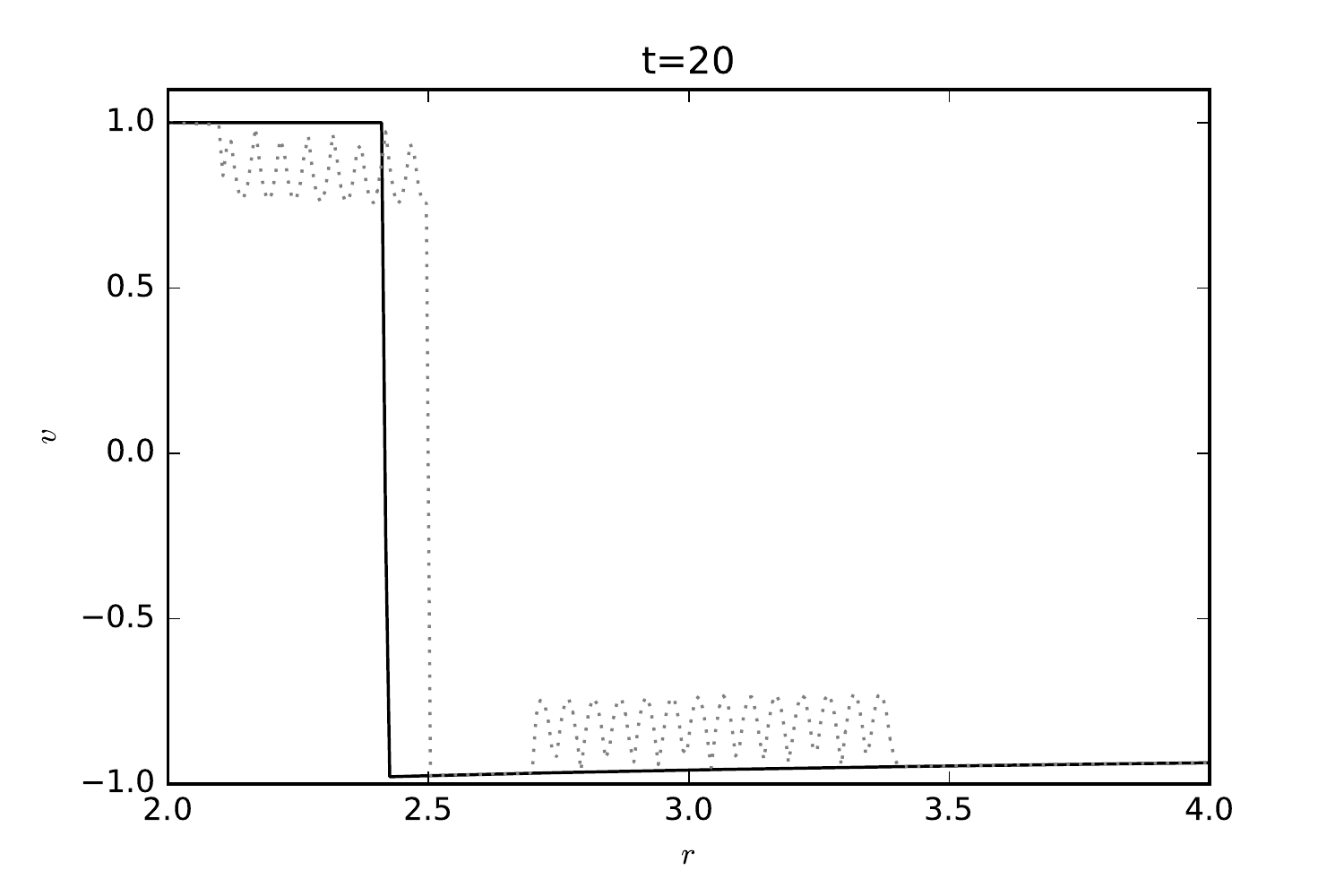,width=2.5in} 
\end{minipage}
\caption{Evolution of a perturbed steady shock, using the Glimm method}
\label{FIG-81-1} 
\end{figure}

\paragraph{Late-time behavior of general solutions}

It is obvious that the steady state solution satisfying \eqref{static-Burgers} serves as a solution to the relativistic Burgers equation on a Schwarzschild background. Notice that on the blackhole horizon $r=2M$, the steady state solution values the light speed, that is, either $1$ or $-1$, which equals exactly the light speed and obviously their boundary values will not change as time evolves.   The value of a steady state solution at infinity is also given explicitly. Observations on the numerical method shows that the asymptotic behavior of Burgers model \eqref{Burgers} is mainly determined by the values of the initial data at the blackhole horizon $r=2M$ and the space infinity $r=+\infty$. More precisely, suppose that a given velocity $v_0 = v_0 (r)$ does not satisfy the static Burgers equation \eqref{static-Burgers}, we have the following conclusion. 

\begin{conclusion}
\begin{enumerate}
\item If the initial velocity $\lim\limits_{r \to 2M} v_0(r)=1$, then the solution to the Burgers equation \eqref{Burgers} satisfies that there exists a time $t>t_0$ such that  for all $t>t_0$ the solution $v= v (t,r)$ is a shock with left-hand state $1$ and right-hand state $v_*^-$ with $v_*^- (r)=-  \sqrt  {2M \over r}$  the negative critical steady solution. 

\item If the initial velocity $\lim\limits_{r \to 2M} v_0 (r)   < 1$ and $\lim \limits_ {r\to +\infty} v_0(r)  >  0$,   there exists a time $t_0>0$ such that the solution to the Burgers equation $ v(t, r)=v_*^-(r)$ for all $t>t_0 $ where $v_*^-(r) =-  \sqrt  {2M \over r}$   is the negative critical steady state solution to the relativistic Burgers model. 

\item If the initial velocity  $\lim\limits_{r \to 2M} v_0(r)<  1$ and  $\lim \limits_ {r\to +\infty} v_0(r)  \leq 0$, then the solution to the relativistic Burgers model satisfies that   $ v(t, r)= - \sqrt {1-({1-v_ 0^\infty}^2)  (1-{2M \over r})}$  for $t>t_0$ for a time $t_0 >0$ where $0 \geq v_0^\infty = \lim \limits_ {r\to +\infty} v_0(r)$. 

\end{enumerate}
\end{conclusion}


\begin{figure}[!htb] 
\centering 
\begin{minipage}[t]{0.3\linewidth}
\centering
\epsfig{figure=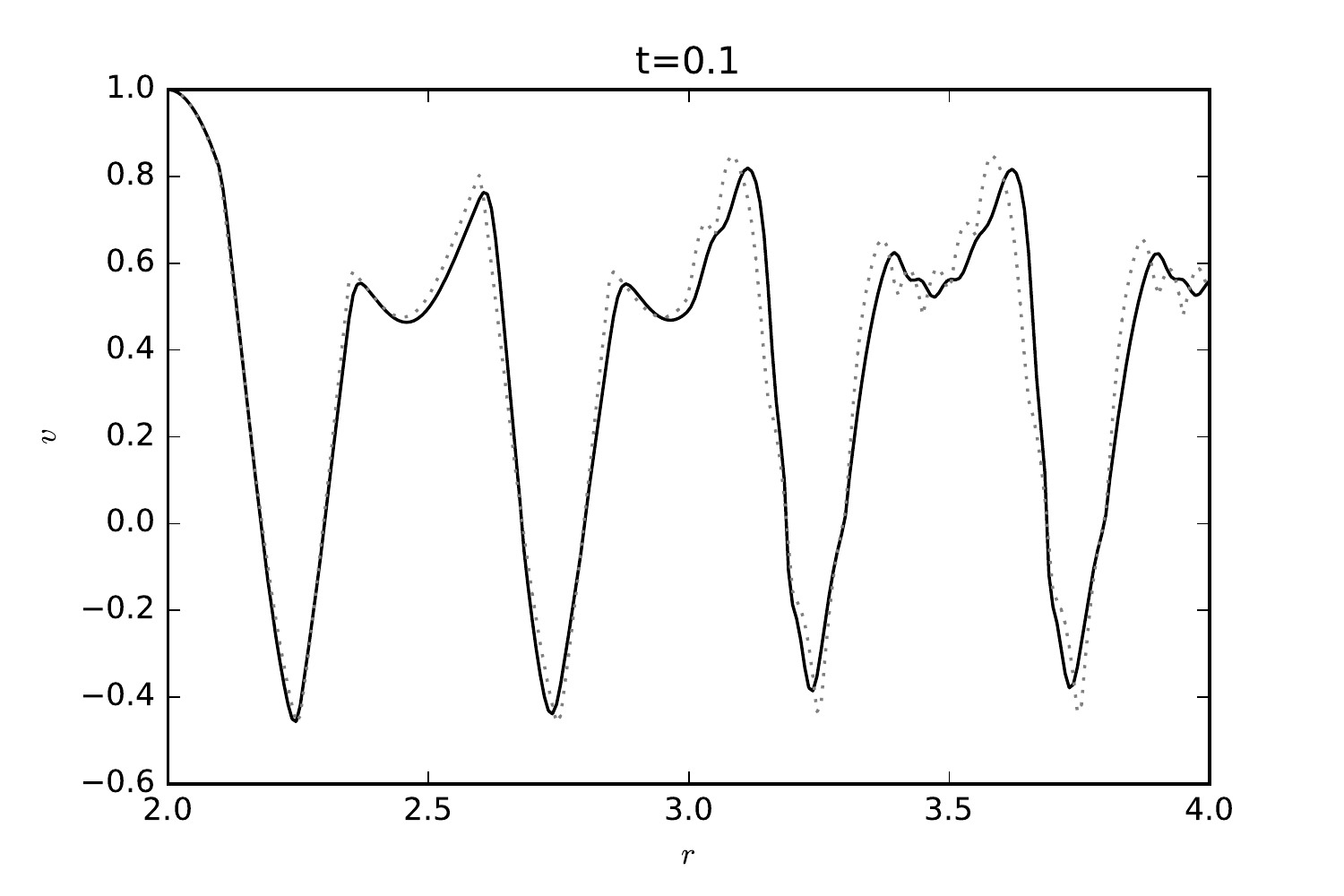,width=2.5in} 
\end{minipage}
\hspace{0.1in}
\begin{minipage}[t]{0.3\linewidth}
\centering
\epsfig{figure=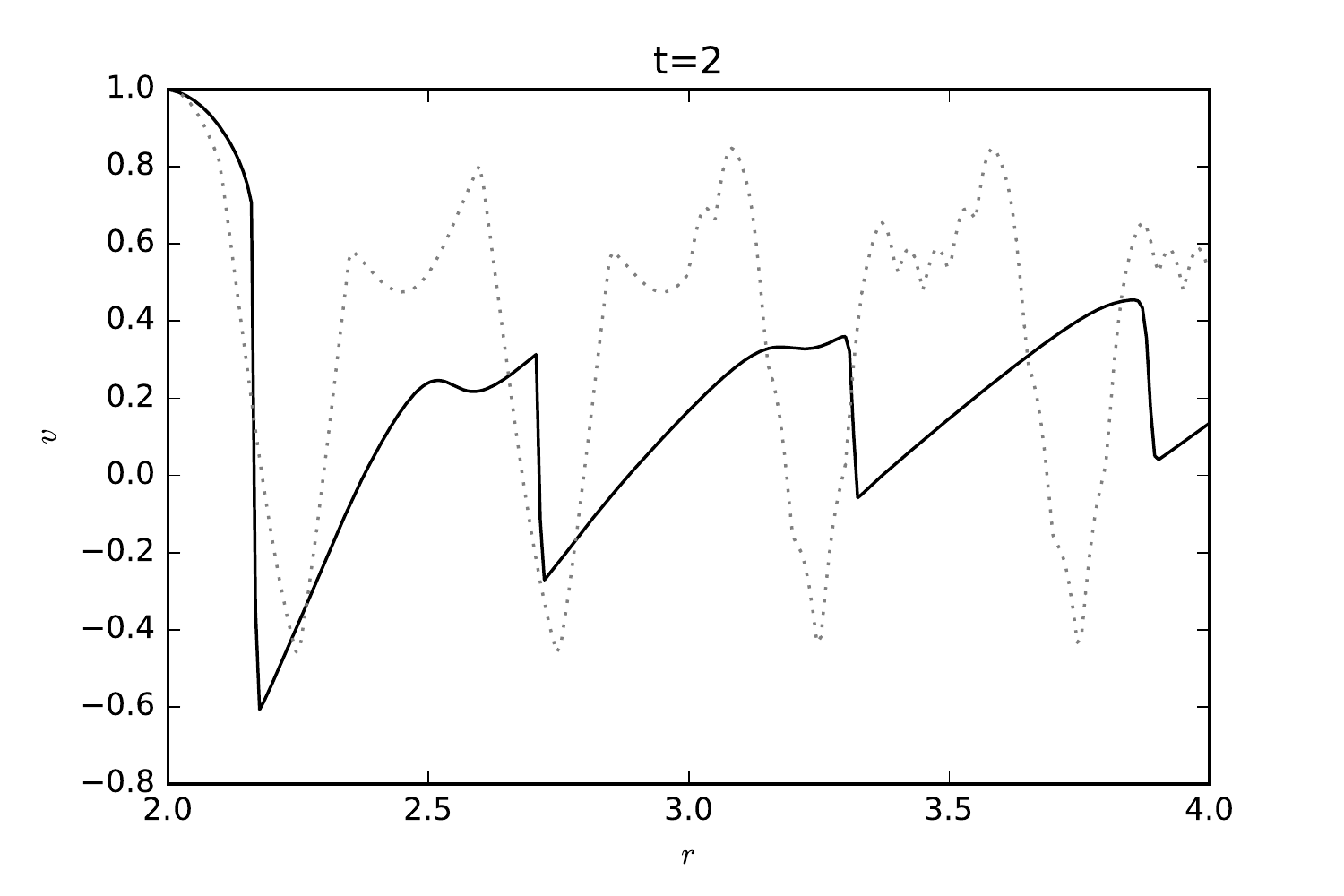,width=2.5in} 
\end{minipage}
\hspace{0.1in}
\begin{minipage}[t]{0.3\linewidth}
\centering
\epsfig{figure=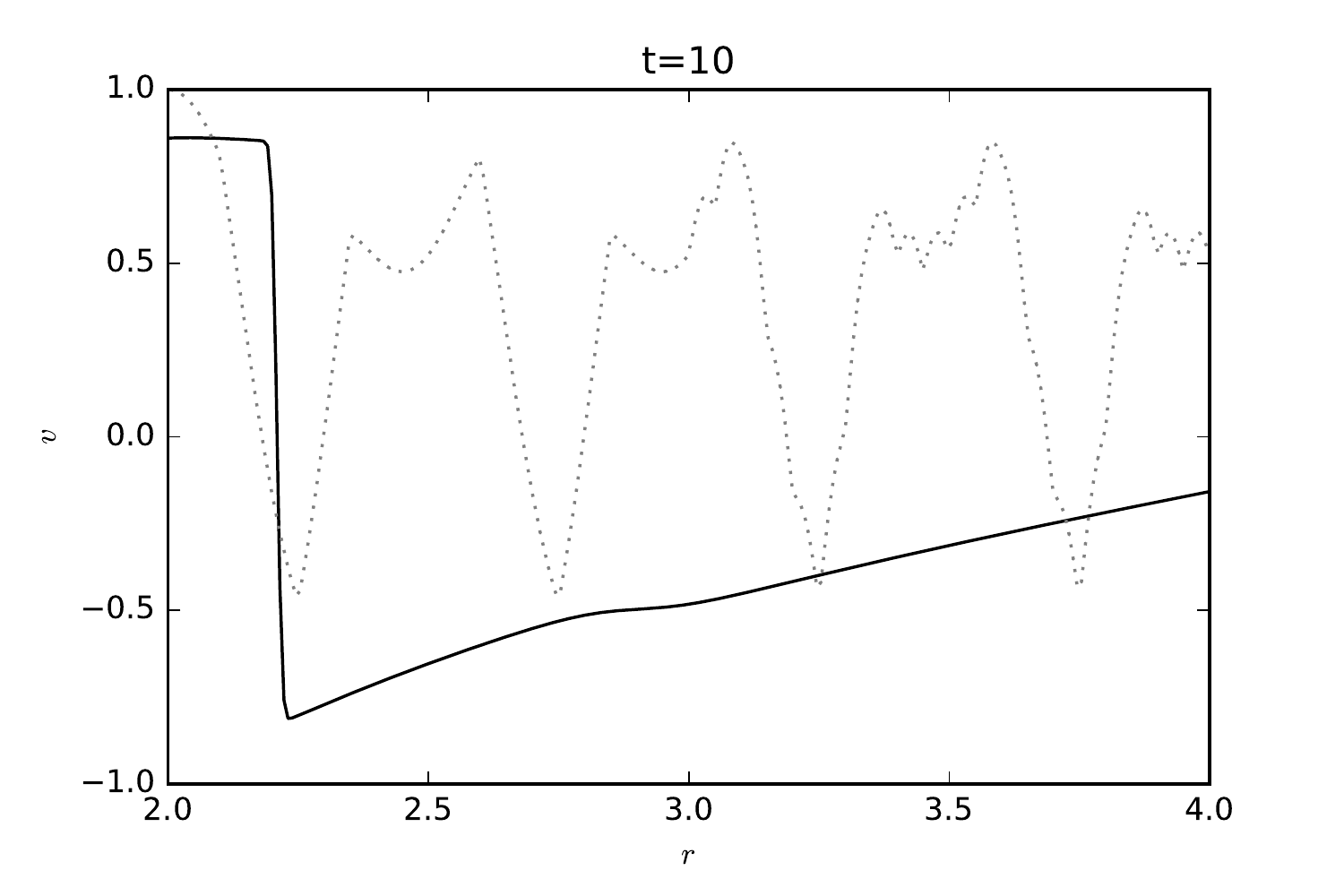,width=2.5in} 
\end{minipage}\\
\begin{minipage}[t]{0.3\linewidth}
\centering
\epsfig{figure=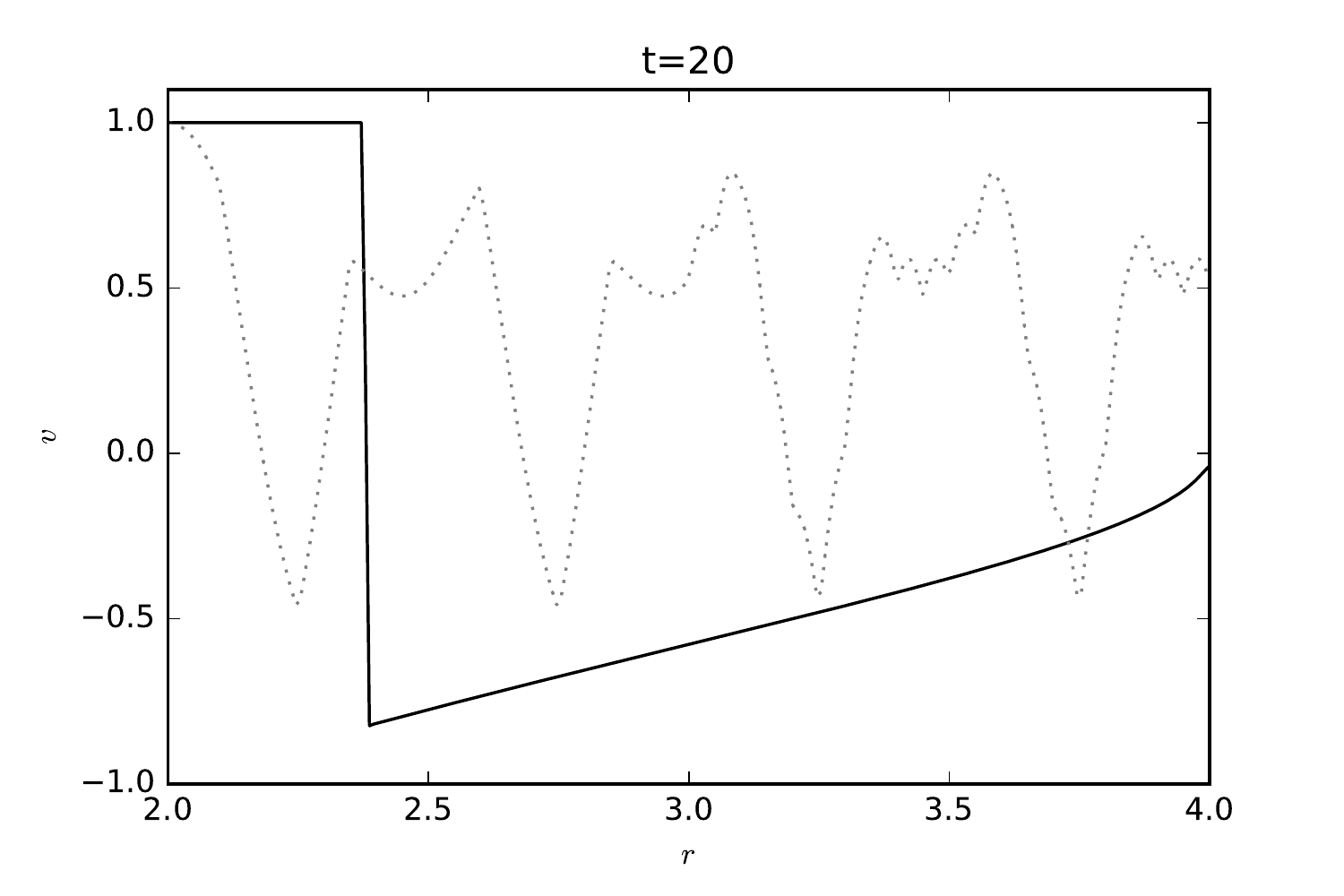,width=2.5in} 
\end{minipage}
\hspace{0.1in}
\begin{minipage}[t]{0.3\linewidth}
\centering
\epsfig{figure=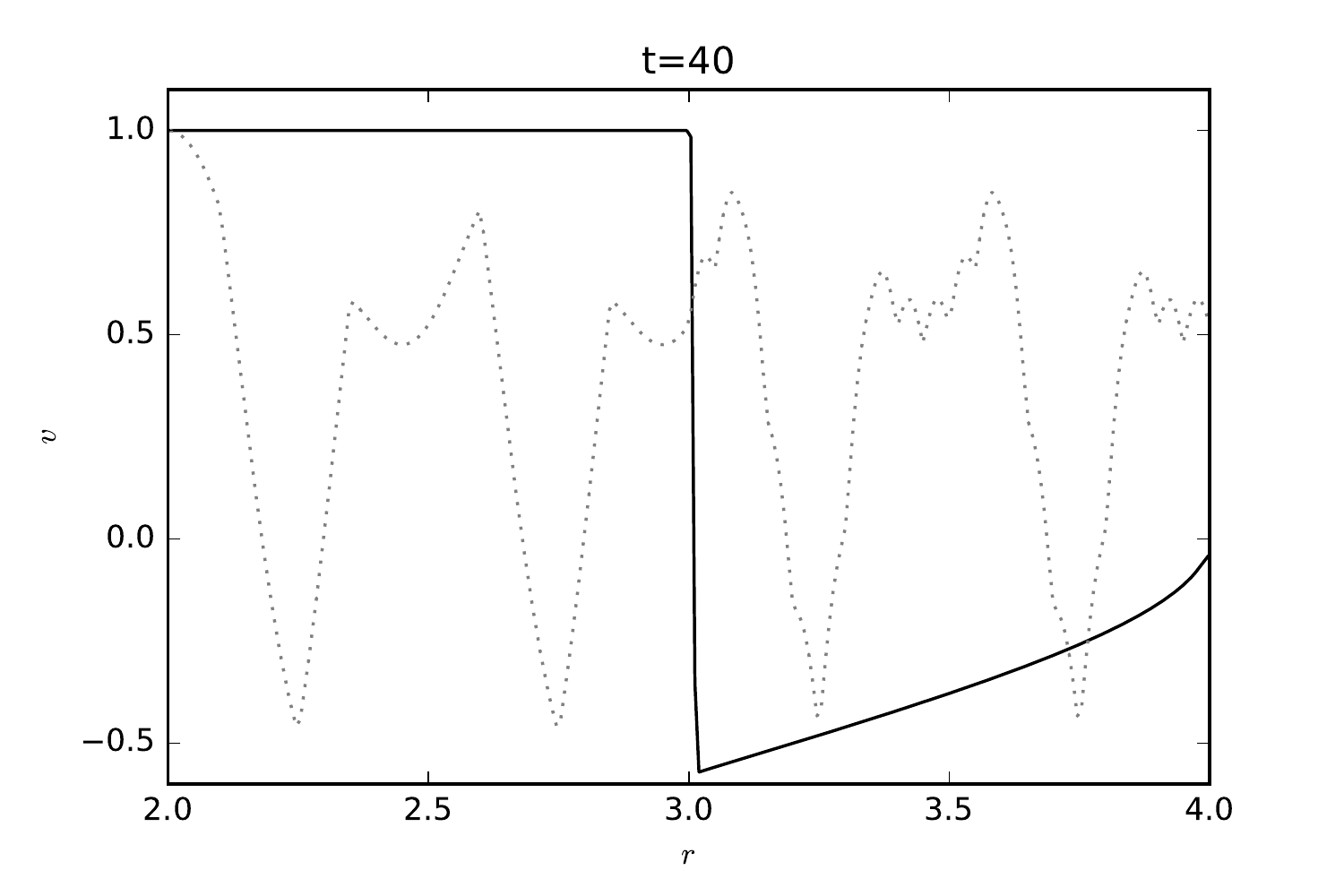,width=2.5in} 
\end{minipage}
\hspace{0.1in}
\begin{minipage}[t]{0.3\linewidth}
\centering
\epsfig{figure=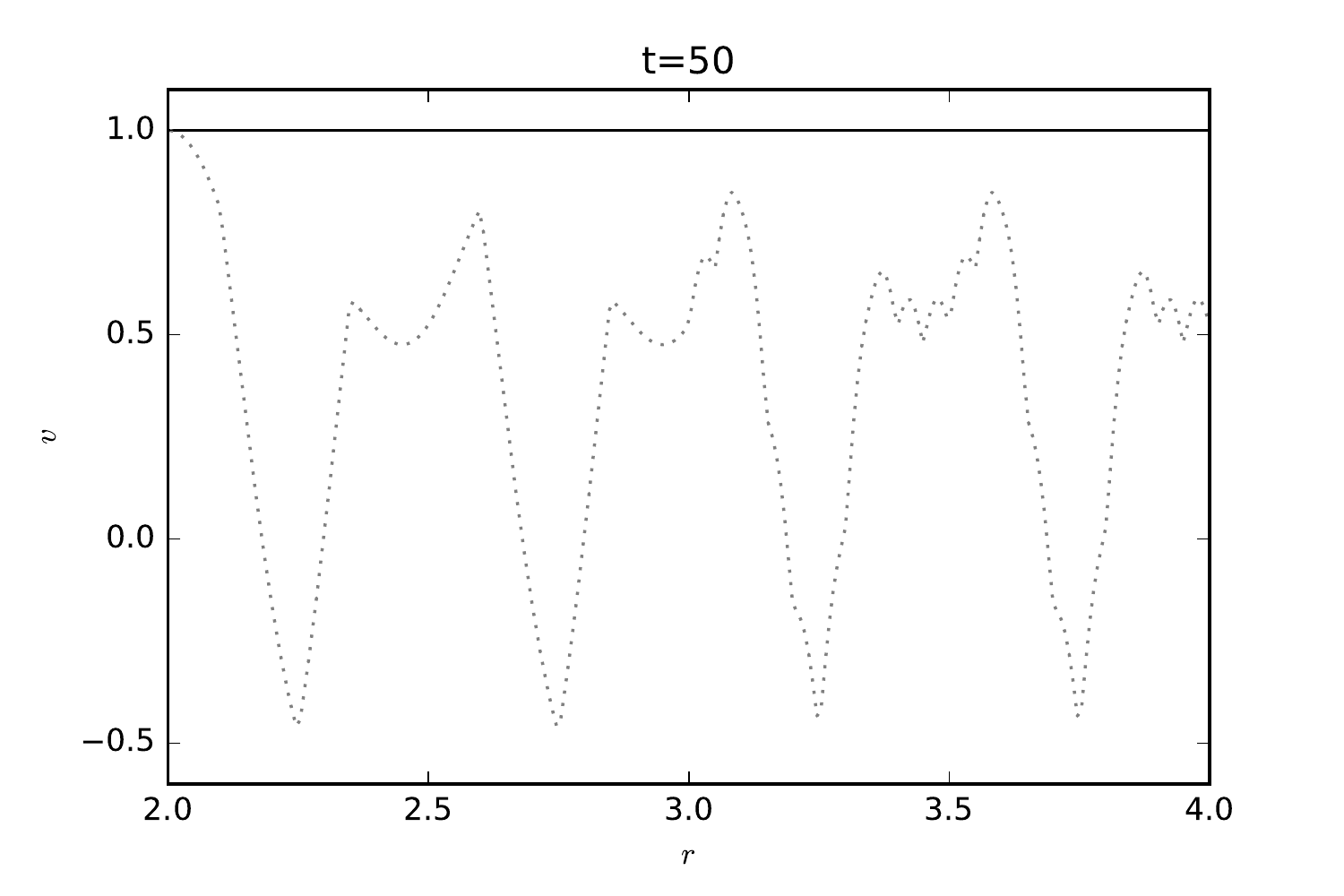,width=2.5in} 
\end{minipage}

\caption{Numerical solution with velocity $1$ at $r=2M$ and $r=+\infty$, using the finite volume scheme}
\label{FIG-82-0} 
\end{figure}

\begin{figure}[!htb] 
\centering 
\begin{minipage}[t]{0.3\linewidth}
\centering
\epsfig{figure=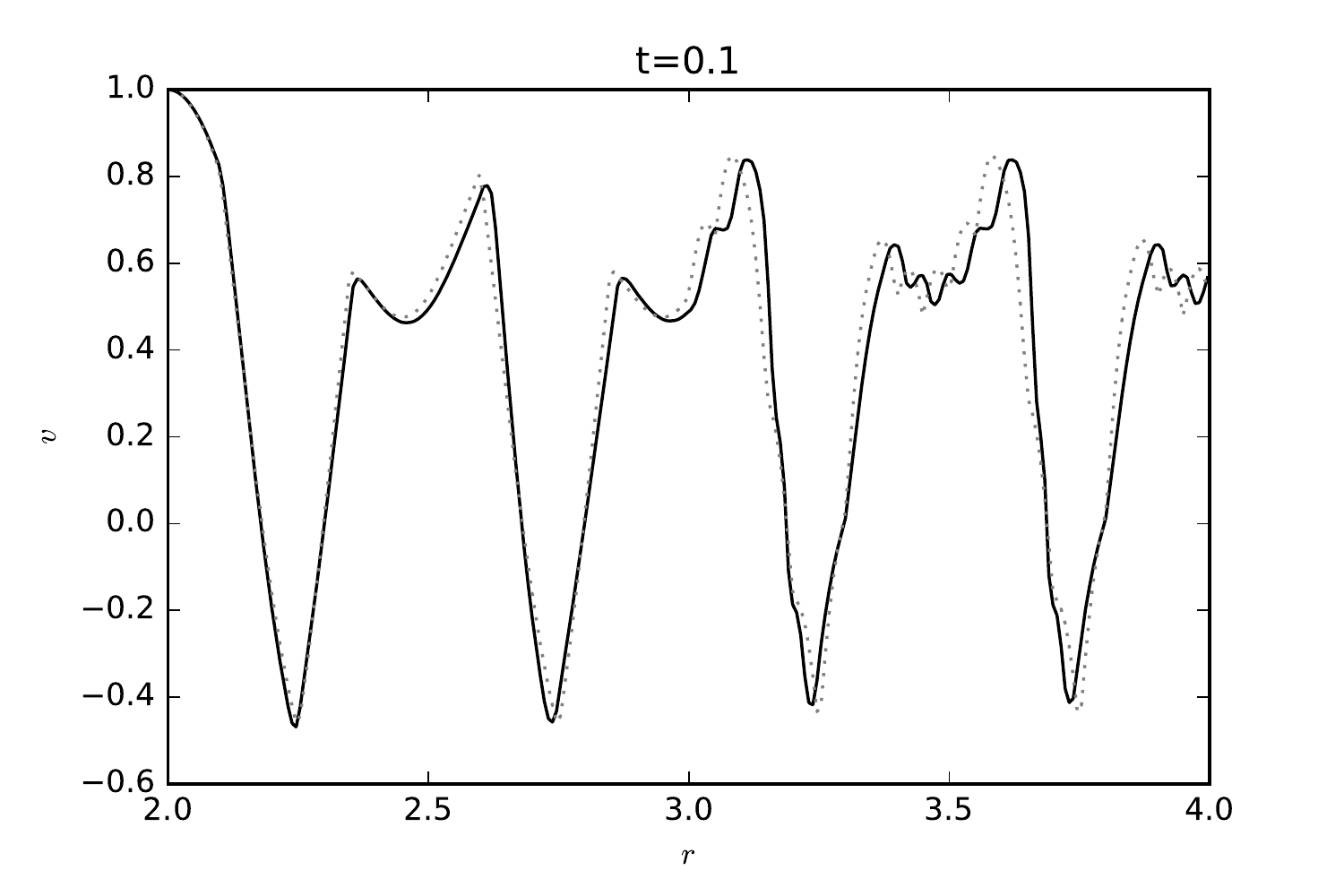,width=2.5in} 
\end{minipage}
\hspace{0.1in}
\begin{minipage}[t]{0.3\linewidth}
\centering
\epsfig{figure=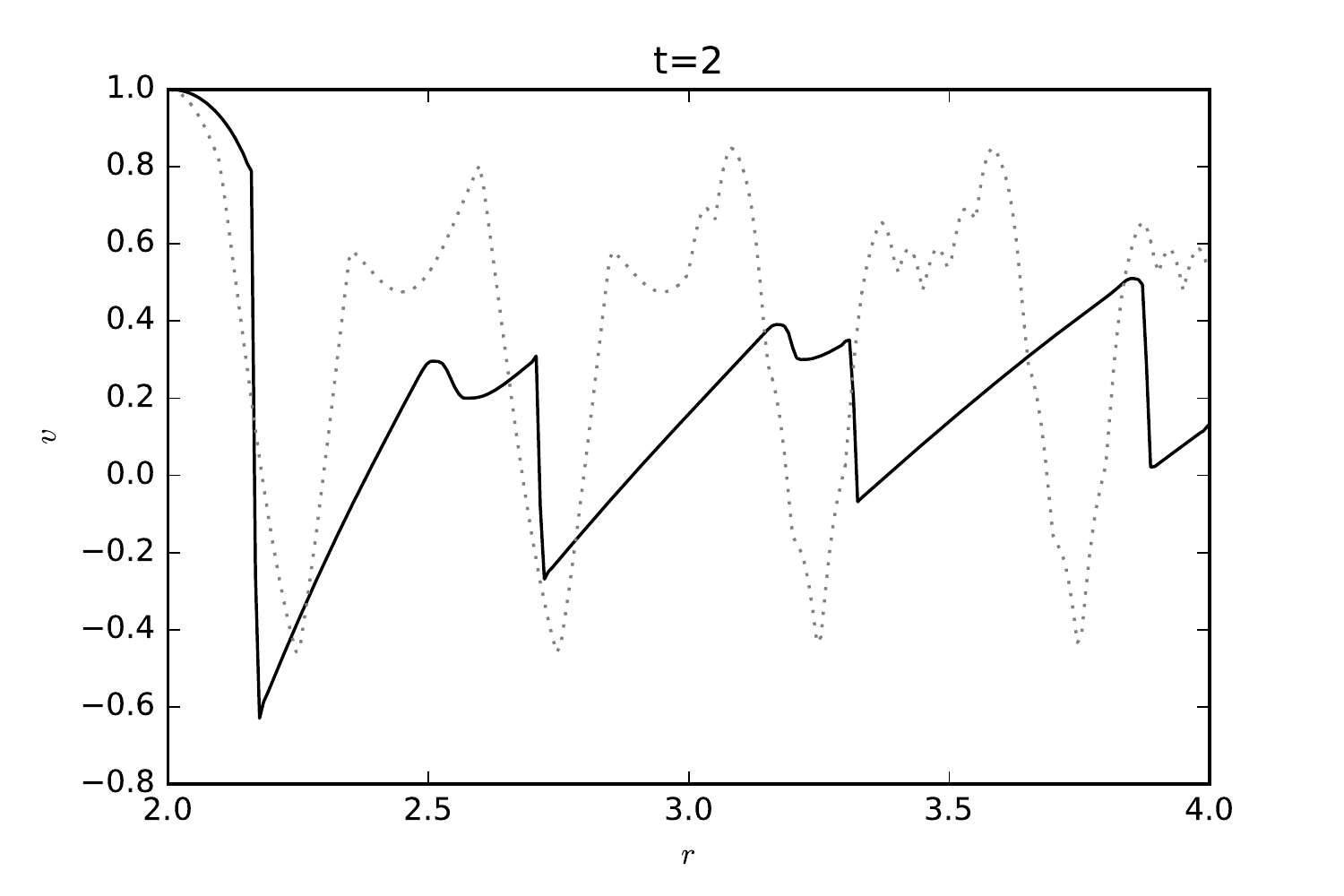,width=2.5in} 
\end{minipage}
\hspace{0.1in}
\begin{minipage}[t]{0.3\linewidth}
\centering
\epsfig{figure=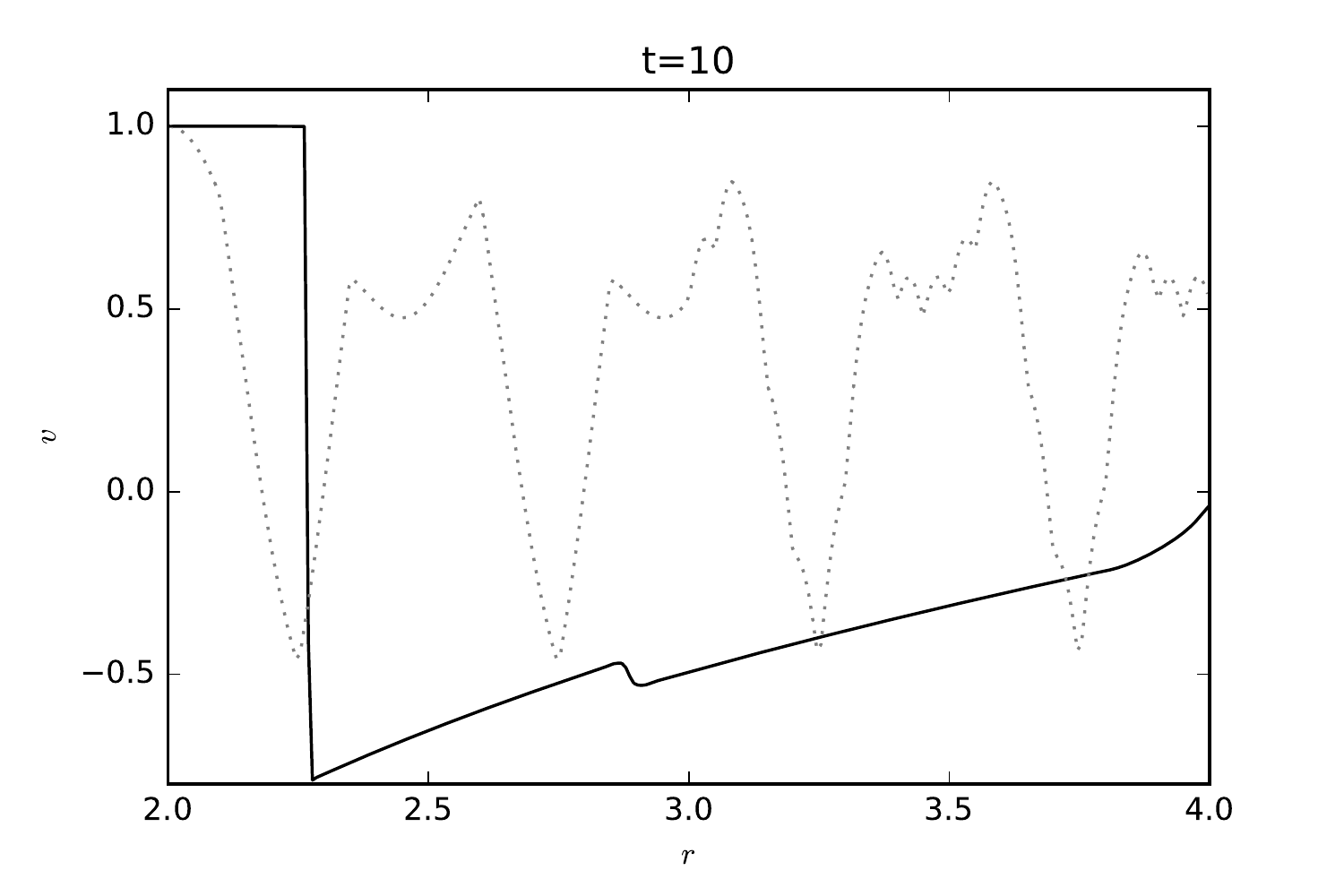,width=2.5in} 
\end{minipage}\\
\begin{minipage}[t]{0.3\linewidth}
\centering
\epsfig{figure=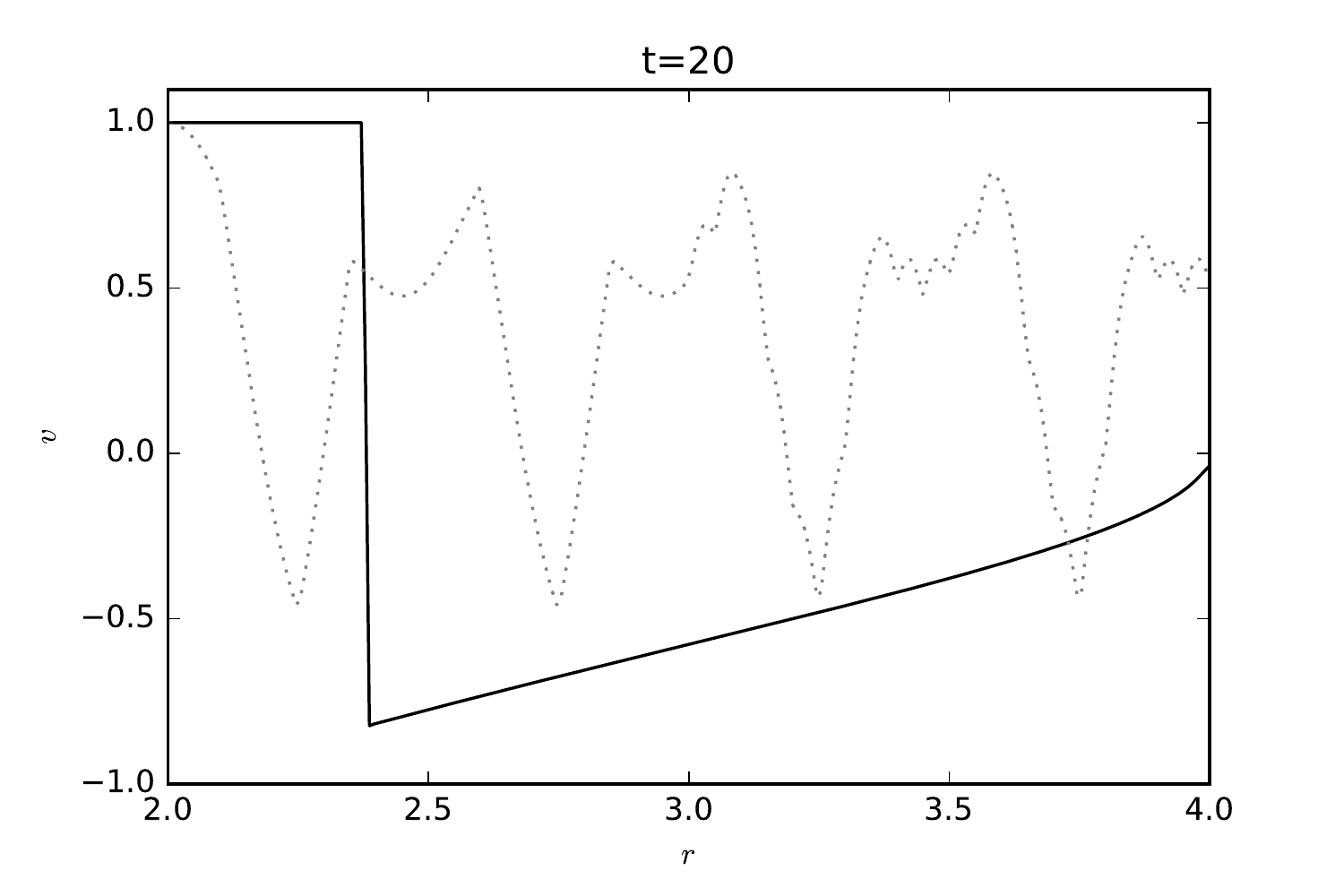,width=2.5in} 
\end{minipage}
\hspace{0.1in}
\begin{minipage}[t]{0.3\linewidth}
\centering
\epsfig{figure=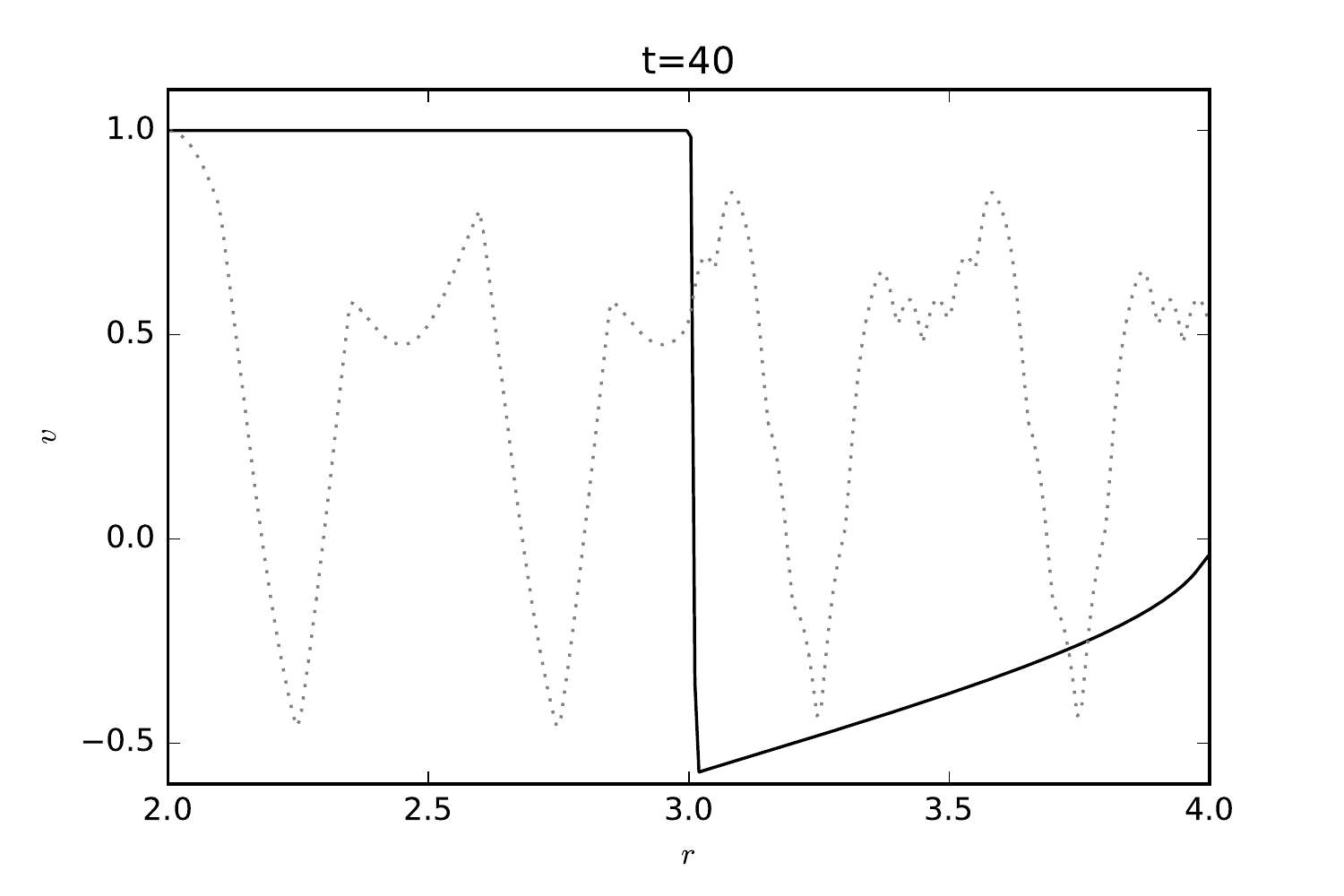,width=2.5in} 
\end{minipage}
\hspace{0.1in}
\begin{minipage}[t]{0.3\linewidth}
\centering
\epsfig{figure=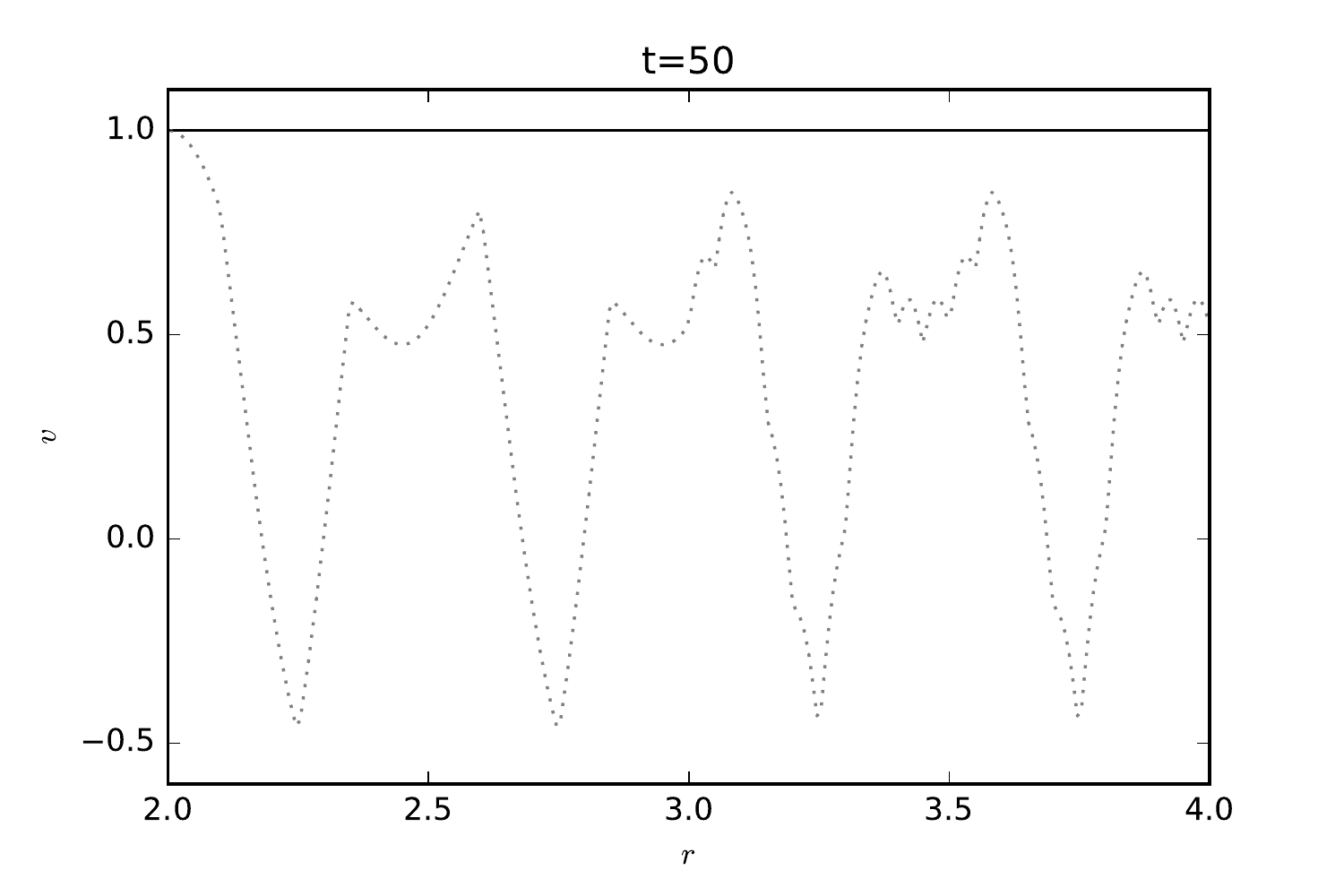,width=2.5in} 
\end{minipage}
\caption{Numerical solution with velocity $1$ at $r=2M$ and at $r=+\infty$, using the Glimm scheme}
\label{FIG-82-1} 
\end{figure}

\begin{figure}
\centering 
\begin{minipage}[t]{0.3\linewidth}
\centering
\epsfig{figure=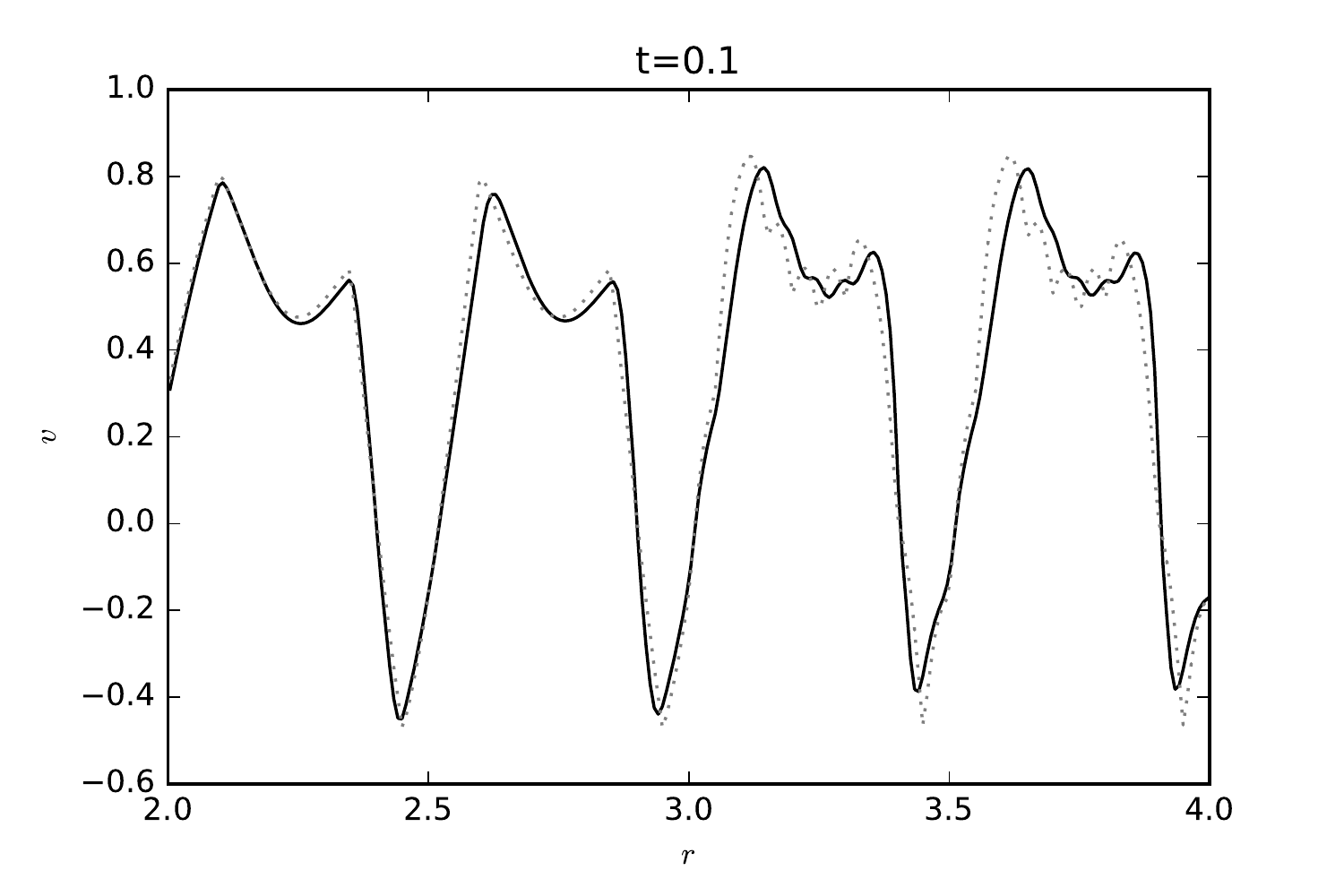,width=2.5in} 
\end{minipage}
\hspace{0.1in}
\begin{minipage}[t]{0.3\linewidth}
\centering
\epsfig{figure=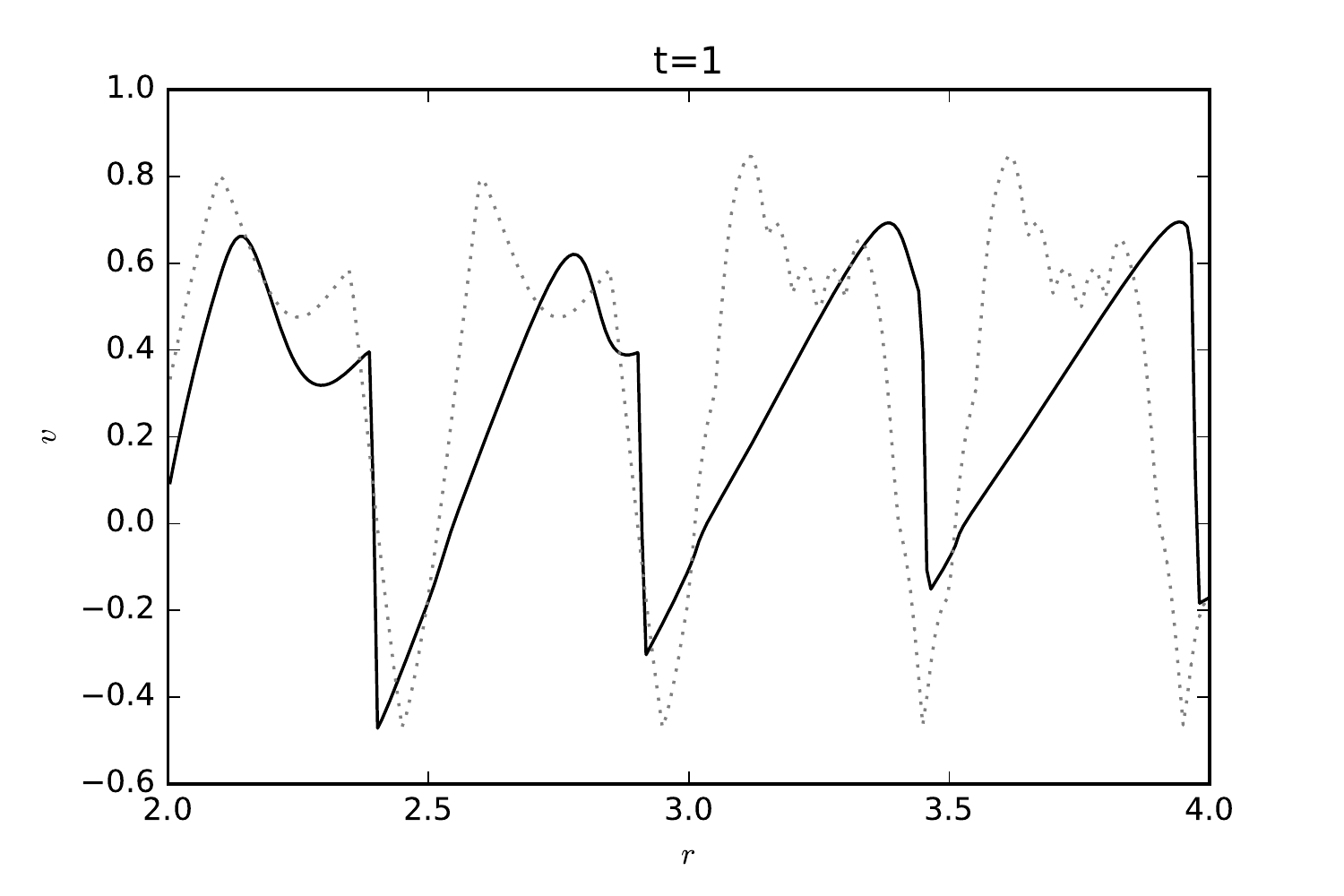,width= 2.5in} 
\end{minipage}
\hspace{0.1in}
\begin{minipage}[t]{0.3\linewidth}
\centering
\epsfig{figure=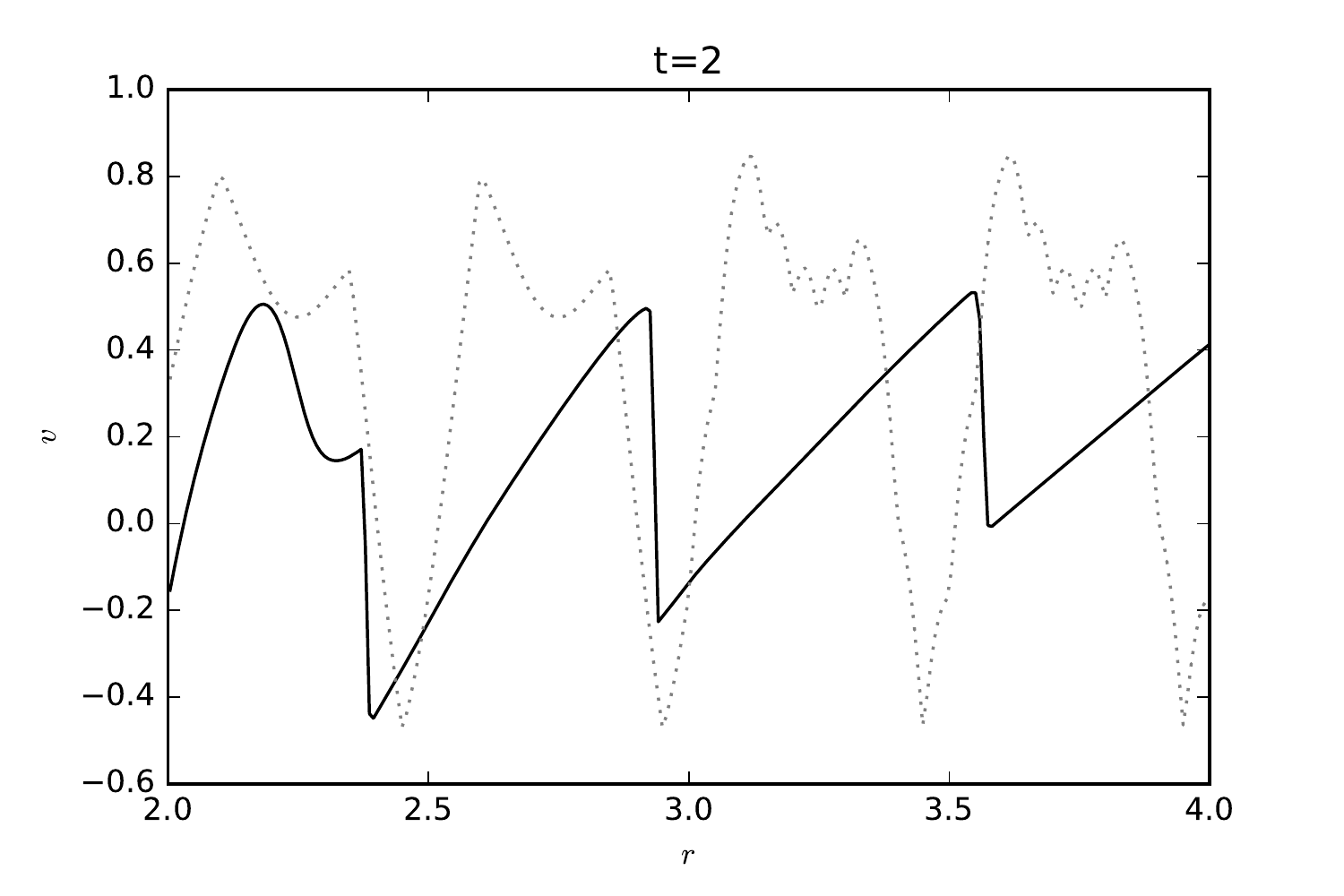,width=2.5in} 
\end{minipage}\\
\begin{minipage}[t]{0.3\linewidth}
\centering
\epsfig{figure=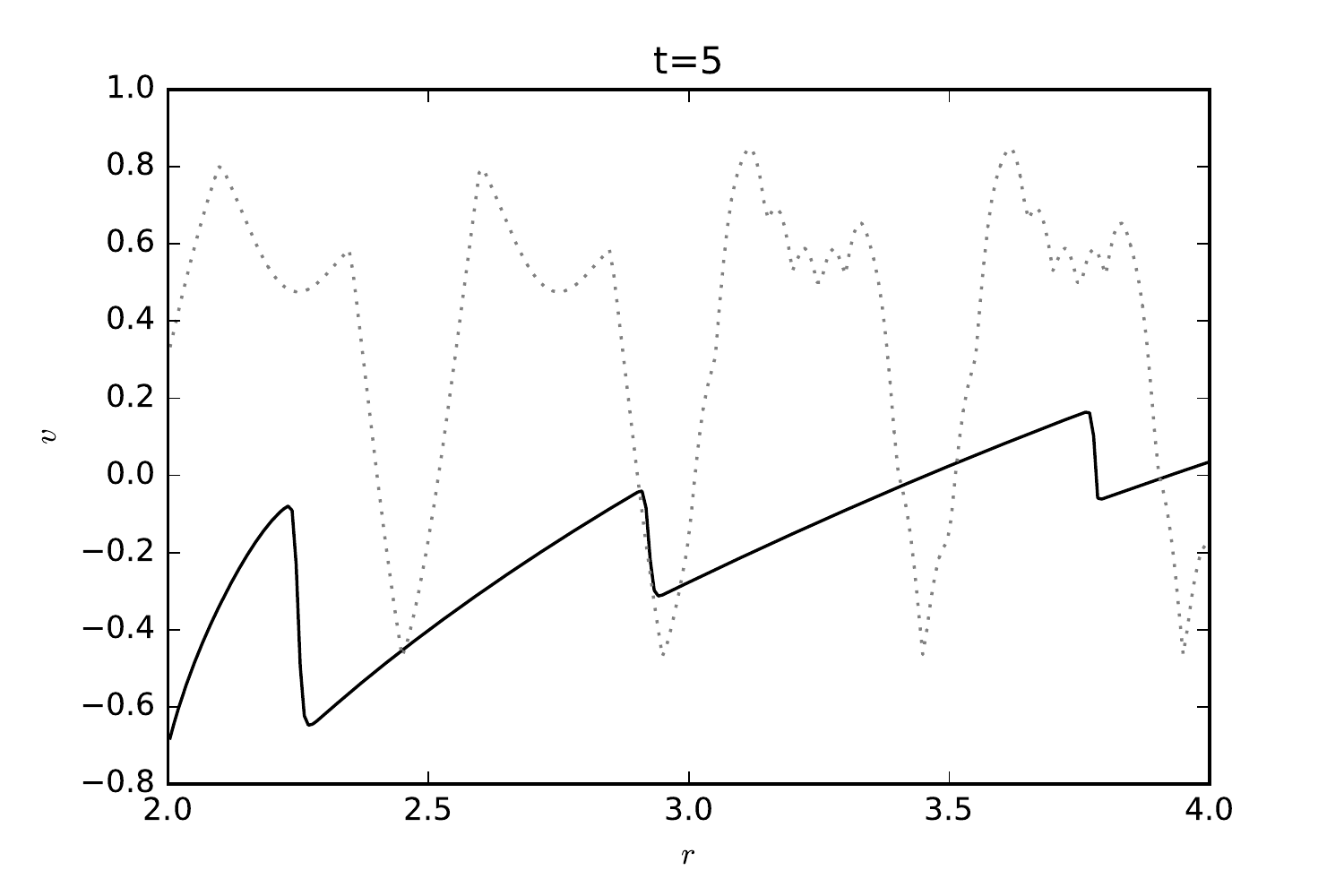,width=2.5in} 
\end{minipage}
\hspace{0.1in}
\begin{minipage}[t]{0.3\linewidth}
\centering
\epsfig{figure=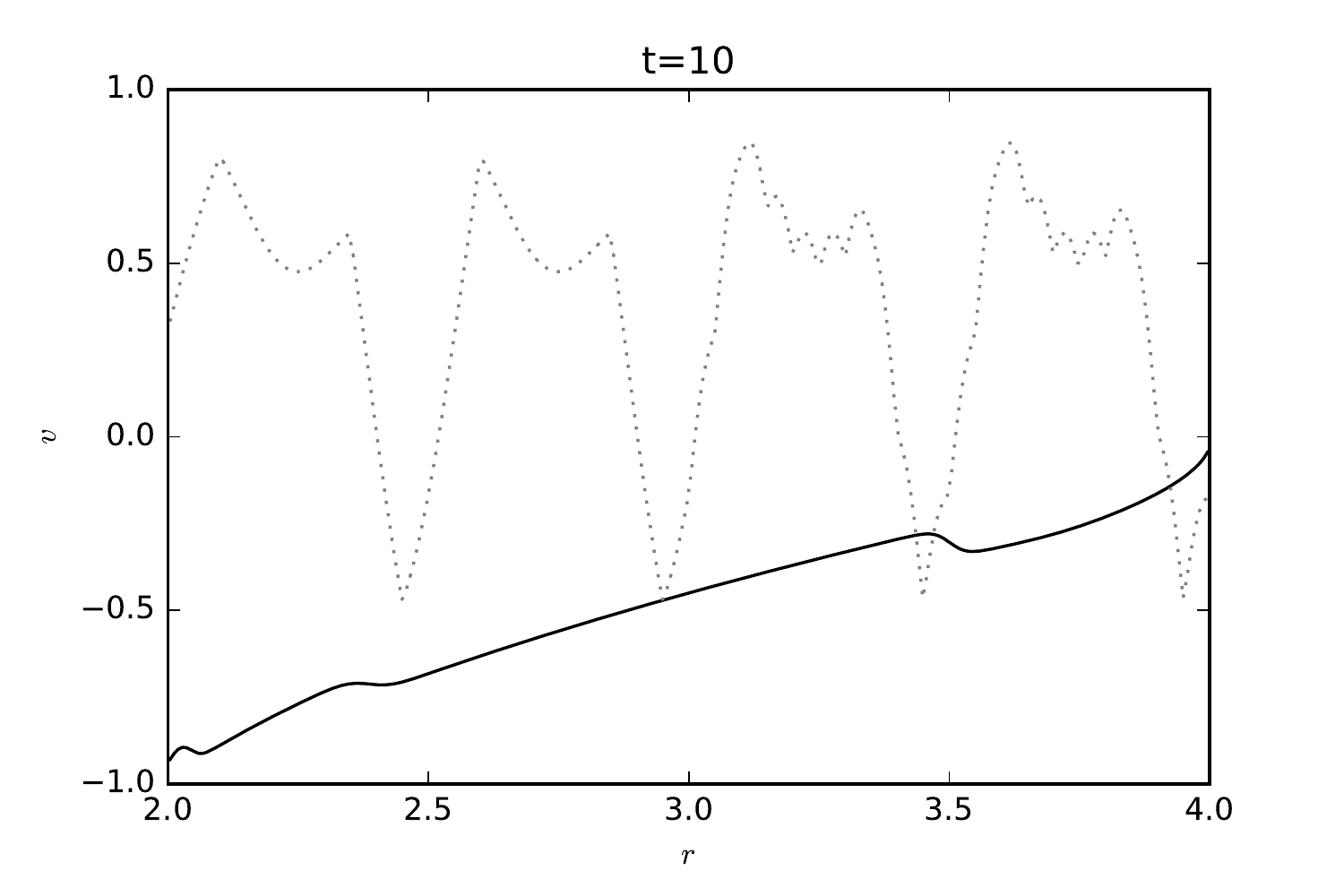,width= 2.5in} 
\end{minipage}
\hspace{0.1in}
\begin{minipage}[t]{0.3\linewidth}
\centering
\epsfig{figure=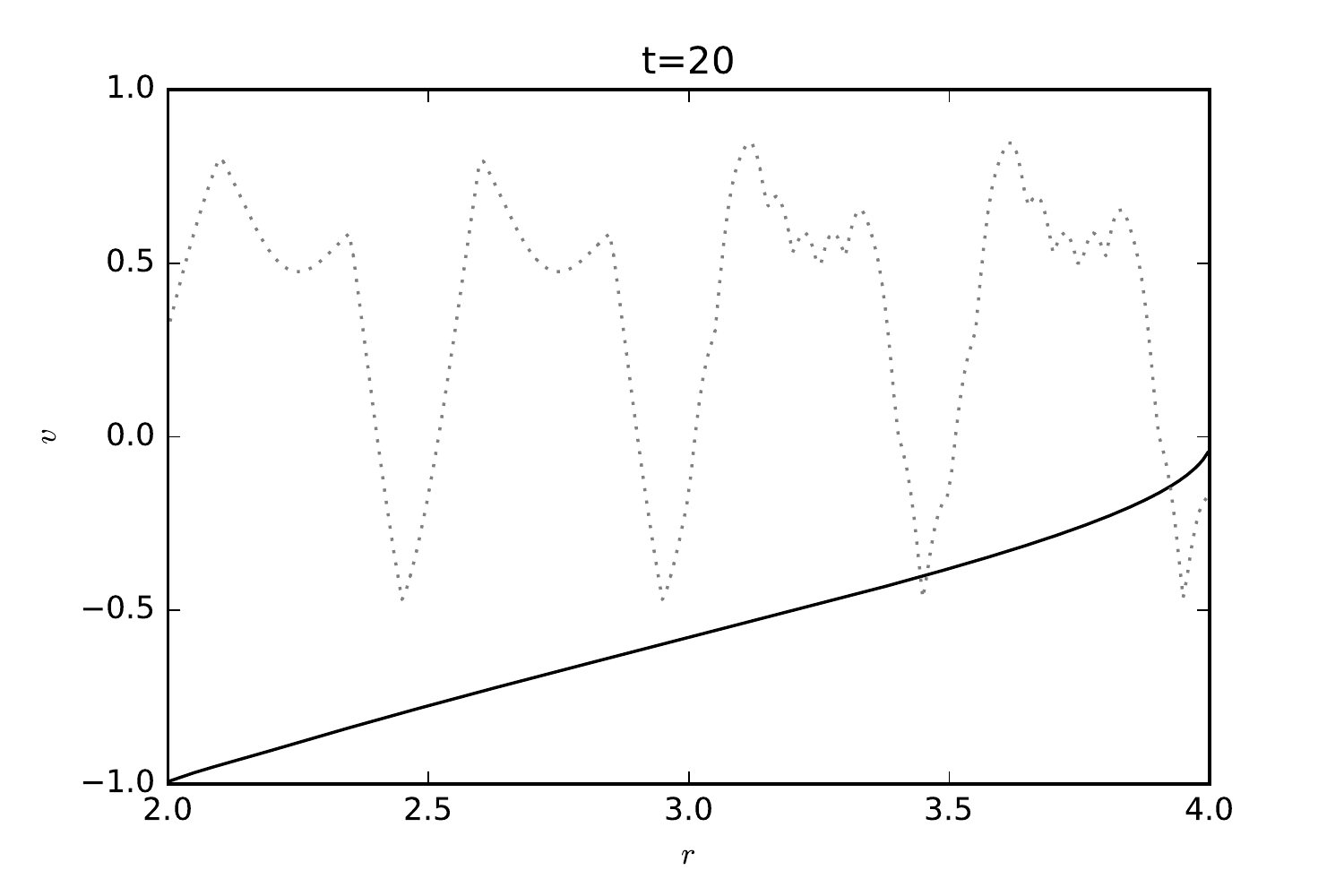,width=2.5in} 
\end{minipage}
\caption{Numerical solutions with positive velocity at $r=2M$ and $r=+\infty$, using the finite volume scheme}
\label{FIG-83} 
\end{figure}

\begin{figure}[!htb] 
\centering 
\begin{minipage}[t]{0.3\linewidth}
\centering
\epsfig{figure=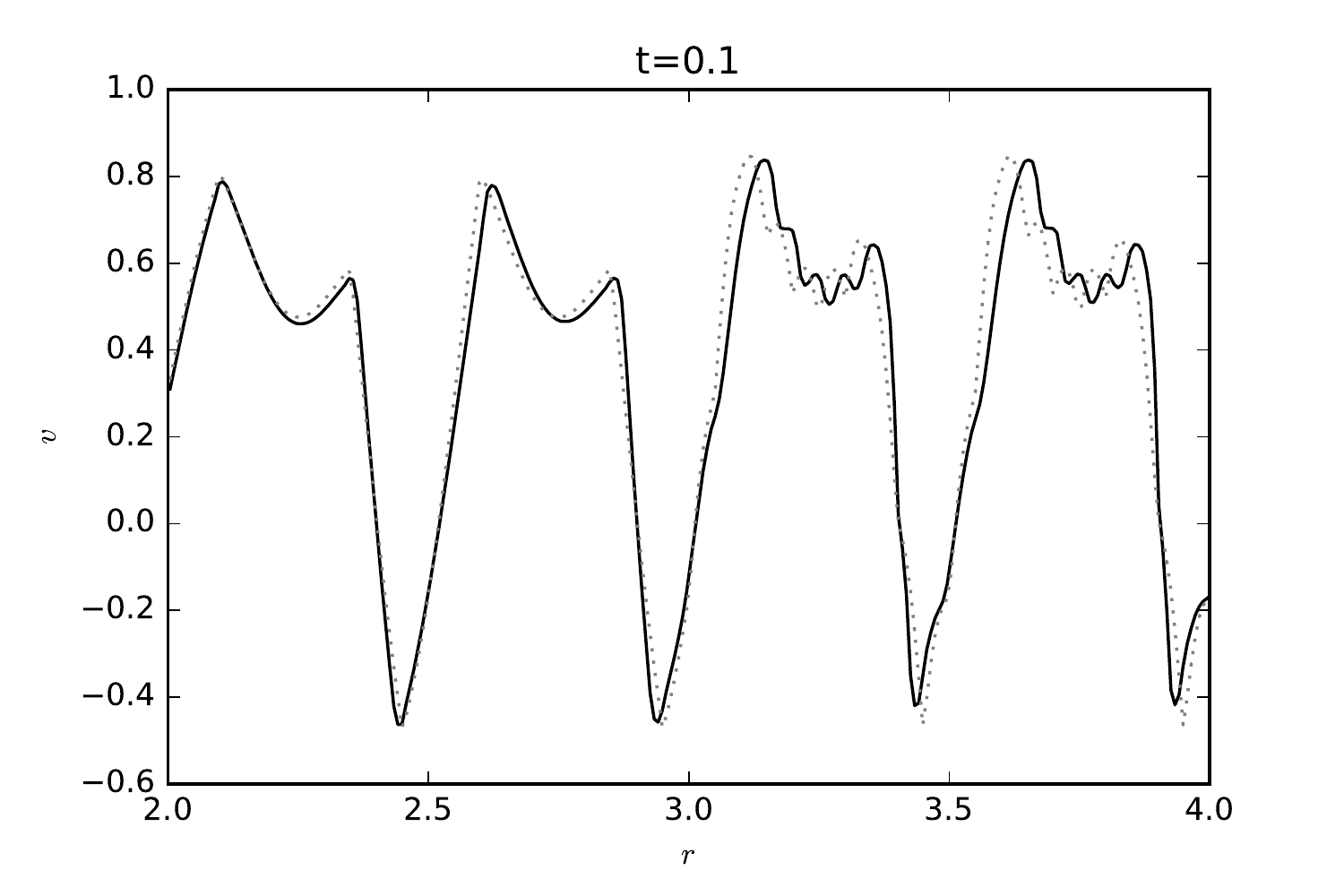,width=2.5in} 
\end{minipage}
\hspace{0.1in}
\begin{minipage}[t]{0.3\linewidth}
\centering
\epsfig{figure=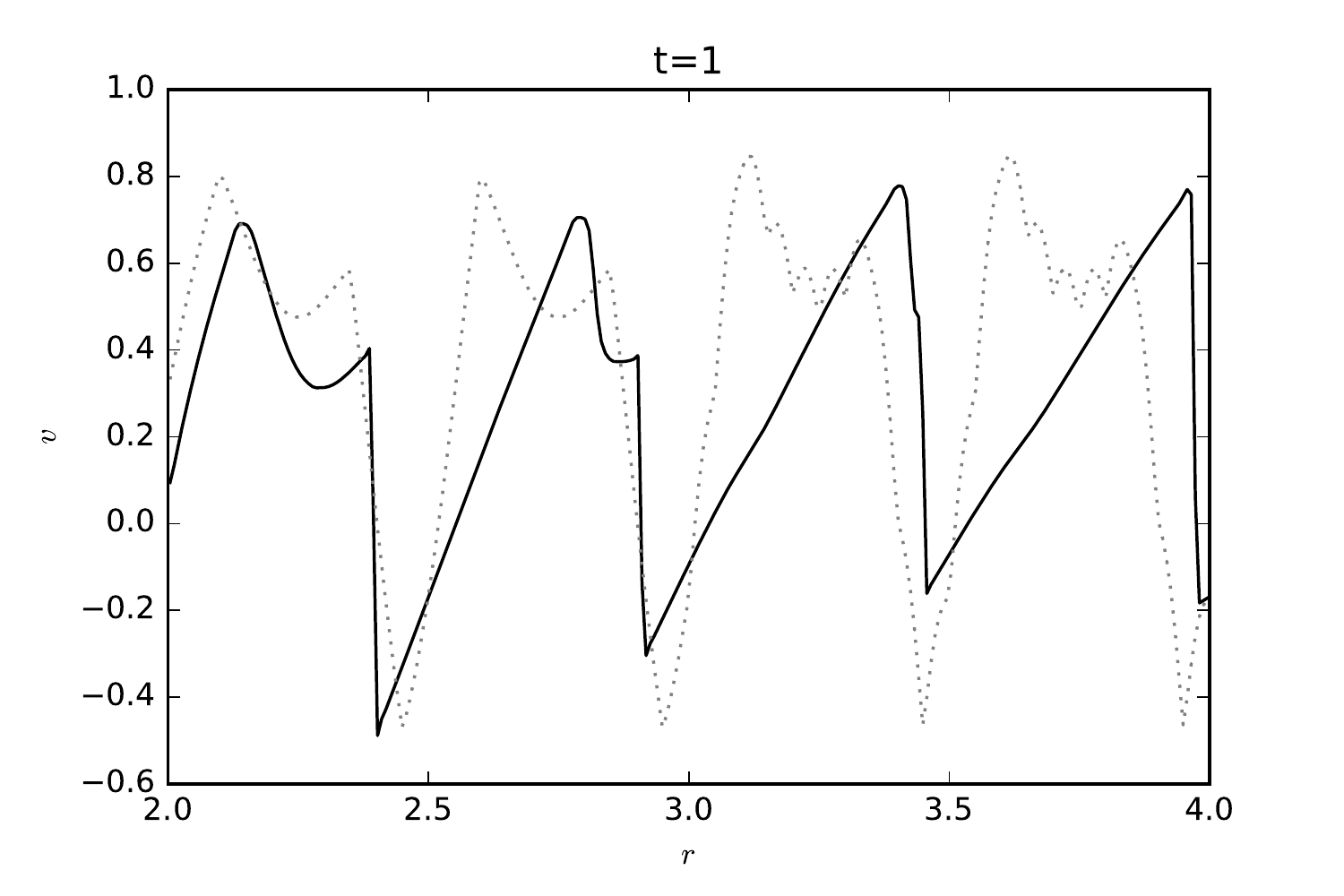,width= 2.5in} 
\end{minipage}
\hspace{0.1in}
\begin{minipage}[t]{0.3\linewidth}
\centering
\epsfig{figure=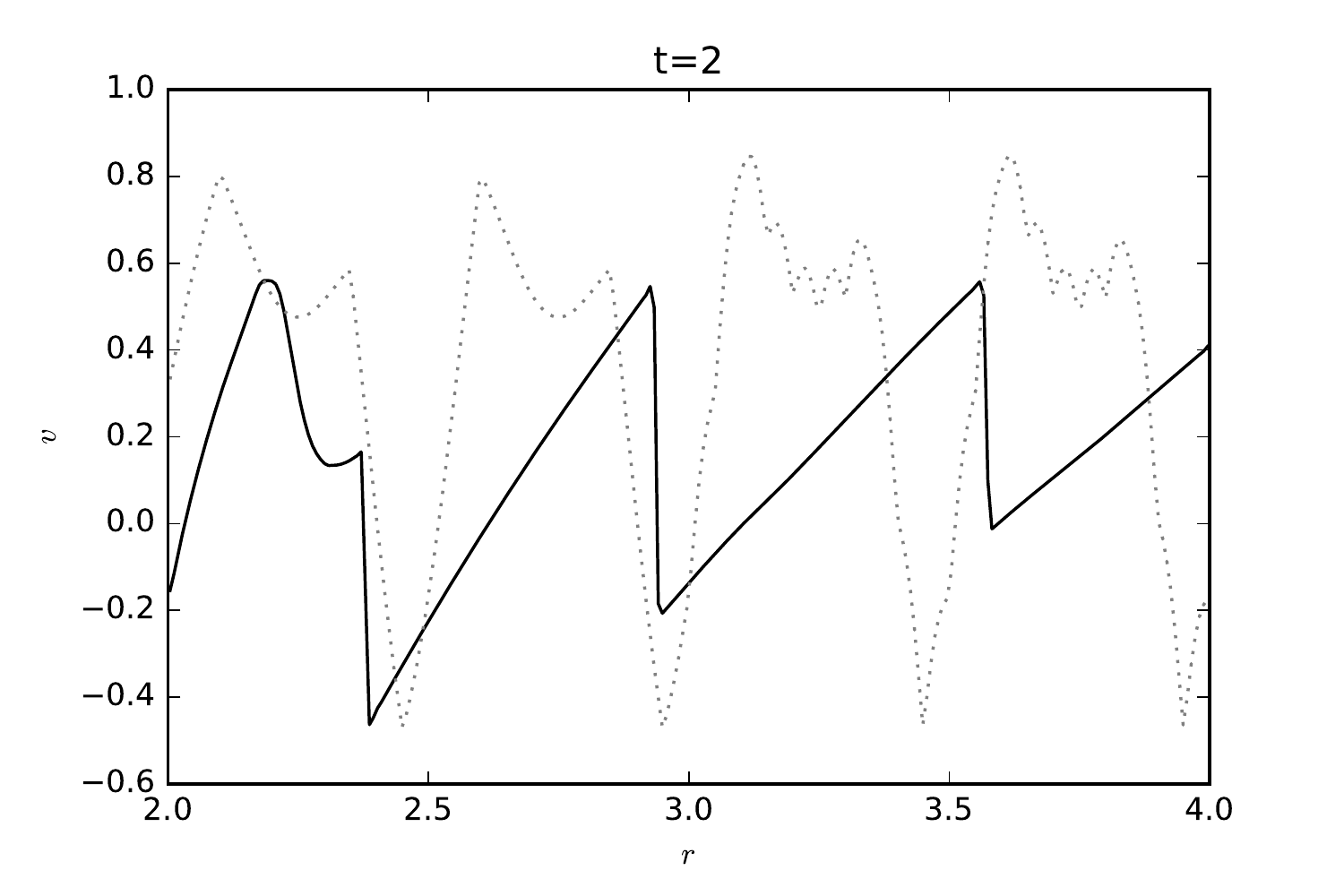,width=2.5in} 
\end{minipage}\\
\begin{minipage}[t]{0.3\linewidth}
\centering
\epsfig{figure=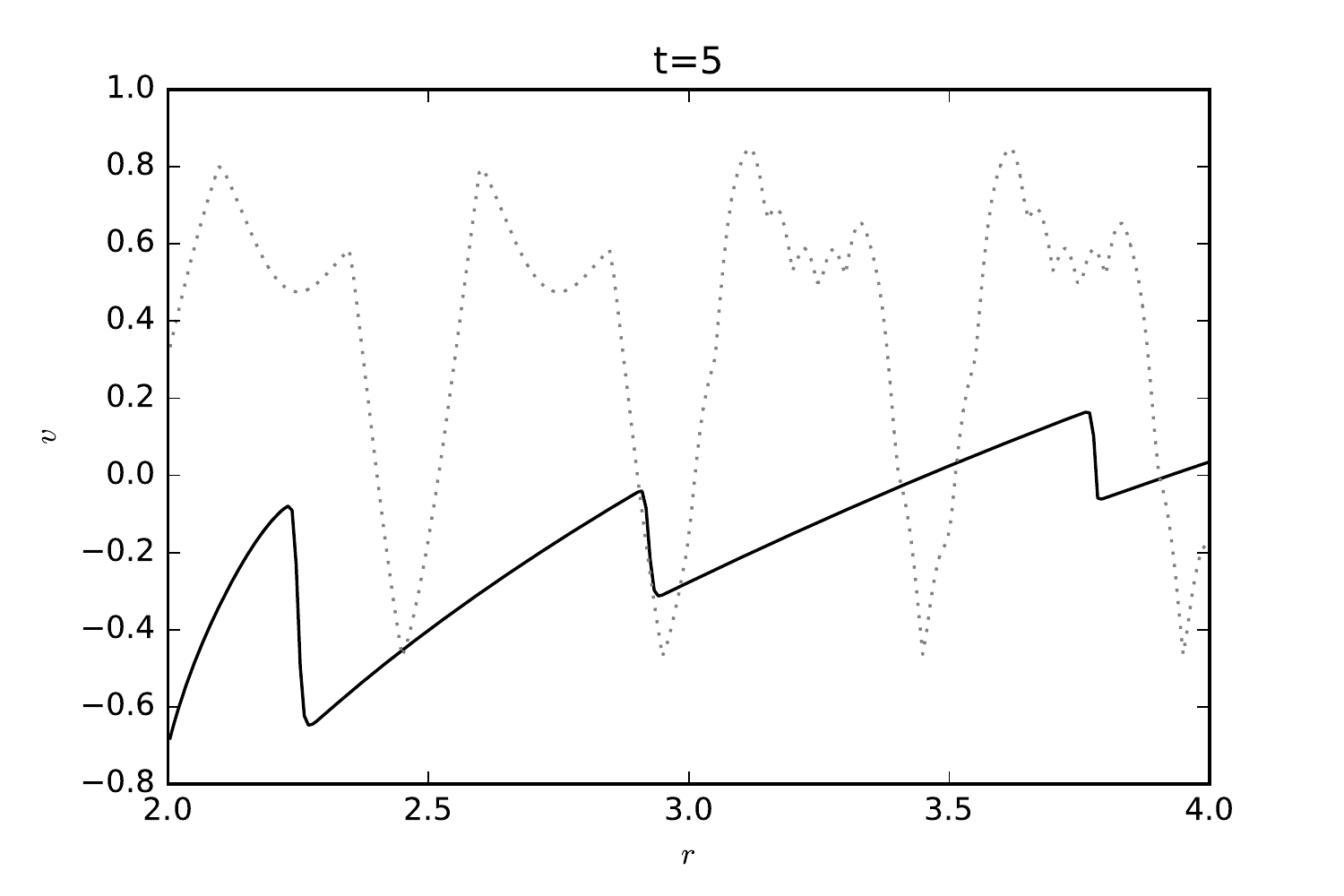,width=2.5in} 
\end{minipage}
\hspace{0.1in}
\begin{minipage}[t]{0.3\linewidth}
\centering
\epsfig{figure=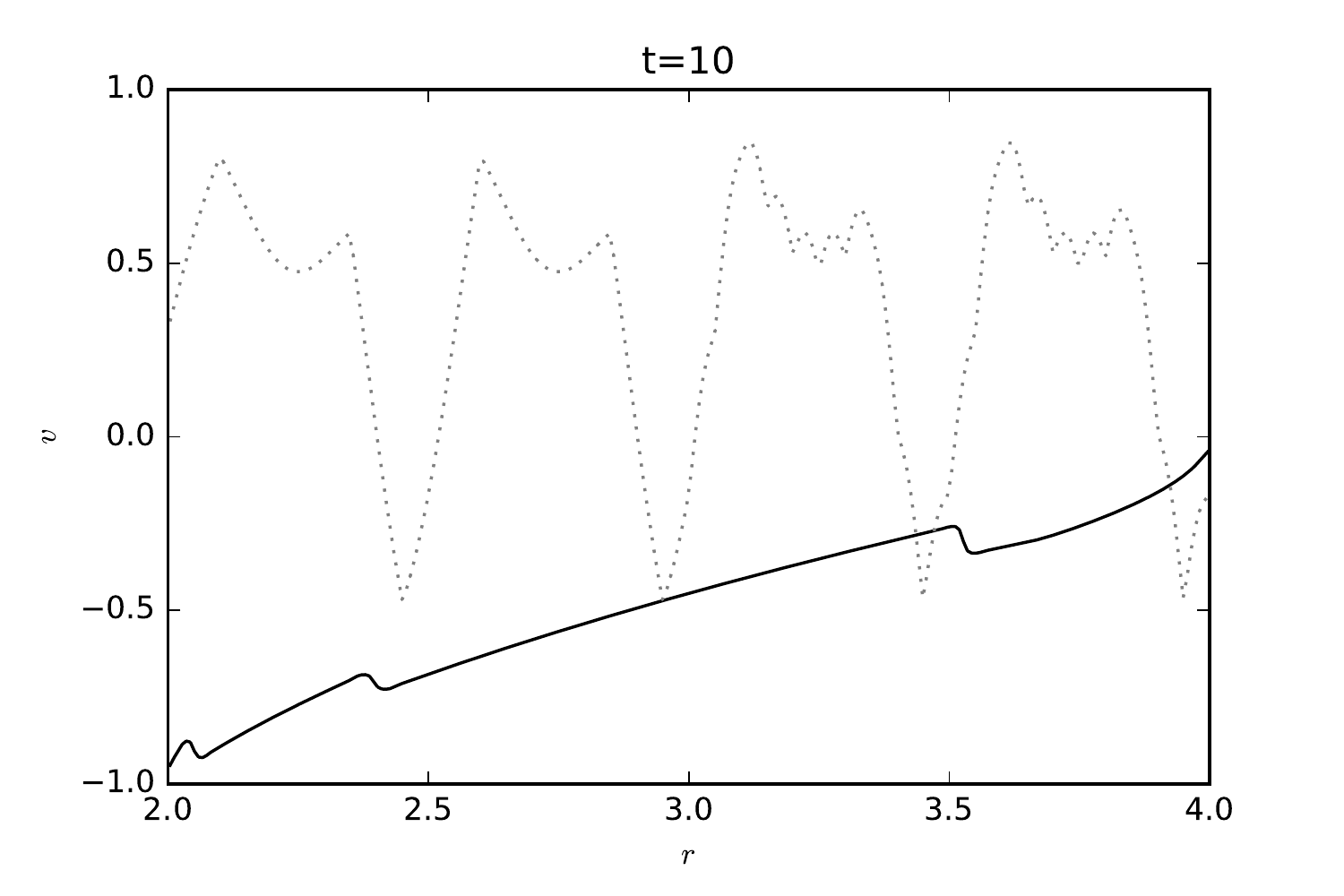,width= 2.5in} 
\end{minipage}
\hspace{0.1in}
\begin{minipage}[t]{0.3\linewidth}
\centering
\epsfig{figure=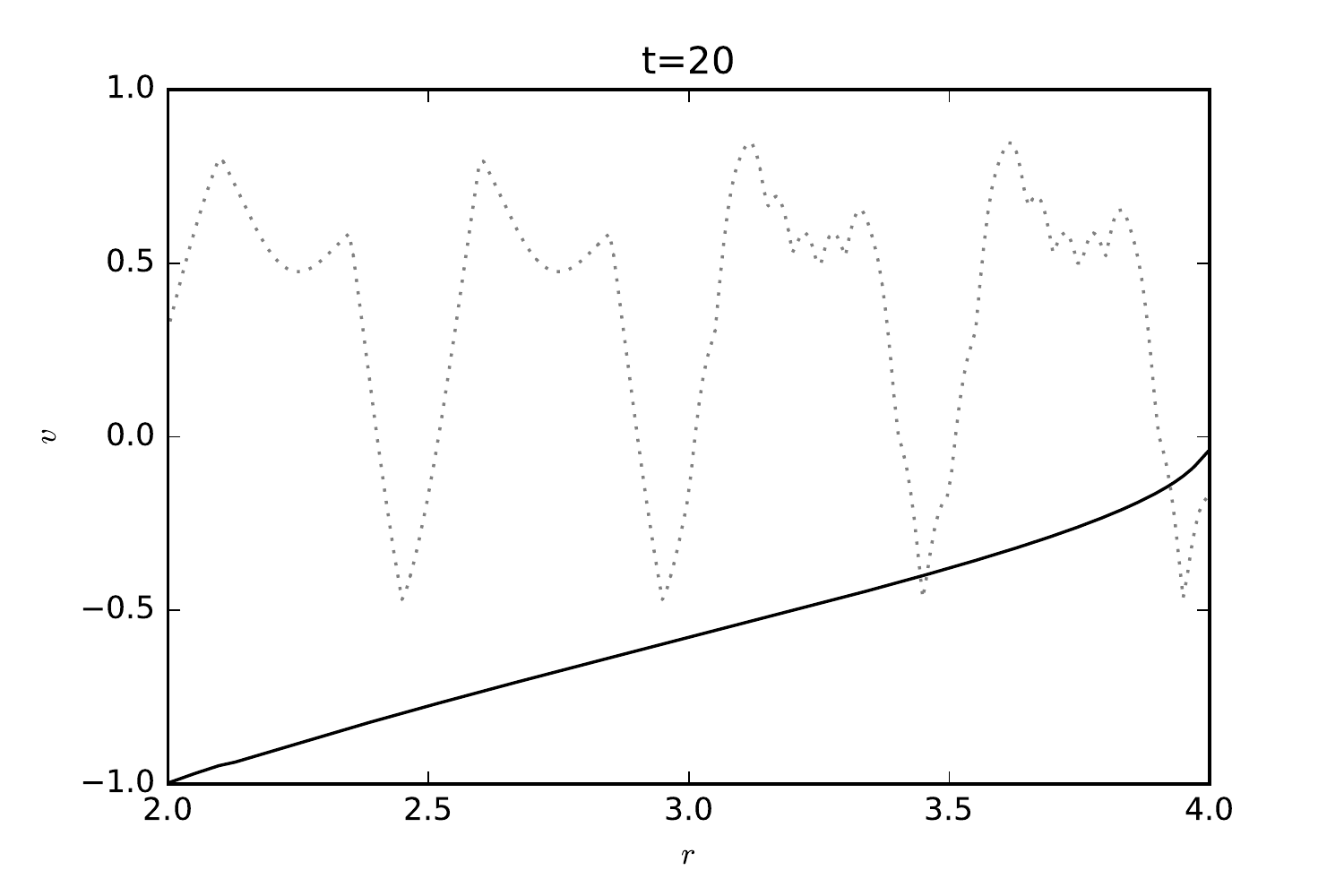,width=2.5in} 
\end{minipage}
\caption{Numerical solution positive velocity at $r= 2M$ and $r=+\infty$, using the Glimm scheme}
\label{FIG-83-bis} 
\end{figure}

\begin{figure}[!htb] 
\centering 
\begin{minipage}[t]{0.3\linewidth}
\centering
\epsfig{figure=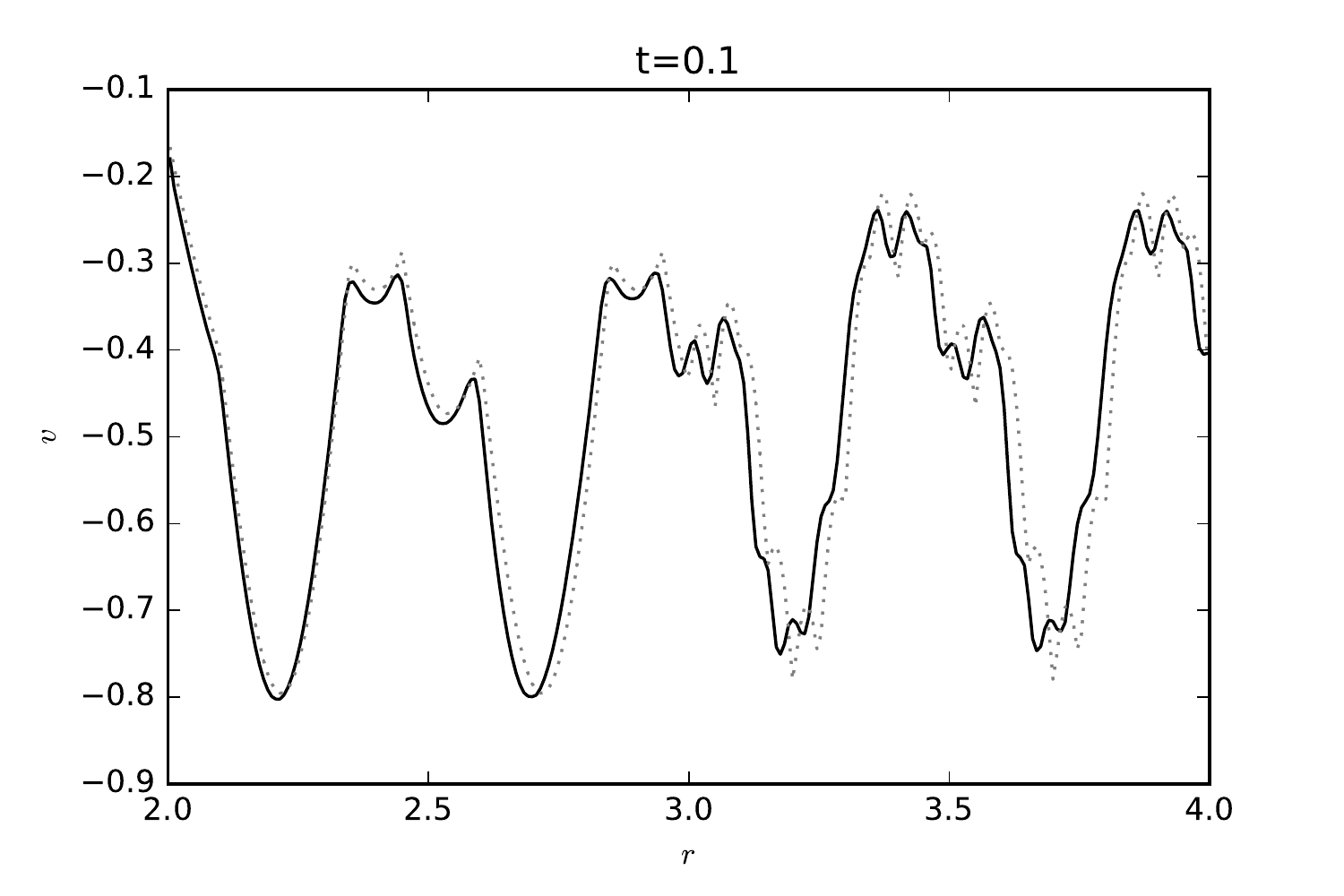,width=2.5in} 
\end{minipage}
\hspace{0.1in}
\begin{minipage}[t]{0.3\linewidth}
\centering
\epsfig{figure=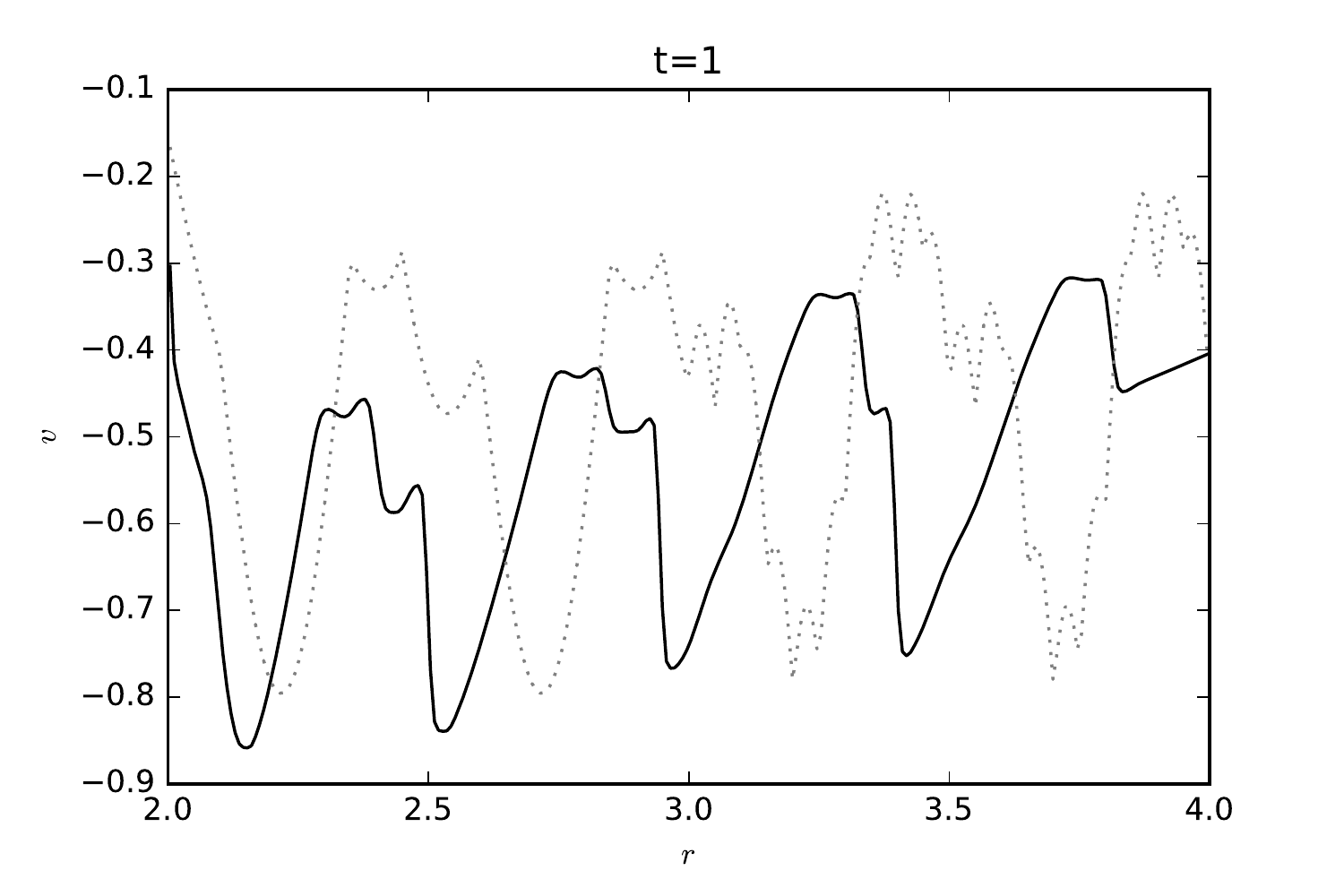,width= 2.5in} 
\end{minipage}
\hspace{0.1in}
\begin{minipage}[t]{0.3\linewidth}
\centering
\epsfig{figure=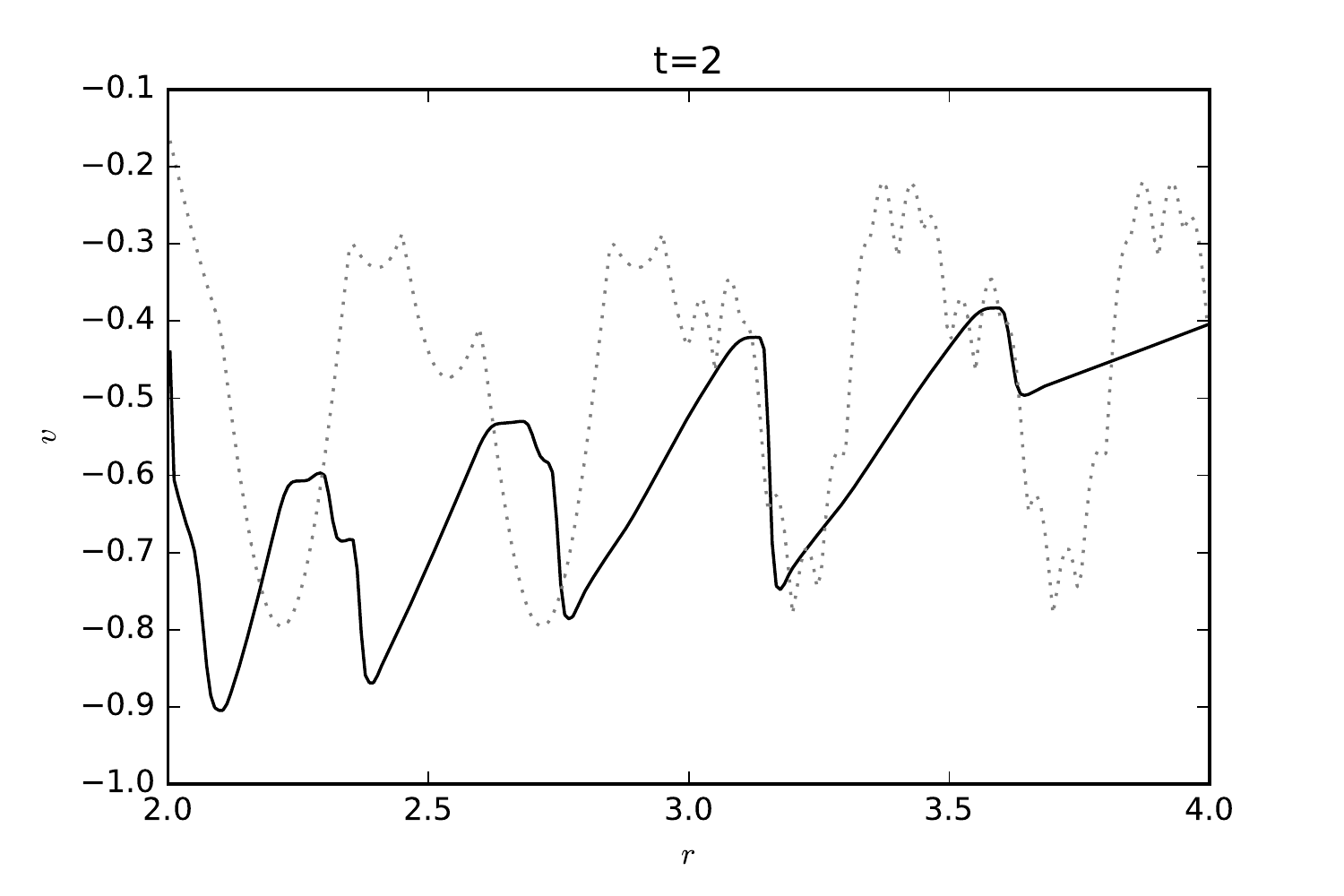,width=2.5in} 
\end{minipage}

\begin{minipage}[t]{0.3\linewidth}
\centering
\epsfig{figure=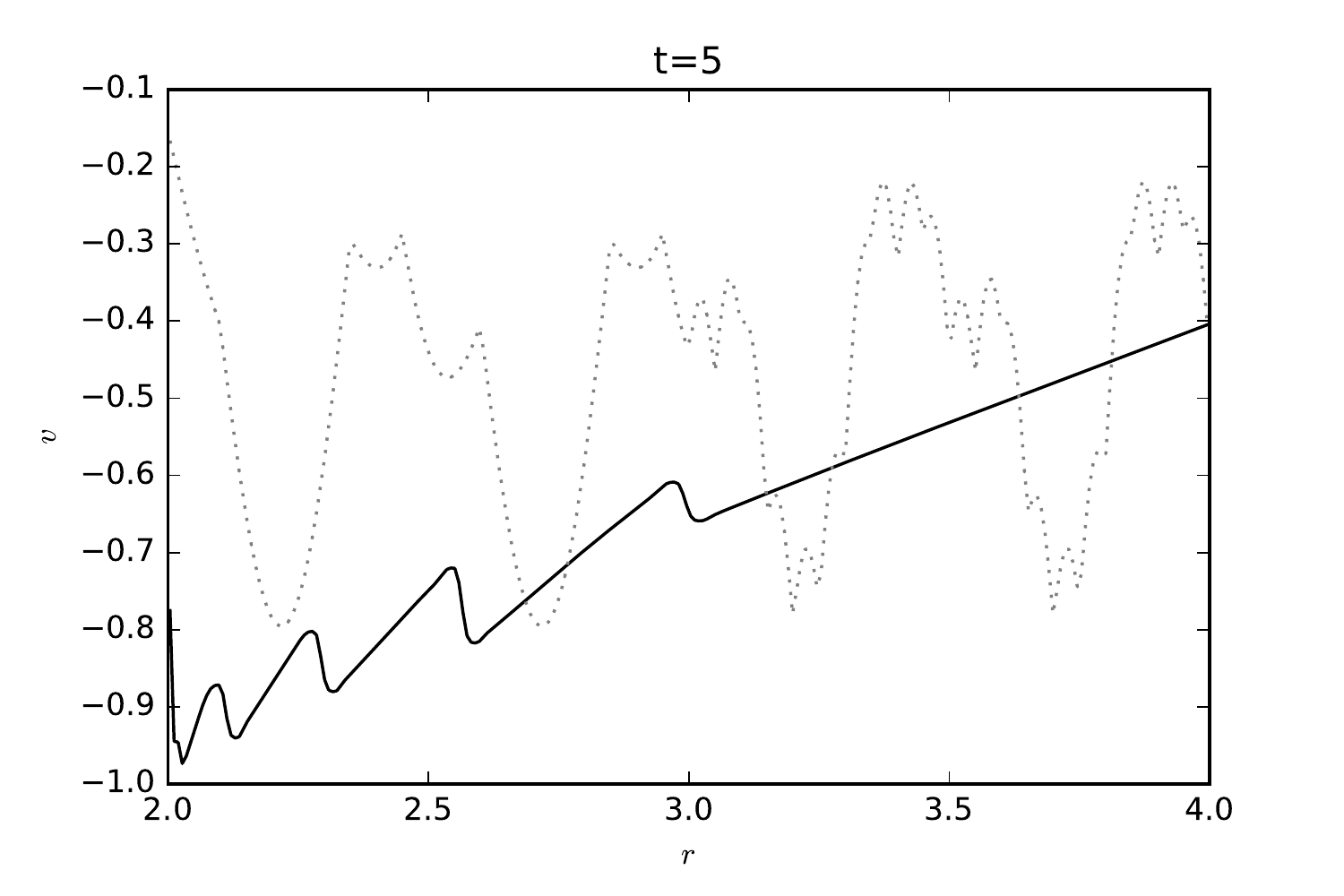,width=2.5in} 
\end{minipage}
\hspace{0.1in}
\begin{minipage}[t]{0.3\linewidth}
\centering
\epsfig{figure=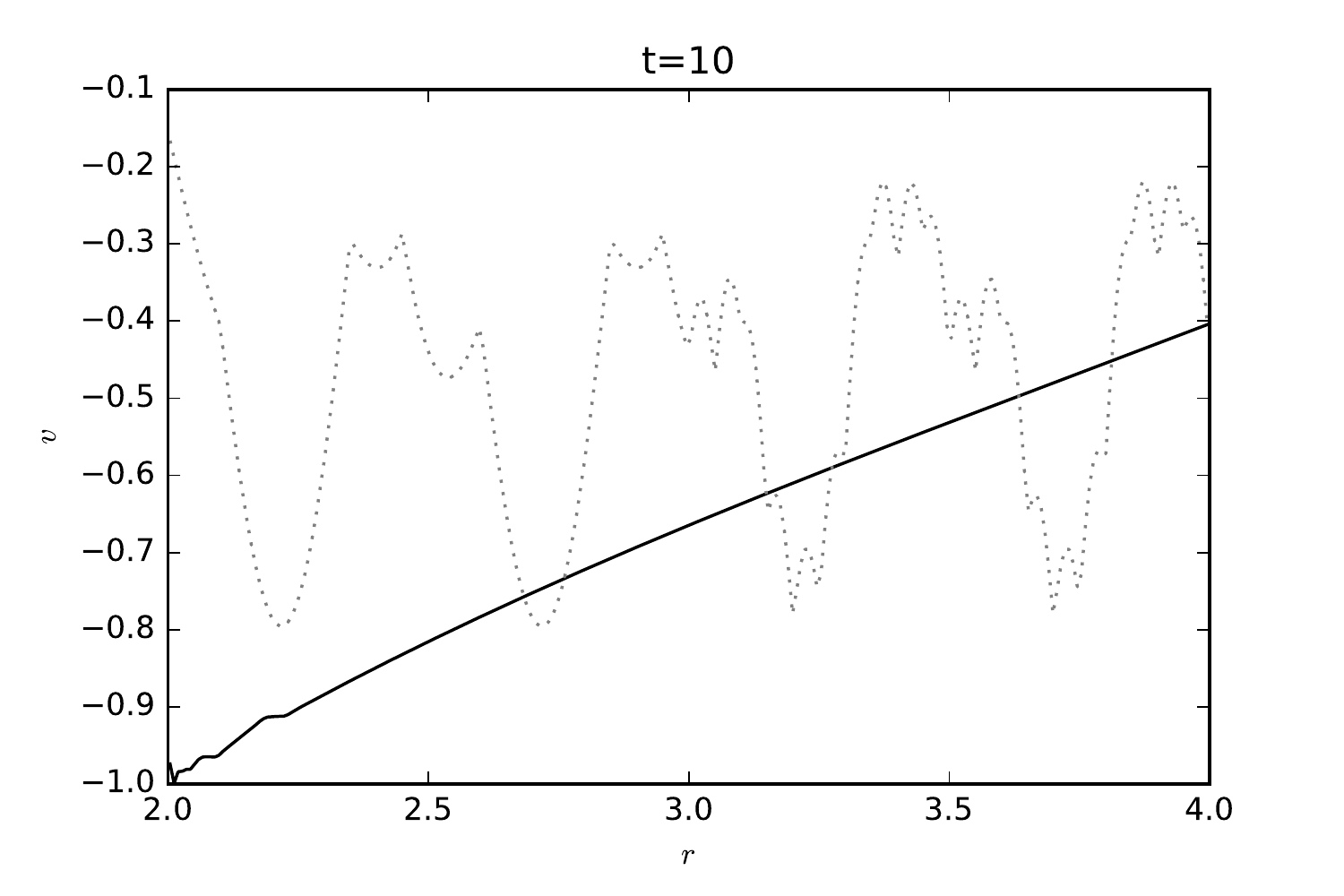,width= 2.5in} 
\end{minipage}
\hspace{0.1in}
\begin{minipage}[t]{0.3\linewidth}
\centering
\epsfig{figure=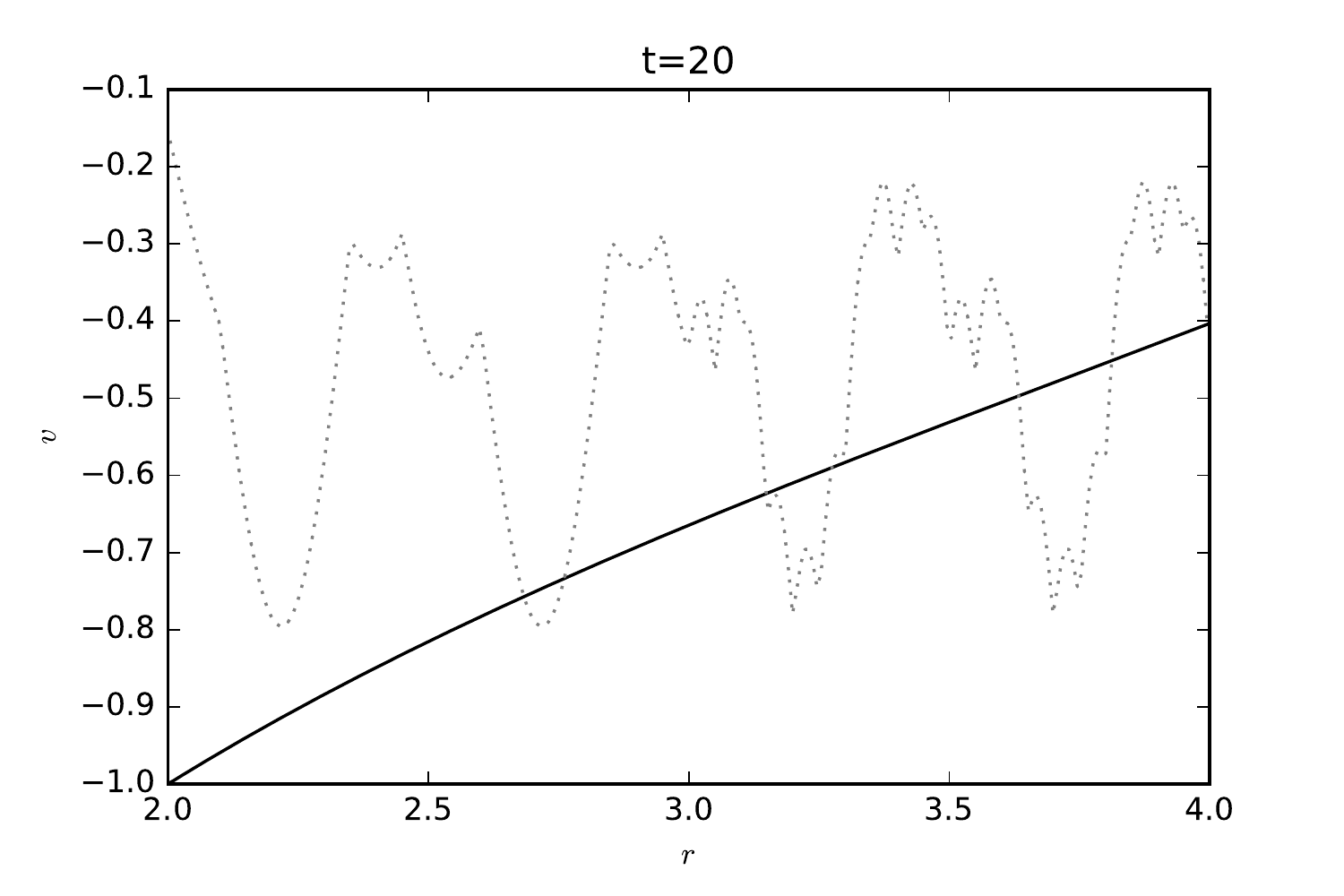,width=2.5in} 
\end{minipage}
\caption{Numerical solution with velocity $-1$ at $r= 2M$ and negative velocity at $r=+\infty$n using the finite volume scheme}
\label{FIG-83-0} 
\end{figure}

\begin{figure}[!htb] 
\centering 
\begin{minipage}[t]{0.3\linewidth}
\centering
\epsfig{figure=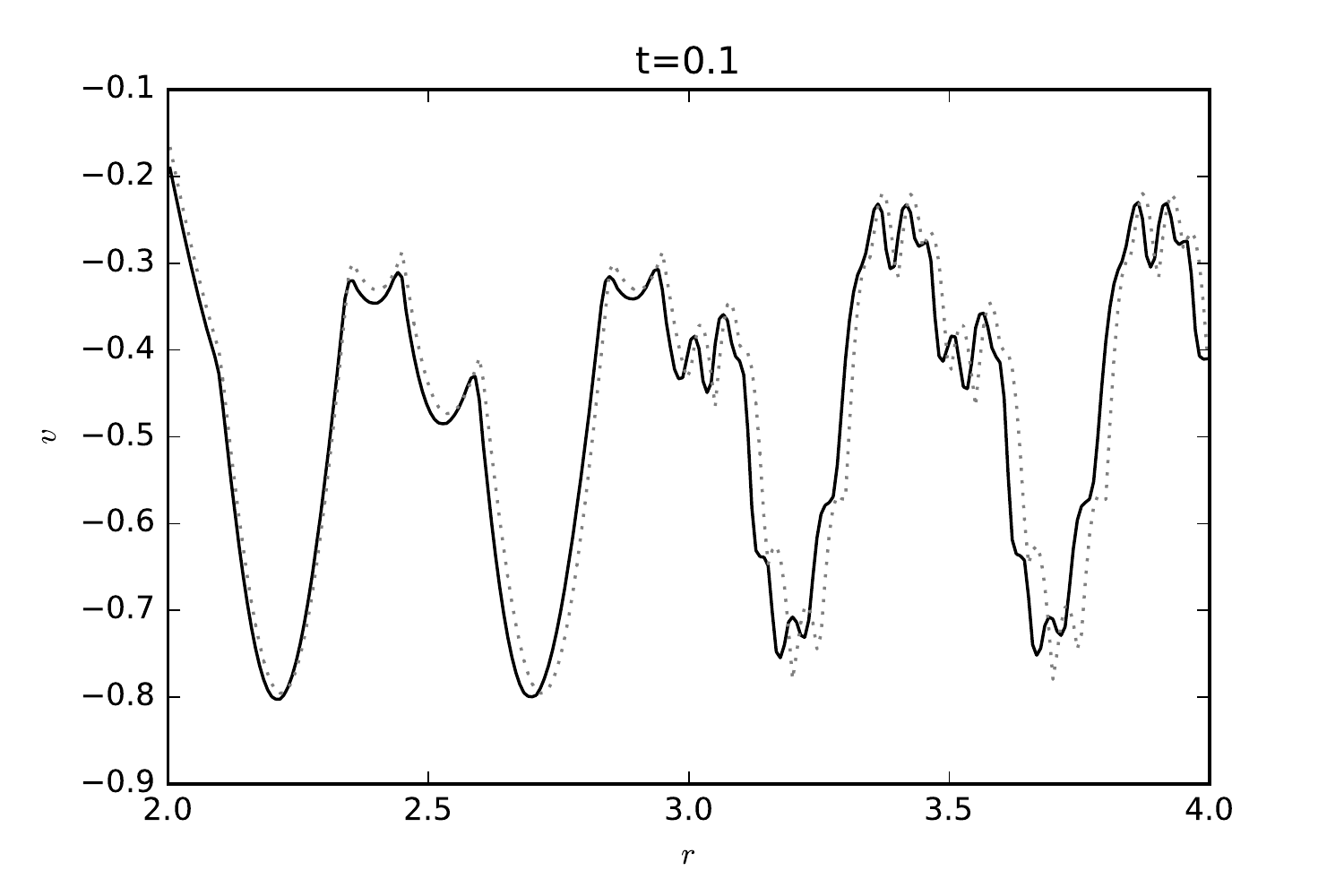,width=2.5in} 
\end{minipage}
\hspace{0.1in}
\begin{minipage}[t]{0.3\linewidth}
\centering
\epsfig{figure=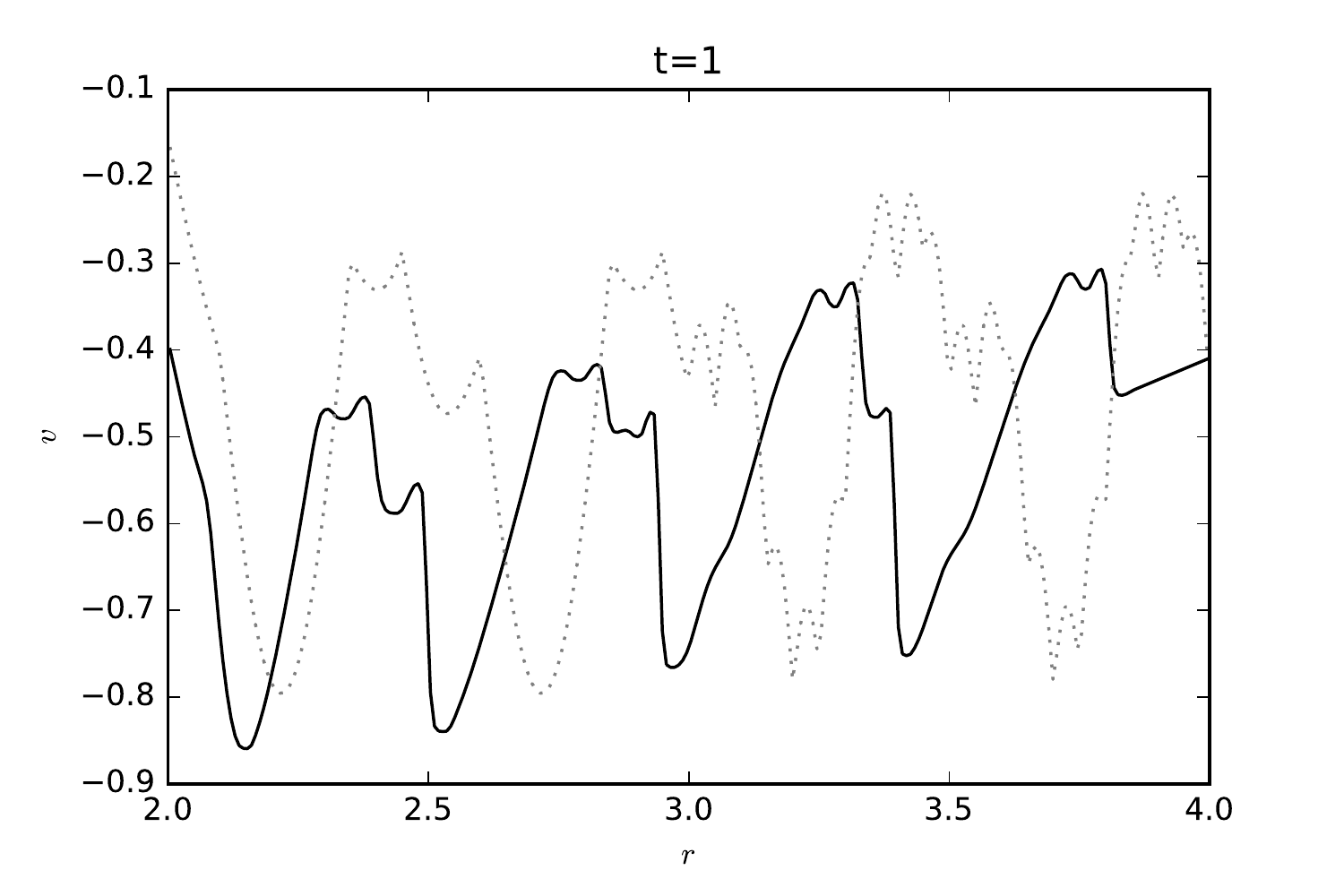,width= 2.5in} 
\end{minipage}
\hspace{0.1in}
\begin{minipage}[t]{0.3\linewidth}
\centering
\epsfig{figure=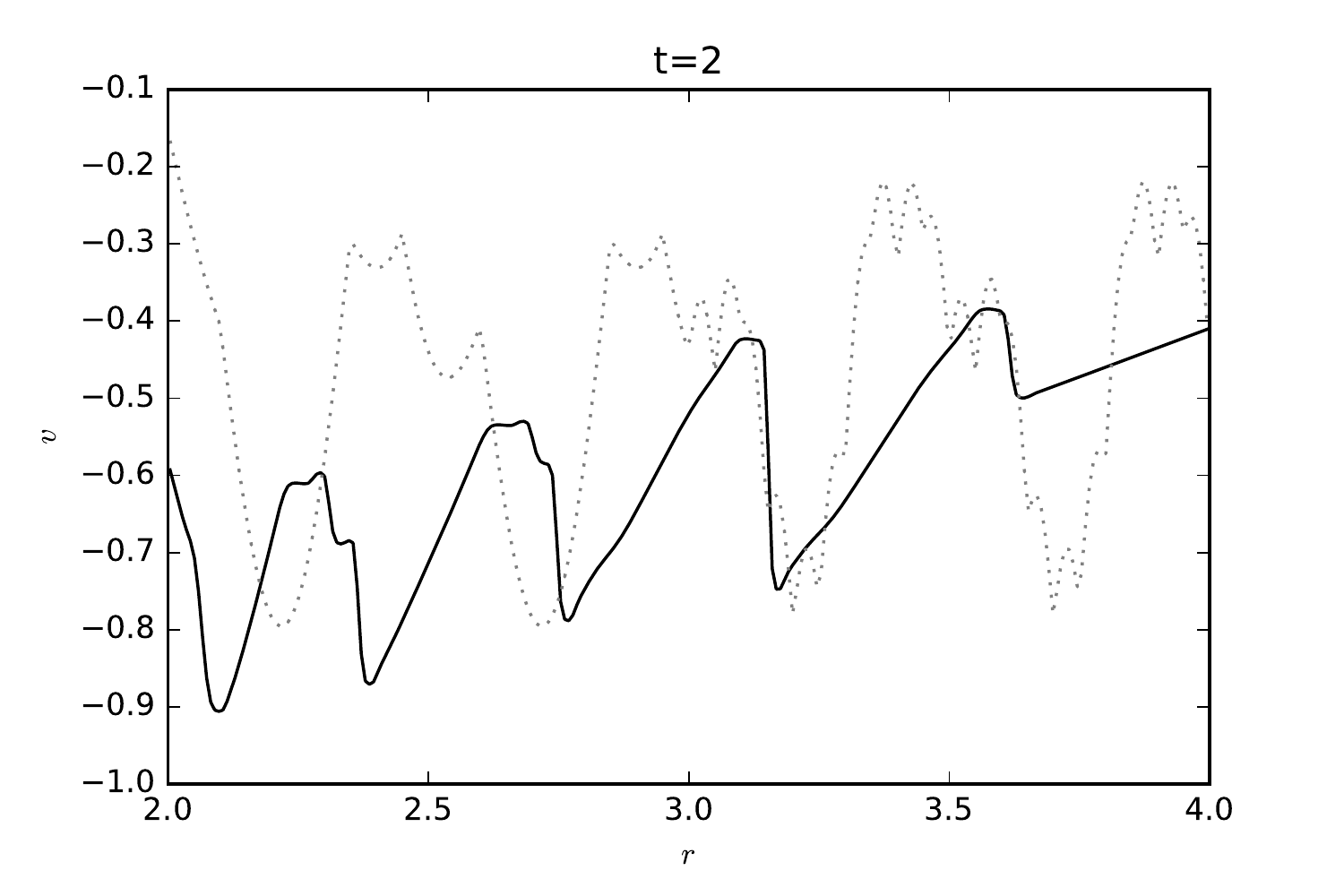,width=2.5in} 
\end{minipage}\\
\begin{minipage}[t]{0.3\linewidth}
\centering
\epsfig{figure=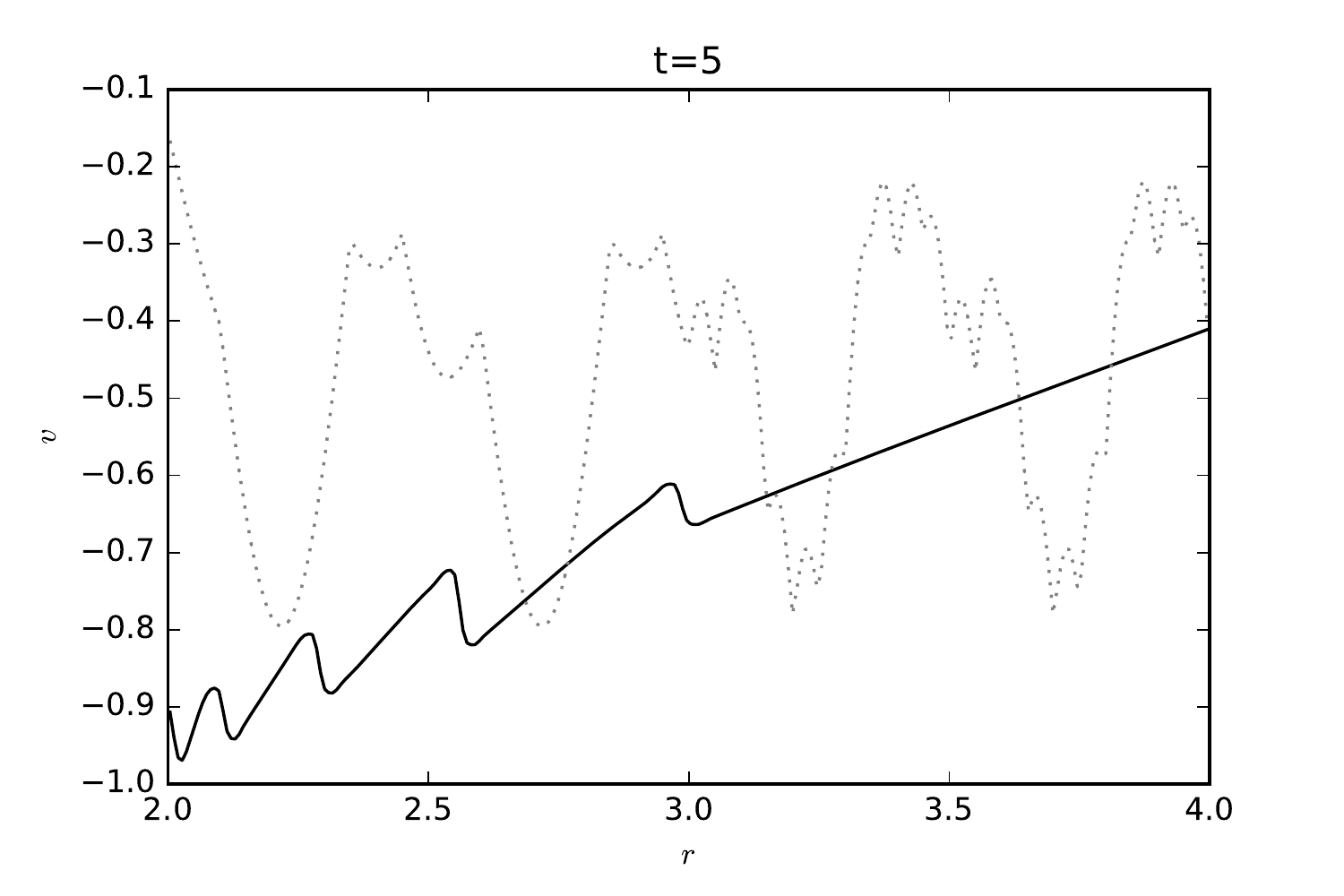,width=2.5in} 
\end{minipage}
\hspace{0.1in}
\begin{minipage}[t]{0.3\linewidth}
\centering
\epsfig{figure=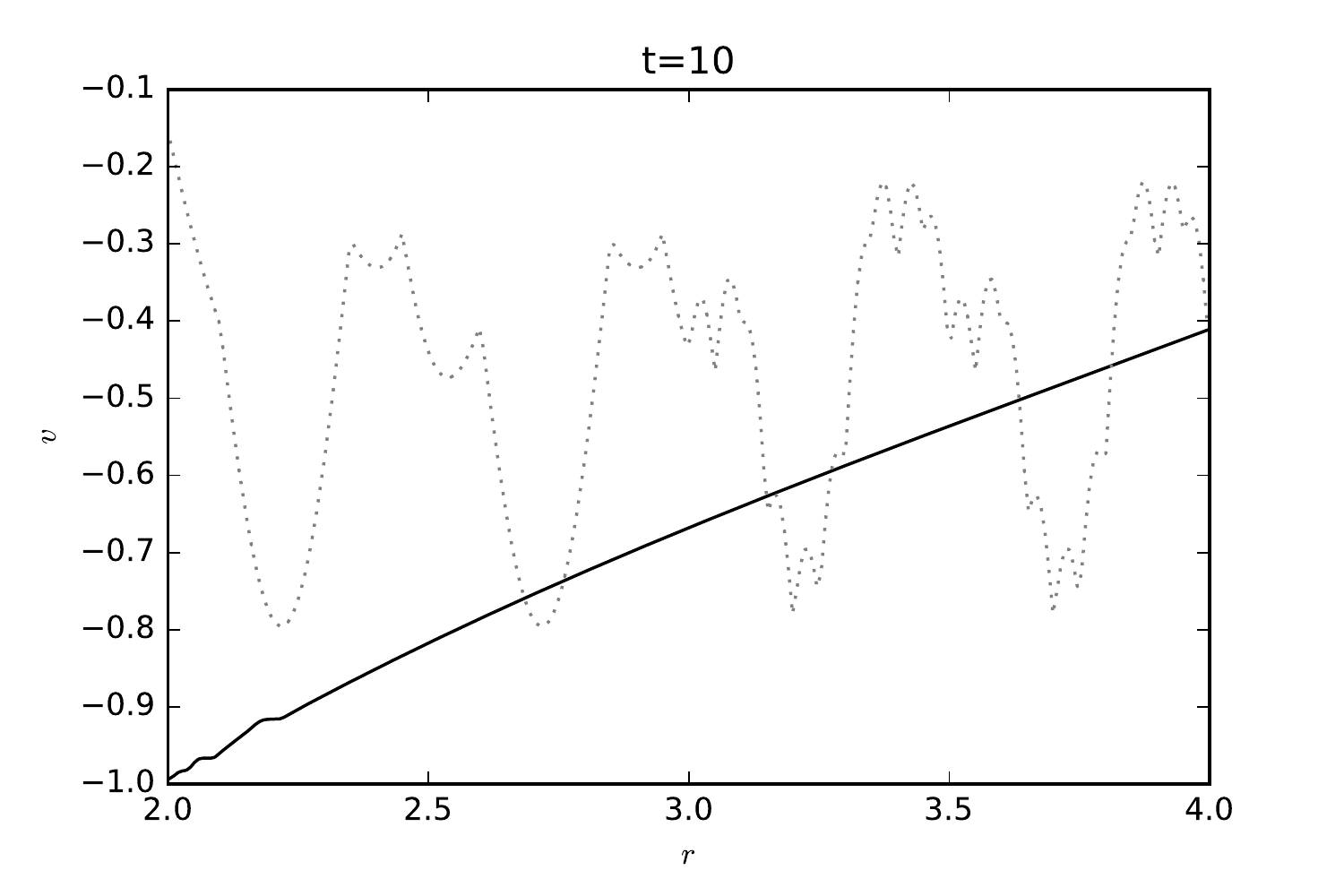,width= 2.5in} 
\end{minipage}
\hspace{0.1in}
\begin{minipage}[t]{0.3\linewidth}
\centering
\epsfig{figure=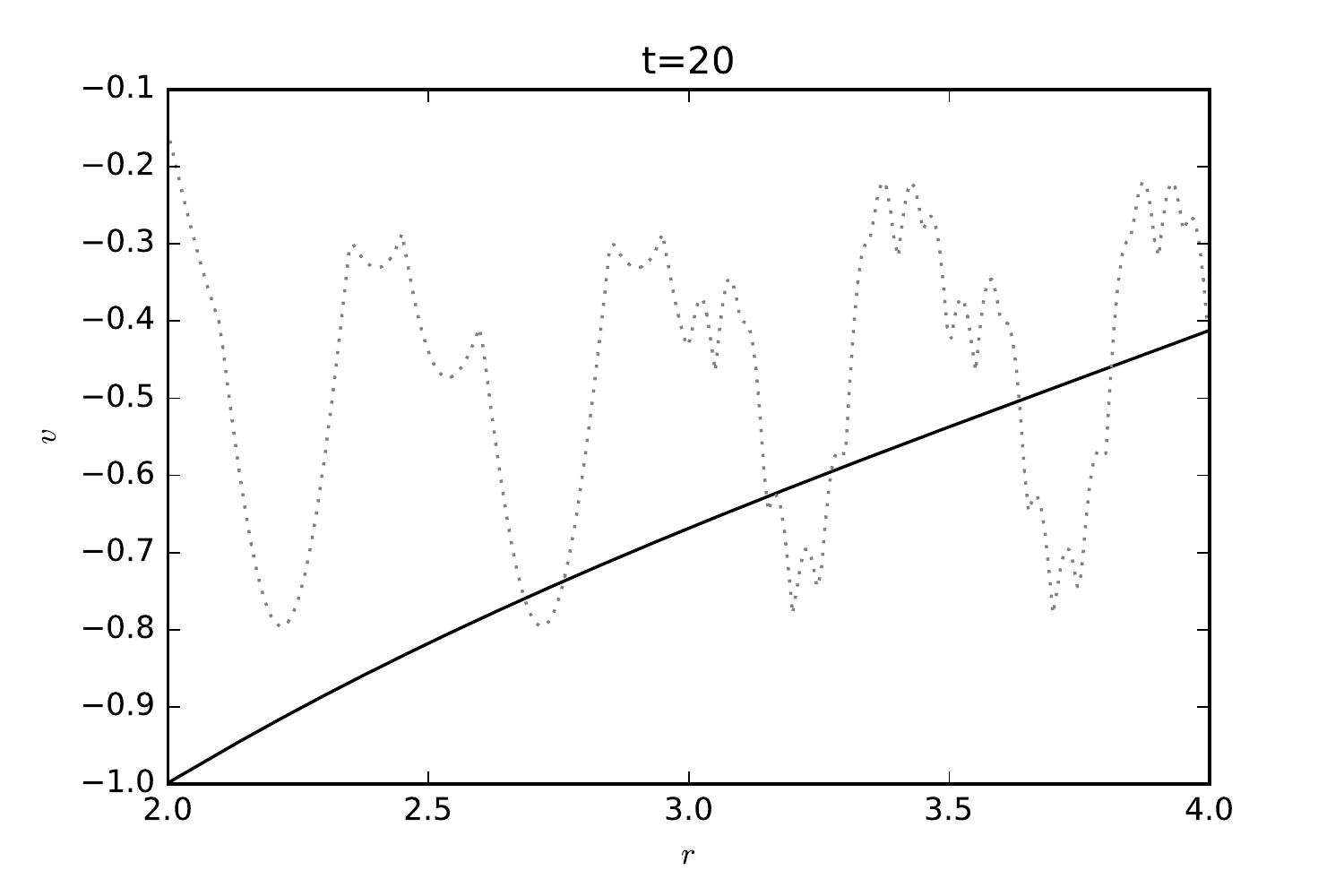,width=2.5in} 
\end{minipage}
\caption{Numerical solution with positive velocity at $r=2M$ and negative velocity at $r=+\infty$, using the Glimm scheme}
\label{FIG-83-1} 
\end{figure}


\begin{figure}[!htb] 

\centering 
\epsfig{figure=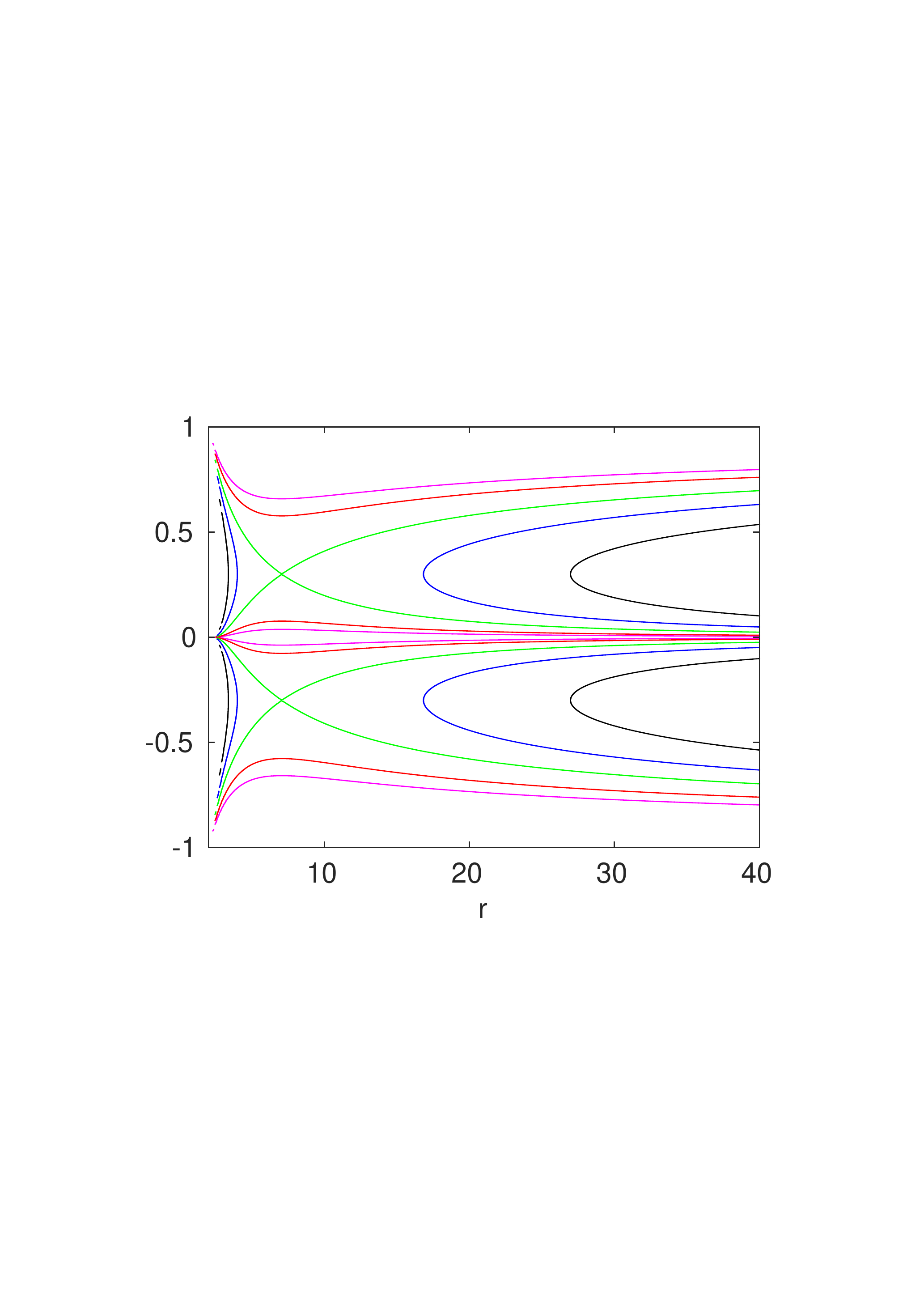,clip, trim=1.2in 3.2in 1.2in 3.2in,height=2.5in} 
\caption{Steady state solutions for the relativistic Euler model} 
\label{FG-30}
\end{figure}


\section{Overview of the theory for the relativistic Euler model} 
\label{Sec:8}

\paragraph{Continuous and discontinuous steady state solutions}

The steady solution to the relativistic Euler model on a Schwarzschild background background \eqref{Euler} is given by the following ordinary differential system: 
\bel{Euler-static}
\aligned
& \del_ r \Big(r(r-2M) {1  \over 1 -  v^2} \rho v \Big) = 0, 
\\
& \del_r  \Big((r-2M)^2 {v^2+ k^2 \over 1 -  v^2} \rho \Big)
= {M \over r} {(r-2M)  \over 1 - v^2}  \Big(3 \rho v^2+ 3 k^2 \rho -  \rho-  k^2 \rho v^2 \Big)  
  + {2k^2 \over r} (r-2M)^2  \rho, 
\endaligned
\ee 
Smooth steady state solutions to the relativistic Euler equation with given radius $r_0>2M$, density $\rho_0 >0$ and velocity $|v_0| < 1$ are given by 
\bel{steady-algebra}
\aligned 
& \sgn (v) (1-v^2) |v|^{2 k^2 \over 1- k^2} r^{4 k^2 \over 1- k^2} /  (1-2M/ r) = \sgn (v_0) (1-v_0^2) |v_0 |^{2 k^2 \over 1- k^2} r_0^{4 k^2 \over 1- k^2} / (1-2M/ r_0), \\  
&  r (r-2M)    \rho  {v   \over 1- v^2} = r _0   (r_0 -2M)   \rho_0   {v_0   \over 1- v_0^2}.
\endaligned 
\ee
We have 
\bel{derivatives}
\aligned 
 & {d \rho \over dr}=-{2 (r-M) \over r(r-2M)}\rho  -  {(1+   v^2) (1- k^2)   \over r  (r-2M)} \rho   \bigg({2  k^2 \over 1-  k^2}   (r-2M)-  M \bigg)\Big/ (v^2- k^2)  \\
 & {d v \over dr}=  v   {(1 - v^2) (1- k^2)   \over r  (r-2M)} \bigg({2  k^2 \over 1-  k^2}   (r-2M)-  M \bigg)\Big/ (v^2- k^2), 
\endaligned
\ee
We denote by the {\em critical steady state solution to the relativistic Euler model} \eqref{Euler} $(\rho, v)$ with its velocity $v=v (r)$ satisfying
\bel{critical-Euler}
{1 - \eps^2 v^2\over 1 -2M/r} (r^2 |v |)^ {2 \eps^2 k^2 \over 1- \eps^2 k^2}=  (1+ 3\eps^2 k^2) k^ {2 \eps^2 k^2 \over 1- \eps^2 k^2} \Big({1+ 3 \eps^2 k^2 \over 2 \eps^2 k^2}  M\Big)^ {4  \eps^2 k^2 \over 1- \eps^2 k^2}. 
\ee
Unlike the static Burgers model \eqref{static-Burgers},  steady state solution to the relativistic Euler model does not have an explicit form. We recall the following from \cite{PLF-SX-one}. 

\begin{theorem}[Smooth steady flows on a Schwarzschild background]
\label{steady-state} 
Let   $k \in [0, 1 ]$ be the sound speed and  $M> 0$ be mass of the blackhole and we consider the relativistic Euler model describing fluid flows on a Schwarzschild background  \eqref{Euler}. 
For any given any radius $r_0 >2M$, density $\rho_0 > 0$, and velocity $|v_0| < 1 $, there exists a smooth unique steady state solution $\rho=\rho(r), v=v(r)$, 
satisfying \eqref{steady-algebra} such that 
the initial condition $\rho(r_0) =\rho_0$ and $v(r_0) =v_0$ holds. 
Moreover, the velocity component satisfies that the signes of $v(r)$ and $|v(r)| -k$ do not change on the domain of definition. We have two different families of solutions: 
\bei
\item If there exists no point  at which the fluid flow is  sonic (referred to the {\em sonic point}), the smooth steady state solution is defined globally on the whole space interval outside of the blackhole $(2M, +\infty)$.

\item Otherwise, the smooth steady state solution cannot be extended once it reaches the sonic point. 
\eei
\end{theorem}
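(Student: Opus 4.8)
The plan is to reduce the steady system \eqref{Euler-static} to a single scalar relation for the velocity, for which the implicit function theorem will do most of the work. First I would record the two first integrals collected in \eqref{steady-algebra}: the mass-flux equation (first line of \eqref{Euler-static}) integrates at once to the second relation of \eqref{steady-algebra}, while differentiating the left-hand side of the first relation and substituting $dv/dr$ from \eqref{derivatives} shows that this quantity is conserved along any steady flow. Writing
\[
\Psi(r,v):=\sgn(v)\,(1-v^2)\,|v|^{\frac{2k^2}{1-k^2}}\,\frac{r^{\frac{4k^2}{1-k^2}}}{1-2M/r},
\]
the first relation reads $\Psi(r,v)=\Psi(r_0,v_0)=:C$ and involves $r$ and $v$ only. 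Local existence and uniqueness of a smooth $v=v(r)$ through $(r_0,v_0)$ then follow from the implicit function theorem provided $\partial_v\Psi(r_0,v_0)\neq0$, and a direct computation gives (for $v>0$, the case $v<0$ being symmetric)
\[
\partial_v\Psi(r,v)=\frac{2}{1-k^2}\,|v|^{\frac{2k^2}{1-k^2}-1}\,(k^2-v^2)\,\frac{r^{\frac{4k^2}{1-k^2}}}{1-2M/r},
\]
so that $\partial_v\Psi$ vanishes exactly on the sonic set $\{v^2=k^2\}$. Thus a smooth velocity profile is produced precisely away from sonic points.

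The two sign statements follow directly from the structure of $\Psi$. Since $\sgn\Psi=\sgn v$ and $\Psi$ vanishes only at $v=0$ and $v=\pm1$, conservation of $C\neq0$ forces $v(r)$ to keep the sign of $v_0$ and to remain strictly inside $(-1,1)$, so the light speed is never attained. Moreover, for each fixed $r$ the map $v\mapsto\Psi(r,v)$ is, for $v>0$, increasing on $(0,k)$ and decreasing on $(k,1)$ by the formula above, so the level set $\{\Psi(r,\cdot)=C\}$ lies entirely on one side of $v=k$; because $C$ is conserved, $\sgn(|v(r)|-k)$ cannot change. This gives the claimed invariance of the signs of both $v$ and $|v|-k$.

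With $v(r)$ in hand I would recover the density from the second relation of \eqref{steady-algebra}, namely
\[
\rho(r)=\frac{r_0(r_0-2M)\,\rho_0\,v_0}{1-v_0^2}\cdot\frac{1-v^2}{r(r-2M)\,v},
\]
which is smooth and, since $v$ keeps the sign of $v_0$ and $r>2M$, strictly positive, and which satisfies $\rho(r_0)=\rho_0$ by construction. (Equivalently one may integrate the linear equation for $\rho$ in \eqref{derivatives}, whose exponential-integral form manifestly preserves positivity.) Uniqueness of the pair $(\rho,v)$ is then inherited from the uniqueness clause of the implicit function theorem together with this explicit reconstruction of $\rho$.

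It remains to determine the maximal interval of existence, and this is where the main difficulty lies. By the standard continuation principle the smooth solution persists as long as it stays in $\{r>2M,\ |v|<1,\ v^2\neq k^2\}$; the argument above already excludes reaching $v=0$ or $|v|=1$, so the only possible breakdown is $|v|\to k$. Whether this occurs is governed by the critical curve
\[
\Phi(r):=\Psi(r,k)=(1-k^2)\,k^{\frac{2k^2}{1-k^2}}\,\frac{r^{\frac{4k^2}{1-k^2}}}{1-2M/r}=\max_{|v|<1}|\Psi(r,v)|,
\]
for which $\tfrac{d}{dr}\log\Phi(r)=\tfrac{4k^2}{(1-k^2)r}-\tfrac{2M}{r(r-2M)}$ vanishes only at $r_s=\tfrac{M(1+3k^2)}{2k^2}$, while $\Phi(r)\to+\infty$ as $r\to2M^+$ and as $r\to+\infty$; hence $\Phi$ has a single minimum at $r_s$. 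If $|C|<\Phi(r_s)$ then $|C|<\Phi(r)$ for every $r$, the trajectory never meets $v^2=k^2$, and the solution is global on $(2M,+\infty)$; if instead $|C|\ge\Phi(r_s)$ the equation $\Phi(r)=|C|$ has a finite root $r_*$ on the side of $r_s$ containing $r_0$, where the subsonic and supersonic branches coalesce, $\partial_v\Psi$ degenerates, and the smooth solution cannot be continued past the sonic point $r_*$ — precisely the borderline described by the critical relation \eqref{critical-Euler}. The hard part is exactly this monotonicity-and-asymptotics analysis of $\Phi$, which I would carry out separately in the subsonic and supersonic regimes and for all $k\in(0,1)$; the limiting cases are easier, since $k=1$ forces $|v|<k$ (always subsonic, hence global) while $k\to0$ reduces $\Psi$ to the Burgers steady profiles of \eqref{steady-solution}.
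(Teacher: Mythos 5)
This paper never actually proves Theorem~\ref{steady-state}: it is recalled verbatim from \cite{PLF-SX-one}, so your proposal can only be measured against the apparatus the paper does display, namely the first integrals \eqref{steady-algebra}, the ODE system \eqref{derivatives}, and the critical relation \eqref{critical-Euler}. Measured against these, your main line is correct and is evidently the intended one. Your derivative formula for $\Psi$ checks out (with $a=\tfrac{2k^2}{1-k^2}$ one gets $\partial_v\Psi=\tfrac{2}{1-k^2}v^{a-1}(k^2-v^2)\,r^{2a}/(1-2M/r)$ for $v>0$), so the implicit function theorem degenerates exactly on the sonic set; the sign invariance of $v$ and of $|v|-k$ follows as you say, because crossing $v=\pm k$ along a level set $|\Psi|=|C|$ forces $|C|=\Phi(r)$ at the crossing, i.e.\ a sonic point (your phrase ``the level set lies entirely on one side of $v=k$'' is slightly off — it has one component on each side — but the branch-selection argument is right). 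Most convincingly, your sonic radius $r_s=\tfrac{M(1+3k^2)}{2k^2}$ gives $1-2M/r_s=\tfrac{1-k^2}{1+3k^2}$, whence $\Phi(r_s)=(1+3k^2)\,k^{\frac{2k^2}{1-k^2}}\big(\tfrac{(1+3k^2)M}{2k^2}\big)^{\frac{4k^2}{1-k^2}}$, which is exactly the right-hand side of \eqref{critical-Euler} with $\eps=1$; so your threshold reproduces the paper's critical steady state and your dichotomy $|C|\lessgtr\Phi(r_s)$ is the paper's dichotomy.

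There is, however, one genuine misstep and a cluster of uncovered hypotheses. The misstep is the borderline case $|C|=\Phi(r_s)$, which you fold into ``$|C|\ge\Phi(r_s)$ $\Rightarrow$ no continuation past the sonic point.'' At $(r_s,k)$ one has $\partial_v\Psi=\partial_r\Psi=\partial^2_{rv}\Psi=0$, $\partial^2_{vv}\Psi<0$, $\partial^2_{rr}\Psi=\Phi''(r_s)>0$, so the level set is a nondegenerate Morse saddle: two smooth transonic branches $v-k\approx\pm\sqrt{\Phi''(r_s)/|\partial^2_{vv}\Psi|}\,(r-r_s)$ cross there, and a solution reaching the sonic point \emph{can} be continued smoothly through it (these are precisely the Bondi-type transonic flows singled out by \eqref{critical-Euler}, and uniqueness of continuation is the delicate issue). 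Your non-extendability argument is valid only for the strict inequality, where $\Phi'(r_*)\neq0$ makes the level set fold back with $dv/dr\to\infty$ — visible in \eqref{derivatives}, whose numerator $\tfrac{2k^2}{1-k^2}(r-2M)-M$ is nonzero at $r_*\neq r_s$. Beyond that: the theorem's hypotheses admit data your construction does not reach — $|v_0|=k$ (the IFT fails at the first step, and this is the nonuniqueness configuration above), $v_0=0$ (then $C=0$, the flux constant vanishes, and $\rho$ cannot be recovered by your division by $v$; your parenthetical fallback via the linear $\rho$-equation in \eqref{derivatives} is then the actual proof and should be promoted from a parenthesis), and the endpoints $k\in\{0,1\}$, which you dispatch by remark although \eqref{steady-algebra} is ill-defined at $k=1$. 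Finally, having solved the algebraic relations you still owe the routine converse check that the reconstructed pair $(\rho,v)$ satisfies \eqref{Euler-static} away from the degenerate loci (differentiate the two conserved quantities and invert the linear system for $(\rho',v')$), since the theorem asserts a solution of the steady system, not merely of \eqref{steady-algebra}. None of this damages your main dichotomy, which is sound and matches the paper's stated critical structure.
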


We now turn to  {\em steady shock of the relativistic Euler model} \eqref{Euler}, that is, two steady state solutions connected by a standing shock: 
\bel{steady-shock-Euler1}
 (\rho, v)= \begin{cases}
 (\rho_L, v_L)  (r), 
&  2M < r < r_0, 
 \\ (\rho_R, v_R) (r),
 & r>r_0, 
 \end{cases}
\ee
where $r_0>2M$ is a given radius and $(\rho_L, v_L)$, $ (\rho_R, v_R)$ two steady state solutions 
two steady state solutions satisfying  \eqref{steady-algebra}  such that 
\bel{steady-shock-Euler2}
v_R(r_0) ={k^2 \over v_L (r_0)}, \qquad  \rho_R(r_0) = {v_L(r_0)^2  -  k^4\over  k^2 \big(1 -  v_L(r_0)^2 \big)}   \rho_L (r_0), \qquad v_L (r_0) \in (-k, -k^2) \cup (k, 1). 
\ee
We denote by the {\em steady shock of the relativistic Euler model}  the function given by \eqref{steady-shock-Euler1},\eqref{steady-shock-Euler2} is  a solution to the static Euler equation \eqref{Euler-static} in the distributional sense, satisfying both  the Lax entropy inequality and the Rankine-Hugoniot jump condition.  
Observe that for a fixed radius $r_1 \neq r_0$ and  $(\rho_L, v_L)$, $ (\rho_R, v_R)$ satisfying \eqref{steady-shock-Euler1}, the following function is {\em not} a steady shock of the Euler model \eqref{Euler}: 
$$
(\rho, v)= \begin{cases}
 (\rho_L, v_L)  (r), & 2M <r < r_1, 
 \\ (\rho_R, v_R) (r), & r>r_1. 
 \end{cases}
$$


\paragraph{Generalized Riemann problem and Cauchy problem}

A generalized Riemann problem for the relativistic Euler system \eqref{Euler} is a Cauchy problem with initial data  given as
\bel{initial-steady-Euler}
(\rho_0, v_0) (r)=\begin{cases}
(\rho_L, v_L) (r) &2M  <r<r_0, 
\\ 
(\rho_R, v_R)(r) & r> r_0, 
\end{cases}
\ee
where $r= r_0$ is a fixed radius and $\rho_L=\rho_L(r), v_L=v_L (r), \rho_R= \rho_R(r), v_R= v_R (r)$ are two smooth steady state solutions  satisfying the static Euler equation \eqref{Euler-static}.  Referring to \cite{PLF-SX-one}, we can construct an approximate solver  $\widetilde U =(\widetilde \rho, \widetilde v)= (\widetilde \rho, \widetilde v)(t, r) $
 of  the generalized Riemann problem of the relativistic Euler  model \eqref{Euler} whose initial date is  \eqref{initial-steady-Euler} such that:
 \bei
 \item  $||\widetilde U (t, \cdot) -  U (t, \cdot)||_{L^1}= O (\Delta t^2) $ for any fixed $t>0$ where $U=(\rho, v)= (\rho, v) (t, r)$  satisfying \eqref{Euler},   \eqref{initial-steady-Euler} and $\Delta t $ is the time step in the construction. 
   \item $\widetilde U= (\widetilde \rho, \widetilde v)$ is accurate out of rarefaction fan regions. 
 \item $\widetilde U=  (\widetilde \rho, \widetilde v)$ (so does the accurate solution $U$) contains  at most three steady states: the two states given in the initial data $(\rho_L, v_L)$, $(\rho_R, \rho_R)$  and the uniquely defined intermediate $(\rho_M, v_M)$ connected by a 1-family wave (either 1-shock or 1-rarefaction) and a 2-family wave (either 2-shock or 2-rarefaction). 
 \eei

\begin{theorem}[The existence theory of the relativistic Euler model]
\label{Euler-ex-th}
Consider the Euler system describing fluid flows on a Schwarzschild geometry \eqref{Euler}.  For  any initial density $\rho_0=\rho_0(r) > 0$ and velocity  $|v_0| = |v_0(r)|< 1$ satisfying$$
TV_{[2M+\delta, +\infty)} \big(\ln \rho_0 \big) + TV_{[2M+\delta, +\infty)}  \Bigg(\ln {1- v_0 \over 1 + v_0} \Bigg) < + \infty, 
$$
where  $\delta> 0$ is a constant, there exists a weak solution $(\rho, v) =(\rho,v) (t,r)$defined on $(0, T)$ for any given $T>0$ and satisfying the prescribed initial data at the initial time and, with a constant $C$ independent of time, 
$$
\aligned
&\sup_{t \in [0, T]} \Bigg(
TV_{[2M+\delta, +\infty)} \big(\ln \rho(t,\cdot) \big) 
+ TV_{[2M+\delta, +\infty)} \Bigg(\ln {1- v (t, \cdot) \over 1 +  v(t, \cdot)} \Bigg) \Bigg) \\
& 
\leq TV_{[2M+\delta, +\infty)} \big(\ln \rho_0 \big) + TV_{[2M+\delta, +\infty)}  \Bigg(\ln {1- v_0 \over 1 + v_0} \Bigg)  e^{CT}.
\endaligned 
$$ 
\end{theorem}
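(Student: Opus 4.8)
The plan is to construct the solution by a generalized random choice (Glimm) method built upon the approximate Riemann solver $\widetilde U = (\widetilde\rho,\widetilde v)$ recalled just above, and to derive a uniform total variation bound that is then passed to the limit. The natural variables are precisely those appearing in the statement, $\ln\rho$ and $w := \ln\frac{1-v}{1+v}$, because for the homogeneous relativistic Euler system (the flat, $M=0$ case) these are, up to constant linear combinations, the two Riemann invariants associated with the $1$- and $2$-characteristic families for constant sound speed $k$. Working in $(\ln\rho, w)$ diagonalizes the wave structure, so that the strength of an elementary wave is measured by the jump of one invariant and the steady states used as building blocks of the solver appear as level sets carrying zero wave strength.

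First I would fix $\delta>0$ and the mesh, set $t_n = n\Delta t$ and $r_j = 2M + j\Delta r$ under the CFL restriction, and define the approximate solution on each slab $t_n \le t < t_{n+1}$ by solving in each cell the generalized Riemann problem with $\widetilde U$ and then sampling at $r_{n,j}=2M+(j+w_n)\Delta r$ with the equidistributed sequence $(w_n)$, exactly as in \eqref{Glimm-method}--\eqref{V^nj0}. Because the solver is well-balanced, a piecewise steady datum is reproduced exactly and produces no spurious variation; hence all growth of total variation comes from genuine wave interactions and from the geometric coupling between the two families in \eqref{Euler1}.

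The core is then a Glimm-type interaction estimate. Following \cite{PLF-SX-one,PLF-SX-two}, I would show that when two incoming waves of strengths $\alpha,\beta$ interact, the outgoing strengths equal $\alpha+\beta$ up to a quadratic error $O(|\alpha|\,|\beta|)$ in the $(\ln\rho,w)$ variables, and that the evolution of a wave along the steady background across one time step changes its strength only by a factor $1 + O(\Delta t)$, with a constant depending on $\sup_{[2M+\delta,\infty)}$ of the coefficients in \eqref{Euler1}. Introducing the Glimm functional $F_n = V_n + C\,Q_n$, where $V_n$ is the total variation of $(\ln\rho,w)$ at $t_n$ and $Q_n$ the usual interaction potential, the quadratic estimate lets $C\,Q_n$ absorb the interaction terms while the source contributes at most $C\,\Delta t\, V_n$. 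This yields $F_{n+1}\le(1+C\Delta t)F_n$, and discrete Gronwall gives $V_n\le F_n\le F_0\,e^{Ct_n}$, which is the claimed bound with the factor $e^{CT}$.

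Finally I would pass to the limit $\Delta r\to 0$: the uniform BV bound together with an $L^1$ time-continuity estimate gives, via Helly's theorem, a subsequence converging in $L^1_{loc}$ to a limit $(\rho,v)$, and the standard equidistribution argument (using the randomness of $(w_n)$) shows that the residual in the weak formulation of \eqref{Euler} vanishes in the limit for almost every choice of the sequence, so that the limit is a weak solution attaining the prescribed data. The hard part will be the interaction and source estimates themselves: I expect the main obstacle to be establishing the quadratic bound for the genuinely nonlinear relativistic fields together with a \emph{linear} (not super-linear) control of the geometric source, and ensuring that all constants remain finite on $[2M+\delta,\infty)$ — the degeneracy of the characteristic speeds as $r\to 2M$ is exactly why the estimate is stated away from the horizon and why $C$ depends on $\delta$.
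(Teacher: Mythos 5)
Your proposal is essentially the proof on record: the present paper states Theorem~\ref{Euler-ex-th} without proof, recalling it from \cite{PLF-SX-one}, where existence is established exactly along the lines you outline --- a generalized random choice scheme whose building blocks are the steady states and the generalized Riemann solver, wave strengths measured via $\ln\rho$ and $\ln\frac{1-v}{1+v}$ (equivalent, for constant sound speed $k$, to the Riemann invariants), a Glimm functional with quadratic interaction estimates plus an $O(\Delta t)$ per-step amplification from the geometric source giving the $e^{CT}$ growth by discrete Gronwall, and Helly compactness combined with the equidistribution argument to show the limit is a weak solution. No substantive gap to flag; your closing caveats (the $\delta$-dependence of $C$ and the degeneracy at the horizon, where the characteristic speeds vanish and steady states reach $|v|\to 1$ so that $\ln\frac{1-v}{1+v}$ blows up) correctly identify why the estimate is stated on $[2M+\delta,+\infty)$.
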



\section{A finite volume method for the relativistic Euler model}
\label{Sec:9}

\paragraph{A semi-discretizenumerical scheme}

We consider the relativistic equation on a Schwarzschild background \eqref{Euler1} and we write 
\bel{Euler-form}
\del_ t U+ \del_ r\bigg(\Big(1-{2M \over r} \Big)F (U)\bigg)= S(r, U), 
\qquad 
U =\left (\aligned & U^0\\  & U^1\endaligned \right)= \left (
\aligned 
&  {1 + k^2  v^2 \over 1 - v^2}  \rho \\ 
&{1+k^2  \over 1 -  v^2} \rho v
\endaligned \right),   \qquad 
F(U) =\left (
\aligned 
&  {1+k^2  \over 1 -  v^2} \rho v \\ 
&    {v^2+ k^2  \over 1 -  v^2} \rho
\endaligned \right),  
\ee
and the source term 
$$
S(r, U) =\left (
\aligned 
& - {2\over r} (1-2M/r) {1+k^2  \over 1 -  v^2} \rho v  \\ 
&  {- 2 r+5M \over r^2}{v^2+ k^2  \over 1 - v^2}\rho    -{M \over r^2}  {1+  k^2  v^2 \over 1 -  v^2}  \rho+2{r-2M  \over r^2}k^2 \rho
\endaligned \right).  
$$
We can compute 
\bel{Duf}
D_U F (U)=  \begin{bmatrix}
    0 & 1 \\
 (- v^2 +k^2)\big/  (1- k^2 v^2) & 2  (1-k^2)  v  \big/  (1- k^2v^2)
  \end{bmatrix}, 
\ee 
which gives the two eigenvalues $\mu_\mp =\Big (1-{2M \over r}\Big){v   \mp k \over 1 \mp  k^2 v}  $.  We also have
$ v =  {1+ k^2  -  \sqrt {(1+ k^2)^2 - 4k^2 \big({U^1 \over U^0}\big)^2}\over 2k^2 {U^1 \over U^0}} \in (-1, 1)$
and 
$\rho ={U^1 (1- v^2)\over v (1+ k^2)}$. 
Again, we take  $\Delta t $,  $\Delta r$ 
as the mesh lengths in time and in space respectively with the CFL condition 
\bel{Cfl-2}
{\Delta t \over \Delta x}   \max \big(|\mu_- |, |\mu_+  |\big) \leq {1\over 2}, 
\ee
where $\mu_\mp $ are eigenvalues.   As is done earlier we write 
$t_n = n \Delta t$ and $r_j=2M+  j\Delta r$, 
and we denote the mesh points by $(t_n, r _j)$, $n\geq 0$, $j\geq 0$. We set alsoy $\rho (t_n, r_j)= \rho_ j^n,  v(t_n, r_j)= v_j^n$ and $U (t_n, r_j)= U_n^j$ where $U = U (t,r)$ is given by \eqref{Euler-form}. 

We search for the approximations $U_ j^n ={1\over \Delta r}  \int_  {r_{j-1/2}}^{r_{j+1/2}} U (t_n, r)dr$and 
$ S_j^n = {1\over \Delta r}  \int_  {r_{j-1/2}}^{r_{j+1/2}} S(t,_n  r)dr$  
and  introduce the following finite volume method: 
\bel{Euler-finite}
U_j^{n+1} =  U_j^n- {\Delta t \over \Delta r} (F_{j+1/2}^n  -  F_ {j - 1/2}^n)  +  \Delta t  S_ j^n, 
\ee
where the numerical flux is 
\bel{Euler-flux}
\aligned 
& F_{j- 1/2}^n =\mathcal F_  l (r_{j-1/2}, U_ {j-1}^n, U_j^n)=\Big(1-{2M \over r_{j-1/2}}\Big) \mathcal  F(U_ {j-1/2-}^n, U_ {j -1/2+}^n), \\
\endaligned 
\ee
and  $ U_{j + 1/2 \pm}, U_{j - 1/2 \pm} $ are determined in the forthcoming subsection and  
\bel{L-F-flux}
\mathcal F(U_L, U_R)= {F (U_L) +F (U_R) \over 2} -{1 \over \lambda} {U_ R - U_L \over 2}, 
\ee 
where $\lambda= \Delta r / \Delta t $.  Here, $F $ is the exact flux \eqref{Euler-form} and $S_j^n$ is the discretized source to be determined later. 


\paragraph{Taking the curved geometry into account}

We now give the states $U_{j+ 1/2 \pm}, U_{j-1/2\pm}$ and the discretized source term $S_j^n $ which take into account the geometry of the Schwarzschild spacetime. For a steady state solution $U= U (r)$, the equation 
$\del_ r \Big((1-2M /  r) F(U)\Big)= S(r,U)$ holds,  
where $U, F$ and the source term $S$ are given by \eqref{Euler-form}, or equivalently, the solution $(\rho, v)$ satisfies the static Euler equation \eqref{Euler-static}.  First of all,  we would like to approximate the solution in each cell $(r_ {j-1/2}, r_ {j+1/2})$ by steady state solutions. Hence we expect the following algebraic relations following from the calculations: 
\bel{Euler-states}
\aligned 
& \big(1-{v_ {j+1/2-}^n}^2\big) {v_ {j+1/2-}^n}^{2 k^2 \over 1- k^2}r_{j+1/2}^{4 k^2 \over 1- k^2} /  (1-2M/  r_{j+1/2}) =   \big(1-{v_j^n}^2   \big) {v_j^n}^{2 k^2 \over 1- k^2} r_j^{4 k^2 \over 1- k^2} / (1-2M/   r_j), 
\\  
&   r_{j+1/2} (r_{j+1/2}-2M)   \rho_{j+1/2 -}^n  {v_ {j+1/2-}^n  \over 1- {v_ {j+1/2-}^n}^2}} =  r_j  (r_j  -2M)  {\rho _ j^n  {v_ j^n  \over 1- {v_j^n}^2}, \\
&  \big(1-{v_ {j+1/2+}^n}^2  \big) {v_ {j+1/2+}^n}^{2 k^2 \over 1- k^2} r_{j+1/2}^{4 k^2 \over 1- k^2} /  (1-2M/  r_{j+1/2})  =   \big(1-{v_{j+1}^n}^2 \big) {v_{j+1}^n}^{2 k^2 \over 1- k^2} r_ {j+1}^{4 k^2 \over 1- k^2} / (1-2M/   r_  {j+1}), 
\\
&  r_{j+1/2} (r_{j+1/2}-2M)   \rho_{j+1/2 +}^n  {v_ {j+1/2+}^n  \over 1- {v_ {j+1/2+}^n}^2}} =  {r_j+1} (r_{j+1}  -2M)  {\rho _ {j+1}^n  {v_ {j+1}^n  \over 1- {v_{j+1}^n}^2}. 
\endaligned  
\ee
However, since a steady state solution might not be defined globally on $(2M, +\infty)$, it is possible that \eqref{Euler-states} does not permits a solution.  We simply define $(\rho_ {j+1/2-}^n, v_ {j+1/2-}^n)=(\rho_j^n, v_j^n)  $ if the first two equations in \eqref{Euler-states}  do not have a solution and $(\rho_ {j+1/2-}^n, v_ {j+1/2-}^n)=(\rho_{j+1}^n, v_{j+1}^n)  $ if  the last two equations in \eqref{Euler-states}  do not have a solution. 
Integrating  \eqref{Euler-finite} by parts, we obtain the approximate source term: 
\bel{Euler-source}
\aligned 
 S_j^n = {1\over \Delta r}  \int_  {r_{j-1/2}}^{r_{j+1/2}} S(t_n, r)dr =&   {1\over \Delta r}  \int_  {r_{j-1/2}}^{r_{j+1/2}} \del_ r \Big((1-2M /  r) F\big(U(t_n,  r)\big)\Big) dr \\
 =& {1\over \Delta r} \bigg((1-2M /  r_{j+1/2}) F(U_{j+1/2-}^n) -  (1-2M /  r_{j- 1/2 +}) F(U_{j-1/2+}^n)\bigg), 
\endaligned 
\ee
where $U_ {j+1/2-}^n, U_{j-1/2+}^n$ are two states determined by \eqref{Euler-states} and $F(\cdot)$ the accurate flux of the Euler model given by \eqref{Euler-form}. 
We then  have the following result. 

\begin{theorem}
The finite volume scheme proposed for the relativistic Euler equation on a Schwarzschild background \eqref{Euler1}  satisfies: 
\bei
\item The scheme preserves the steady state solution to the Euler equation \eqref{Euler-static}. 
\item The scheme is consistent, that is,  for an exact  solution $U= U(t,r)$ and  the states $U_L, U_R \to U$, $r_L, r_R \to r$,  we have 
\bel{consistent-Euler}
 \mathcal F_ r  (r_R, U_L, U_R) - \mathcal F_ l (r_L, U_L, U_R) =  S(r, U)(r_R- r_L)+ O \big((r_R- r_L)^2\big), 
 \ee 
where $\mathcal F_l, \mathcal F_r$ are numerical fluxes given by \eqref{Euler-flux} and $S(r, U)$  is the source term given by  \eqref{Euler-form}. 
\item The scheme has  second-order accuracy in space and first-order accuracy in time. 
\eei
\end{theorem}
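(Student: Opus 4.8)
The plan is to treat the three assertions in turn, in each case exploiting the single structural fact that the interface reconstruction \eqref{Euler-states} is built from the exact steady-state algebra \eqref{steady-algebra}, so that it reproduces any genuine steady profile exactly. \textbf{Well-balanced property.} First I would insert a steady state $(\rho,v)=(\rho,v)(r)$, sampled into the cell values $U_j^n$, into the update \eqref{Euler-finite}. Since \eqref{Euler-states} solves the very relations that characterise steady states, the left and right reconstructions at each interface, $U_{j+1/2-}^n$ and $U_{j+1/2+}^n$, both equal the exact value $U(r_{j+1/2})$ (and likewise at $r_{j-1/2}$). The Lax-Friedrichs flux \eqref{L-F-flux} then loses its dissipative term, because its two arguments coincide, and reduces to $F\big(U(r_{j\pm1/2})\big)$, so that $F_{j\pm1/2}^n=(1-2M/r_{j\pm1/2})F\big(U(r_{j\pm1/2})\big)$. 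The source \eqref{Euler-source} is, by construction, exactly $\tfrac{1}{\Delta r}$ times this same flux difference; hence $\tfrac{\Delta t}{\Delta r}(F_{j+1/2}^n-F_{j-1/2}^n)=\Delta t\,S_j^n$ and $U_j^{n+1}=U_j^n$, exactly as in the Burgers case already treated.

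\textbf{Consistency.} Reading $U=U(r)$ as a steady state (in parallel with the Burgers consistency lemma), the conservative identity $\del_r[(1-2M/r)F(U)]=S(r,U)$ holds pointwise. The two facts used above — exactness of the reconstruction and vanishing of the dissipation — reduce the left side of \eqref{consistent-Euler} to $(1-2M/r_R)F\big(U(r_R)\big)-(1-2M/r_L)F\big(U(r_L)\big)$, which is precisely $\int_{r_L}^{r_R}\del_r[(1-2M/r)F(U)]\,dr=\int_{r_L}^{r_R}S(r,U(r))\,dr$. A Taylor expansion of the integrand about $r$, using boundedness of $\del_r S$ along the profile (which \eqref{derivatives} supplies), then yields $S(r,U)(r_R-r_L)+O\big((r_R-r_L)^2\big)$.

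\textbf{Order of accuracy.} The first-order-in-time statement is immediate, since \eqref{Euler-finite} is a single forward-Euler step; moreover the dissipation in \eqref{L-F-flux}, carrying the factor $1/\lambda=\Delta t/\Delta r$ against an $O(\Delta r)$ reconstruction jump, injects only an $O(\Delta t)$ term into the update and so does not improve the temporal order. For second order in space I would expand the symmetric central part of the flux and the source \eqref{Euler-source} about $r_j$ for a general smooth profile: the antisymmetric slope contributions cancel in the central average, while the $O(1)$ slope terms left in $S_j^n$ recombine, through the steady-state ODE \eqref{derivatives}, into exactly $\del_r[(1-2M/r)F(U)]-S(r,U)=-\del_t U$, leaving a truncation error that is $O(\Delta r^2)$.

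The step I expect to be the main obstacle is this last second-order spatial estimate. Because \eqref{Euler-states} has no closed form, the required expansions must be obtained by implicitly differentiating \eqref{steady-algebra} and feeding in the slope formulas \eqref{derivatives}, and the favourable cancellations have to be pushed one order beyond consistency to secure the $O(\Delta r^2)$ bound. This analysis is valid only away from sonic points, where Theorem~\ref{steady-state} guarantees the local solvability of \eqref{Euler-states}; the degenerate cases are exactly those handled by the fallback clause in the construction, and I would dispatch them separately.
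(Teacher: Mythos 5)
Your proposal is correct and shares the paper's overall architecture, but two steps are executed by a genuinely different route, and the comparison is instructive. The well-balanced argument is the paper's verbatim: exactness of the reconstruction \eqref{Euler-states} forces $U_{j+1/2-}^n=U_{j+1/2+}^n$, kills the dissipative part of \eqref{L-F-flux}, and makes the flux difference equal $\Delta r\, S_j^n$ by \eqref{Euler-source}. For consistency, however, you reduce to data lying exactly on a steady profile and integrate $\del_r\big((1-2M/r)F(U)\big)=S(r,U)$ over $(r_L,r_R)$, whereas the paper Taylor-expands the two-point flux $\mathcal F(U_L,U_R)$ about the common state $U$ and uses the estimate $U_R-U_L=O(r_R-r_L)\,S(r,U)$ furnished by \eqref{Euler-states}; since \eqref{consistent-Euler} is asserted for arbitrary states $U_L,U_R\to U$, your reduction silently strengthens the hypothesis (though the paper's own last step also invokes the steady identity, so both arguments are really about solutions of \eqref{Euler-static}, as in the Burgers lemma). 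The most significant divergence is in the accuracy estimate: the obstacle you single out --- that \eqref{Euler-states} has no closed form, so you would implicitly differentiate \eqref{steady-algebra} and feed in \eqref{derivatives} --- is precisely what the paper's proof circumvents. Because the interface states solve the steady equations, \eqref{Euler-source} gives the exact telescoping identity $\big(1-2M/r_{j+1/2}\big)\big(F(U_{j+1/2+})-F(U_{j+1/2-})\big)=\big(1-2M/r_{j+1}\big)F(U_{j+1}^n)-\big(1-2M/r_j\big)F(U_j^n)-\int_{r_j}^{r_{j+1}}S\big(r,U(t_n,r)\big)\,dr$, which rewrites the update \eqref{Euler-finite} as a central difference of cell-centered quantities plus source integrals; ordinary Taylor expansions of $1-2M/r_{j\pm1}$, $F(U_{j\pm1}^n)$ and the integral then deliver $O(\Delta t+\Delta r^2)$ with no differentiation of the implicit reconstruction, the residual dissipation being controlled by the jump bound $U_{j+1/2+}-U_{j+1/2-}=O(\Delta r^3)$ (which, as your own expansion would show, is valid on steady profiles; for genuinely time-dependent data the jump is only $O(\Delta r)$, a point on which the paper's proof is itself loose, and where your $O(\Delta t)$ bookkeeping of the dissipative term is the more careful statement). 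Your closing caveat about sonic points and the fallback clause in \eqref{Euler-states} is sound and is passed over in silence by the paper.
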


\begin{proof}
For a steady state given by \eqref{Euler-static}, we have $ U _ {j+1/2+}= U_{j+1/2-}  $. Hence,  the flux of the finite volume method \eqref{Euler-flux} satisfies $F_{j+1/2}= (1-2M /r_{j+1/2})F(U _ {j+1/2+})=  (1-2M /r_{j+1/2})F(U_{j+1/2-})$, which gives: 
$$
{1 \over \Delta r} (F_{j+1/2}^n  - F_ {j - 1/2}^n) =   (1-2M /r_{j+1/2})F(U_{j+1/2-})-   (1-2M /r_{j-1/2})F(U_{j-1/2+})=  S_j^n. 
$$
Therefore, the scheme preserves the steady state solutions.  Next, according to \eqref{Euler-states} and \eqref{Euler-source}, there exist four states $U_L^l, U_R^l, U_L^r,U_R^r $ such that 
$$
\aligned 
&  \mathcal F_ r  (r_R, U_L, U_R) - \mathcal F_ l (r_L, U_L, U_R)=  (1-2M/ r_ R) \mathcal F(U_L^r,U_R^r)-  (1-2M/ r_ L)\mathcal F (U_L^l,U_R^l)\\ 
 =&  \big(1-2M / r + 2M /r^2 (r_R-r)+ O (r_R-r)\big) \big(\mathcal F(U, U)+ \del_ 1 \mathcal F(U, U)(U_R-U) + o(U_R-U)\big)\\
 & -  \big(1-2M / r + 2M /r^2 (r_L -r)+  O (r_L -r)\big) \big(\mathcal F(U, U)+ \del_2  \mathcal F(U, U)(U_L-U) + o(U_L -U)\big). 
\endaligned 
$$
By \eqref{Euler-states}, $U_R- U_L= O(r_R-r_L) S (r, U)$. Moreover, since $U= U (t, r)$ is accurate, we have $\mathcal F(U, U)= F(U)$ and $\del_ 1 \mathcal F(U, U)= \del_ 2 \mathcal F(U, U)=  \del_ U F (U)$.  Therefore, 
$$
\aligned 
& \mathcal F_ r  (r_R, U_L, U_R) - \mathcal F_ l (r_L, U_L, U_R)={2M \over r^2} (r_R- r_L) F (U) + (1-2M/ r) \del_ U F (U)
(U_R- U_L)+ O \big((r_R- r_L)^2\big)\\
= & \del_ r \big((1-2M/r) F (U)\big)(r_R- r_L) +  o (r_R-r_L)=S(r, U)(r_R- r_L)+ O \big((r_R- r_L)^2\big). 
\endaligned 
 $$ 
 
Next, a Taylor expansion with respect to time yields us
$U _ j^{n+1}  =  U _ j^n + \del_ t U _ j^n \Delta t +  \del_ {tt}^2 U _ j^n \Delta t^2 + o (\Delta t^2)$. 
Recall that our scheme gives 
$$
\aligned  
 U _ j^{n+1} & = U _ j^n  -  {\Delta t \over \Delta r}  \big((1-2M/r _{j+1/2}) F_{j+1/2}^n  -(1-2M/r _{j-1/2}) F_ {j - 1/2}^n   -   \Delta r  S_ j^n). 
\\
& = U _ j^n - {1\over \lambda}  \bigg((1-2M/r _{j+1/2}) \Big({F(U_{j+1/2+}) - F(U_{j+ 1/2-})\over 2}-{1 \over  \lambda}   {U_{j+1/2+} - U_{j+ 1/2-}\over 2}\Big)\\ 
& \quad + (1-2M/r _{j-1/2}) \Big({F(U_{j-1/2+})-  F(U_{j-1/2-})\over 2}+ {1 \over  \lambda}   {U_{j-1/2+} - U_{j- 1/2-}\over 2}\Big)\bigg).
\endaligned
$$ 
According our construction, we have 
$$
\aligned 
 & \Big(1-{2M\over r _{j+1/2}}\Big) \Big(F(U_{j+1/2+}) - F(U_{j+ 1/2-}) \Big)
= \Big(1-{2M\over r _{j+1}}  \Big)F(U_{j+1}^n)-  \Big(1-{2M \over r_j}  \Big)F(U_j^n) - \int_ {r_j}^{r_{j+1}} S(r, U(t_n, r)) dr. 
\endaligned 
$$
A Taylor expansion to $\Delta r $ gives us $U_{j+1/2+} - U_{j+ 1/2-}=  O (\Delta  r^3)$ and 
$$
\aligned 
&\Big(1-{2M\over r _{j\pm 1}}\Big)= 1-{2M\over r _j}\pm  {2 M  \over r _ j^2} \Delta r -{2 M \over r_ j^3} \Delta r^2  + O 
 (\Delta r^3), \\
 & F (U_{j\pm 1}^n)= F (U_ j^n)+ \del_ U F (U_ j^n)\Big(\pm \del_ r U_ j^n \Delta r + {1\over 2}\del _{rr}^2  U_ j^n \Delta r^2  \Big)+{1\over 2}(\del_ r U_ j^n)^T \del_ {UU}^2 F(U_ j^n) \del_ r U_j^n \Delta r^2 + O (\Delta r^3), \\ 
 &  \int_ {r_j}^{r_{j+1}} S(r, U(t_n, r)) dr=  S(r_j,U_ j^n)\Delta r + \del_ r S(r_j,,U_ j^n) \Delta r^2 + O (\Delta r^3).
\endaligned
$$
Hence we conclude that 
$\del_ t U _ j^n+ \del_ r \Big((1-2M/ r_ j) F (U_j^n)\Big) - S (r_j, U_j^n)+ O (\Delta t + \Delta r^2)=0$.
\end{proof}


\paragraph{Numerical steady state solution}

Recall that the steady state solution to the relativistic Euler model is given by a static Euler system  \eqref{Euler-static}. Hence, if $U = U (t, r)$ is a steady state solution, it trivially satisfies 
$\int \big|\del_ r F \big((1-2M/r)U\big) - S(r, U)\big|dr =0$, 
where $F= (F^0, F^1)^T$ is the flux and $S= (S^0, S^1)^T$ the source term given by \eqref{Euler-form}. In order to describe the steady state solution numerically, we define the total variation in time: 
\bel{TV-euler}
\aligned 
E^n :=E(t_n)= \sum _ j    \sum _ {i=0, 1} & \bigg|  (1- 2M / r_{j+1/2})\big(F^ i  (U_  {j+1/2+}^n) -    F^i    (U_{j-1/2-}^n)\big)
\\
& - (1-2M /r _{j-1/2})\big(F^ i  (U_ {j-1/2 +}^n)- F^ i  (U_ {j-1/2 -}^n \big)\bigg|. 
\endaligned 
\ee
From our former construction, we have the following result. 

\begin{lemma}
\label{En}
If $U= (t, r)$ is a numerical solution to the relativistic Euler model constructed  by \eqref{Euler-finite}- \eqref{Euler-source}, then $U$ is a steady state solution for $t\geq T$ where $T>0$ is a finite time if and only if there exists a $N<+\infty$ such that for all $n>N$, the total variation  $E^n\equiv0$. 
\end{lemma}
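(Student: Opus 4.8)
The plan is to reduce this biconditional to a single elementary observation: for the present scheme $E^n$ vanishes exactly when the numerical solution does not change over one time step. Once that is in hand, the forward implication is the well-balanced property already established, and the reverse implication follows from a propagation-in-time argument. Concretely, I would first rewrite the update \eqref{Euler-finite} as $U_j^{n+1}-U_j^n=-\frac{\Delta t}{\Delta r}R_j^n$ with the discrete residual $R_j^n:=(F_{j+1/2}^n-F_{j-1/2}^n)-\Delta r\,S_j^n$, and then substitute the numerical flux \eqref{Euler-flux}–\eqref{L-F-flux} together with the source \eqref{Euler-source}. A short computation shows that the summand appearing in \eqref{TV-euler} is, up to the fixed factor $\Delta r/\Delta t$, precisely the $i$-th component of $R_j^n$, i.e. of the one-step increment $U_j^{n+1}-U_j^n$. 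Since $E^n$ is a sum of absolute values, $E^n=0$ holds \emph{if and only if} $R_j^n=0$ for every $j$, \emph{if and only if} $U_j^{n+1}=U_j^n$ for all $j$.

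\emph{Forward direction.} Assume $U$ is a steady state solution for $t\geq T$ and let $N$ be the first index with $t_N\geq T$. For $t_n\geq T$ the cell values $U_j^n$ lie on a common steady profile, so the reconstruction \eqref{Euler-states} returns coinciding interface states $U_{j+1/2-}^n=U_{j+1/2+}^n$; by \eqref{L-F-flux} the Lax--Friedrichs flux then collapses to the exact flux, $\mathcal F(U,U)=F(U)$, and the calculation carried out in the proof of the preceding theorem yields $F_{j+1/2}^n-F_{j-1/2}^n=\Delta r\,S_j^n$, that is $R_j^n=0$. By the first step this gives $E^n=0$ for all $n>N$.

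\emph{Reverse direction.} The hypothesis provides some index $n_0$ with $E^{n_0}=0$, so by the first step $U_j^{n_0+1}=U_j^{n_0}$ for all $j$. Because $E^{n_0+1}$ is computed solely from the time level $U^{n_0+1}=U^{n_0}$, we get $E^{n_0+1}=E^{n_0}=0$, and by induction $U_j^m=U_j^{n_0}$ for every $m\geq n_0$; in fact this shows that the two forms of the hypothesis (a single vanishing $E^{n}$ versus $E^n\equiv0$ for all large $n$) are equivalent. Thus the numerical solution is time-independent for $t\geq T:=t_{n_0}$, and it remains to identify this stationary discrete state with a solution of \eqref{Euler-static}. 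From $R_j^n=0$ for all $j$ one reads off that the interface quantity $(1-2M/r_{j+1/2})\bigl[\mathcal F(U_{j+1/2-}^n,U_{j+1/2+}^n)-F(U_{j+1/2-}^n)\bigr]$ is independent of $j$; fixing this constant by means of the ghost-cell boundary conditions forces each interface contribution to vanish, hence $U_{j+1/2-}^n=U_{j+1/2+}^n$, and \eqref{Euler-states}–\eqref{steady-algebra} then place $U_j^n$ and $U_{j+1}^n$ on one and the same steady-state curve for every $j$, which is exactly the assertion that $U$ is a steady state solution for $t\geq T$.

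\emph{Main obstacle.} The delicate point is the final identification in the reverse direction. Because the flux \eqref{L-F-flux} carries Lax--Friedrichs dissipation, the vanishing of $R_j^n$ does not cell-by-cell coincide with the coincidence of the reconstructed interface states, so one must run the telescoping argument across cells and use the boundary data to rule out a nonzero constant, rather than deducing $U_{j+1/2-}^n=U_{j+1/2+}^n$ directly. One must also check the fallback prescription in \eqref{Euler-states} — invoked when the algebraic steady reconstruction admits no solution — is never triggered once the interface states already agree, so that the stationary discrete profile is genuinely a steady state of \eqref{Euler-static} and not an artefact of the reconstruction. Proving that $R_j^n=0$ truly forces $U_{j+1/2-}^n=U_{j+1/2+}^n$, rather than a mere balance between the centered and dissipative parts of the flux, is where the real work lies.
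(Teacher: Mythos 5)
Your proof rests on the claim that each summand of \eqref{TV-euler} is, up to the fixed factor $\Delta r/\Delta t$, the $i$-th component of the residual $R_j^n$, i.e. of the one-step increment $U_j^{n+1}-U_j^n$. That identity is false, and it undermines the whole biconditional reduction. Substituting \eqref{Euler-flux}, \eqref{L-F-flux} and \eqref{Euler-source} into \eqref{Euler-finite} gives (as in the proof of the theorem of Section~\ref{Sec:9})
\begin{equation*}
U_j^{n+1}-U_j^n=-\frac{\Delta t}{\Delta r}\bigg\{\Big(1-\tfrac{2M}{r_{j+1/2}}\Big)\Big[\tfrac{F(U_{j+1/2+}^n)-F(U_{j+1/2-}^n)}{2}-\tfrac{1}{2\lambda}\big(U_{j+1/2+}^n-U_{j+1/2-}^n\big)\Big]
+\Big(1-\tfrac{2M}{r_{j-1/2}}\Big)\Big[\tfrac{F(U_{j-1/2+}^n)-F(U_{j-1/2-}^n)}{2}+\tfrac{1}{2\lambda}\big(U_{j-1/2+}^n-U_{j-1/2-}^n\big)\Big]\bigg\},
\end{equation*}
whereas the summand of \eqref{TV-euler} contains only differences of the \emph{exact} flux $F^i$ at the reconstructed interface states, with the opposite sign structure between the two interface blocks and, crucially, without the Lax--Friedrichs dissipation terms $\frac{1}{2\lambda}(U_{+}-U_{-})$. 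Consequently $E^n=0$ is not equivalent to $R_j^n=0$ for all $j$: for instance, interface states with $F(U_{j+1/2+})=F(U_{j+1/2-})$ but $U_{j+1/2+}\neq U_{j+1/2-}$ (the standing-shock Rankine--Hugoniot relation, cf. \eqref{steady-shock-Euler2}) contribute nothing to the flux differences yet generate a nonzero increment through the dissipative part. Both directions of your argument route through this false equivalence; in particular your assertion that a \emph{single} vanishing $E^{n_0}$ freezes the solution — and hence that the two forms of the hypothesis coincide — is unsupported, which is presumably exactly why the lemma requires $E^n\equiv 0$ for all $n>N$. Your telescoping step fails for a related reason: $R_j^n=0$ equates two \emph{different} interface quantities, namely $(1-2M/r_{j+1/2})\big[\mathcal F(U_{j+1/2-}^n,U_{j+1/2+}^n)-F(U_{j+1/2-}^n)\big]$ at the right interface with $(1-2M/r_{j-1/2})\big[\mathcal F(U_{j-1/2-}^n,U_{j-1/2+}^n)-F(U_{j-1/2+}^n)\big]$ at the left; since these two expressions differ by the full flux jump at a common interface, they do not chain into a $j$-independent constant, and the ghost-cell argument you invoke (which the paper only specifies for the Burgers scheme, not for Euler) has nothing to normalize.

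For comparison, the paper offers no written proof: the lemma is stated as an immediate consequence of the construction. The intended mechanism operates at the level of the \emph{reconstruction}, not the increment: by \eqref{Euler-source} each summand of \eqref{TV-euler} is the discrete residual of the static equation \eqref{Euler-static} evaluated on the steady interface states of \eqref{Euler-states}, so $E^n=0$ encodes that $U_{j+1/2-}^n=U_{j+1/2+}^n$ at every interface, i.e. that consecutive cell values lie on a common steady curve (or are joined by admissible standing shocks); the forward direction and the persistence in time then follow from the well-balanced property established in the preceding theorem — which is also the only part of your argument that is sound as written. A correct completion of your reverse direction must therefore deduce interface coincidence directly from the vanishing of the flux-jump summands (ruling out non-admissible standing discontinuities), rather than passing through the one-step increment.
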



\section{Numerical experiments for the relativistic Euler model}
\label{Sec:10}

\paragraph{Nonlinear stability of steady state solutions}

Before studying the stability of steady state solutions, we check that our scheme preserves smooth steady state solutions to the relativistic Euler model \eqref{Euler1}. Recall that $r>2M$ with $M=1$ being the blackhole mass.   We work on the space interval $(r_{\min,}, r_{\max})$ with $r_{\min}= 2M=2$ and $r_{\max}=10$ and we take $500$ points to discretize this interval. We consider the evolution of two steady state solutions satisfying the algebraic relation \eqref{steady-algebra} of the Euler model with the density $\rho(10)=1.0$, the velocity $v(10)= 0.6$ and  the density $\rho(10)=1.0$, the velocity $v(10)= -0.8$ respectively.   We also provides the evolution of a steady state shock.

\begin{figure}[!htb] 
\centering 
\begin{minipage}[t]{0.3\linewidth}
\centering
\epsfig{figure=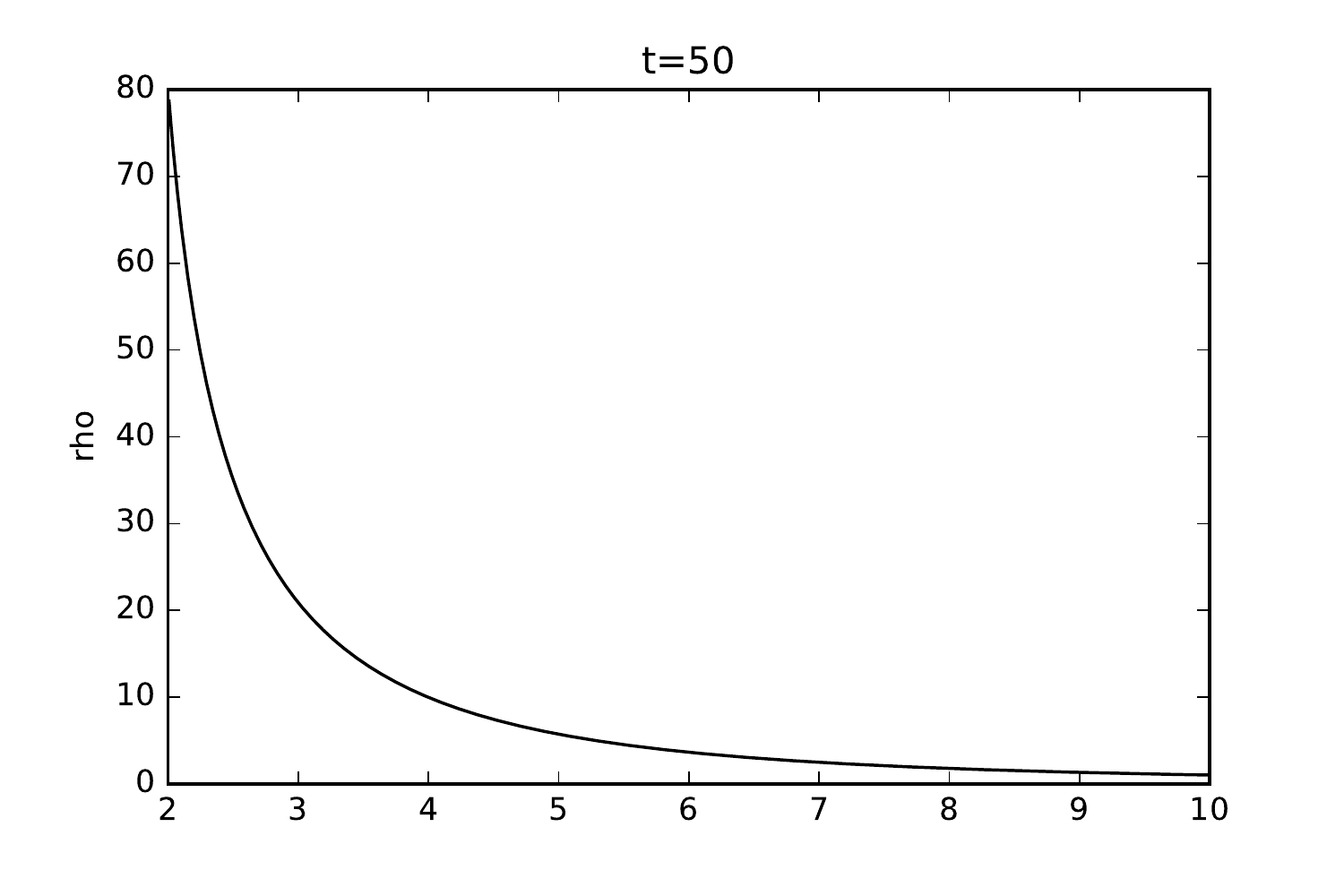,width = 2.5in} 
\end{minipage}
\hspace{0.5in}
\begin{minipage}[t]{0.3\linewidth}
\centering
\epsfig{figure=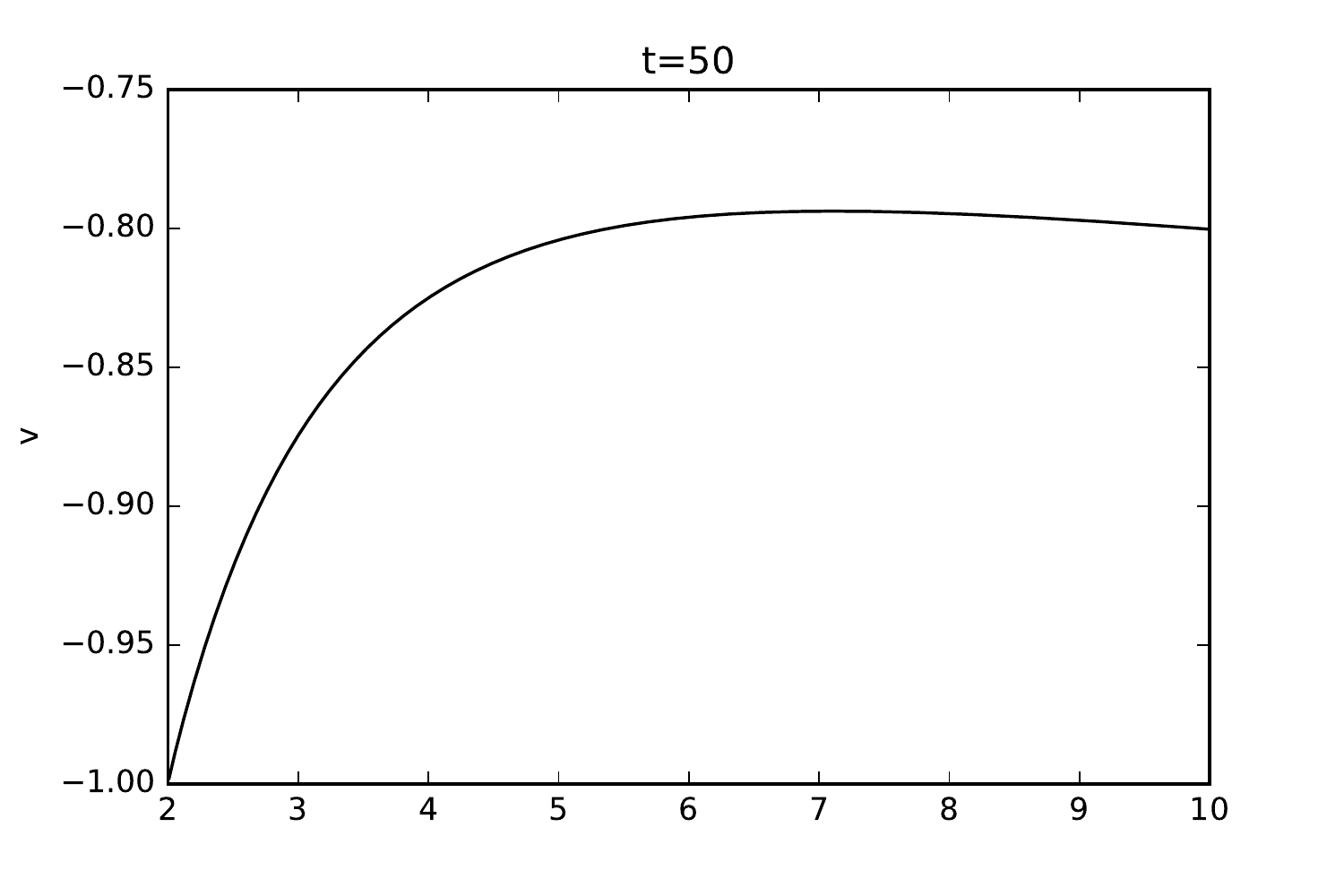,width = 2.5in}
\end{minipage}\\
\begin{minipage}[t]{0.3\linewidth}
\centering
\epsfig{figure=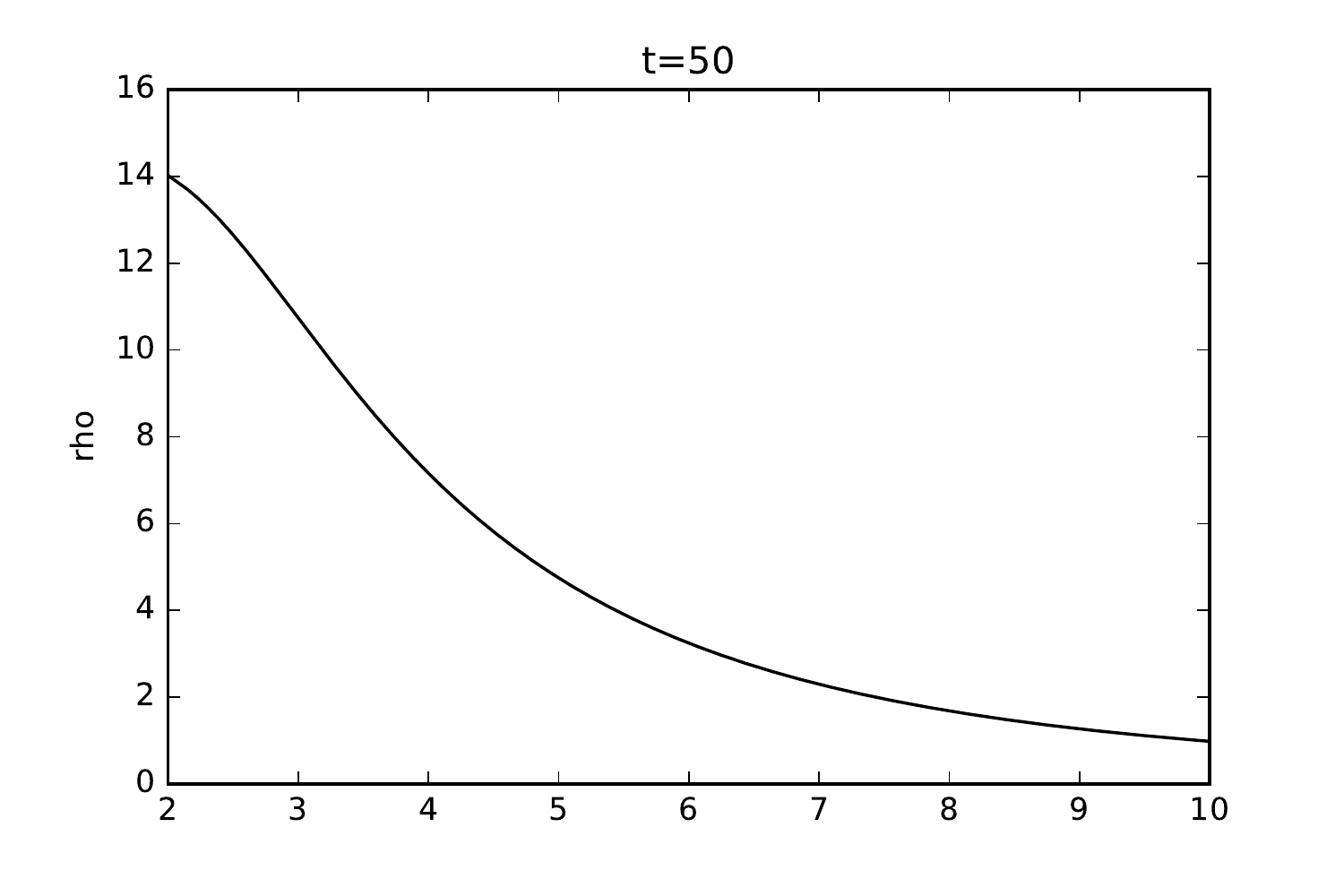,width = 2.5in} 
\end{minipage}
\hspace{0.5in}
\begin{minipage}[t]{0.3\linewidth}
\centering
\epsfig{figure=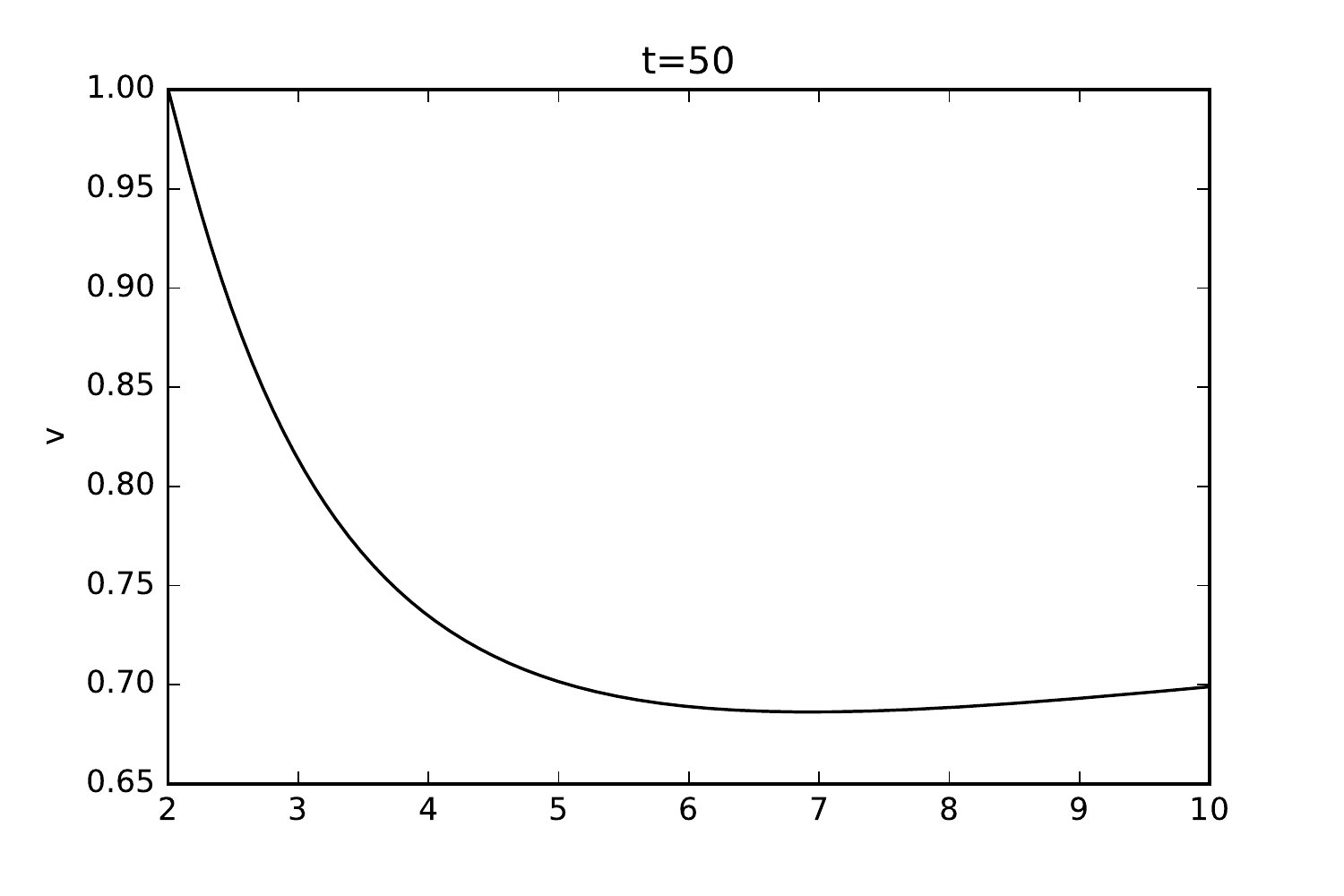,width = 2.5in}
\end{minipage}
\caption{Evolution of steady state solutions, plotted at time $t=50$} 
\end{figure}

\begin{figure}[!htb] 
\centering
\begin{minipage}[t]{0.3\linewidth}
\centering
\epsfig{figure=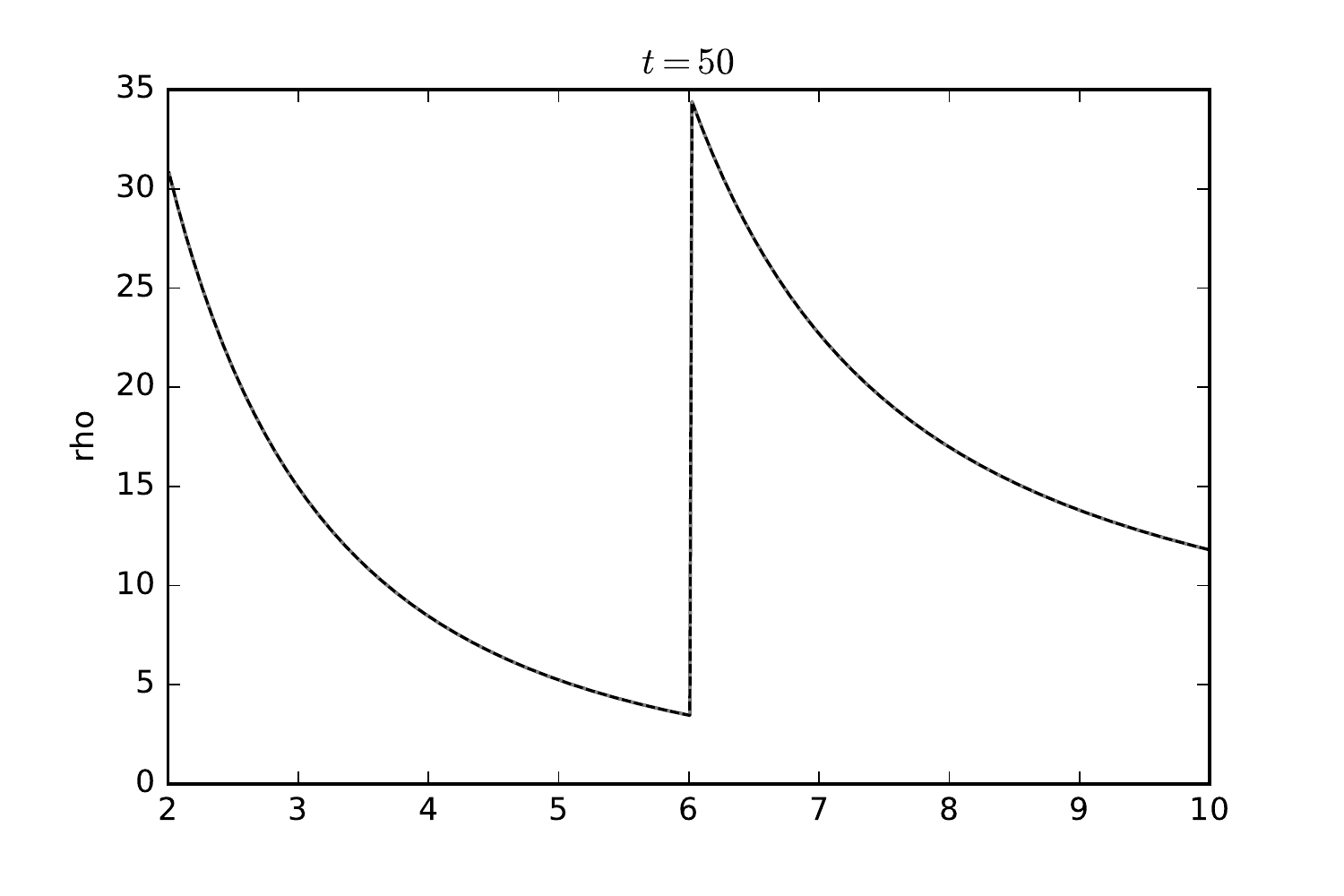,width = 2.5in} 
\end{minipage}
\hspace{0.5in}
\begin{minipage}[t]{0.3\linewidth}
\centering
\epsfig{figure=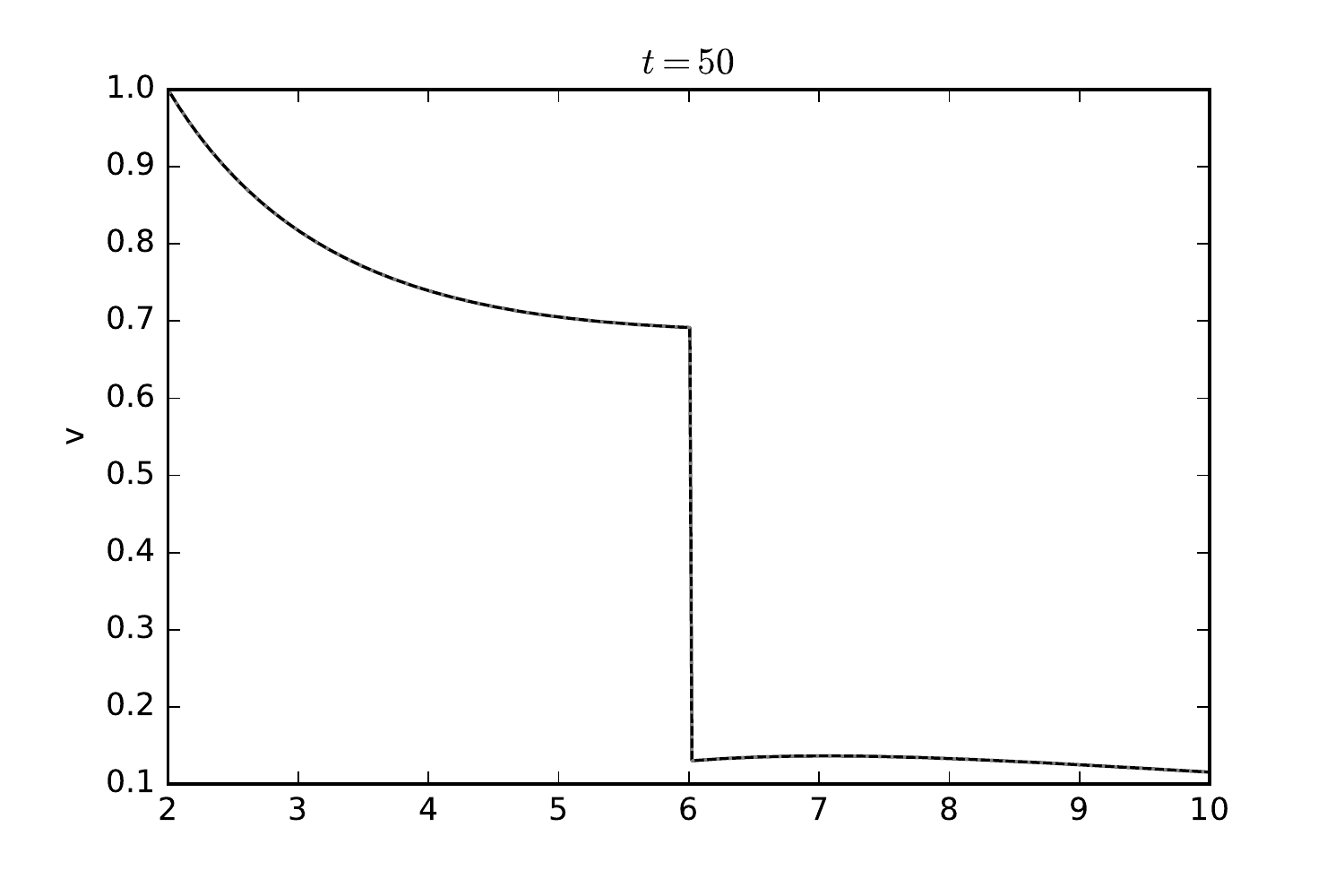,width = 2.5in}
\end{minipage}
\caption{Evolution of a steady shock plotted at time $t=50$} 
\end{figure}


\paragraph{Propagation of discontinuities}

Refering to \cite{PLF-SX-one}, we recall that there exists a solution to the generalized Riemann problem \eqref{Euler}, \eqref{initial-steady-Euler} consisting of at most three steady state solutions.  Figures~\ref{FIG-90}, \ref{FIG-91} show the evolution of two generalized Riemann problem with an initial discontinuity. 
%
Furthermore, we are now interested in the late-time behavior of solutions whose initial data is steady state solution perturbed by a compactly supported solution. Numerical tests lead us to the following result. 

\begin{conclusion}[Stability of smooth steady state solutions to the Euler model]
Let $(\rho_*, v_*) =(\rho_*, v_*)(r)$, $r>2M$ be a smooth steady state solution satisfying the static Euler equation \eqref{Euler-static} and $(\rho_0, v_0)= (\rho_0, v_0) (r)= (\rho_*, v_*)(r)+  (\delta_ {\rho}, \delta_v)(r)$ where $ (\delta_ {\rho}, \delta_v)=  (\delta_ {\rho}, \delta_v)(r)$ is a function with compact support, then  the solution to the relativistic Euler equation on a Schwarzschild background \eqref{Euler1} denoted by $(\rho, v)=(\rho, v)(t, r)$ satisfies that $(\rho, v)(t, \cdot) = (\rho_*, v_*)$ for all $t>t_0$ where $t_0>0$ is a finite time. Numerical experiments show that there exists a finite time $t_0>0$ such that:
\bei

\item If $\int \delta_ {\rho} (r)dr + \int  \delta_v (r)dr = 0$, $(\rho, v) (t, r)= (\rho_*, v_*)(r)$ for all $t>t_0$.

\item If $\int \delta_ {\rho} (r)dr + \int  \delta_v (r)dr \neq 0$, then there exists a time $t_0 >0$ such that $(\rho, v) (t, r)= (\rho_{**}, v_{**})(r)$ for all $t>t_0$ where $(\rho_{**}, v_{**})$ is a steady state solution to the Euler model and   $(\rho_{**}, v_{**})\neq (\rho_*, v_*)$.  

\eei 
\end{conclusion}
 
We observe the phenomena described in Conjecture~\ref{con2} in Figures~\ref{FIG-92} and \ref{FIG-93}.  To check that the numerical solutions in  Figures~\ref{FIG-92}, ~\ref{FIG-93} converge to a steady state solution, we refer to Lemma~\ref{En} and calculate the total variation at each time step. Figure~\ref{FIG-94} shows that these solutions are eventually steady state solutions. 
The steady shock given by \eqref{steady-shock-Euler1} and \eqref{steady-shock-Euler2} is a weak solution satisfying the static Euler equation \eqref{Euler-static}. We are also interested in the behavior of steady shocks with perturbations. We summarize our results as follows; see Figure~\ref{FIG-95}. 

\begin{conclusion}
Consider a steady shock $(\rho_*, v_*) =(\rho_*, v_*)(r)$, $r>2M$ given by  \eqref{steady-shock-Euler1}, \eqref{steady-shock-Euler2}  whose point of discontinuity is at $r=r_*$ and we give the initial data $(\rho_0, v_0)= (\rho_0, v_0) (r)= (\rho_*, v_*)(r)+  (\delta_ {\rho}, \delta_v)(r)$ with  $ (\delta_ {\rho}, \delta_v)=  (\delta_ {\rho}, \delta_v)(r)$ a compactly supported function, then there exists a finite time $t>t_0$ such that  for all $t>t_0$, the solution $ (\rho, v)(t, \cdot)=(\rho_{**}, v_{**}) $ where $(\rho_{**}, v_{**})$ is a steady state shock whose point of discontinuity is at $r=r_{**}$ with $r_{**}\neq r_*$. 
\end{conclusion}

\begin{figure}[!htb] 
\centering 
\begin{minipage}[t]{0.3\linewidth}
\centering
\epsfig{figure=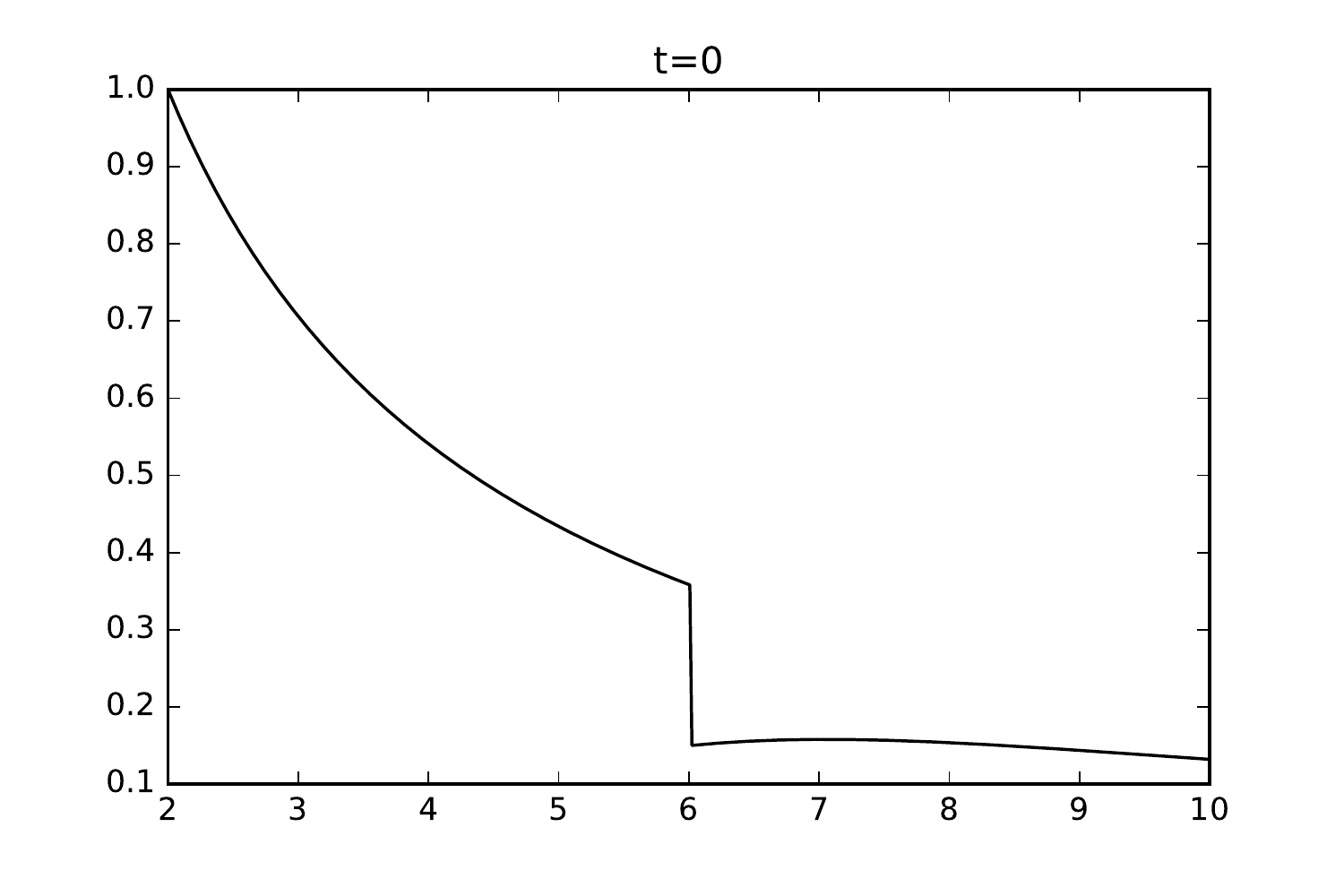,width= 2.5in} 
\end{minipage}
\hspace{0.1in}
\begin{minipage}[t]{0.3\linewidth}
\centering
\epsfig{figure=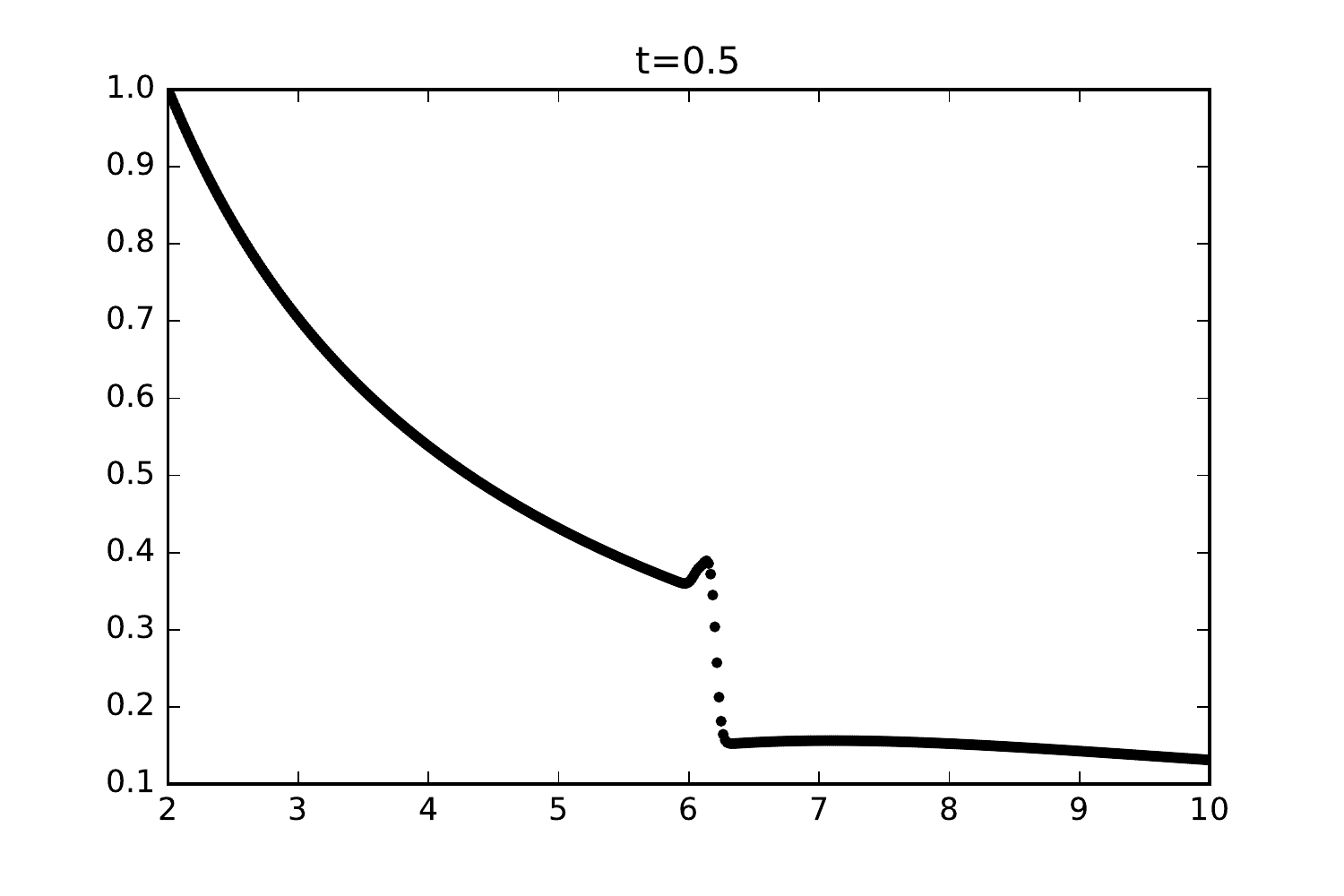,width=2.5in}
\end{minipage}
\hspace{0.1in}
\begin{minipage}[t]{0.3\linewidth}
\centering
\epsfig{figure=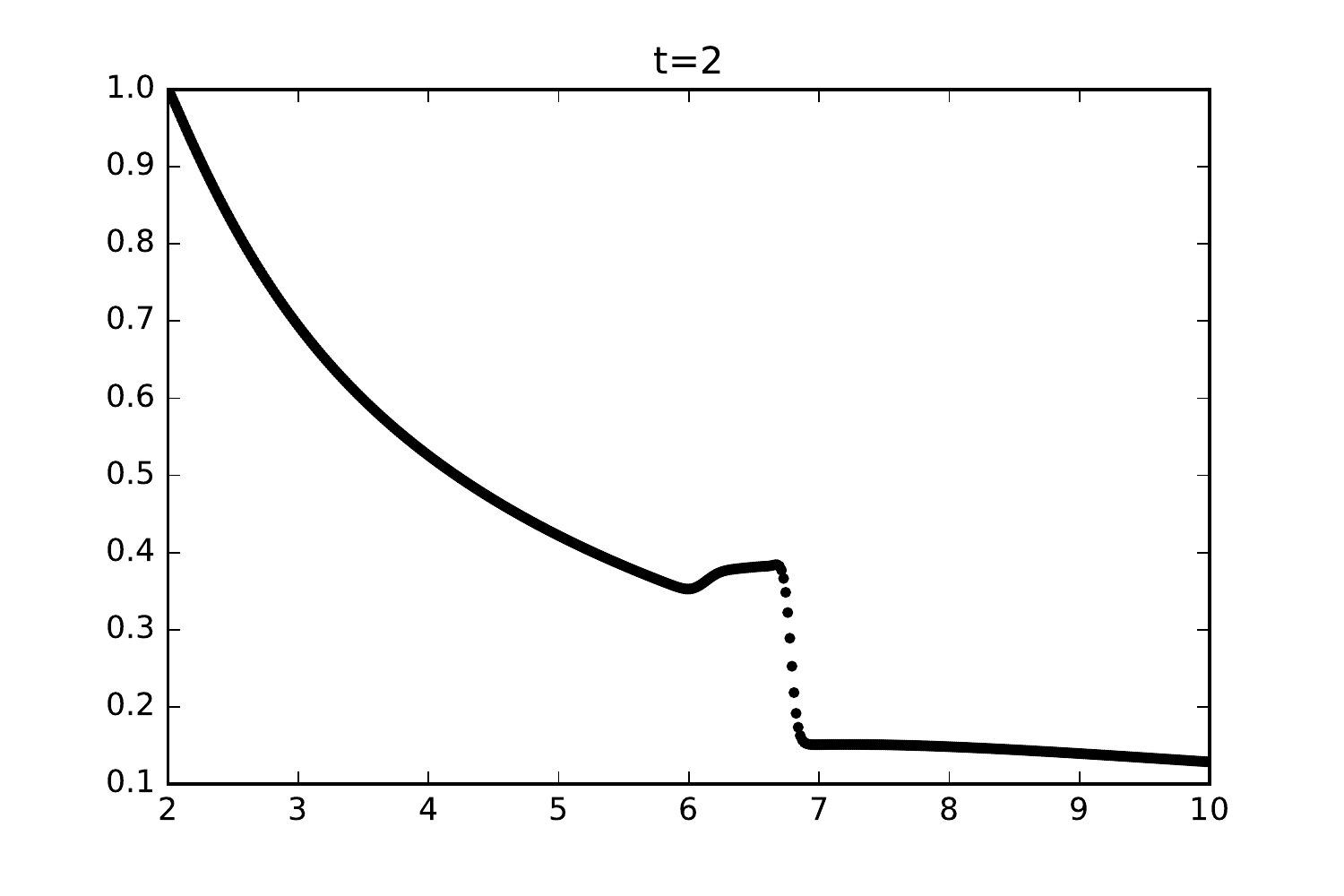,width=2.5in}
\end{minipage}
\caption{Solution to a Riemann problem  (1-rarefaction and 2-shock)} 
\label{FIG-90} 
\end{figure}

\begin{figure}[!htb] 
\centering 
\begin{minipage}[t]{0.3\linewidth}
\centering
\epsfig{figure=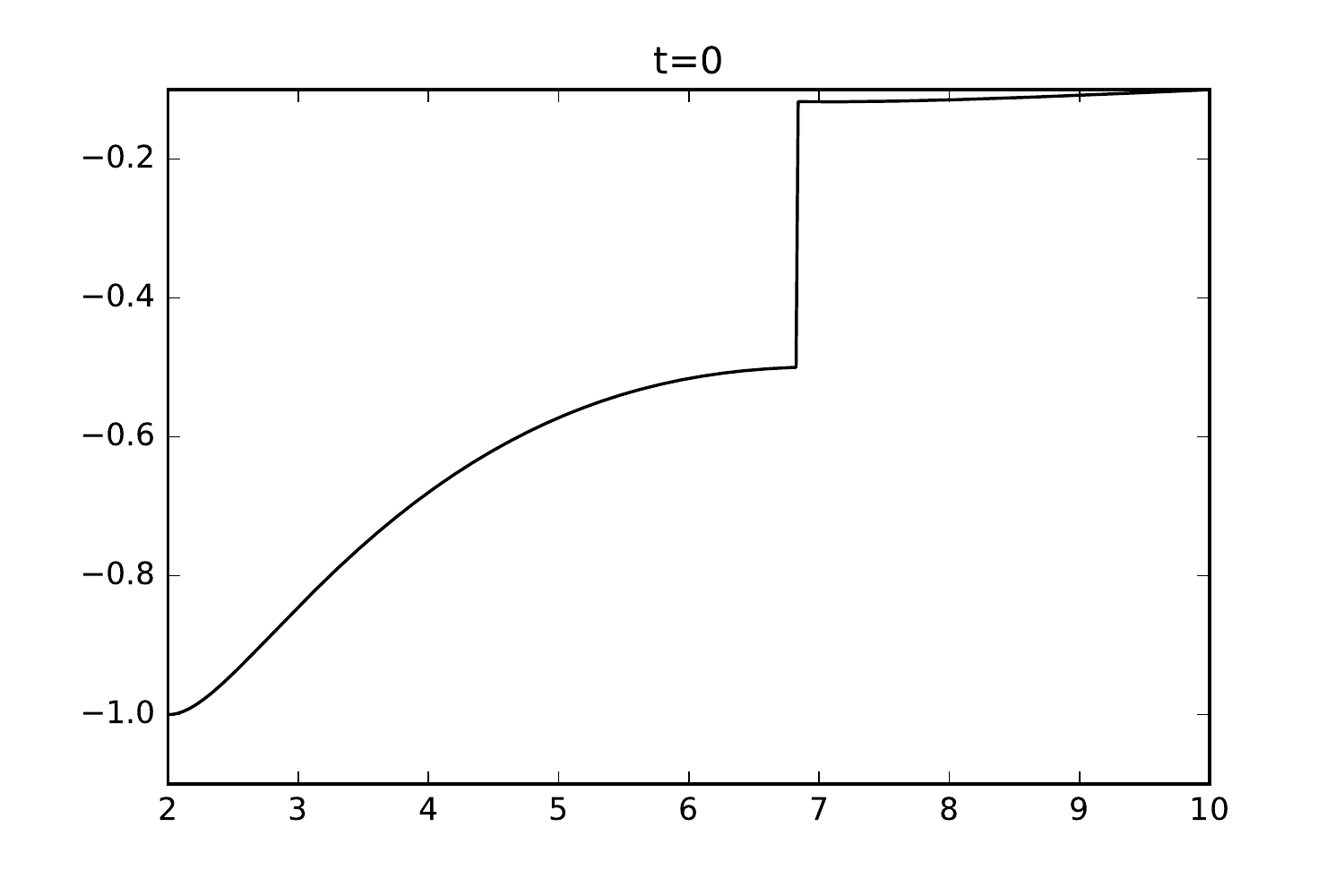,width= 2.5in} 
\end{minipage}
\hspace{0.1in}
\begin{minipage}[t]{0.3\linewidth}
\centering
\epsfig{figure=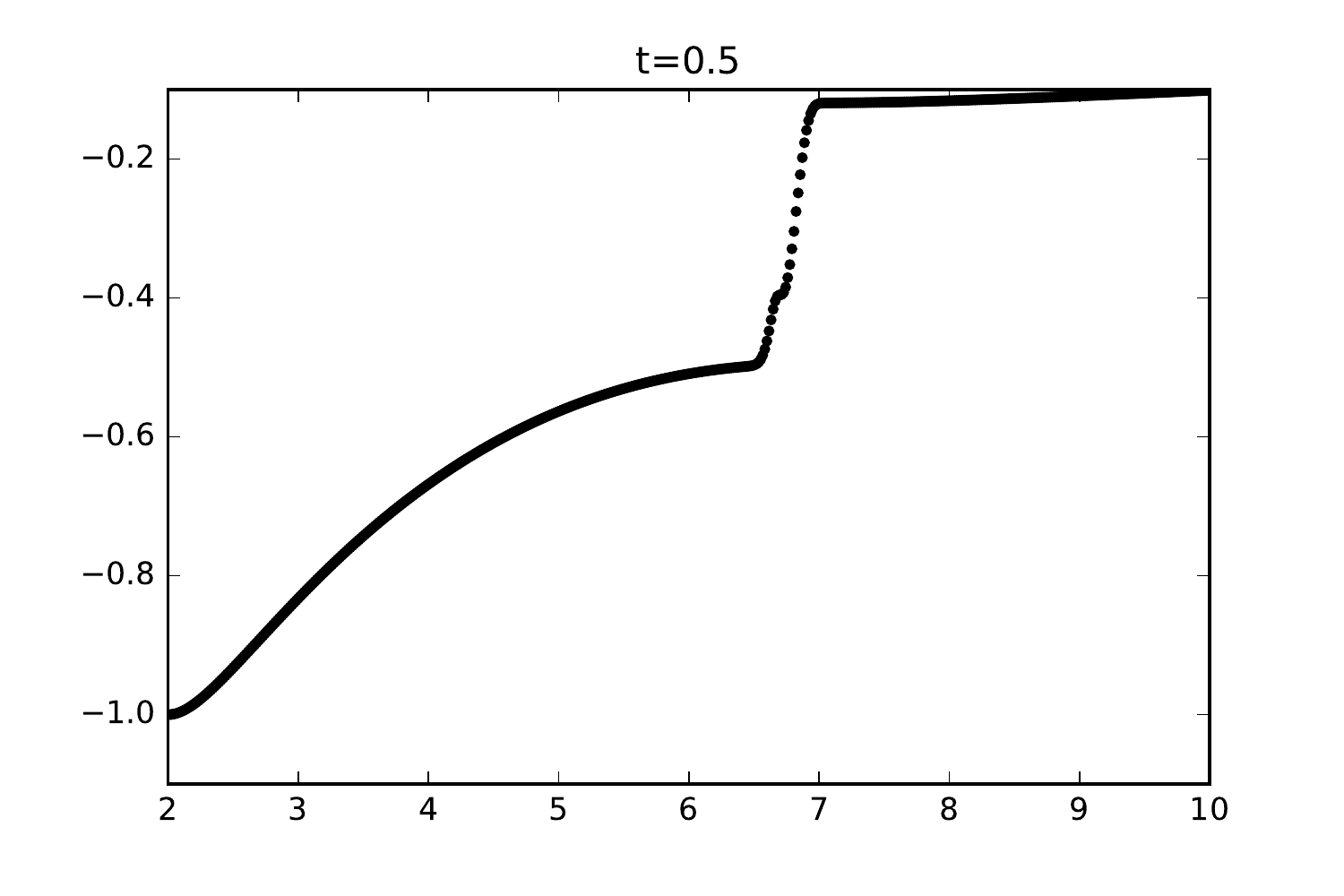,width=2.5in}
\end{minipage}
\hspace{0.1in}
\begin{minipage}[t]{0.3\linewidth}
\centering
\epsfig{figure=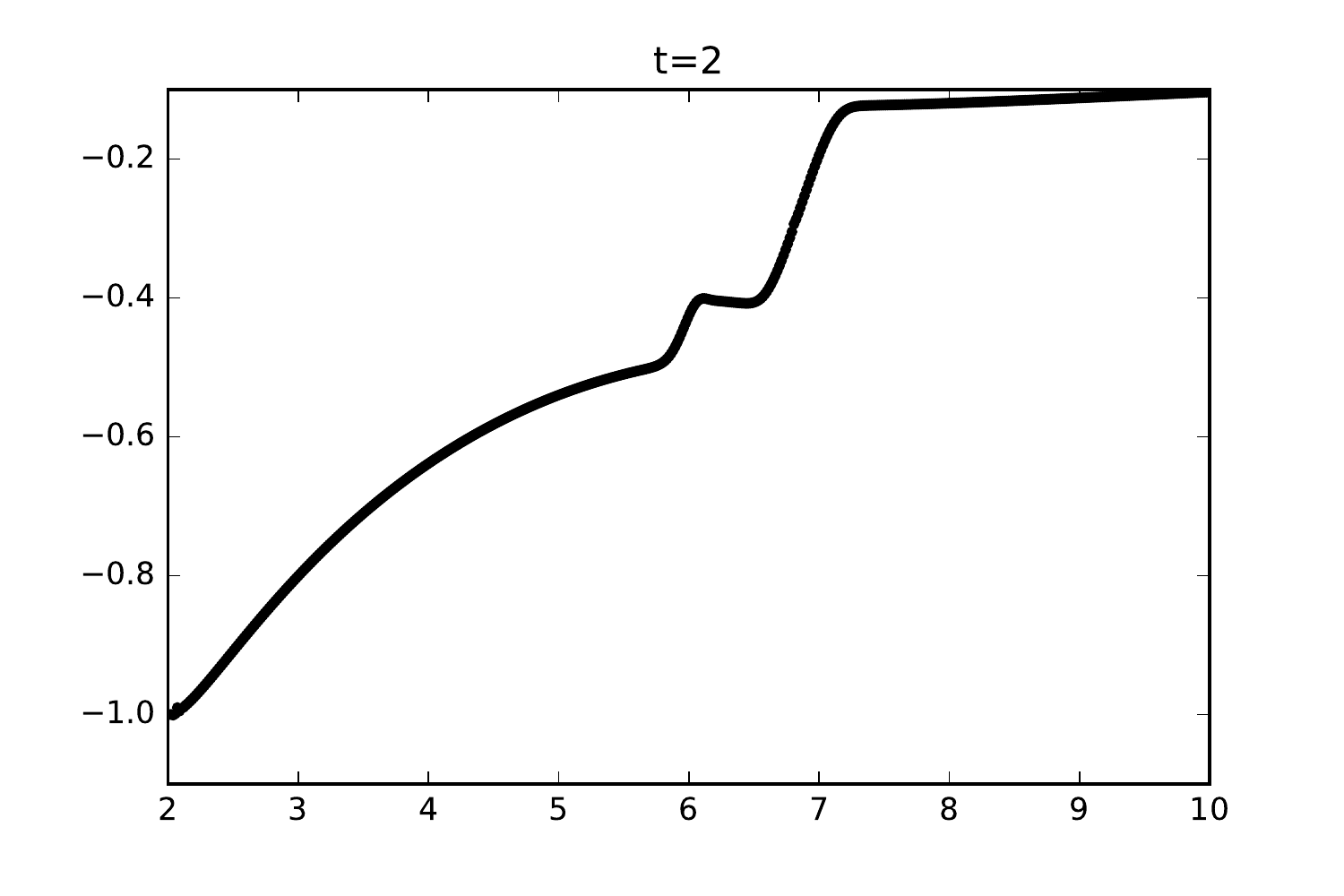,width=2.5in}
\end{minipage}
\caption{Solution to a Riemann problem (1-rarefaction and 2-rarefaction)}
\label{FIG-91} 
\end{figure}


\begin{figure}[!htb] 
\centering 
\begin{minipage}[t]{0.3\linewidth}
\centering
\epsfig{figure=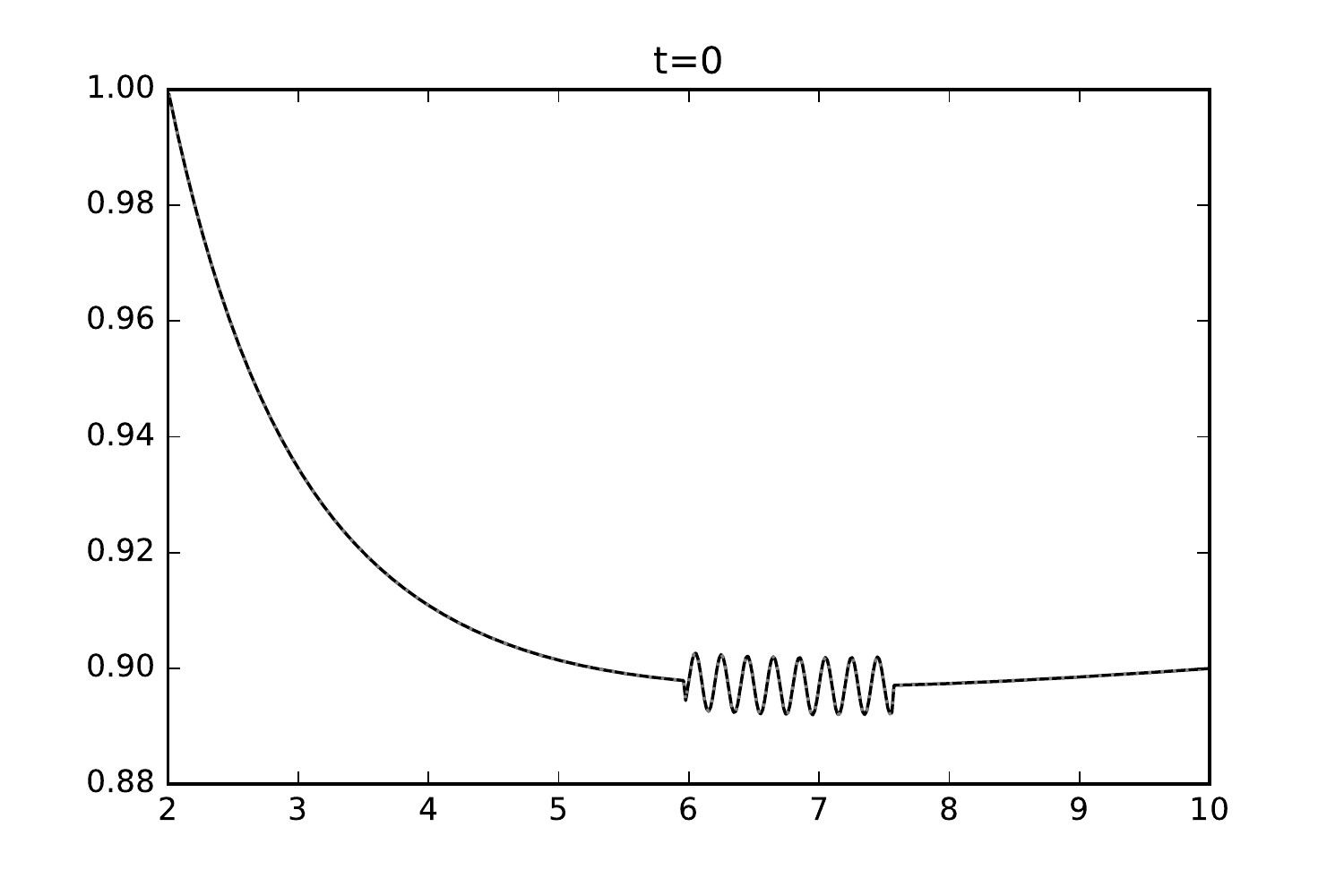,width= 2.5in} 
\end{minipage}
\hspace{0.1in}
\begin{minipage}[t]{0.3\linewidth}
\centering
\epsfig{figure=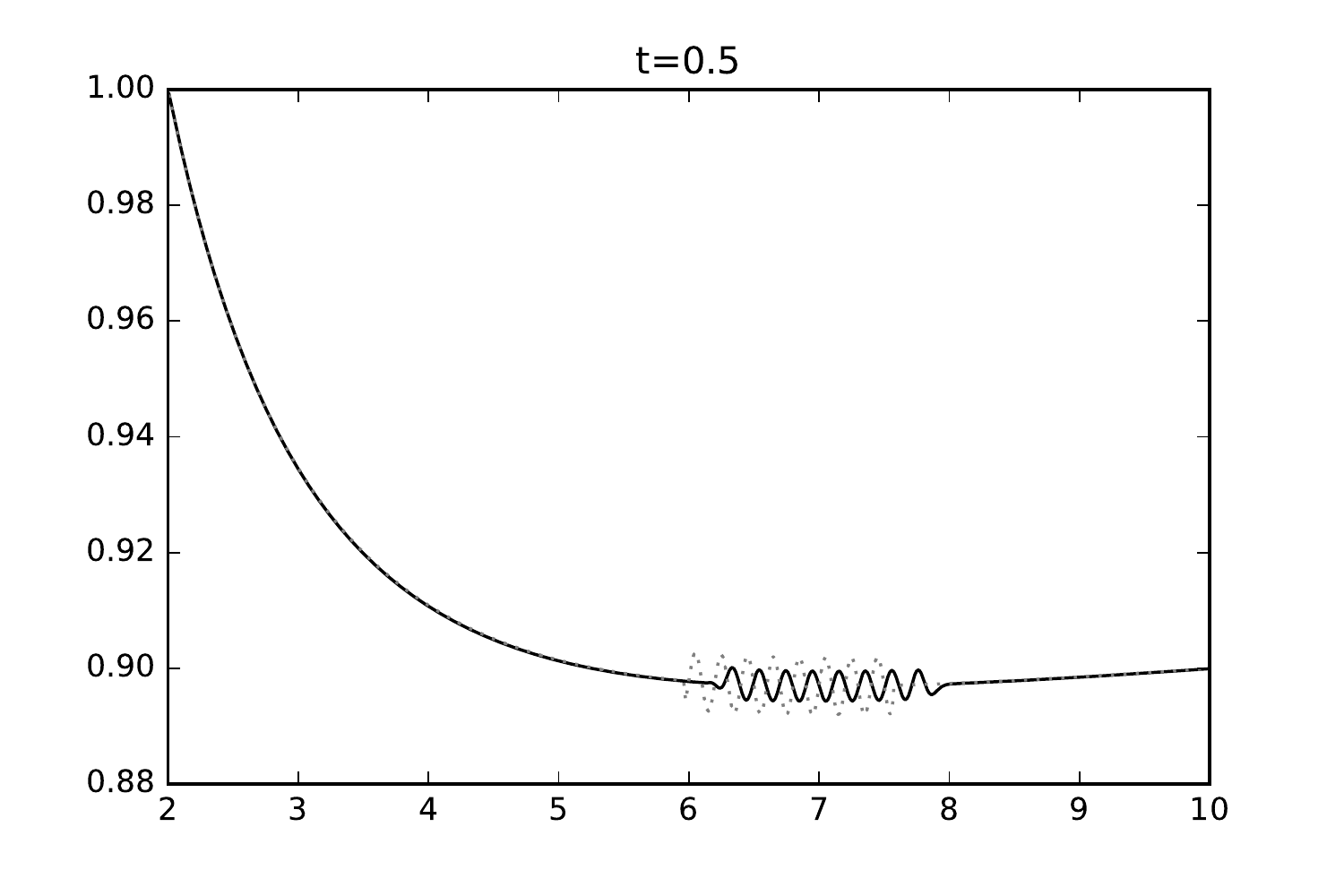,width=2.5in}
\end{minipage}
\hspace{0.1in}
\begin{minipage}[t]{0.3\linewidth}
\centering
\epsfig{figure=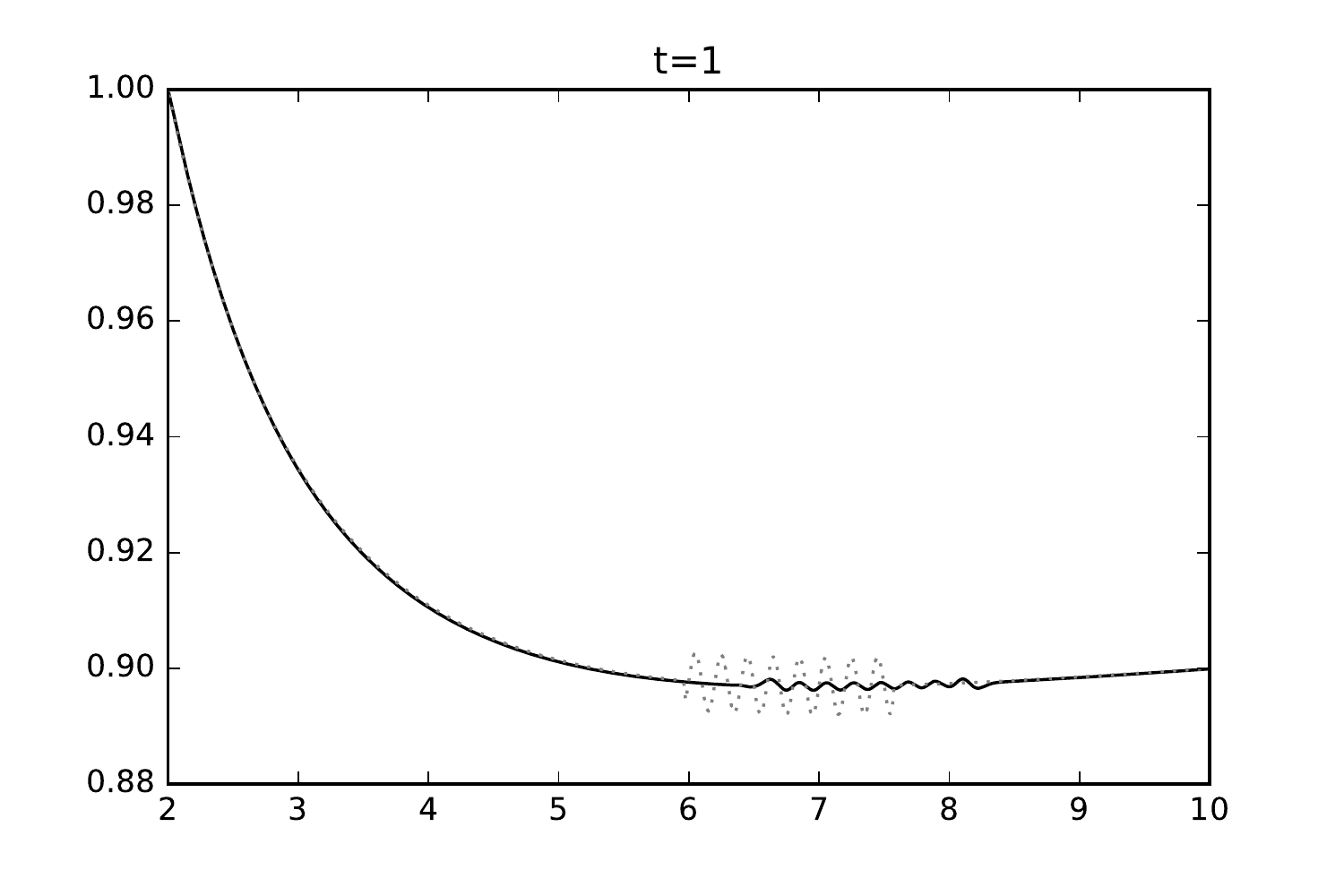,width=2.5in}
\end{minipage}
\begin{minipage}[t]{0.3\linewidth}
\centering
\epsfig{figure=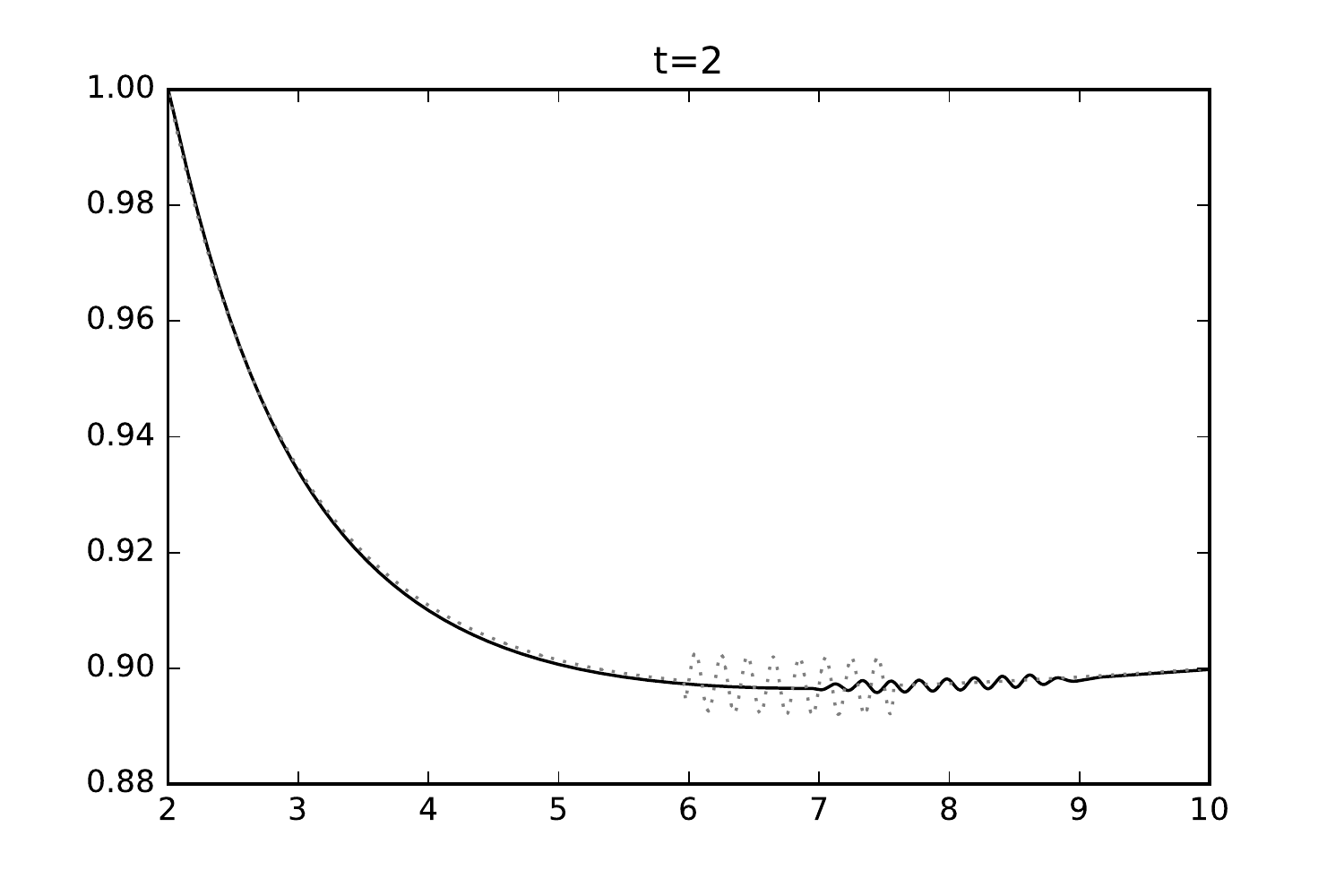,width= 2.5in} 
\end{minipage}
\hspace{0.1in}
\begin{minipage}[t]{0.3\linewidth}
\centering
\epsfig{figure=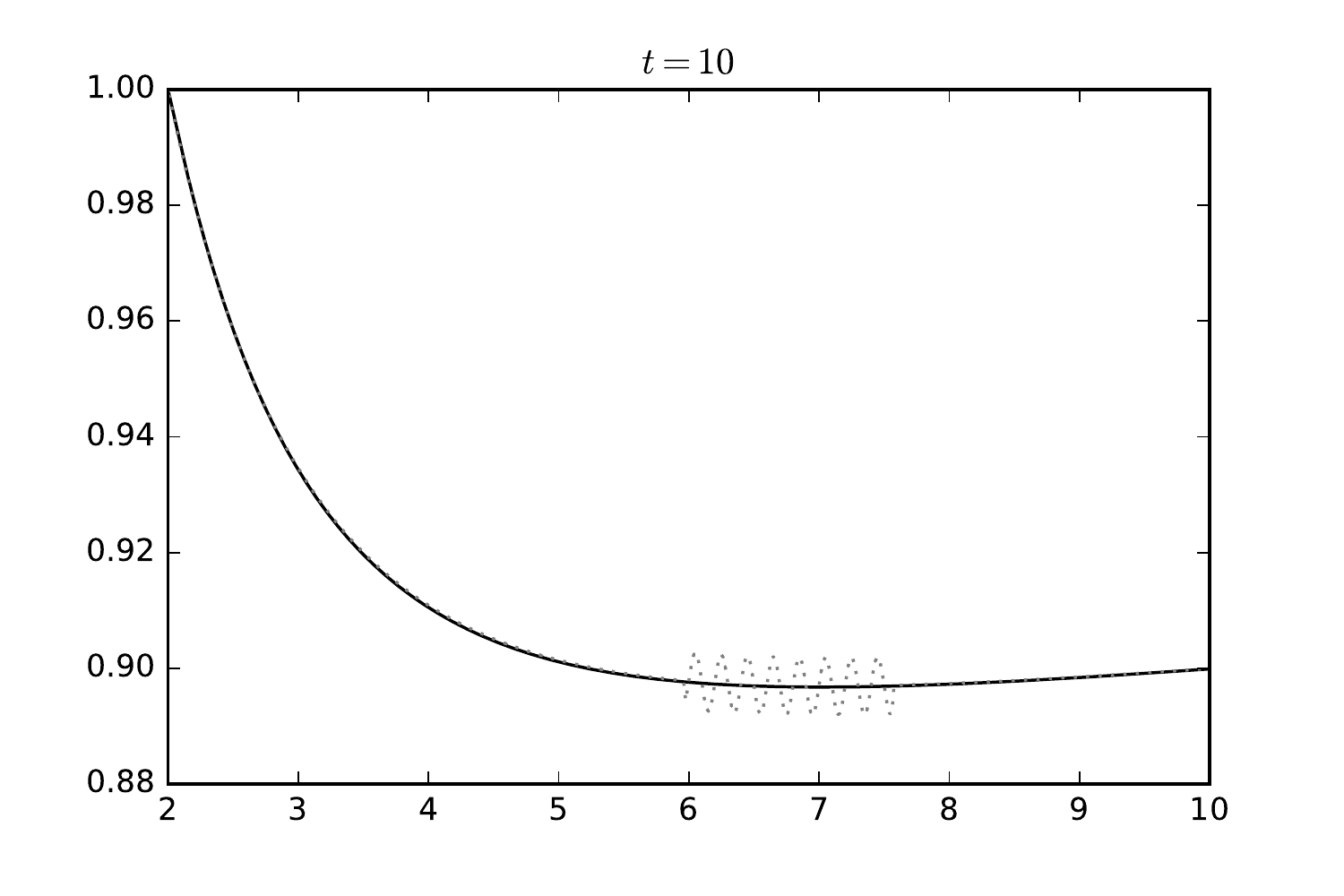,width=2.5in}
\end{minipage}
\hspace{0.2in}
\begin{minipage}[t]{0.3\linewidth}
\centering
\epsfig{figure=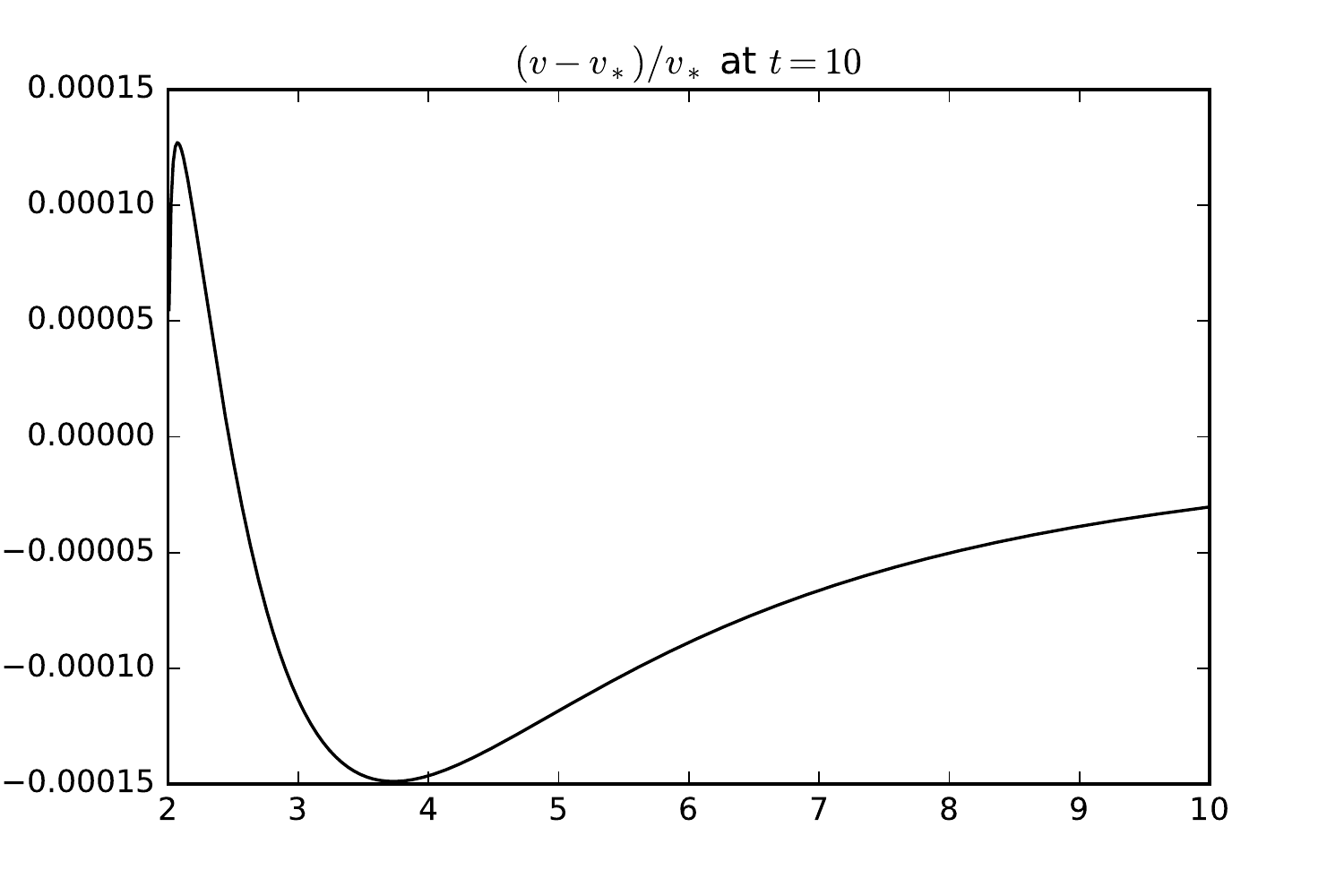,width=2.5in}
\end{minipage}
\caption{Evolution of a steady state with perturbation, converging to the same asymptotic state}
\label{FIG-92} 
\end{figure}

\begin{figure}[!htb] 
\centering 
\begin{minipage}[t]{0.3\linewidth}
\centering
\epsfig{figure=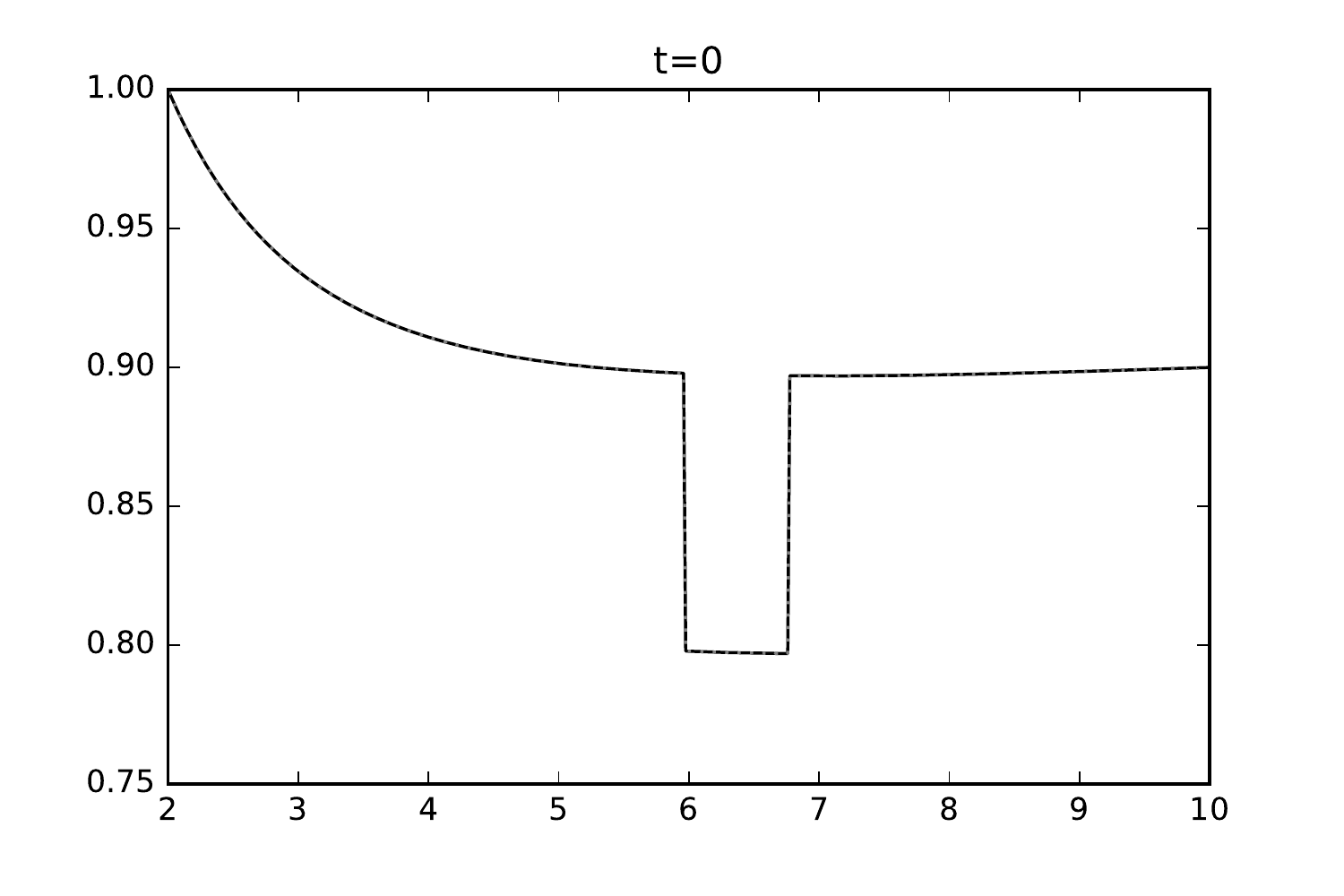,width= 2.5in} 
\end{minipage}
\hspace{0.1in}
\begin{minipage}[t]{0.3\linewidth}
\centering
\epsfig{figure=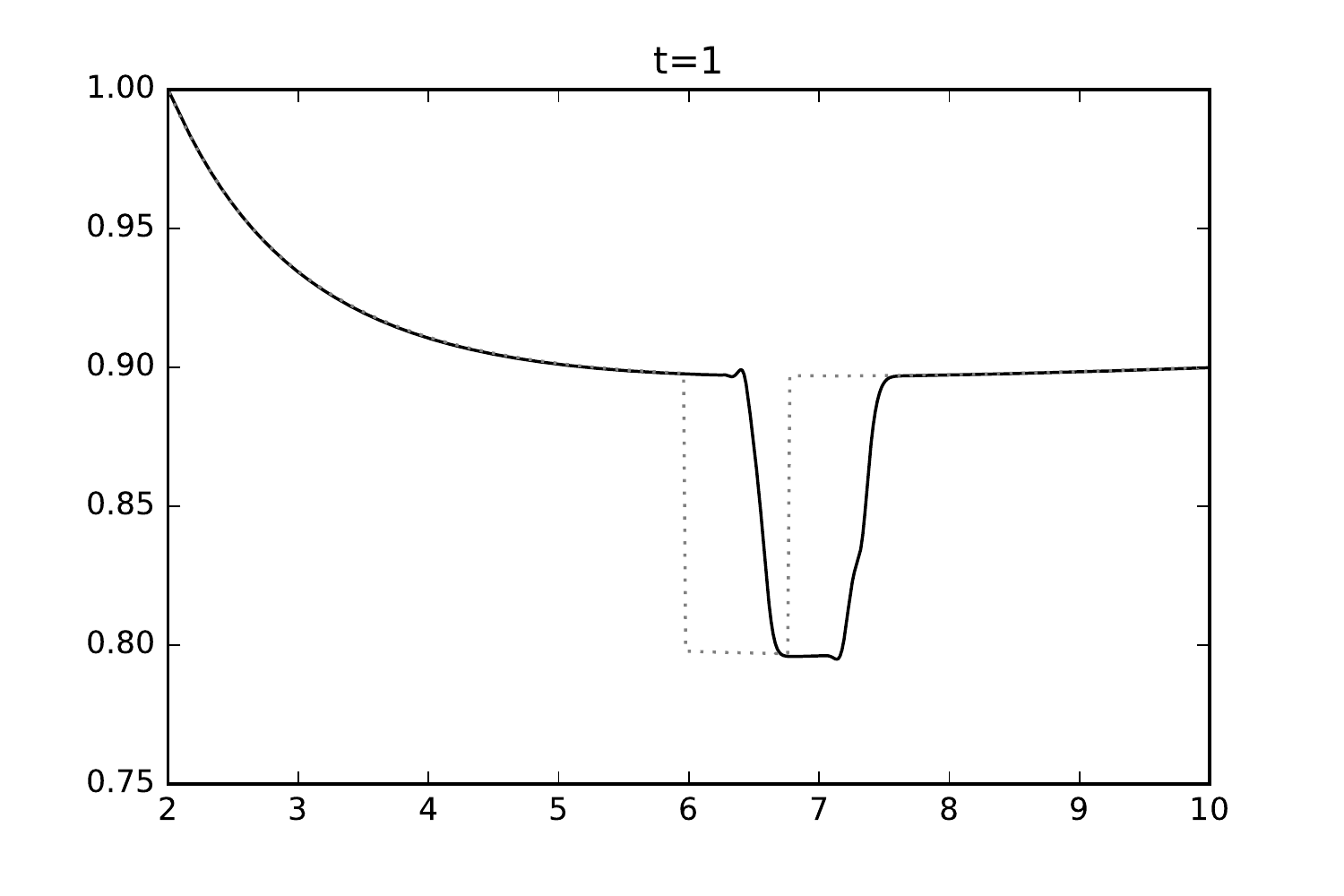,width=2.5in}
\end{minipage}
\hspace{0.1in}
\begin{minipage}[t]{0.3\linewidth}
\centering
\epsfig{figure=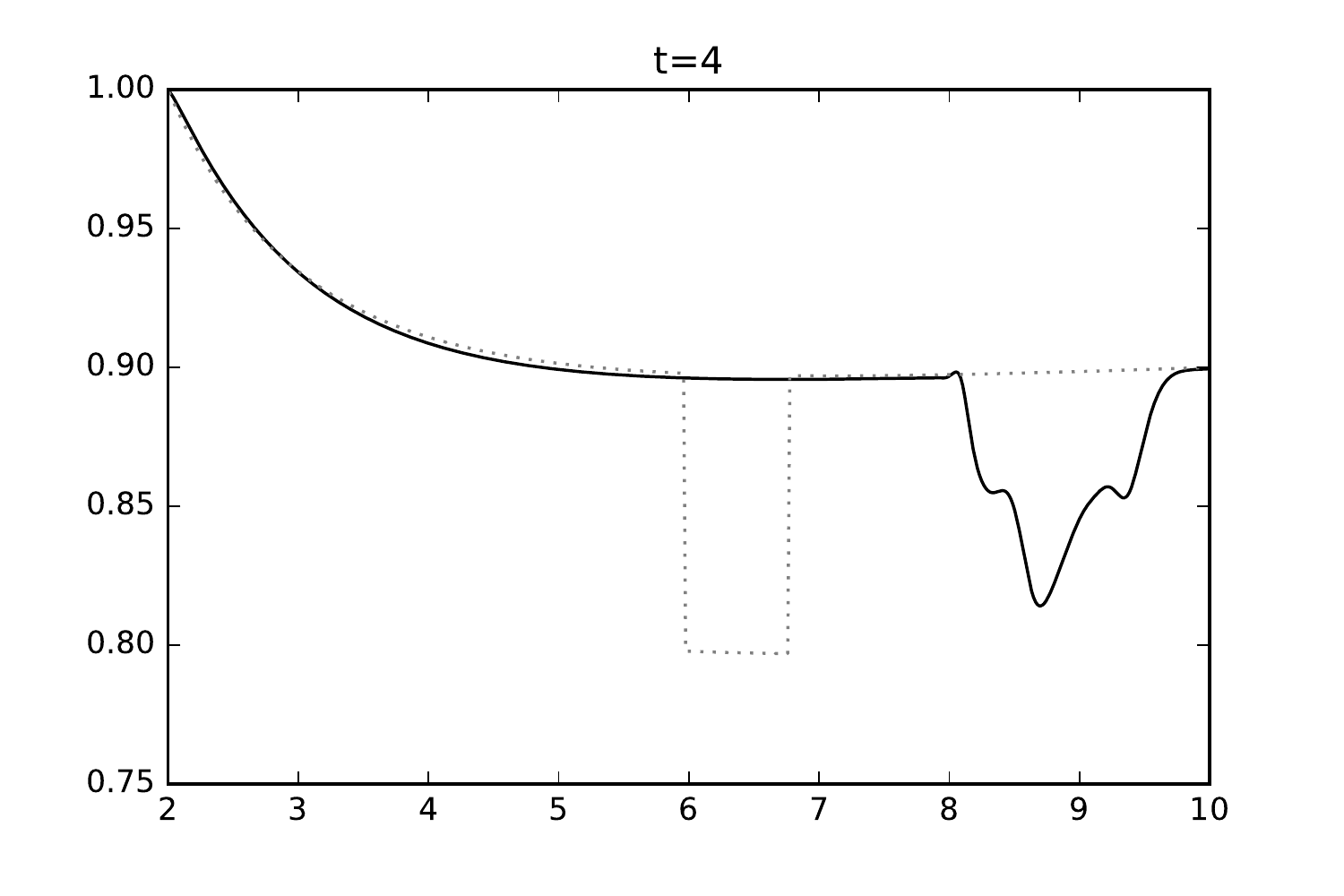,width=2.5in}
\end{minipage}
\begin{minipage}[t]{0.3\linewidth}
\centering
\epsfig{figure=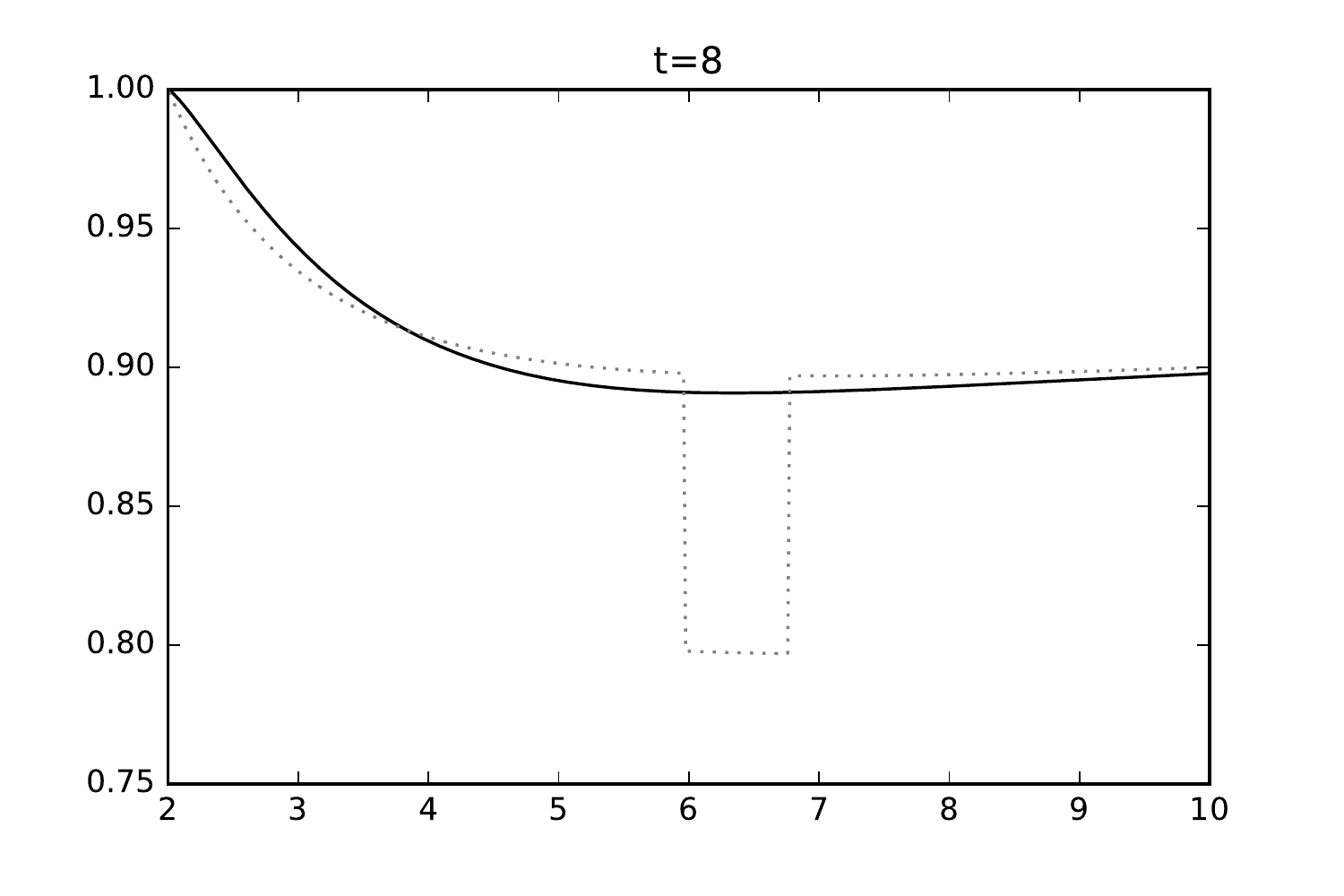,width= 2.5in} 
\end{minipage}
\hspace{0.1in}
\begin{minipage}[t]{0.3\linewidth}
\centering
\epsfig{figure=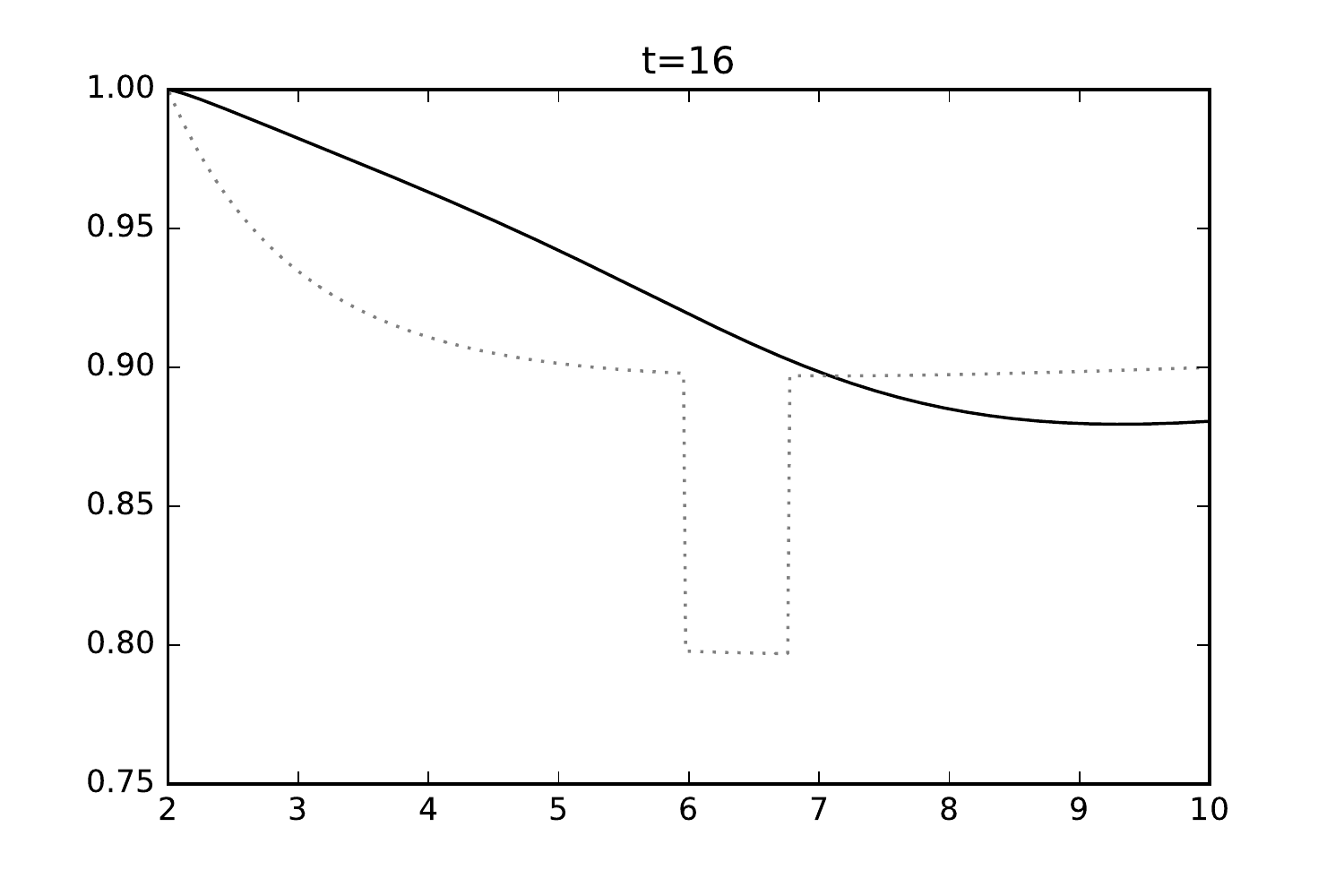,width=2.5in}
\end{minipage}
\hspace{0.2in}
\begin{minipage}[t]{0.3\linewidth}
\centering
\epsfig{figure=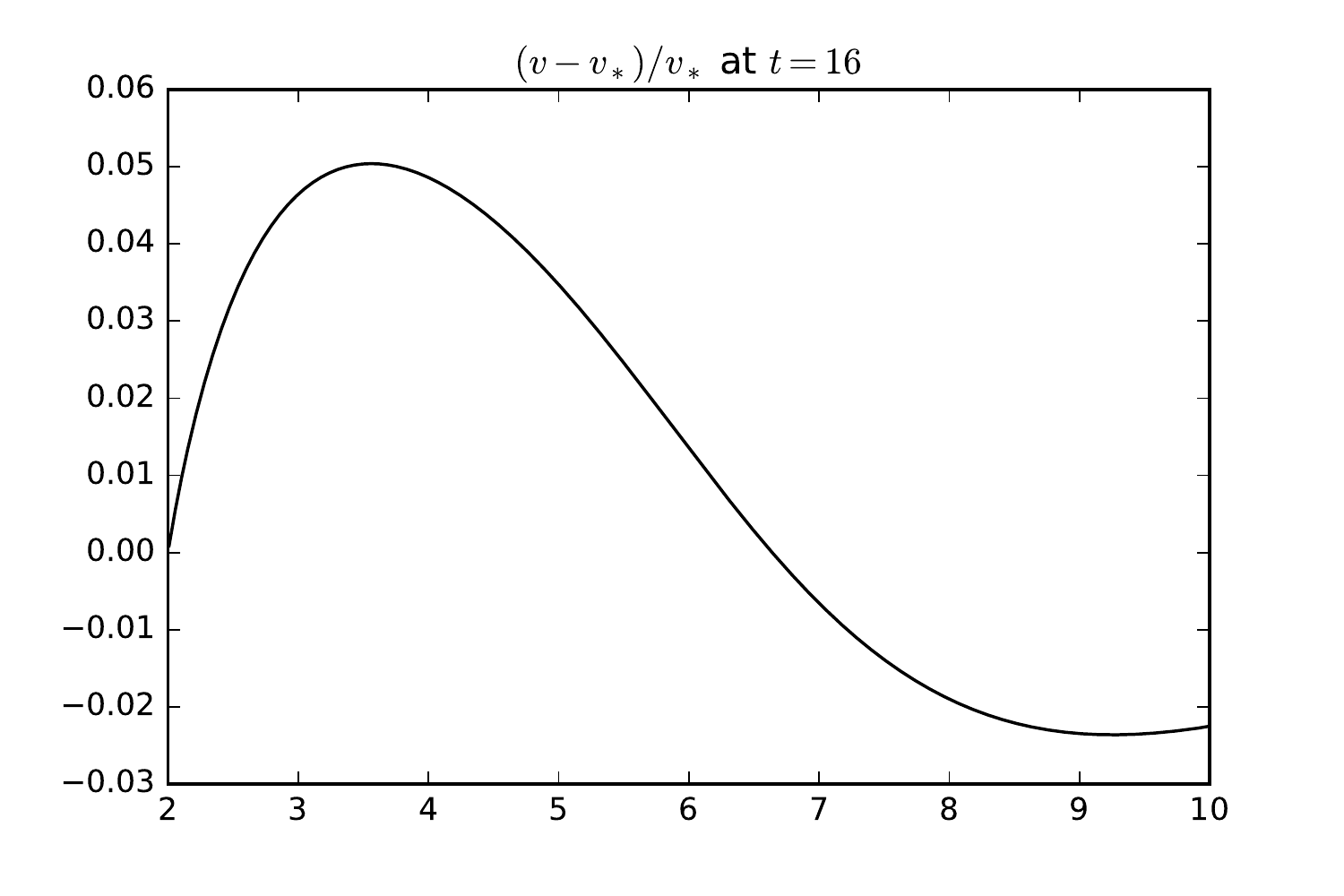,width=2.5in}
\end{minipage}
\caption{Evolution of a steady state with perturbation, converging to a different asymptotic state}
\label{FIG-93} 
\end{figure}

\begin{figure}[!htb] 
\centering 
\begin{minipage}[t]{0.3\linewidth}
\centering
\epsfig{figure=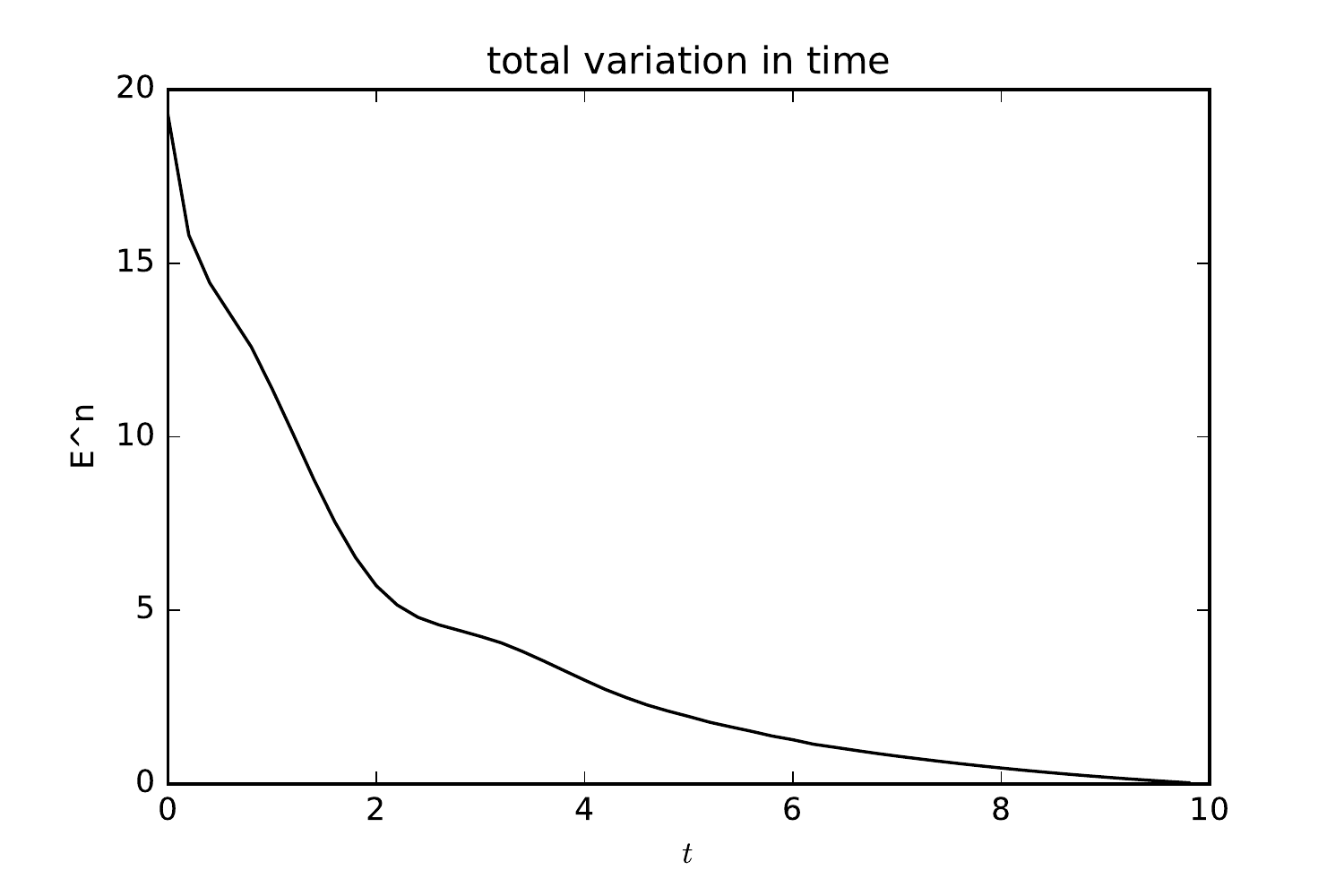,width = 2.5in} 
\end{minipage}
\hspace{1in}
\begin{minipage}[t]{0.3\linewidth}
\centering
\epsfig{figure=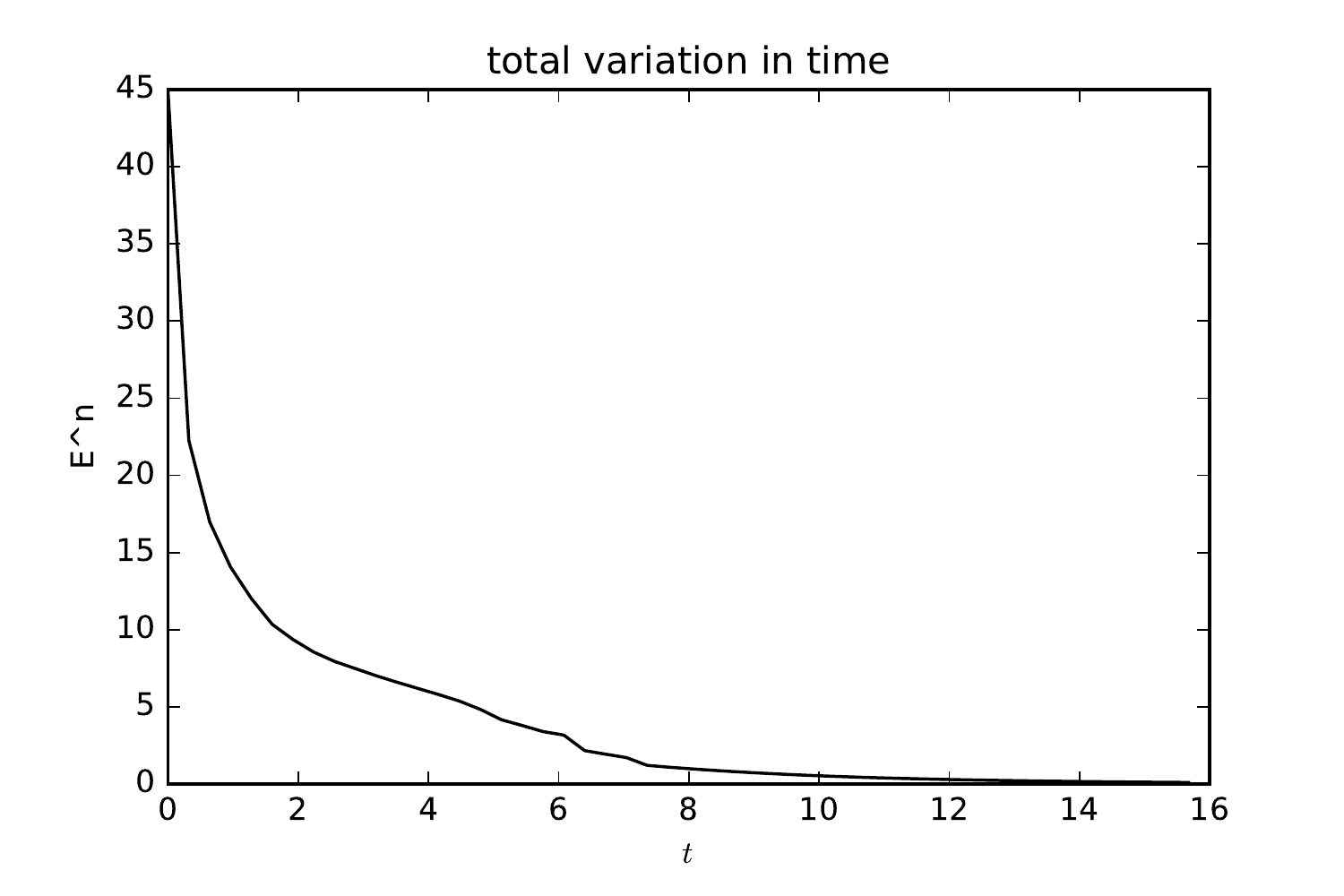,width = 2.5in}
\end{minipage}
\caption{Total variation in time corresponding to Figures~\ref{FIG-92} and \ref{FIG-93}, respectively}
\label{FIG-94}
\end{figure}


\begin{figure}[!htb] 
\centering 
\begin{minipage}[t]{0.3\linewidth}
\centering
\epsfig{figure=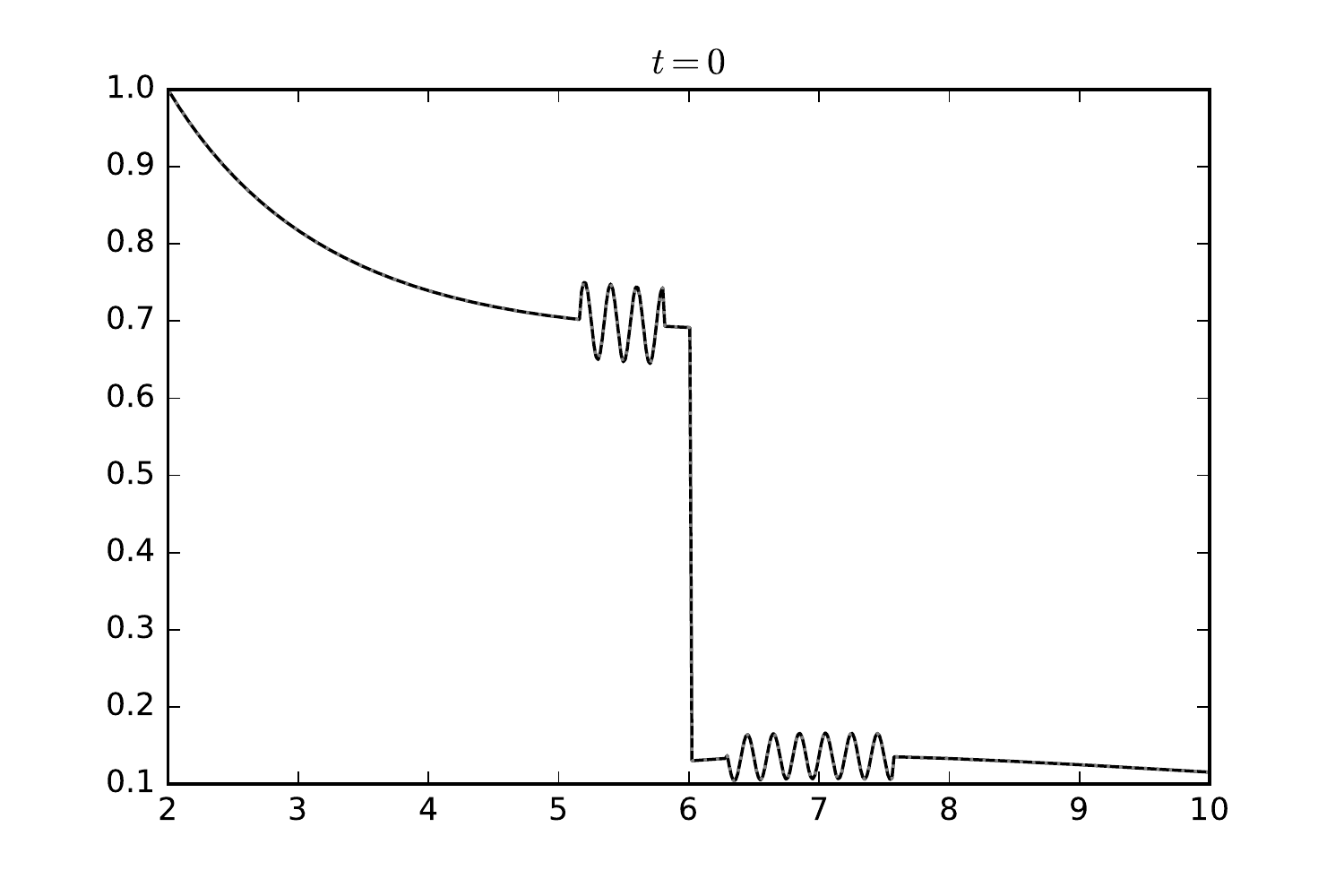,width= 2.5in} 
\end{minipage}
\hspace{0.1in}
\begin{minipage}[t]{0.3\linewidth}
\centering
\epsfig{figure=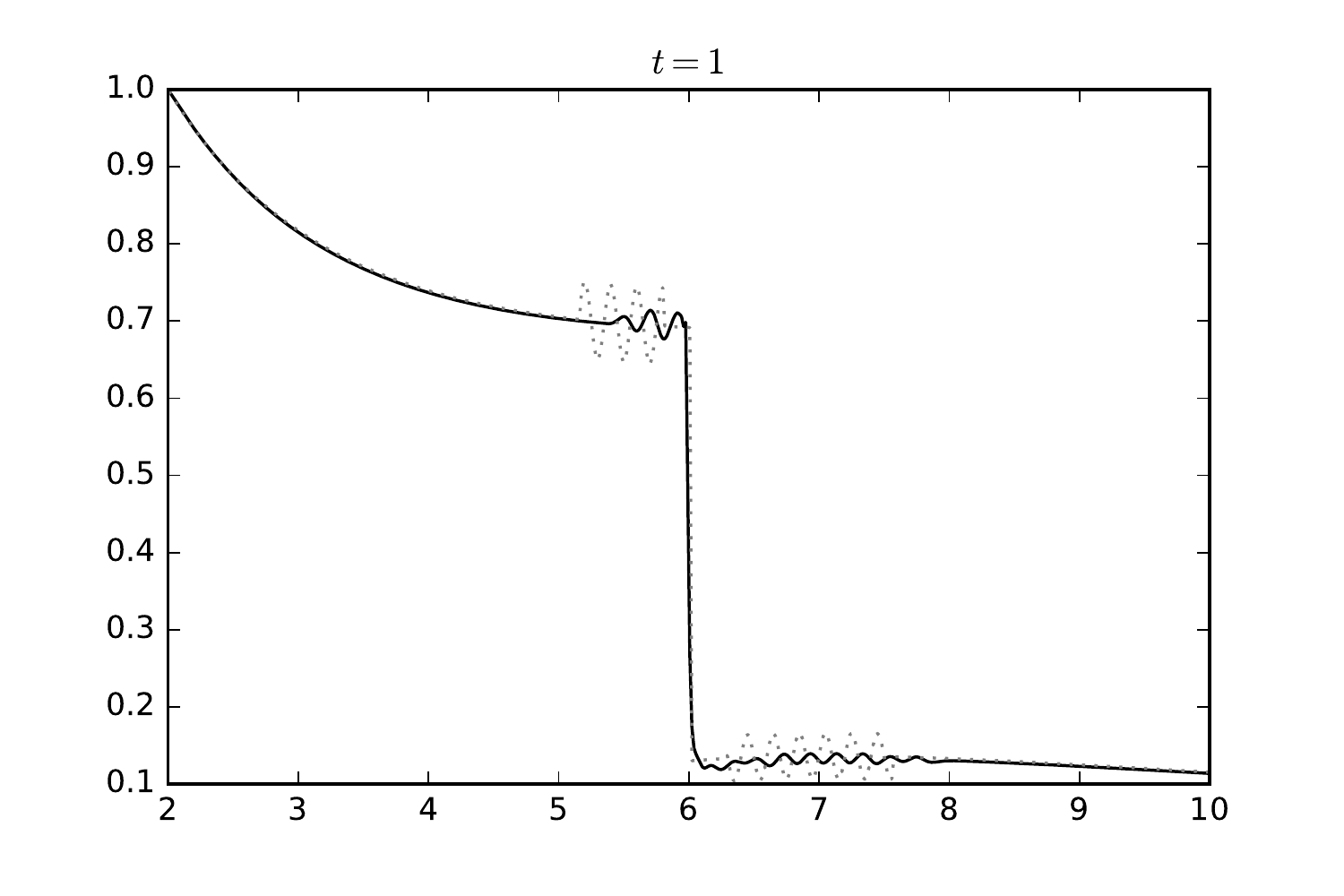,width=2.5in}
\end{minipage}
\hspace{0.1in}
\begin{minipage}[t]{0.3\linewidth}
\centering
\epsfig{figure=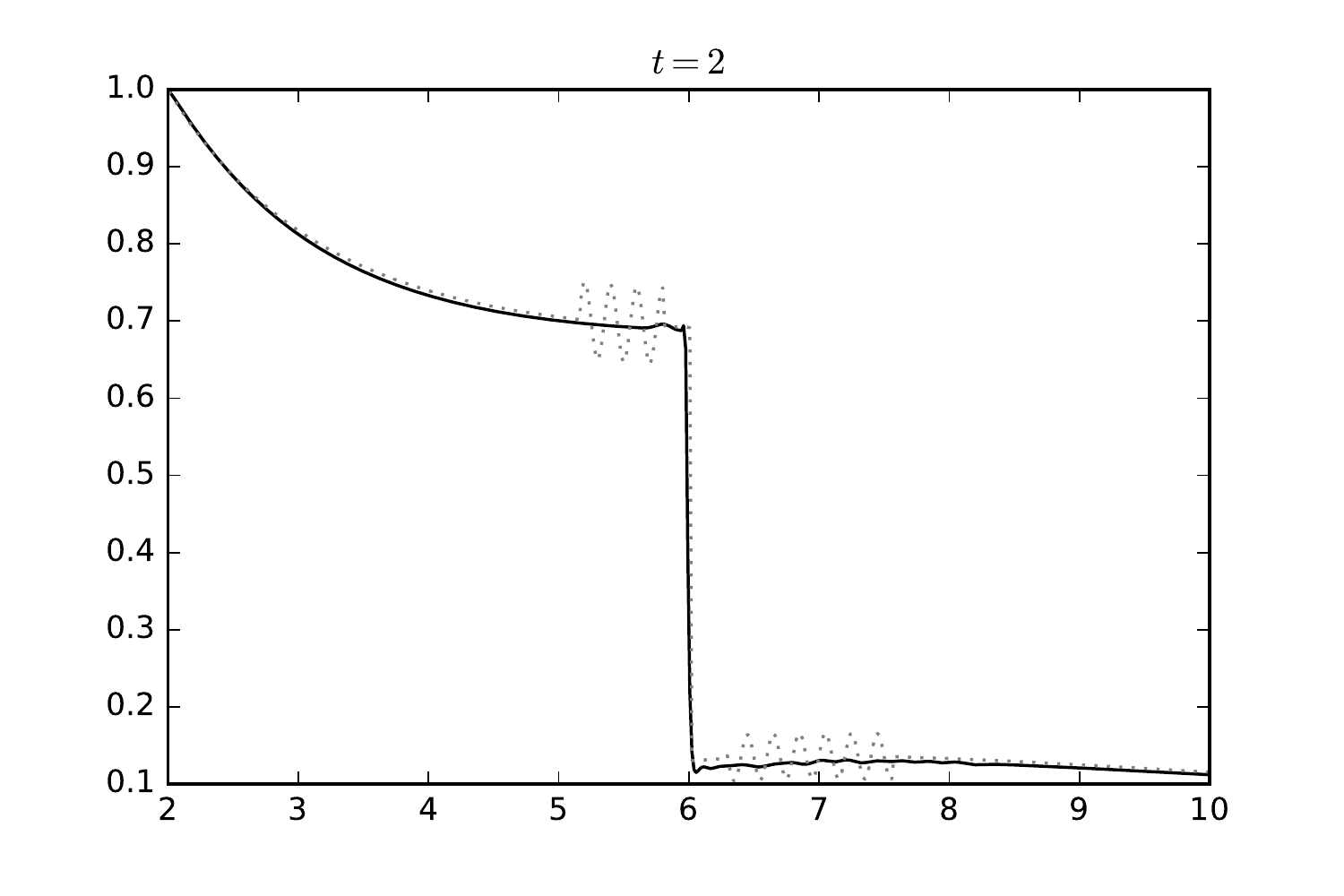,width=2.5in}
\end{minipage}

\centering 
\begin{minipage}[t]{0.3\linewidth}
\centering
\epsfig{figure=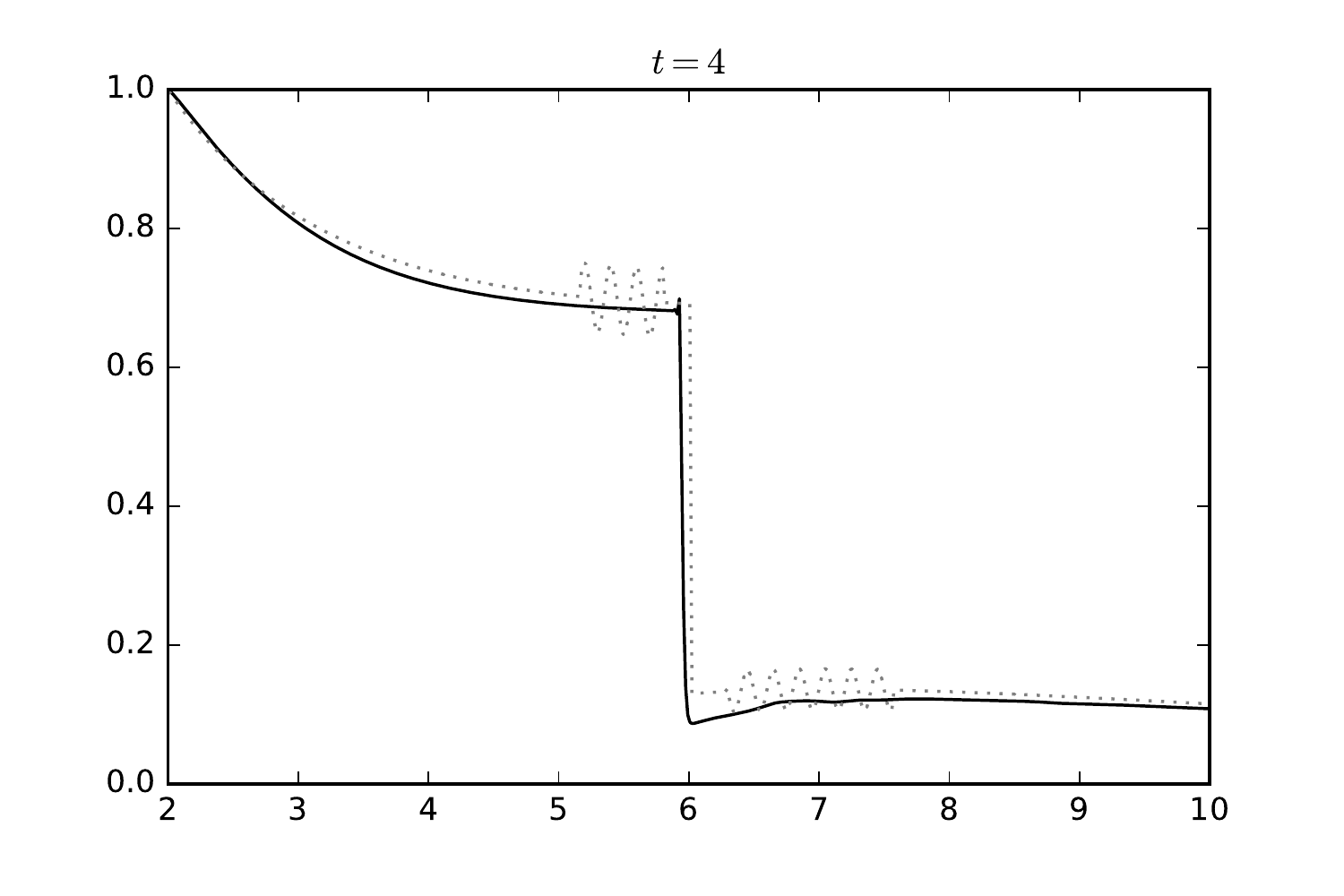,width= 2.5in} 
\end{minipage}
\hspace{0.1in}
\begin{minipage}[t]{0.3\linewidth}
\centering
\epsfig{figure=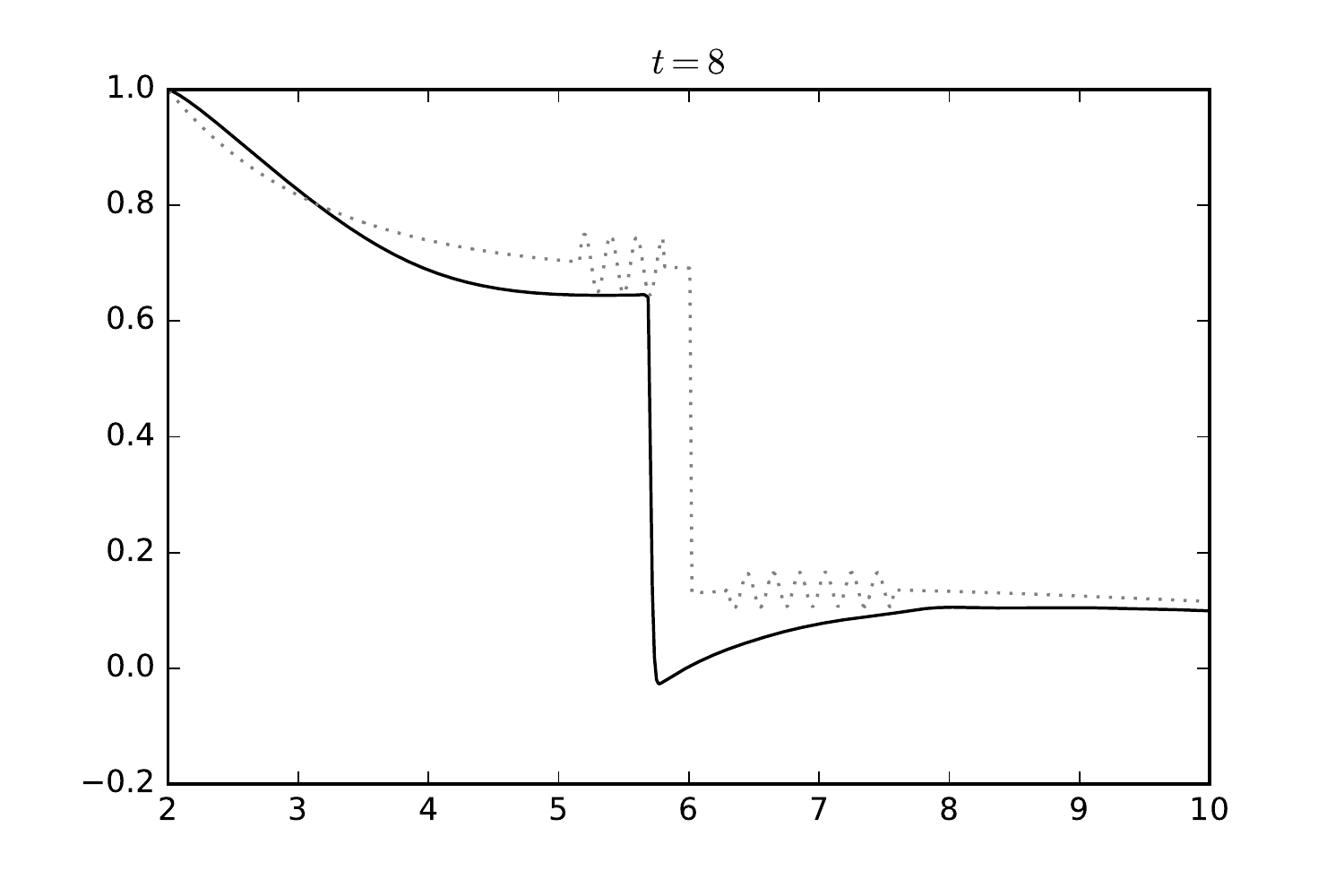,width=2.5in}
\end{minipage}
\hspace{0.1in}
\begin{minipage}[t]{0.3\linewidth}
\centering
\epsfig{figure=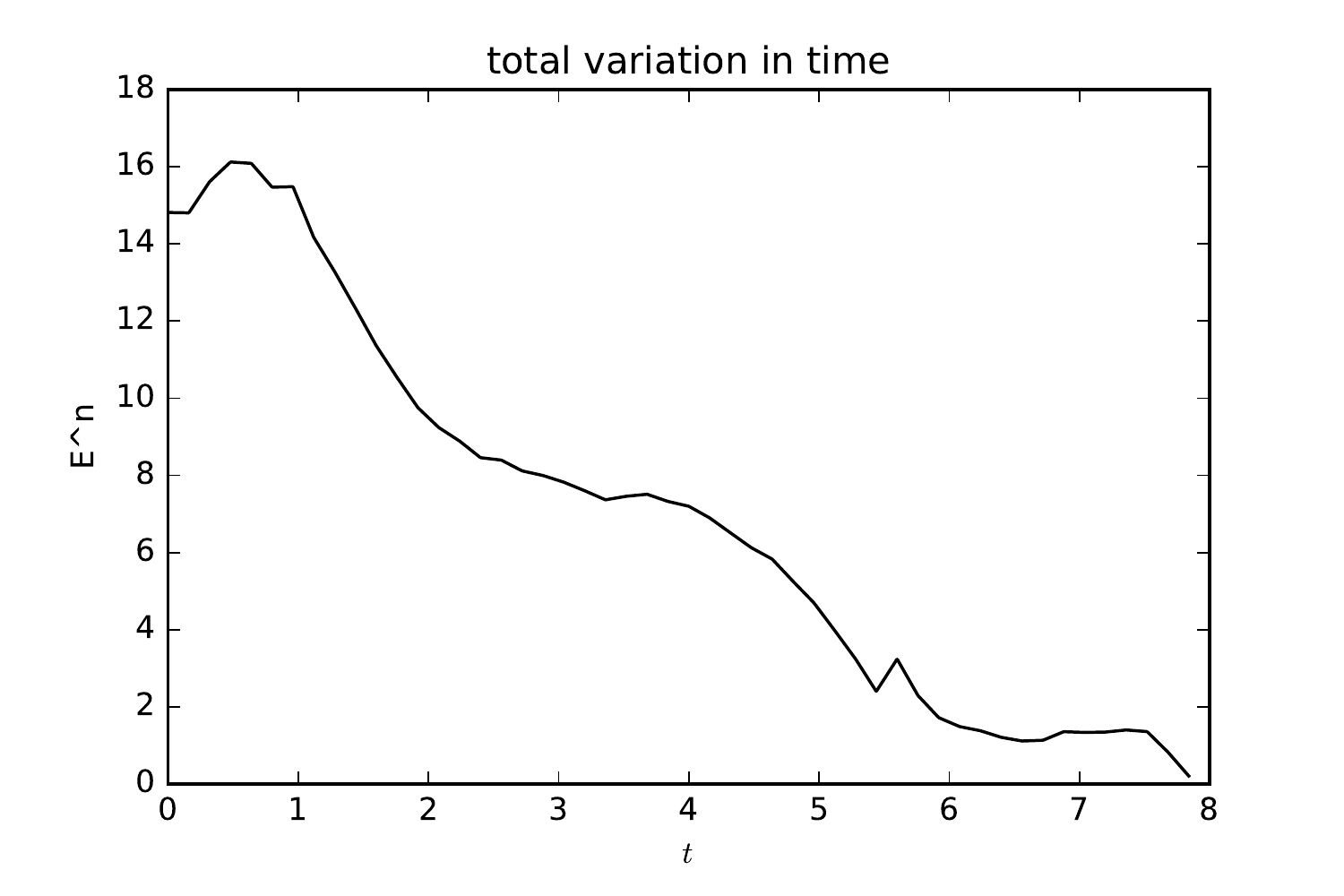,width=2.5in}
\end{minipage}
\caption{Evolution of an initially perturbed steady shock and its total variation in time}
\label{FIG-95}
\end{figure}


\small

\end{document}